\newcommand{\aset}{\text{$A$-Set}}
\newcommand{\xspace}{\mathcal{X}}
\newcommand{\pid}{\mathfrak{p}}
\newcommand{\oneideal}{\mathfrak{a}}
\newcommand{\sk}{\text{Sk}}
\newcommand{\hookedrightarrow}[1]{ \tikz	\draw[{Hooks[right,length=5,width=6]}->] (0,0) -- (#1cm,0cm);	\; }
\newcommand{\ideal}{\mathscr{I}}
\newcommand{\myemph}[1]{{\it{\bf{#1}}}}
\newcommand{\threeover}[3]{
				\begin{array}[h]{c}
								\scriptstyle #1 \vspace{-.25cm}\\
								\scriptstyle #2 \vspace{-.20cm}\\
								\scriptstyle #3
				\end{array}
}
\newcommand{\bigbit}{\text{Bit}}
\newcommand{\rdarrow}[2]{   
				\begin{tikzpicture}[baseline]
								\draw[->] (0.3cm,0.05cm)--(1cm,0.05cm);
								\draw[->] (0.3cm,0.15cm)--(1cm,0.15cm);
								\node at (0.7cm,-0.2cm) [] {$#1$};
								\node at (0.7cm,0.39cm) [] {$#2$};
\end{tikzpicture}}
\newcommand{\midLarr}[1]{
				\begin{tikzpicture}[baseline]
								\draw[-] (0.3cm,0.05cm)--(1cm,0.05cm);
							   \draw[decorate,decoration={markings,mark=at position 0.5 with
											{\arrow[color=black]{<}}}] (0.3cm,0.05cm)--(1cm,0.05cm);
								\node at (0.6cm,0.3cm) [] {$#1$};
				\end{tikzpicture}
			}
\newcommand{\midarrow}[4]{ 
				\begin{tikzpicture}[baseline]
								\draw[-] ($(0.3cm,#4+0.05cm)$)--($(0.3cm+#1,#4+0.05cm)$);
							   \draw[decorate,decoration={markings,mark=at position 0.5 with
								 {\arrow[color=black]{<}}}] ($(0.3cm,#4+0.05cm)$)--($(0.3cm+#1,#4+0.05cm)$);
								 \node at ($(0.3cm+0.5*#1,#2)$) [] {$#3$};
				\end{tikzpicture}
			}
\newcommand{\bbDelta}{%
  \Delta\mkern-12mu\Delta%
}
\newcommand{\BDelta}{\bbDelta}
\newcommand{\myhom}{\text{\normalfont{Hom}}}
\newcommand{\xtworightarrow}[2][]{%
				  \xrightarrow[#1]{#2}\mathrel{\mkern-14mu}\rightarrow
	}
\newcommand{\plusprimeideal}{\mathfrak{p}} 
\newcommand{\step}{\item[(\addtocounter{equation}{1}\theequation) ]}
\newcommand{\op}{\text{op}}
\newcommand*{\rom}[1]{\expandafter\@slowromancap\romannumeral #1@}
\newcommand{\characteristic}{{\mathbf{\mathbbm{1}}}}
\newcommand{\unit}{{\mathbf{1}}}
\newcommand{\bigo}{\mathcal{O} }
\newcommand{\Q}{\mathbb{Q}}
\newcommand{\R}{\mathbb{R}}
\newcommand{\C}{\mathbb{C}}
\newcommand{\N}{\mathbb{N}}
\newcommand{\kernel}{\mbox{Ker}}
\DeclareRobustCommand{\properideal}{\mathrel{\text{$\m@th\proper@ideal$}}}
\newcommand{\proper@ideal}{%
  \ooalign{$\lneq$\cr\raise.22ex\hbox{$\lhd$}\cr}%
}
\newcommand{\lideal}{\lhd}
\newcommand{\Z}{\mathbb{Z}}
\newcommand{\colim}{\mathop{\text{colim}}}
\newcommand{\colimrightarrow}{\mathop{\underrightarrow{\text{colim}}}}
\newcommand{\limrightarrow}{\mathop{\underrightarrow{\text{lim}}}}
\newcommand{\limleftarrow}{\mathop{\underleftarrow{\text{lim}}}}
\newcommand{\cocolim}{\mathop{\text{(co)lim}}}
\newcommand{\mymod}{\text{\normalfont-mod}}
\newcommand{\biop}{\mathscr{P}}
\newcommand{\bioq}{\mathscr{Q}}
\newcommand{\bior}{\mathscr{R}}
\newcommand{\circleftarrow}{\,\overleftarrow{\circ}\,}
\newcommand{\circrightarrow}{\,\overrightarrow{\circ}\,}
\DeclareMathSymbol{\LLCurly}{\mathrel}{mathb}{"CE}
\DeclareMathSymbol{\ggcurly}{\mathrel}{mathb}{"CF}
\newtheorem*{claim*}{Claim}
\newtheorem{theorem}{Theorem}
\newtheorem{lemma}{Lemma}
\newtheorem{definition}[equation]{Definition} 
\newtheorem{definition*}{Definition} 
\newtheorem{remark}[equation]{Remark}
\newtheorem*{remark*}{Remark}
\newtheorem{corollary}{Corollary}
\newtheorem*{corollary*}{Corollary}
\newtheorem{proposition}{Proposition}
\numberwithin{equation}{section}
\title{Homotopy and Arithmetic \\  \large From bi-operads to generalized schemes}
\author{Shai Haran \\ {haran@technion.ac.il} }
\date{}
\begin{document}
\maketitle
\begin{abstract}
				We define the concept of a Bi-operad. We develop the homotopy theory of Bital-Sets
				and of $\infty$-Bi-operad. We develop a geometry of generalized schemes based on
				the spectra of distributive monochromatic bi-operads.
\end{abstract}
\section{Introduction}
It was Dan Kan who developed the language of \myemph{Categories}, and of \break
\myemph{Simplicial Sets} based on finite-linearly-ordered sets, to give a
combinational approach to homotopy.  Dan Quillen  axiomatized this approach in
his language of \myemph{Model Categories}. \myemph{Operads} were discovered by
Boardman and Vogt in connection with homotopy invariant algebraic structures, and by May
in connection with iterated loop spaces. Ieke Moerdijk (with collaborators I. Weiss, C.
Berger, D.C. Cisinski, G. Heuts, V. Hinich, E. Hoffbeck) generalized categories and operads
to a theory of ``\myemph{Coloured-Operads}'', leading to a generalization of simplicial
sets to ``\myemph{Dendroided Sets}'', based on finite trees, and developed the homotopy
theory of \myemph{$\infty$Operads} generalizing \myemph{$\infty$Categories}. \\
While categories and simplicial sets have a dual opposite, there is no opposite of an operad. 
Some algebraic structures (such as Hopf-algebras and bi-algebras) are not controlled by an
operad since they involve both operations and ``co-operations''. The purpose of this paper
is to define a notion of ``Bi-Operad'', having both operations and co-operations, and of
``Bital-Sets'', based on finite - ``Bi-trees'', and sketch the homotopy theory of
$\infty$Bi-Operads generalizing the Dendroidal and the Simplicial stories. \\
Having an opposite for
Bi-operads and Bital-Sets we can speak of the self-dual Bi-operads and Bital-sets having an
 involution. Finally, returning to the classical ``monochromatic'' case of having
one colour or object, we can speak of the distributive Bi-operads. \\
It was Alexander Grothendieck who develop the modern language of algebraic geometry, the \myemph{schemes},
based on the \myemph{spectra} of commutative-rings. But commutative rings are Not adequate
for \myemph{Arithmetical Geometry} (The ``integers'' at a real or complex place of a
number field do  not form a sub-ring; the integers $\Z$ are the initial object of rings
and so $\text{spec}(\Z)$ is the final object of Grothendieck's geometry, rather than the
\myemph{absolute point} $\text{spec}(\mathbb{F}_{1})=\left\{ (0) \right\}$, where
$\mathbb{F}_{1}$ the initial object of distributive  bi-operads is the \myemph{``Field with one
element''}; the \myemph{Arithmetical surface} $\text{spec}(\Z)\prod
\text{spec}(\Z)=\text{spec}\left( \Z\otimes\Z \right)=\text{spec}(\Z)$ reduces to its diagonal
\cite{haran1989}) The self-dual distributive bi-operads are essentially the  \myemph{``Generalized-Rings''} of
\cite{MR3605614}, and are the building blocks of an \myemph{Arithmetical Geometry}, generalizing
Grothendieck's theory of \myemph{Schemes}. \vspace{.1cm}\\
As it turns out, to do geometry following
Grothendieck's paradigm, one does not need an involution (although in arithmetic, the real
and complex ``integers'' prefer the self-dual $\ell_2$-metric). We lay the foundation of
\myemph{generalized schemes} based on the spectra of distributive bi-operads. This new
language of geometry is indeed much simpler than the one of \cite{MR3605614} (where we had to keep the
involution), and it also shows the way to a \myemph{``Quantum-geometry``} (where vectors
and co-vectors are Not the same). \\
Thus the theory of Bi-operads and Bital-sets give the homotopy
language needed for \myemph{Derived Arithmetical Geometry} and for
\myemph{Arithmetical Stacks}. The \myemph{Arithmetical Surface} $\text{spec}(\Z)\prod
\text{spec}(\Z)=\text{spec}(\Z\coprod\Z)$ with $\Z\coprod \Z$ the sum in the category of
distributive Bi-operads (and its compactification as a pro-object of generalized schemes,
as in \cite{MR2330442}) is our Jungentraum of  \cite{haran1989} \vspace{.2cm}\\
\begin{center}
\colorbox{gray!10}{
\begin{minipage}[H]{8cm}
\begin{quote}
				``Mathematics should not be complicated! You just set the stage right - 
				then you sail away...'' 
\end{quote} 
\flushright Raoul Bott \\
\
\end{minipage}
}
\end{center}
\begin{center}
\colorbox{gray!10}{
\begin{minipage}[H]{8cm}
\begin{quote}
``What's important is the point-of-view... the theorems will follow!''
\end{quote} 
\flushright 
 Barry Mazur \\
\scriptsize{lecture on the rule of the point at infinity in \\ ``Algebra and Geometry'', Cambridge
1993.}  \
\end{minipage}
}
\end{center}
The content of this report is as follows: 
\begin{itemize}
				\item[1. ] \underline{Introduction}
				\item[2. ] \underline{Bio$\equiv$Bi-operands} We define Bi-operads as a pair of
								colored symmetric operads with the same objects, and with mutual actions
								on each other. 
				\item[3. ] \underline{Bit$\equiv$Bi-trees} We define the Bi-trees as finite
								oriented graphs, with no closed paths, and such that for any vertex
								$v$, $\# d_{-}(v)=1$ or $\# d_{+}(v)=1$. 
				\item[4. ] \underline{Formulas and $1$-sub-bits} We define our ``formulas''
								inductively as bi-trees with compatible planar structures. 
				\item[5. ] \underline{The free Bio} We describe the free bio on a collection
								$\mathcal{C}$ as the $\mathcal{C}$-labeled formulas. 
				\item[6. ] \underline{Bios via generators and relations} We show Bio is complete
								and co-complete. 
				\item[7. ] \underline{Bipo$\equiv$Bi-Pre-Order} We describe the (co-)complete
								closed symmetric monoidal structure on Bipo $\equiv$ the  full subcategory
								of Bio with objects the bios having at most one point in every hom-set. 
				\item[8. ] \underline{Natural transformation} We describe the internal hom
								structure on Bio. 
				\item[9. ] \underline{Bilinear maps and tensor products} We describe the closed
								monoidal structure on Bio.
				\item[10. ] \underline{From Bits to Bios} We associate with every bit a bio and we
								make Bit into a category by defining the ``flat'' maps of bits. 
				\item[11. ] \underline{Degeneracies and face maps} We show these maps generate
								Bit; the face maps are either ``outer'' ($=$ elimination of an edge), or
								``inner'' (= the contraction of ``composition'' or ``action''
								edge(s)).
				\item[12. ] \underline{The category of Bital sets bSet} We describe the category
								$\text{bSet}\equiv\text{Set}^{\text{Bit}^{\text{op}}}$ of pre-sheaves on
								Bit, and the resulting Kan adjunction 
								\begin{tikzpicture}[baseline=4.5mm]
												\draw [<-, color=black]  plot [smooth] coordinates {(5mm,7mm)(8mm,8mm)(11mm,7mm)};
												\node at (1mm,5.5mm) {bSet};
												\draw [->, color=black]  plot [smooth] coordinates {(5mm,4mm)(8mm,3mm)(11mm,4mm)};
												\node at (14mm,5.5mm) {Bio};
								\end{tikzpicture}.
				\item[13. ] \underline{The symmetric ``monoidal'' structure on bSet} We describe
								the closed, symmetric, and not quite associative, tensor structure on
								bSet.
				\item[14. ] \underline{The image of $N:\text{Bio}\hookrightarrow bSet$} The nerve
								functor $N$ gives a full and faithfull embedding of Bio onto the
								strict-Kan Bital Sets.
				\item[15. ] \underline{The homotopy functor $\tau=\text{ho}: \infty\text{Bio}\to
								\text{Bio}$} For an $\infty\text{Bio}$ $X\in \text{bSet}$ satisfying the
								(inner) Kan condition, the associated bio $\tau(X)$ has a homotopical
								description. 
				\item[16. ] \underline{Normal monomorphisms} We describe following \cite{MR2294028},
								\cite{MR4222648} the cofibrations of bSet, and their interaction with the monoidal
								structure. 
				\item[17. ] \underline{Quillen Model structure. Ein M{\" a}rchen} Following
								\cite{MR4222648} we describe the model structures on $\text{sBio}_{C}$, $C$ a fixed
								set of objects, and the conjectural model structures on simplicial bios
								$\text{sBio}$, and on bital sets $\text{bSet}$ (these should be equivalent
								via a modification of the Berger-Moerdijk-Boardman-Vogt resolution). 
				\item[18. ] \underline{Self-duality} We show the bios $\biop$, with an involution
								$\biop\cong\biop^{\text{op}}$, $X\mapsto X^{t}$, can be also described as
								operads with a ``contraction'' operaton, making contact with the language
								of \cite{MR3605614}.
				\item[19. ] \underline{Rigs} For a rig $A$ ($=$ associative ring without
								``negatives'', i.e. no inverses for addition) we associate a bio
								$\biop_{A}$ with one object, giving a full and faithful embedding
								$\text{Rig}\hookrightarrow \text{Bio}_{ \left\{ \ast \right\} }$. 
				\item[20. ] \underline{The $\ell_{p}$ bio} For $p\in [1,\infty]$ we define the
								``$\ell_{p}$-sub-bio'' of the bios $\biop_{\R}$ and $\biop_{\C}$; for
								$p=2$ these are $\Z_{\R}$ and $\Z_{\C}$, the real and complex
								``integers''. 
				\item[21. ] \underline{Distributive bios} These are the monochromatic bios
								$\biop$ where the actions and compositions interchange; the monoid
								$\biop(1)$ is commutative and central. The association $A\mapsto
								\biop_{A}$ gives a full and faithful embedding of commutative rings into
								distributive bios under $\mathbb{F}_{1}$ 
								\begin{equation*}
												C\text{Ring}\hookrightarrow \text{CBio}.
								\end{equation*}			
				\item[22. ] \underline{The spectrum} We associate with a distributive bio
								$\biop\in \text{CBio}$, a compact $\text{sober}\equiv \text{Zariski}$,
								topological space $\text{spec}(\biop)$. 
				\item[23. ] \underline{The structure sheaf $\mathscr{O}_{\biop}$} We describe the
								sheaf of bios over $\text{spec}(\biop)$, $\biop\in \text{CBio}$.
				\item[24. ] \underline{Generalized Schemes} Following Grothendieck \cite{MR3075000} we
								describe the generalized schemes as spaces with a sheaf of distributive
								bios with local stalks, that are locally affine. 
				\item[25. ] \underline{A$\text{-sets}$} For $A\in \text{CBio}$ we define the
								category of $A\text{-sets}$. For $A\in C\text{Ring}$ this is the category
								of $A$ modules. 
				\item[26. ] \underline{Commutativity} We discuss commutativity of
								$A\text{-sets}$
				\item[27. ] \underline{Symmetric monoidal structure} We describe the closed
								monoidal (model) structure on $CA\text{(s)Set}$-the category of
								commutative $A$-(simplicial) sets.
				\item[28. ] \underline{Symmetric sequences} We describe the Category
								$\Sigma(A)\equiv \left( CA\text{-sSet} \right)^{\text{iso}(\text{Fin})}$of
								symmetric sequences, and the operadic Fourier transform. 
				\item[29. ] \underline{The sphere spectrum} Following \cite{MR1860878},
								\cite{MR1695653}, we describe
								the sphere spectrum $S_{F}^{.}$ as the commutative-monoid-object of
								$\Sigma(\mathbb{F})$, giving rise to the category
								$S_{A}^{.}\text{-mod}\subseteq \Sigma(A)$ of ``symmetric spectra``, with
								its stable monoidal model structure. 
				\item[30. ] \underline{Quasi-coherent $\mathscr{O}_{X}$-modules} We describe the
								category of $\mathscr{O}_{X}\text{-modules}$, $X$ a generalized scheme,
								and its full subcategory $q.c.\mathscr{O}_{X}\text{-modules}$ of
								''quasi-coherent`` $\mathscr{O}_{X}\text{-modules}$.
				\item[31. ] \underline{The global derived category} Following \cite{MR2522659}, we can
								describe the derived category of quasi-coherent simplicial
								$\mathscr{O}_{X}\text{-modules}$ as the homotopy category of the
								$\infty\text{-category}$ of cartesian sections of the inner fibration
								\break
								$\text{Aff}/_{X}\ltimes S_{A}^{.}\text{-mod}\to \text{Aff}/_{X}$. 
\end{itemize}
\begin{center}
\colorbox{gray!10}{
\begin{minipage}[H]{10cm}
\begin{quote}
				``At all events, I have exceeded my commission and been seduced into describing
				things as they may be and, as seems to me at present, are likely to be... The
				views of this report are in any case not peculiar mine. I have simply fused my own
				observations and reflections with ideas of others and with commonly accepted
				tenets.''
\end{quote} 
\flushright 
R.P. Langlands \\
\scriptsize{}  \
\end{minipage}
}
\end{center}
\section{Bio $\equiv$ Bi-operads}
\begin{definition}
				A Bio ($\equiv$ Bi-Operad) $\biop=(\biop^{-},\biop^{o},\biop^{+})$ is a pair
				$\biop^{-}, \biop^{+}$ of (coloured, symmetric) operads with the same set of
				``colours'' or ``objects'' $\biop^{o}$, together with mutual actions of
				$\biop^{-}$, $\biop^{+}$ on each other.
				\label{def:bio}
\end{definition}
Explicitly, for $c_0,c_1,\dots,c_n\in\biop^{o}$ we have sets (simplicial sets, etc.) of
(co)-operations $\biop^{-}(c_0;c_1,\dots,c_n)$ and $\biop^{+}(c_1,\dots,c_n;c_0)$, with
actions of the symmetric group $S_n$, given for $\sigma\in S_n$ 
\begin{equation}
				\begin{array}[H]{ll}
								\biop^{-}(c_0;c_{\sigma(1)}\dots c_{\sigma(n)}) 
								 \;\begin{tikzpicture}
								 				\draw (0,0) circle [radius=0cm];
								 				\draw [{To[length=2mm, width=2mm]}-] (0,0) -- (1,0);
								 \end{tikzpicture}\; 
								\biop^{-}(c_0;c_1\dots c_n) \qquad\qquad  &P\sigma 
								\;\begin{tikzpicture}
												\draw (0,0) circle [radius=0cm];
												\draw [{To[length=2mm, width=2mm]}-|] (0,0) -- (1,0);
								\end{tikzpicture}\; P \\\\
								\biop^{+}(c_1\dots c_n;c_0)
								\;\begin{tikzpicture}
												\draw (0,0) circle [radius=0cm];
												\draw [-{To[length=2mm, width=2mm]}] (0,0) -- (1,0);
								\end{tikzpicture} \; 
								\biop^{+}(c_{\sigma^{-1}(1)}\dots c_{\sigma^{-1}}(n);c_0)
						&
							P
							\;\begin{tikzpicture}
												\draw (0,0) circle [radius=0cm];
												\draw [|-{To[length=2mm, width=2mm]}] (0,0) -- (1,0);
								\end{tikzpicture} \;
								\sigma P
				\end{array}
				\label{eq:2.2}
\end{equation}
We have units $\unit_{c}^{-}\in\biop^{-}(c;c)$ and $\unit^{+}_{c}\in \biop^{+}(c;c)$,
$(n=1)$. \\ 
The compositions are given for each word
\begin{equation*}
				\overline{c}=c_{11}\dots c_{1{m_1}},c_{21}\dots c_{2{m_2}}, \dots
				,c_{n1}\dots c_{n{m_n}} \in \left( \biop^{\circ} \right)^{\sum m_{i}}
\end{equation*}
partitioned into the $n$ words
\begin{equation*}
				\overline{c_1}=c_{11}\dots c_{1{m_{1}}}\; , \; \overline{c_2}=c_{21}\dots
				c_{2{m_2}}\; , \dots \; ,\; \overline{c_{n}}=c_{n1}\dots c_{n{m_{n}}}
\end{equation*}
and for $b_0,b_1,\dots , b_n\in \biop^{o}$,
\begin{equation}
				\begin{array}[H]{l}
								o:\biop^{-}(b_0;b_1\dots b_n)\times \left[
								\biop^{-}(b_1;\overline{c_1})
								\times\dots\times\biop^{-}(b_n,\overline{c}_n) \right] 
								\xrightarrow{\qquad} \biop^{-}(b_0;\overline{c}) \\\\
								o:\left[
								\biop^{+}(\overline{c_1};b_1)\times\dots\times\biop^{+}(\overline{c}_n;b_n)
				\right] \times \biop^{+}(b_1\dots b_n;b_0)\xrightarrow{\qquad} 
								\biop^{+}(\overline{c};b_0) 
				\end{array}
				\label{eq:2.3}
\end{equation}
These compositions are assumed to be \myemph{associative}, 
\begin{equation}
				\begin{array}[H]{rll}
				P\circ\big(P_{i}\circ(P_{ij})\big) &= \big(P\circ(P_i)\big)\circ(P_{ij}) \qquad &
								P,P_i,P_{ij}\in \biop^{-} \\\\
								(P_{ij})\circ\big((P_j)\circ P\big) &=  \big( (P_{ij})\circ P_j\big)\circ
								P
							&	P_{ij},P_j, P\in \biop^{+}
				\end{array}
				\label{eq:2.4}
\end{equation}
\myemph{unital},
\begin{equation}
				\begin{array}[H]{ll}
								\unit_{c_0}^{-}\circ  P = P = P\circ(\unit_{c_{i}}^{-}) \qquad  & P \in
				\biop^{-}(c_0;c_1\dots c_n) \\\\
				\left( \unit_{c_i}^{+} \right)\circ P = P = P \circ\unit_{c_0}^{+}  &P\in\biop^{+}(c_1\dots c_n;c_0)
				\end{array}
				\label{eq:2.5}
\end{equation}
and $S_n\times \left[ S_{m_1}\times \dots \times S_{m_{n}} \right]\subseteq
S_{m_1+\dots+m_n}$ \myemph{covariant}. The mutual actions are given for such words
$\overline{c}=\overline{c}_{1}\dots\overline{c}_{n}$, 
				$\overline{c}_{i}=\left( c_{i1}\dots c_{i,{m_i}}\right), b_0,b_1\dots b_n\in
				\biop^{o}$, by maps
\begin{equation}
				\begin{array}[H]{lll}
				\circrightarrow: \left[ \biop^{-}(b_1;\overline{c}_1)
				\times\dots\times\biop^{-}(b_n;\overline{c}_n)\right]\times\biop^{+}(\overline{c};b_0)\xrightarrow{\qquad}
				\biop^{+}(b_1\dots b_{n};b_0) \\\\
				\circleftarrow: \biop^{-}(b_0;\overline{c})\times \left[
				\biop^{+}(\overline{c}_{1};b_1)\times \dots \times \biop^{+}(\overline{c}_{n};b_n)
\right]\xrightarrow{\qquad} \biop^{-}(b_0;b_1\dots b_n)
				\end{array}
				\label{eq:2.6}
\end{equation}
These actions are assumed to be \myemph{associative} 
\begin{equation}
				\big(P\circleftarrow (P_{ij})\big) \circleftarrow (P_j) = P \circleftarrow \big(
								(P_{ij})\circ P_j\big) \quad , \quad
								(P_i)\circrightarrow\big((P_{ij})\circrightarrow
								P\big)=\big(P_i\circ (P_{ij})\big)\circrightarrow P
								\label{eq:2.7}
\end{equation}
\myemph{unital}, 
\begin{equation}
				P\circleftarrow (\unit_{c_i}^{+})= P \quad , \quad \left( \unit_{c_i}^{-}
				\right)\circrightarrow P = P
				\label{eq:2.8}
\end{equation}
$S_n$-\myemph{covariant}, and $S_{m_1}\times\dots\times
S_{m_n}$-\myemph{invariant}. \vspace{.1cm}\\
We will also assume that these actions are ``linear'' in all possible ways: 
\begin{equation}
				\big(P\circ(P_i)\big)\circleftarrow(P_{ij}) =
				P\circ\big(P_i\circleftarrow(P_{ij})\big)\quad , \quad (P_{ij})\circrightarrow
				\big((P_j)\circ P\big) = \big( (P_{ij})\circrightarrow P_{j}\big)\circ P
				\label{eq:2.9}
\end{equation}
and 
\begin{equation}
				P\circleftarrow\big((P_{ij})\circrightarrow P_j\big) = \big( P\circ
								(P_{ij})\big)\circleftarrow (P_j) \quad , \quad \big( P_i\circleftarrow
								(P_{ij})\big)\circrightarrow P = (P_i)\circrightarrow \big((P_{ij})\circ
												P\big)
				\label{2.10}
\end{equation}
A map of bios $\varphi: \biop\to \bioq$ is a pair of functors of operads
$\varphi^{-}:\biop^{-}\to \bioq^{-} $ and $\varphi^{+}:\biop^{+}\to \bioq^{+}$, with the same map
on objects $\varphi^{\circ}:\biop^{\circ}\to \bioq^{\circ}$ and preserving the natural actions 
\begin{equation}
				\varphi^{-}\big( P\circleftarrow (P_i)\big)= \varphi^{-}(P)\circleftarrow
				\big(\varphi^{+}(P_i)\big) \qquad 
				\varphi^{+}\big((P_i)\circrightarrow
				P\big)=\big(\varphi^{-}(P_i)\big)\circrightarrow \varphi^{+}(P)
				\label{eq:2.11}
\end{equation}
(as well as preserving the compositions, units, and $S_n$-actions). \\
Thus we have a
category \myemph{Bio}. 
\begin{remark}
				Forgetting the $S_n$-actions we get the notion of \myemph{planar}, or
				non-symmetric bio, and we have the category $p$Bio. There is a forgetful functor $U:\text{Bio}\to 
				p\text{Bio}$, forgetting the $S_n$-actions, and this functor has a left adjoint
				$\Sigma :  p\text{Bio}\to\text{Bio}$, with free $S_n$-actions 
				\begin{equation*}
								\textstyle\sum \displaystyle \biop^{-}(c_0;c_1\dots c_n) := \coprod\limits_{\sigma\in
								S_n}\biop^{-}(c_0;c_{\sigma(1)}\dots c_{\sigma(n)})
				\end{equation*}
				We say the bio $\biop$ is \myemph{closed} if ($n=0$),
$\biop^{-}(b;\phi)=\left\{ 0_{b}^{-} \right\}$, $\biop^{+}(\phi;b)=\left\{
				0_{b}^{+}
\right\}$, reduce to a point for each $b\in \biop^{\circ}$; we say $\biop$ is
\myemph{open} if these sets are empty. We will be mainly interested in closed bios
$\biop$; the advantage being that every $n$ (co)-operation give rise to $m$ (co)-operations
for $m<n$: and for $I=(i_1,\ldots, i_m)\subseteq \left\{ 1,\dots , n \right\}$ we get maps
obtained by substituting $0_{b_i}^{-}$, resp. $0_{b_i}^{+}$, for $i\not\in I$, 
\begin{equation*}
				\biop^{-}(b_0; b_1\dots b_n)\to \biop^{-}(b_0;b_{i_1}\dots b_{i_m}) \quad , \quad
				\biop^{+}(b_1\dots b_{n};b_0)\to \biop^{+}(b_{i_1}\dots b_{i_m};b_0)
\end{equation*}
We will write Bio for the category of \myemph{closed} bios, and $\text{Bio}^{\phi}$ for
the category of \myemph{open} bios.
We can also fix the set of objects $C$, and we have the categories $\text{Bio}_{C}$
(resp. $p\text{Bio}_{C}$) with objects the closed bios $\biop$ with $\biop^{\circ}=C$, and maps of
bios $\varphi$ such that $\varphi^{\circ}\equiv \text{id}_C$.
				\label{remark:2.12}
\end{remark}
\begin{remark}
				For a bio $\biop$ we have the underlying categories $i^*\biop^{-}$ and
				$i^*\biop^{+}$, and for $f^{\pm}\in \biop^{\pm}(b;c)$ we have 
				\begin{equation}
								\overrightarrow{f^{-}}:= f^{-}\circrightarrow \unit^{+}_{c}\in
								\biop^{+}(b; c) \quad \text{and} \quad
								\overleftarrow{f^{+}}:=\unit^{-}_{b}\circleftarrow f^{+} \in
								\biop^{-}(b;c). 
								\label{eq:2.14}
				\end{equation}
				These are inverse isomorphisms 
				\begin{equation}
								\overset{\leftrightarrows}{f^{-}} = \unit_{b}^{-}\circleftarrow \left( f^{-}
												\circrightarrow \unit_{c}^{+}
								\right) = (\unit_{b}^{-}\circ f^{-})\circleftarrow
								\unit_{c}^{+} =
								f^{-}
								\label{eq:2.15}
				\end{equation}
				and similarly $\overset{\rightleftarrows}{f^{+}}=f^{+}$. For $g^{\pm}\in
				\biop^{\pm}(c; d)$ we have 
				\begin{equation}
								\begin{array}[H]{lll}
								\overrightarrow{f^{-}\circ g^{-}} &= (f^{-}\circ g^{-})\circrightarrow
								\unit_{d}^{+}  = f^{-}\circrightarrow (g^{-}\circrightarrow
								\unit_{d}^{+})  \\\\ 
								&= f^{-}\circrightarrow \big( \unit_{c}^{+}\circ
								(g^{-}\circrightarrow \unit_{d}^{+})\big)  
								= (f^{-}\circrightarrow \unit_{c}^{+})\circ (g^{-}\circrightarrow
								\unit_{d}^{+}) \\\\ 
								&= \overrightarrow{f^{-}}\circ \overrightarrow{g^{-}}
								\end{array}
								\label{eq:2.16}
				\end{equation}
				and similarly $\overleftarrow{f^{+}\circ g^{+}}=\overleftarrow{f^{+}}\circ
				\overleftarrow{g^{+}}$. \\
				Using these canonical isomorphisms we identify $i^{*}\biop^{-}$ and $i^{*}\biop^{+}$, and we
				write $\unit_{c}$ for either $\unit_{c}^{-}$ or $\unit_{c}^{+}$, $c\in
				\biop^{\circ}$. Note that for $P\in\biop^{-}(b;c_1\cdots c_n)$,
				$Q\in\biop^{+}(c_1\cdots c_n;c)$, this identifies $P\circleftarrow Q$ with
				$P\circrightarrow Q$.
				\label{remark:2.13}
\end{remark}
\begin{remark}
				Every bio $\biop= (\biop^{-},\biop^{\circ},\biop^{+})$ has an \myemph{opposite} bio
				\break
				$\biop^{\op}=(\biop^{+},\biop^{\circ},\biop^{-})$, with the same objects, and
				for $c_0,c_1,\dots, c_n\in \biop^{\circ}$, 
				\begin{equation}
								\begin{array}[H]{l}
								(\biop^{\op})^{-}(c_0;c_1\dots c_n):= \biop^{+}(c_1\dots c_n;c_0),
								\\\\
								(\biop^{\op})^{+}(c_1\dots c_n; c_0):= \biop^{-}(c_0;c_1\dots c_n)	
								\end{array}
								\label{eq:2.18} 
				\end{equation}
				and with the opposite compositions and actions, so 
				\begin{equation}
								P^{\op}\circleftarrow \left( P_i^{\op} \right):= \big(\left(
								P_i \right)\circrightarrow P\big)^{\op}\quad , \quad
								(P_i^{\op})\circrightarrow P^{\op}:= \big(P\circleftarrow
								(P_i)\big)^{\op}
								\label{eq:2.19}
				\end{equation}
				This is an involution, $(\biop^{\op})^{\op}=\biop$, of Bio extending the
				one on 
				\begin{equation*}
								\text{Cat}\;\; (=\text{the category of small categories})
				\end{equation*}
				\label{remark:2.17}
\end{remark}
\begin{remark}
				For a symmetric monoidal category $\mathcal{E}$, we can similarly define the
				category $\text{Bio}(\mathcal{E})$ of $\mathcal{E}$-enriched bios
				$\biop=(\biop^{-},\biop^{\circ},\biop^{+})$. Here for $c_0,c_1\dots c_n$ we have
				objects of $\mathcal{E}$, $\biop^{-}(c_0;c_1\dots c_n)$, $\biop^{+}(c_1\dots
				c_n;c_0)\in \mathcal{E}^{\circ}$, and the compositions, actions, and units, are
				given by maps in $\mathcal{E}$, replacing the Cartesian product $\times$ in Set by
				the monoidal product $\otimes$ in $\mathcal{E}$. \vspace{.1cm}\\ 
				For an $\mathcal{E}$-enriched
				operad $\biop\in \text{Oper}(\mathcal{E})$, we get the bio $j_{!} \biop\in
				\text{Bio}(\mathcal{E})$, with $j_{!}\biop:=(\biop,\biop^{\circ},i^{*}\biop)$, i.e. with the
				same operations, but with only unaray cooperations. This is left adjoint to the
				forgetfull functor $j^{*}:\text{Bio}(\mathcal{E})\to\text{Oper}(\mathcal{E})$,
				\break
				$j^{*}\biop=(\biop^{-},\biop^{\circ})$. There is a right adjoints $i_{*}$ 
				 given by
defining the missing $m$-array operations, $m>1$ to be a point. We thus have the
adjunctions 
\begin{equation}
				\begin{tikzpicture}[baseline=-2mm]
				\coordinate (A) at (0,0);
				\node at (0,-.1cm) [] {$\text{Cat}(\mathcal{E})$};
				\node at (1.8cm,0.35cm) [] {$i_{!}$};
				\draw[{Hooks[right,length=5,width=6]}->,color=black] (A)+(.6cm,0.15cm) -- (3cm,0.15cm);	
				\draw[arrows={<<-},color=black] (A)+(.6cm,-0.1cm) -- (3cm,-0.1cm);	
				\draw[->,color=black] (0.6cm,-0.35cm) to [bend right] (3cm,-0.35cm);	
				\node at (1.8cm,-0.3cm) [] {$i^{*}$};
				\node at (2.8cm,-.8cm) [] {$i_{*}$};
				\node at (3.8cm,-.1cm) [] {$\text{Oper}(\mathcal{E})$};
				\node at (5.8cm,0.35cm) [] {$j_{!}$};
				\draw[{Hooks[right,length=5,width=6]}->,color=black] (A)+(4.6cm,0.15cm) -- (7cm,0.15cm);	
				\draw[arrows={<<-},color=black] (A)+(4.6cm,-0.1cm) -- (7cm,-0.1cm);	
				\node at (5.8cm,-0.3cm) [] {$j^{*}$};
				\node at (7.8cm,-.1cm) [] {$\text{Bio}(\mathcal{E})$};
\end{tikzpicture}
\label{eq:2.21}
\end{equation}
and $i_{!},j_{!}$ are full and faithful embeddings. \vspace{.1cm}\\
For a symmetric monoidal category
$\mathcal{E}$, we have the bio $\hat{\mathcal{E}}$, with the same objects
$\mathcal{E}^{\circ} $ as the category $\mathcal{E}$, and with (co)-operations for
$c_0,c_1\dots c_n\in\mathcal{E}^\circ$
\begin{equation*}
				\hat{\mathcal{E}}^{-}(c_0;c_1\dots c_n):=\mathcal{E}(c_1\otimes\dots\otimes
				c_n,c_0) \quad , \quad \hat{\mathcal{E}}^{+}(c_1\dots
				c_n;c_0):=\mathcal{E}(c_0,c_1\otimes\dots\otimes c_n)
\end{equation*}
with the obvious compositions and mutual actions (again given by compositions); this gives
an embedding 
\begin{equation*}
				\text{Sym}\;\tikz \draw[{Hooks[right,length=5,width=6]}->,color=red] (0cm,0cm) -- (2cm,0cm);\;
				\text{Bio} \quad , \quad \mathcal{E} \;\tikz \draw[arrows={|->},color=red] (0cm,0cm) --
				(2cm,0cm);\;\hat{\mathcal{E}}
\end{equation*}
Forgetting the $S_n$-actions we get a similar embedding of (non-symmetric) monoidal
categories into planar bios 
\begin{equation}
				M\text{Cat}\;
				\tikz \draw[{Hooks[right,length=5,width=6]}->] (0,0)--(1cm,0);
				\;p\text{Bio} \quad , \quad \mathcal{E}\; \tikz \draw[|->] (0,0)--(1cm,0);
				\; \hat{\mathcal{E}}.
				\label{eq:2.23}
\end{equation}
If the symmetric monoidal category $\mathcal{E}$ is \myemph{closed}, so that we have
an internal hom functor
\begin{equation}
				\mathcal{E}^{\op}\times\mathcal{E} \;\tikz \draw[arrows={->},color=red]
				(0cm,0cm) -- (2cm,0cm);\;	\mathcal{E} \quad , \quad (b,c) \;\tikz
				\draw[arrows={|->},color=red] (0cm,0cm) -- (2cm,0cm);\; c^{b}
				\label{eq:2.24}
\end{equation}
and adjunction  $\mathcal{E}(b_1\otimes b_2,c)=\mathcal{E}(b_1,c^{b_2})$, then we can view
$\hat{\mathcal{E}}$ as an $\mathcal{E}$-enriched bio 
\begin{equation}
				\hat{\mathcal{E}}^{-}(c_0;c_1\dots c_n):= c_0^{c_1\otimes \dots \otimes c_n} \quad ,
				\quad \hat{\mathcal{E}}^{+}(c_1\dots c_n;c_0):=(c_1\otimes \dots \otimes
				c_n)^{c_0}
				\label{eq:2.25}
\end{equation}
For the category of simplicial sets
$\mathcal{E}=\text{sSet}=(\text{Set})^{\bbDelta^{\text{op}}}$, we shall write \break
$\text{sBio}\equiv  \text{Bio}(\text{sSet})$ for simplicial bios. For the category of
(compactly generated)
topological spaces Top we shall write $\text{tBio}\equiv\text{Bio}(\text{Top})$ for
topological bios. 
\label{remark:2.20}
\end{remark}
\begin{definition}
				A collection
				$\mathcal{C}=(\mathcal{C}^{-},\mathcal{C}^{\circ},\mathcal{C}^{+})$ is a
				set $\mathcal{C}^{\circ}$, and for $c_0,c_1\dots c_n\in\mathcal{C}^{\circ}$ we
				have sets $\mathcal{C}^{-}(c_0;c_1\dots c_n)$ and $\mathcal{C}^{+}(c_1\dots
				c_n;c_0)$ and when $n=1$ we have $\mathcal{C}^{-}(c_0;c_1)\equiv
				\mathcal{C}^{+}(c_0;c_1)$, and for $c_{0}=c_{1}=c$ we have $\unit_{c}\in
				\mathcal{C}^{\pm}(c;c)$. 
				\label{def:2.26}
				\end{definition}
				A map of collections $\varphi:\mathcal{C}\to \mathcal{B}$ is a function
				$\varphi^{\circ}:\mathcal{C}^{\circ}\to \mathcal{B}^{\circ}$, and for $c_0,c_1\dots c_n\in
				\mathcal{C}^{\circ}$ functions $\varphi^{-}:\mathcal{C}^{-}(c_0;c_1\dots c_n)\to 
				\mathcal{B}^{-}\left( \varphi^{\circ}(c_0);\varphi^{\circ}(c_1)\dots
				\varphi^{\circ}(c_n) \right)$ and $\varphi^{+}:\mathcal{C}^{+}(c_1\dots
				c_n;c_0)\to \mathcal{B}^{+}\left(
				\varphi^{\circ}(c_1)\dots\varphi^{\circ}(c_n);\varphi^{0}(c_0) \right)$ with
				$\varphi^{+}=\varphi^{-}$ when $n=1$, and
				$\varphi^{\pm}(\unit_{c})=\unit_{\varphi^{\circ}(c)}$. Thus we have a category
				\myemph{Coll}, and a forgetful functor 
				\begin{equation}
								U:\text{pBio}\to\text{Coll}
								\label{eq:2.27}
				\end{equation}
				We shall describe next its left adjoint 
				\begin{equation}
								p\mathcal{F}:\text{Coll}\to \text{pBio}
								\label{eq:2.28}
				\end{equation}
				which gives the left adjoint of $U:\text{Bio}\to\text{pBio}\to\text{Coll}$, the
				\myemph{free bio} on $\mathcal{C}$  $\mathcal{F}_{\mathcal{C}}=\sum
				p\mathcal{F}_{\mathcal{C}}$.
\section{Bit $\equiv$ Bi-Trees} 
\begin{definition}
				A \myemph{graph}
				$ 
				\begin{tikzpicture}[baseline]
								\node at (0cm,0.1cm) [] {$ (G^{\unit}$};
								\node at (1.4cm,0.1cm) [] {$ G^{\circ})$};
								\draw[->] (0.3cm,0.05cm)--(1cm,0.05cm);
								\draw[->] (0.3cm,0.15cm)--(1cm,0.15cm);
								\node at (0.7cm,-0.1cm) [] {$\scriptstyle d_{-}$};
								\node at (0.7cm,0.35cm) [] {$\scriptstyle d_{+}$};
\end{tikzpicture}
				$ for us will always be finite and \break oriented, so is just given by two parallel functions of
				finite sets: $G^{1}$ the ``\myemph{edges}'' to $G^{\circ}$ the
				``\myemph{vertices}''. 
				\label{def:graph}
\end{definition}
				As a general rule we write our edges from right
				$\equiv$ positive to the \break 
				left $\equiv$ negative 
				\begin{equation}
								\begin{tikzpicture}
											\draw[decorate,decoration={markings,mark=at position 0.5 with
											{\arrow[color=black]{<}}}] (0,0)--(3,0);	
											\draw[-] (0,0)--(3,0);
											\node at (1.5cm,.3cm) [] {$e$};
							\node at (-.2cm,.3cm) [] {$d_{-}e$}; 
											\filldraw (-.2cm,0cm) circle [radius=.04cm];
											\filldraw (3.2cm,0cm) circle [radius=.04cm];
							\node at (3.2cm,.3cm) [] {$d_{+}e$}; 
								\end{tikzpicture}
								\label{eq:3.2}
				\end{equation}
				A map of graphs $\varphi:G\to H$ is a pair of functions $\varphi^{1}:G^{1}\to
				H^{1}$ and \break $\varphi^{\circ}:G^{\circ}\to H^{\circ}$ such that
				$d_{\pm}^{H}\circ\varphi^{1}=\varphi^{\circ}\circ d_{\pm}^{G}$. \vspace{.1cm}\\ 
				Thus we have a category of (finite, oriented) \myemph{Graph}. \vspace{.1cm}\\
				For $v\in G^{\circ}$, we write $ d_{-}v = d_{+}^{-1}(v)$, $d_{+}v =
				d_{-}^{-1}(v)$. \vspace{.1cm}\\
				For $G\in \text{Graph}$, $n,m\ge 0$, put $ C_{n,m}(G):= \left\{ v \in G^{\circ}, \# d_{-}v = n, \# d_{+}v = m
				\right\}$. \vspace{.1cm}\\
				The left (resp. right) ends of $G$, $C_{01}(G)$ (res. $C_{10}(G)$), are called
				\myemph{stumps}, and they are in bijection with the edges attached to them,
				which we will denote by 
				\begin{equation}
								d_{-}(G):= d_{-}^{-1}C_{01}(G)\xrightarrow{\;\;\underset{\sim}{d_{-}}\;\;}
								C_{01}(G) \quad , \quad d_{+}G:=d_{+}^{-1}C_{10}(G) \xrightarrow{\;\;\underset{\sim}{d_{+}}\;\;}
								C_{10}(G)
								\label{eq:3.3}
				\end{equation}
		We say $G$ is a $\unit$-Graph if 
		\begin{equation}
		C_{nm}(G)\not=\phi \;\; \Longrightarrow \;\; n=1 \text{ or } m=1
		\label{eq:3.4}
		\end{equation}
		Thus for a $\unit$-graph $G$
		\begin{equation}
						\begin{array}[H]{rll}
						G^{\circ} &= C_{01}(G) \coprod C^{-}(G) \coprod C^{+}(G) \coprod
						C_{10}(G) \\\\
						C^{-}(G) &= \bigcup_{m\ge 1} C_{1m}(G) \quad, \quad
						C^{+}(G) = \bigcup_{n\ge 1} C_{n1}(G)  
						\end{array}
						\label{eq:3.5}
		\end{equation}
Such a $\unit$-graph is made up of the basic negative and positive
$n$-\myemph{corollas}, $n\ge 0$, 
\begin{equation}
				\begin{tikzpicture}[baseline]
								\node at (-3cm,0.6cm) [] {$C_{n}^{-}$:}; 
								\filldraw (-2.4cm,-0.5cm) circle [radius=.04cm];
							\draw[decorate,decoration={markings,mark=at position 0.5 with
							{\arrow[color=black]{<}}}] (-2.2cm,-0.5cm)--(-1cm,-0.5cm);	
							\draw[-] (-2.2cm,-0.5cm)--(-1cm,-0.5cm);
							\node at (-1.8cm,-0.25cm) [] {$e_0$};
							\filldraw (-0.8cm,-0.5cm) circle [radius=.04cm];
							\draw[-] (-0.7cm,-0.45cm)--(0.3cm,0.25cm);
							\draw[decorate,decoration={markings,mark=at position 0.5 with
							{\arrow[color=black]{<}}}] (-0.7cm,-0.45cm)--(0.3cm,0.25cm);	
							\node  at (-0.26cm,0.1cm) {$e_1$};
							\node  at (-0.20cm,-0.7cm) {$e_n$};
							\filldraw (0.4cm,0.3cm) circle [radius=.04cm];
							\filldraw (0.4cm,-1.3cm) circle [radius=.04cm];
							\draw[-] (-0.7cm,-0.6cm)--(0.3cm,-1.25cm);
							\draw[decorate,decoration={markings,mark=at position 0.5 with
							{\arrow[color=black]{<}}}] (-0.7cm,-0.6cm)--(0.3cm,-1.25cm);	
							\filldraw (0.1cm,-0.8cm) circle [radius=.01cm];
							\filldraw (0.1cm,-0.5cm) circle [radius=.01cm];
							\filldraw (0.1cm,-0.2cm) circle [radius=.01cm];
							\node at (2cm,0.6cm) [] {$C_{n}^{+}$:};
							\filldraw (5.6cm,-0.5cm) circle [radius=.04cm];
							\draw[decorate,decoration={markings,mark=at position 0.5 with
							{\arrow[color=black]{>}}}] (5.5cm,-0.5cm)--(4.5cm,-0.5cm);
							\draw[-] (5.5cm,-0.5cm)--(4.5cm,-0.5cm);
							\filldraw (4.4cm,-0.5cm) circle [radius=.04cm];
							\filldraw (3.4cm,0.3cm) circle [radius=.04cm];
							\filldraw (3.4cm,-1.3cm) circle [radius=.04cm];
							\filldraw (3.6cm,-0.8cm) circle [radius=.01cm];
							\filldraw (3.6cm,-0.5cm) circle [radius=.01cm];
							\filldraw (3.6cm,-0.2cm) circle [radius=.01cm];
							\draw[-] (4.3cm,-0.45cm)--(3.5cm,0.2cm);
							\draw[decorate,decoration={markings,mark=at position 0.5 with
							{\arrow[color=black]{>}}}] (4.3cm,-0.45cm)--(3.5cm,0.2cm);
							\draw[-] (4.3cm,-0.6cm)--(3.5cm,-1.25cm);
							\draw[decorate,decoration={markings,mark=at position 0.5 with
							{\arrow[color=black]{>}}}] (4.3cm,-0.6cm)--(3.5cm,-1.25cm);
							\node  at (4.2cm,-1.1cm) {$e_n$};
							\node  at (4.2cm,0cm) {$e_1$};
							\node  at (5.1cm,-0.25cm) {$e_0$};
				\end{tikzpicture}
				\label{eq:3.6}
\end{equation}
A \myemph{path} in a graph $G$, ``from $v_m$, resp. $e_m$, to $v_0$, resp. $e_1$'', 
$e_i\in G^{1}$, $v_i\in G^{0}$, is given by 
\begin{equation}
				\begin{tikzpicture}[baseline]
				\tikzmath{
								let \x =1.2;
								let \y =0;
								let \w = 1;
								let \s = 0.3;
				};
				\def\mlarrow#1#2#3#4{
							\draw[-] #1--#2;
							\draw[decorate,decoration={markings,mark=at position 0.5 with {\arrow[color=black]{<}}}] #1--#2;
							\node at ($0.5*#1+0.5*#2+(0,0.3)$) {$#3$};
							\filldraw ($#1-(\s,0cm)$) circle [radius=.04cm];
							\node at ($#1-(\s,-0.3cm)$) {$#4$};
			};
		  \node at (0,0) {$\ell= \Big($};
			\mlarrow{(\x,\y)}{(\x+\w,\y)}{e_1}{v_0};
			\mlarrow{(\x+\w+2*\s,\y)}{(\x+2*\s+2*\w,\y)}{e_2}{v_1};
			\filldraw  (\x+3*\s+2*\w,\y) circle [radius=.04cm];
			\filldraw  (\x+4*\s+2*\w,\y) circle [radius=.01cm];
			\filldraw  (\x+5*\s+2*\w,\y) circle [radius=.01cm];
			\filldraw  (\x+6*\s+2*\w,\y) circle [radius=.01cm];
			\mlarrow{(\x+4*\w+\s,\y)}{(\x+\s+5*\w,\y)}{e_{m}}{};
			\filldraw  (\x+2*\s+5*\w,\y) circle [radius=.04cm];
			\node at (\x+2*\s+5*\w,\y+0.3cm) {$v_{m}$};
			\node at (\x+4*\s+5*\w,\y) {$\Big)$};
\end{tikzpicture} = (e_1,\cdots, e_m)
\label{eq:3.7}
\end{equation}
with $d_{-}e_i=v_{i-1}, d_{+}e_{i}=v_i$, $i=1\cdots m$. We say $G$ is \myemph{simple} if
there are No \myemph{closed} paths $(v_0=v_m)$. For simple $G$ we get a partial order
$\le $ on $G^{1}\coprod G^{\circ}$: 
\begin{equation}
				v,\;\text{resp. } e \le v^{\prime},\; \text{resp. } e^{\prime}\Leftrightarrow
				\exists \; \text{path from $v^{\prime}$, resp. $e^{\prime}$, to $v$, resp. e.}
				\label{eq:3.8}
\end{equation}
Every path in a simple graph $G$ extends to maximal path. \vspace{.1cm}\\ 
A path $\ell=(e_1,\cdots , e_m)$
is maximal iff $e_1\in d_{-}G$, $e_{m}\in d_{+}G$. 
\begin{definition}
				A \myemph{Bit} ($\equiv \text{Bi-Tree}$) $G$ is a simple $\unit$-graph. \\
				We write
				\begin{equation*}
								\begin{array}[H]{ll}
								\text{Bit}^{k}_{n,m} &=  \left\{ G\in Bit,\; \# d_{-}G=n,\; \# d_{+}G=m,
												\; \#
								G^{\circ}=n+m+k \right\}/\text{isom} \\\\
								\text{Bit}_{n,m} &= \bigcup_{k}\text{Bit}_{n,m}^{k}\quad , \quad
								\text{Bit}=\bigcup_{n,m}\text{Bit}_{n,m} \\\\
								\text{Bit}^{-}&= \bigcup_{m}\text{Bit}_{1,m}\quad , \quad
								\text{Bit}^{+}=\bigcup_{n}\text{Bit}_{n,1}
								\end{array}
				\end{equation*}
				A \myemph{planar structure} on a bit $G$ is a linear order on
				$d_{\pm}(v)$ for each $v\in G^{\circ}$, and a linear order on $d_{\pm}G$, so
				that we can write
				\begin{equation}
								\begin{array}[t]{ll}
								\ & d_{-}v = \left\{ 0_{v}\right\}\quad,\quad d_{+}v=\left\{ \unit_{1}(v),\cdots ,
								\unit_{m}(v) \right\} \quad , \quad v\in C^{-}(G) \\\\
								\text{resp. } &
												d_{-}v = \left\{ 0_1(v),\cdots , 0_{n}(v) \right\} \quad , \quad  d_{+}v=\left\{
												1_{v} \right\}\quad\, ,  \quad\, v\in C^{+}(G)
								\end{array}
								\label{eq:3.10}
				\end{equation}
				\begin{equation}
								\begin{array}[H]{ll}
								d_{-}(G)=\left\{ 0_1(G),\cdots, 0_{n}(G) \right\}, \qquad
								 d_{+}G=\left\{ \unit_{1}(G),\ldots , \unit_{m}(G) \right\}. 
								\end{array}
								\label{eq:3.11}
				\end{equation}
				\label{def:3.9}
\end{definition}
				Thus we have the similarly defined sets of isomorphism classes preserving the
				planar structure $p\text{Bit}$, $p\text{Bit}^{-}$, $p\text{Bit}^{+}$. 
				\begin{remark}
								We only define the ``closed'' bits. One can similarly define the more
								general bits by keeping two subsets $\text{out}(G)\subseteq d_{-}G$ and
								$\text{in}(G)\subseteq d_{+}G$ of the ``open-stumps''.  
								\label{remark:3.12}
				\end{remark}
				\begin{definition}
								For $G=\big( G^{1}  \rdarrow{}{d_{\pm}} G^{\circ}\big)\in p \text{Bit}$,
								$\mathcal{C}=(\mathcal{C}^{-},\mathcal{C}^{\circ},\mathcal{C}^{+})\in \text{Coll}$, a \myemph{labeling}
								$f:G\to \mathcal{C}$ is given by functions $f^{1}:G^{1}\to \mathcal{C}^{\circ}$ and
								$f^{\pm}:C^{\pm}(G)\to \mathcal{C}^{\pm}$, such that for $v\in C^{-}(G)$, resp.
								$C^{+}(G)$, 
								\begin{equation*}
												\begin{array}[H]{ll}
												\ & f^{-}(v)\in \mathcal{C}^{-}\left( f^{1}(0_{v});f^{1}(\unit_{1}(v)),\cdots ,
												f^{1}(\unit_{m}(v))\right) \\\\
								\text{resp. }  & f^{+}(v)\in \mathcal{C}^{+}\left( f^{1}(0_{1}(v)),\cdots ,
												f^{1}(0_n(v));f^{1}(\unit_{v}) \right)
												\end{array}
								\end{equation*}
								\begin{equation*}
												f^{+}(v)=f^{-}(v) \; \text{for} \; v\in C_{11}(G).
								\end{equation*}
								\label{def:3.4}
				\end{definition}
			\section{Formulas and $\unit$-sub-Bits}
			Contemplating the following bits in $\bigbit_{2,2}$ we see their structure can be
			complicated: \\
			\begin{figure}[H]
							\centering
			\begin{tikzpicture}[baseline]
							\tikzmath{
											let \buff=0.1cm;
											let \fdist=0.7cm;
											let \sdist=0.7cm;
											let \down=-0.5cm;
							}

							\def\hrarrow#1#2#3{
							   \draw[decorate,decoration={markings,mark=at position #3 with
											{\arrow[color=black]{>}}}] #1--#2;
								 \draw[-] #1--#2;
				 };
				 \coordinate (a) at (0,-0.5cm);
				 \coordinate (b) at (\fdist,-0.5cm);
				 \coordinate (c) at ($(b)+(\sdist,0.5cm)$);
				 \coordinate (d) at ($(b)+(\sdist,-0.5cm)$);
				 \coordinate (e) at ($(a)+(-\sdist,0.5cm)$);
				 \coordinate (f) at ($(a)+(-\sdist,-0.5cm)$);
				 \filldraw (a) circle [radius=0.04cm];
				 \filldraw (b) circle [radius=0.04cm];
				 \filldraw (c) circle [radius=0.04cm];
				 \filldraw (d) circle [radius=0.04cm];
				 \filldraw (e) circle [radius=0.04cm];
				 \filldraw (f) circle [radius=0.04cm];
				 \hrarrow{($(b)+(-\buff,0)$)}{($(a)+(\buff,0)$)}{0.5};
				 \hrarrow{($(c)+(-\buff,-\buff)$)}{($(b)+(\buff,\buff)$)}{0.5};
				 \hrarrow{($(d)+(-\buff,\buff)$)}{($(b)+(\buff,-\buff)$)}{0.5};
				 \hrarrow{($(a)+(-\buff,\buff)$)}{($(e)+(\buff,-\buff)$)}{0.5};
				 \hrarrow{($(a)+(-\buff,-\buff)$)}{($(f)+(\buff,\buff)$)}{0.5};
			\end{tikzpicture} \qquad
			\begin{tikzpicture}[baseline]
							\tikzmath{
											let \buff=0.1cm;
											let \fdist=1.5cm;
											let \sdist=0.8cm;
											let \down=-0.6cm;
							}

							\def\hrarrow#1#2#3{
							   \draw[decorate,decoration={markings,mark=at position #3 with
											{\arrow[color=black]{>}}}] #1--#2;
								 \draw[-] #1--#2;
				 };
										\coordinate (a) at (0,-0.2cm);
										\coordinate (b) at ($(a)+(\sdist,0)$);
										\coordinate (c) at ($(b)+(\fdist,0)$);
										\coordinate (ad) at ($(0,-0.2cm)+(0,\down)$);
										\coordinate (bd) at ($(ad)+(\fdist,0)$);
										\coordinate (cd) at ($(bd)+(\sdist,0)$);
										\filldraw (a) circle [radius=0.04cm];
										\filldraw (b) circle [radius=0.04cm];
										\filldraw (c) circle [radius=0.04cm];
										\filldraw (ad) circle [radius=0.04cm];
										\filldraw (bd) circle [radius=0.04cm];
										\filldraw (cd) circle [radius=0.04cm];
										\hrarrow{($(c)+(-\buff,0)$)}{($(b)+(\buff,0)$)}{0.5};
										\hrarrow{($(b)+(-\buff,0)$)}{($(a)+(\buff,0)$)}{0.5};
										\hrarrow{($(cd)+(-\buff,0)$)}{($(bd)+(\buff,0)$)}{0.5};
										\hrarrow{($(bd)+(-\buff,0)$)}{($(ad)+(\buff,0)$)}{0.5};
										\hrarrow{($(bd)+(-\buff,\buff)$)}{($(b)+(\buff,-\buff)$)}{0.5};
			\end{tikzpicture} \qquad
							\begin{tikzpicture}[baseline, scale=1]
							\tikzmath{
											let \ax=0;
											let \ay=0;
											let \buff=0.1;
											let \down=-1;
											let \mid=3cm;
											let \mlen=1cm;
							};
							\def\hrarrow#1#2#3{
							   \draw[decorate,decoration={markings,mark=at position #3 with
											{\arrow[color=black]{>}}}] #1--#2;
								 \draw[-] #1--#2;
				 };
										\coordinate (a) at (\ax,\ay);
										\coordinate (b) at ($(-\mlen,0)+(-\buff,0)$);
										\coordinate (c) at ($(-\mid,0)+(-\buff,0)$);
										\coordinate (d) at ($(c)+(-\mlen,0)$);
										\coordinate (ad) at (\ax,\down);
										\coordinate (bd) at ($(-\mlen,\down)+(-\buff,0)$);
										\coordinate (cd) at ($(-\mid,\down)+(-\buff,0)$);
										\coordinate (dd) at ($(c)+(-\mlen,\down)$);
										\filldraw (a) circle  [radius=0.04cm];
										\hrarrow{(-.2cm,0)}{(-\mlen,0)}{0.5};
										\filldraw (b) circle [radius=0.04cm];
										\filldraw (c) circle [radius=0.04cm];
										\hrarrow{($(b)+(-.2cm,0)$)}{($(\buff,0)+(c)$)}{0.5};
										\filldraw (d) circle [radius=0.04cm];
										\hrarrow{($(c)+(-.2cm,0)$)}{($(\buff,0)+(d)$)}{0.5};

										\filldraw (ad) circle  [radius=0.04cm];
										\hrarrow{($(ad)+(-\buff,0)$)}{($(ad)+(-\mlen,0)$)}{0.5};
										\filldraw (bd) circle [radius=0.04cm];
										\filldraw (cd) circle [radius=0.04cm];
										\hrarrow{($(bd)+(-.2cm,0)$)}{($(\buff,0)+(cd)$)}{0.5};
										\filldraw (dd) circle [radius=0.04cm];
										\hrarrow{($(cd)+(-.2cm,0)$)}{($(\buff,0)+(dd)$)}{0.5};
										\hrarrow{($(bd)+(-\buff,\buff)$)}{($(\buff,-\buff)+(c)$)}{0.7};
										\hrarrow{($(b)+(-\buff,-\buff)$)}{($(\buff,\buff)+(cd)$)}{0.7};
										\filldraw (1.2cm,-.5cm) circle [radius=0.04cm];
										\filldraw (0.9cm,-.5cm) circle [radius=0.04cm];
										\filldraw (0.6cm,-.5cm) circle [radius=0.04cm];
			\end{tikzpicture}
			\caption{Bits in $\text{Bit}_{2,2}$}
							\label{fig:3}
			\end{figure} \ \\
			We define the subset $\text{fBit}^{\pm}$ of $p\bigbit^{\pm}$ of ``formulas'' having
			\myemph{compatible} linear orders on the $d_{\pm}(v)$'s and on $d_{\pm}(G)$, by an
			inductive definition obtained by grafting of bits: 
			\begin{equation}
							\begin{array}[H]{l}
							\text{fBit}_{1,1}^{\circ}= \left\{ I=\left( \overset{0}{\cdot}\leftarrow
							\overset{1}{\cdot} \right) \right\}	 \\\\
							\text{fBit}_{1,m}^{\unit}=\left\{ C_{m}^{-} \right\}\quad , \quad
							\text{fBit}_{n,1}^{1}=\left\{ C_n^{+} \right\}
							\end{array}
							\label{eq:4.1}
			\end{equation}
			There are two inductive steps given by compositions and actions: \\
			For $G\in
			\text{fBit}_{1,m}^{k}$, $d_{+}G=\left\{ e_1,\cdots , e_{m} \right\}$, $i\in\left\{ 1,\cdots
							, m
			\right\}$, $n\ge 1$ we have
			\begin{equation}
							G \circ_{i} C_{n}^{-}\in \bigbit_{1,n+m-1}^{k+1};
							\label{eq:4.2}
			\end{equation}
			Picturially it is given by 
			\begin{figure}[H]
							\centering
							\begin{tikzpicture}
												\tikzmath{
																let \lBpoly=3;
																let \hBpoly=2.5;
																let \bAng = 1;
																let \relHeight = 0;
																let \simConst= 0.5 ;
																let \padding = 0;
																let \step=0.15;
																let \seg=1;
																let \spc =5.5*\seg ;
																let \over=0.25;
												};
												\def\Poly#1#2#3#4#5{ 
																				\draw[-]
																								#1--($#1+(#2,0)$)--($#1+(#2,-#3)$)--($#1+(0,-#3)$)--($#1-(#4,0.5*#3)$)--#1;
																				\node at ($#1+(0.5*#2,-0.5*#3)$) {#5};
															};
												\coordinate (A) at (0,0); 
												\Poly
																{(A)}
																{\lBpoly}
																{\hBpoly}
																{\bAng}{$G$};
																\coordinate (URC) at ($(A)+(\lBpoly,0)$);	
																\coordinate (ULC) at ($(A)$);	
																\coordinate (LRC) at ($(A)+(\lBpoly,-\hBpoly)$);	
																\coordinate (LLC) at ($(A)+(0,-\hBpoly)$);	
																\coordinate (LTP) at ($(A)-(\bAng,0.5*\hBpoly)$);
																\coordinate (MID) at ($(A)+(\lBpoly,-0.5*\hBpoly)$);

																\coordinate (urc) at ($(A)+(\spc,0)+\simConst*(\lBpoly,0)$);	
																\coordinate (ulc) at ($(A)+(\spc,0)$);	
																\coordinate (lrc) at ($(A)+(\spc+\simConst*\lBpoly,-\simConst*\hBpoly)$);	
																\coordinate (llc) at ($(A)+(\spc,-\simConst*\hBpoly)$);	
																\coordinate (ltp) at ($(A)+(\spc,0)-\simConst*(\bAng,0.5*\hBpoly)$);

																\filldraw ($(LTP)-(\padding,0)$) circle [radius=0.04cm];
																\draw[-] ($(LTP)-(2*\padding,0)$)--($(LTP)-(2*\padding+\seg,0)$);
																\filldraw ($(LTP)-(2*\padding+\seg,0)-(\padding,0)$) circle [radius=0.04cm];

																\draw[-] ($(URC)+(\padding,0)$)--($(URC)+(\padding+\seg,0)$);
																\node at ($(URC)+(\padding+0.5*\seg,\over)$) {$e_{1}:=\ell_1$};
																\filldraw ($(URC)+(2*\padding+\seg,0)$) circle [radius=0.04cm];

																\draw[-] ($(LRC)+(\padding,0)$)--($(LRC)+(\padding+\seg,0)$);
																\filldraw ($(LRC)+(2*\padding+\seg,0)$) circle [radius=0.04cm];
																\node at ($(LRC)+(\padding+1.1*\seg,-\over)$)
																{$e_{m}:=\ell_{n+m-1}$};
																\filldraw (MID) circle [radius=0.04cm];
																\draw[-] ($(MID)+(\padding,0)$)--($(MID)+(\seg+\padding,0)$);
																\coordinate (A3) at ($(MID)+(\seg+2*\padding,0)$);
																\filldraw  (A3) circle  [radius=0.04cm];
																\coordinate (A1) at ($(MID)+(\seg+2*\padding,0.25*\hBpoly)$);
																\filldraw (A1) circle  [radius=0.04cm];
																\coordinate (A2) at ($(ltp)+(-\padding,0)$);
																\filldraw (A2) circle [radius=0.04cm];
																\node at ($(A2)+(-\over,\over)$) {$C_{n}^{-}$};
																\fill[color=red, opacity=.4] (MID)--(A1)--(A2)--(A3)--(MID);
																\foreach \i in {0,1,2,3,4,5,6} {
																				\draw[dashed] ($(A1)+(\i*\step,0)$)--($(MID)+(\i*\step,0)$);
														};
														\node at ($0.5*(MID)+0.5*(A3)+(0,-\over)$) {$e_{i}$};
														\draw[dashed] (A2)--(A3);

																\filldraw ($(URC)+(0.5*\seg,-0.1*0.5*\hBpoly)$) circle [radius=0.01cm];
																\filldraw ($(URC)+(0.5*\seg,-0.2*0.5*\hBpoly)$) circle [radius=0.01cm];
																\filldraw ($(URC)+(0.5*\seg,-0.3*0.5*\hBpoly)$) circle [radius=0.01cm];

																\filldraw ($(LRC)+(0.5*\seg,0.1*0.5*\hBpoly)$) circle [radius=0.01cm];
																\filldraw ($(LRC)+(0.5*\seg,0.2*0.5*\hBpoly)$) circle [radius=0.01cm];
																\filldraw ($(LRC)+(0.5*\seg,0.3*0.5*\hBpoly)$) circle [radius=0.01cm];
																\coordinate (li) at ($(A2)+(\seg,0.8*\seg)$);
																\filldraw (li) circle [radius=0.04cm];
																\coordinate (lid) at ($(A2)+(\seg,-0.8*\seg)$);
																\filldraw (lid) circle [radius=0.04cm];
																\draw[-] (A2)--(li);
																\draw[-] (A2)--(lid);
																\node at ($(lid)+(0.6cm,-\over)$) {$\ell_{n+i-1}$};
																\node at ($(li)+(0.3cm,\over)$) {$\ell_i$};
																\coordinate (ddots) at ($(A2)+(0.6cm,0)$);
																\filldraw (ddots) circle [radius=0.01cm];
																\filldraw ($(ddots)+(0,\over)$) circle [radius=0.01cm];
																\filldraw ($(ddots)+(0,-\over)$) circle [radius=0.01cm];

							\end{tikzpicture}
							\caption{Inductive composition step}
							\label{fig:4}
			\end{figure} 
			\noindent For $G\in\text{fBit}_{1,m}^{k}$, $m\ge n\ge 1$, $j\le m-n+1$ we obtain
			\begin{equation}
							G \threeover{\leftarrow}{\circ}{j}
							C_{n}^{+}\in \text{Bit}_{1,m-n+1}^{k+1}
							\label{eq:4.3}
			\end{equation}
			pictorially it is given by  \\
			\begin{figure}[H]
							\centering
							\begin{tikzpicture}
												\tikzmath{
																let \lBpoly=3;
																let \hBpoly=2.5;
																let \bAng = 1;
																let \relHeight = 0;
																let \simConst= 0.5 ;
																let \padding = 0;
																let \step=0.15;
																let \seg=1;
																let \spc =5.5*\seg ;
																let \over=0.25;
												};
												\def\Poly#1#2#3#4#5{ 
																				\draw[-]
																								#1--($#1+(#2,0)$)--($#1+(#2,-#3)$)--($#1+(0,-#3)$)--($#1-(#4,0.5*#3)$)--#1;
																				\node at ($#1+(0.5*#2,-0.5*#3)$) {#5};
															};
												\coordinate (A) at (0,0); 
												\Poly
																{(A)}
																{\lBpoly}
																{\hBpoly}
																{\bAng}{$G$};
																\coordinate (URC) at ($(A)+(\lBpoly,0)$);	
																\coordinate (ULC) at ($(A)$);	
																\coordinate (LRC) at ($(A)+(\lBpoly,-\hBpoly)$);	
																\coordinate (LLC) at ($(A)+(0,-\hBpoly)$);	
																\coordinate (LTP) at ($(A)-(\bAng,0.5*\hBpoly)$);
																\coordinate (fMID) at ($(A)+(\lBpoly,-0.3*\hBpoly)$);
																\coordinate (sMID) at ($(A)+(\lBpoly,-0.7*\hBpoly)$);
																\coordinate (MID) at ($(A)+(\lBpoly,-0.5*\hBpoly)+(2*\seg,0)$);

																\coordinate (urc) at ($(A)+(\spc,0)+\simConst*(\lBpoly,0)$);	
																\coordinate (ulc) at ($(A)+(\spc,0)$);	
																\coordinate (lrc) at ($(A)+(\spc+\simConst*\lBpoly,-\simConst*\hBpoly)$);	
																\coordinate (llc) at ($(A)+(\spc,-\simConst*\hBpoly)$);	
																\coordinate (ltp) at ($(A)+(\spc,0)-\simConst*(\bAng,0.5*\hBpoly)$);

																\filldraw ($(LTP)-(\padding,0)$) circle [radius=0.04cm];
																\draw[-] ($(LTP)-(2*\padding,0)$)--($(LTP)-(2*\padding+\seg,0)$);
																\filldraw ($(LTP)-(2*\padding+\seg,0)-(\padding,0)$) circle [radius=0.04cm];

																\draw[-] ($(URC)+(\padding,0)$)--($(URC)+(\padding+\seg,0)$);
																 \draw[decorate,decoration={markings,mark=at position 0.5 with
																				 {\arrow[color=black]{<}}}] ($(URC)+(\padding,0)$)--($(URC)+(\padding+\seg,0)$);

																\node at ($(URC)+(\padding+0.5*\seg,\over)$) {$e_{1}:=\ell_1$};
																\filldraw ($(URC)+(2*\padding+\seg,0)$) circle [radius=0.04cm];

																\draw[-] ($(LRC)+(\padding,0)$)--($(LRC)+(\padding+\seg,0)$);
																 \draw[decorate,decoration={markings,mark=at position 0.5 with
																				 {\arrow[color=black]{<}}}] ($(LRC)+(\padding,0)$)--($(LRC)+(\padding+\seg,0)$);
																\filldraw ($(LRC)+(2*\padding+\seg,0)$) circle [radius=0.04cm];
																\node at ($(LRC)+(\padding+1.1*\seg,-\over)$) {$e_{m}:=\ell_{m-n+1}$};
																\filldraw (fMID) circle [radius=0.04cm];
																\filldraw (sMID) circle [radius=0.04cm];
																\filldraw (MID) circle [radius=0.04cm];
																\coordinate (lastMID) at ($(MID)+(1.5*\seg,0)$);
																\filldraw (lastMID) circle [radius=0.04cm];
																\draw[-] (lastMID)--(MID);
																 \draw[decorate,decoration={markings,mark=at position 0.5 with
																				 {\arrow[color=black]{>}}}] (lastMID)--(MID);
																 \draw (MID) .. controls +(down:0.8cm) and +(left:0cm) ..  (sMID);
																 \draw [decorate,decoration={markings,mark=at position 0.3 with
																 {\arrow[color=black]{>}}}](MID) .. controls +(down:0.8cm) and +(left:0cm) ..  (sMID);
																 \draw [decorate,decoration={markings,mark=at position 0.3 with
																 {\arrow[color=black]{>}}}](MID) .. controls +(up:0.8cm) and +(left:0cm) ..  (fMID);
																 \draw (MID) .. controls +(up:0.8cm) and +(left:0cm) ..  (fMID);
																 \node at ($(fMID)+(0.7*\seg,\over)$) {$e_{j}$};
																 \node at ($(sMID)+(0.5*\seg,0.6*\over)$) {$e_{j+n-1}$};
																 \node at ($(lastMID)+(-0.45\seg,\over)$) {$\ell_{j}$};

							\end{tikzpicture}
							\caption{Inductive action step}
							\label{fig:5}
			\end{figure} \ \\
			Elements of $\text{fBit}_{n,1}^{k}$, and $\text{fBit}^{+}$, have a similar inductive
			definition. \\ 
			For $G\in\bigbit$, a subgraph
			$E=\big(E^{\unit}\rdarrow{}{d_{\pm}}
			E^{\circ}\big)$, $E^{i}\subseteq G^{i}$, $d_{\pm}^{E}=d_{\pm}^{G}|_{E^{\unit}}$, and
			such that $E$ by itself is a bit will be called a \myemph{sub-bit} of $G$, and we let
			$\bigbit_{n,m}^{k}(G)$  denote the sub-bits of $G$ that are in
			$\bigbit_{n,m}^{k}$. In general, the collection
			$\bigbit_{n,m}(G)=\bigcup_{k}\bigbit_{n,m}^{k}(G)$ are hard to describe, but the
			sets $\bigbit_{1,m}^{k}(G)$ and $\bigbit_{n,1}^{k}(G)$ have a similar
			\myemph{inductive} definition. Beginning with 
			\begin{equation}
							\begin{array}[H]{ll}
							\bigbit_{1,1}^{\circ}(G) &\equiv G^{\unit} \\\\
							\bigbit_{1,m}^{\unit}(G)&\equiv \left\{ C_{i_{1}\ldots i_{m}}^{-}(v); v\in
							C_{1,m^{\prime}}(G), \left\{ i_{1},\cdots , i_{m} \right\}\subseteq \left\{1\cdots m^{\prime}\right\} \right\} \\\\
							\bigbit_{n,1}^{\unit} (G) &\equiv \left\{ C_{j_{1}\ldots j_{n}}^{+}(v); v\in
							C_{n^{\prime},1}(G), \left\{ j_{1}\cdots j_{n} \right\}\subseteq \left\{1\cdots n^{\prime}\right\} \right\}
							\end{array}
							\label{eq:4.4}
			\end{equation}
			and using the compositions and actions graftings, for $E\in\bigbit_{1,m}^{k}(G)$,
			(and similarly for $E\in\bigbit_{n,1}^{k}(G)$): \\ 
			For $e_{i}\in d_{+}E$ with $v=d_{+}e_{i}\in C_{1,n^{\prime}}(G)$, we have for a choice of $n\le n^{\prime}$ edges
			in $d_{+}v$, 
			\begin{equation}
							E\circ_{i} C_{n}^{-}(v)\in \bigbit_{1,m+n-1}^{k+1}(G).
							\label{eq:4.5}
			\end{equation}
			For $D\subseteq d_{+}E\cap d_{-}v$, $v\in C_{n^{\prime},1}(G)$, $n^{\prime}\ge n
			=\# D$, we have 
			\begin{equation}
							E 
											\begin{array}[h]{l}
															\leftarrow \vspace{-.27cm}\\
											\,\circ  \vspace{-.23cm}\\
											\scriptstyle D
											\end{array}
							C_{n}^{+}(v)\in
							\bigbit_{1,m-n+1}^{k+1}(G).
							\label{eq:4.6}
			\end{equation}
			\begin{remark}
							This is the very reason we prefer to work with ``bi-operads'', and not with
							the more general ``properads'' having operations $\biop
							(b_1\cdots b_{n};c_1\cdots c_m)$ with any output $n\ge 0$ and any input
							$m\ge 0$. \vspace{.1cm}\\
							For  $G\in\bigbit$, we denote the sub-$\unit$-bits $E$ of $G$ together with
							linear order on $d_{\pm}E$, by
							\begin{equation*}
											\mathscr{S}_{G}^{-} = \bigcup_{k,m} \bigbit_{1,m}^{k}(G) \quad ,
											\quad  \mathscr{S}_{G}^{+} =
											\bigcup_{k,n}\bigbit_{n,1}^{k}(G). 
							\end{equation*}
							\label{remark:4.7}
			\end{remark}
			\section{The free Bio}
				We can now define $p\mathcal{F}_{\mathcal{C}}=\left(
				p\mathcal{F}_{\mathcal{C}}^{-},\mathcal{C}^{\circ},p\mathcal{F}_{\mathcal{C}}^{+} \right)$, 
				\begin{equation}
								p\mathcal{F}_{\mathcal{C}}^{\pm}:= \left\{ f:G\to \mathcal{C} \;\text{labeling},\; G\in
								\text{fBit}^{\pm} \right\}/\text{isom}
								\label{eq:5.1}
				\end{equation}
			  where ``isomorphisms'' should preserve the planar structure and $\mathcal{C}$
				labeling, and also allow ``degeneracies'': the replacement of an edge
				$e \in G^{1}$. by a path $\Big( 
				\begin{tikzpicture}[baseline]
				\tikzmath{
								let \x =1.2;
								let \y =0;
								let \w = 1;
								let \s = 0.3;
				};
				\def\mlarrow#1#2#3#4{
							\draw[-] #1--#2;
							\draw[decorate,decoration={markings,mark=at position 0.5 with {\arrow[color=black]{<}}}] #1--#2;
							\node at ($0.5*#1+0.5*#2+(0,0.3)$) {$#3$};
							\filldraw ($#1-(\s,0cm)$) circle [radius=.04cm];
							\node at ($#1-(\s,-0.3cm)$) {$#4$};
			};
			\mlarrow{(\x,\y)}{(\x+\w,\y)}{e_1}{v_0};
			\mlarrow{(\x+\w+2*\s,\y)}{(\x+2*\s+2*\w,\y)}{e_2}{v_1};
			\filldraw  (\x+3*\s+2*\w,\y) circle [radius=.04cm];
			\filldraw  (\x+4*\s+2*\w,\y) circle [radius=.01cm];
			\filldraw  (\x+5*\s+2*\w,\y) circle [radius=.01cm];
			\filldraw  (\x+6*\s+2*\w,\y) circle [radius=.01cm];
			\mlarrow{(\x+4*\w+\s,\y)}{(\x+\s+5*\w,\y)}{e_{m}}{};
			\filldraw  (\x+2*\s+5*\w,\y) circle [radius=.04cm];
			\node at (\x+2*\s+5*\w,\y+0.3cm) {$v_{m}$};
\end{tikzpicture} 
				\Big)$ with labeling $f^{1}(e_{i})\equiv f^{1}(e)$, and
				$f^{\pm}(v_{i})=\unit_{f^{1}(e)}$, $i=1,\cdots , m-1$, $v_{0}\equiv d_{-}e,
				v_{m}\equiv d_{+}e$. \\
				Thus for $c_0,c_1,\cdots, c_n\in \mathcal{C}^{0}$, 
				\begin{equation}
								\begin{array}[H]{l}
								p \mathcal{F}_{\mathcal{C}}^{-}(c_{0};c_{1}\cdots c_{n})= 
								\left\{ (f:G\to \mathcal{C})\in p {\mathcal{F}^{-}_{\mathcal{C}}}, G\in
								\text{fBit}^{-}, f^{1}(0_G)=c_0,
				f^1\big(\unit_{i}(G)\big)=c_{i}\right\} \\\\
								p\mathcal{F}_{\mathcal{C}}^{+}(c_{1}\cdots c_{n};c_{0})=\left\{ (f:G\to
												\mathcal{C})\in
												p\mathcal{F}_{\mathcal{C}}^{+}, G\in \text{fBit}^{+},f^{1}(\unit_{G})=c_0,
								f^{1}(0_{i}(G))=c_{i} \right\}.
								\end{array}
								\label{eq:5.2}
				\end{equation}
				The composition in $p\mathcal{F}_{\mathcal{C}}^{\pm}$ are given by ``grafting'' of bits. For
				example, for 
				\begin{equation*}
				P=\left\{ f:G\to \mathcal{C} \right\}\in p\mathcal{F}^{-}_{\mathcal{C}}(b_0;b_1\cdots
				b_n)\quad , \quad P^{\prime}=\left\{ f^{\prime}:G^{\prime}\to C \right\}\in p
				\mathcal{F}^{-}_{\mathcal{C}}(b_{i};c_{1}\cdots c_{m}),
				\end{equation*}
				we have
				\begin{equation}
				P\circ_{i}P^{\prime}:=\left\{ f\cup f^{\prime}: G\coprod G^{\prime}/\approx \; \to
								\mathcal{C}
				\right\}\in p\mathcal{F}_{\mathcal{C}}^{-}(b_0; b_1\cdots b_{i-1}c_{1}\cdots
				c_{m}b_{i+1}\cdots b_{n})
				\label{eq:5.3}
				\end{equation}
				where $\approx$ means that we identify
				$\unit_{i}(G)$ with $0(G^{\prime})$; \vspace{.1cm}\\ 
				\big(and identify $d_{\pm}\unit_{i}(G)\approx d_{\pm} 0(G^{\prime})$\big); pictorially:  
				\begin{figure}[H]
								\centering
								\begin{tikzpicture}[baseline]
												\tikzmath{
																let \lBpoly=3;
																let \hBpoly=2.5;
																let \bAng = 1;
																let \relHeight = 0;
																let \simConst= 0.5 ;
																let \padding = 0;
																let \step=0.15;
																let \seg=1;
																let \spc =5.5*\seg ;
																let \over=0.25;
												};
												\def\Poly#1#2#3#4#5{ 
																				\draw[-]
																								#1--($#1+(#2,0)$)--($#1+(#2,-#3)$)--($#1+(0,-#3)$)--($#1-(#4,0.5*#3)$)--#1;
																				\node at ($#1+(0.5*#2,-0.5*#3)$) {#5};
															};
												\coordinate (A) at (0,0); 
												\Poly
																{(A)}
																{\lBpoly}
																{\hBpoly}
																{\bAng}{$P$};
												\Poly
																{($(A)+(\spc,\relHeight)$)}
																{\simConst*\lBpoly}
																{\simConst*\hBpoly}
																{\simConst*\bAng}
																{$P^{\prime}$};
																\coordinate (URC) at ($(A)+(\lBpoly,0)$);	
																\coordinate (ULC) at ($(A)$);	
																\coordinate (LRC) at ($(A)+(\lBpoly,-\hBpoly)$);	
																\coordinate (LLC) at ($(A)+(0,-\hBpoly)$);	
																\coordinate (LTP) at ($(A)-(\bAng,0.5*\hBpoly)$);
																\coordinate (MID) at ($(A)+(\lBpoly,-0.5*\hBpoly)$);

																\coordinate (urc) at ($(A)+(\spc,0)+\simConst*(\lBpoly,0)$);	
																\coordinate (ulc) at ($(A)+(\spc,0)$);	
																\coordinate (lrc) at ($(A)+(\spc+\simConst*\lBpoly,-\simConst*\hBpoly)$);	
																\coordinate (llc) at ($(A)+(\spc,-\simConst*\hBpoly)$);	
																\coordinate (ltp) at ($(A)+(\spc,0)-\simConst*(\bAng,0.5*\hBpoly)$);

																\filldraw ($(LTP)-(\padding,0)$) circle [radius=0.04cm];
																\draw[-] ($(LTP)-(2*\padding,0)$)--($(LTP)-(2*\padding+\seg,0)$);
																\node at ($(LTP)-(2*\padding+0.5*\seg,-\over)$)  {$0(G)$} ;
																\filldraw ($(LTP)-(2*\padding+\seg,0)-(\padding,0)$) circle [radius=0.04cm];

																\draw[-] ($(URC)+(\padding,0)$)--($(URC)+(\padding+\seg,0)$);
																\node at ($(URC)+(\padding+0.5*\seg,\over)$) {$\unit_1(G)$};
																\filldraw ($(URC)+(2*\padding+\seg,0)$) circle [radius=0.04cm];

																\draw[-] ($(LRC)+(\padding,0)$)--($(LRC)+(\padding+\seg,0)$);
																\filldraw ($(LRC)+(2*\padding+\seg,0)$) circle [radius=0.04cm];
																\node at ($(LRC)+(\padding+0.5*\seg,-\over)$) {$\unit_n(G)$};

																\filldraw (MID) circle [radius=0.04cm];
																\draw[-] ($(MID)+(\padding,0)$)--($(MID)+(\seg+\padding,0)$);
																\coordinate (A3) at ($(MID)+(\seg+2*\padding,0)$);
																\filldraw  (A3) circle  [radius=0.04cm];
																\coordinate (A1) at ($(MID)+(\seg+2*\padding,0.25*\hBpoly)$);
																\filldraw (A1) circle  [radius=0.04cm];
																\coordinate (A2) at ($(ltp)+(-\padding,0)$);
																\filldraw (A2) circle [radius=0.04cm];
																\fill[color=red, opacity=.4] (MID)--(A1)--(A2)--(A3)--(MID);
																\foreach \i in {0,1,2,3,4,5,6} {
																				\draw[dashed] ($(A1)+(\i*\step,0)$)--($(MID)+(\i*\step,0)$);
														};
														\node at ($0.5*(A2)+0.5*(A1)+(0,\over)$) {$0(G^{\prime})$};
														\node at ($0.5*(MID)+0.5*(A3)+(0,-\over)$) {$\unit_i(G)$};
														\draw[dashed] (A2)--(A3);
																\filldraw ($(urc)+(\padding,0)$) circle [radius=0.04cm];
																\draw[-] ($(urc)+(\padding,0)$)--($(urc)+(\padding+\seg,0)$);
																\filldraw ($(lrc)+(\padding,0)$) circle [radius=0.04cm];
																\draw[-] ($(lrc)+(\padding,0)$)--($(lrc)+(\padding+\seg,0)$);
																\filldraw ($(lrc)+(2*\padding+\seg,0)$) circle [radius=0.04cm];
																\filldraw ($(urc)+(2*\padding+\seg,0)$) circle [radius=0.04cm];
																\node at ($(lrc)+(\padding+0.7*\seg,-\over)$) {$\unit_m(G^{\prime})$};
																\node at ($(urc)+(\padding+0.6*\seg,\over)$) {$\unit_1(G^{\prime})$};

																\draw[-] ($(ltp)+(-2*\padding,0)$)-- ($(MID)+(\seg+3*\padding,0.25*\hBpoly)$) ;

																\filldraw ($(URC)+(0.5*\seg,-0.1*0.5*\hBpoly)$) circle [radius=0.01cm];
																\filldraw ($(URC)+(0.5*\seg,-0.2*0.5*\hBpoly)$) circle [radius=0.01cm];
																\filldraw ($(URC)+(0.5*\seg,-0.3*0.5*\hBpoly)$) circle [radius=0.01cm];

																\filldraw ($(LRC)+(0.5*\seg,0.1*0.5*\hBpoly)$) circle [radius=0.01cm];
																\filldraw ($(LRC)+(0.5*\seg,0.2*0.5*\hBpoly)$) circle [radius=0.01cm];
																\filldraw ($(LRC)+(0.5*\seg,0.3*0.5*\hBpoly)$) circle [radius=0.01cm];


								\end{tikzpicture}
								\caption{Grafting composition}
								\label{fig:1}
				\end{figure} 
				\noindent Note the new linear order on $d_{+}(P\circ_{i} P^{\prime})$. This composition on
				$p\mathcal{F}_{\mathcal{C}}^{-}$, and the similar one on $p\mathcal{F_{\mathcal{C}}^{+}}$, are
				associative;
				$p\mathcal{F}_{\mathcal{C}}^{-}(c;c^{\prime})\equiv
				p\mathcal{F}_{\mathcal{C}}^{+}(c;c^{\prime})$
				$n=1$; and we have the units
				$\unit_{c}:=\left\{ f\equiv f^{1}\equiv c:I\to \mathcal{C} \right\}$,
				with the \myemph{unit bit} $I\in \text{Bit}^{\circ}_{1,1}$, 
				\begin{equation}
								I:= \left( I^{1}=\left\{ e \right\}\rdarrow{}{d_{\pm}}\;I^{\circ}=\left\{ 0,1
								\right\}\right)\equiv \left( \overset{0}{\bullet} \; \midLarr{e}\; \overset{1}{\bullet} \right)
								\label{eq:5.4}
				\end{equation}
				Note that $C^{\pm}(I)=\phi$, so $f$ is just the value $f^{1}(e)=c$.
				\vspace{.1cm}\\ 
				Thus $p\mathcal{F}_{\mathcal{C}}^{\pm}$ are operads, with the same objects $\mathcal{C}^{\circ}$, and the
				same underlying category. \vspace{.1cm}\\ 
				Similarly, the natural actions of
				$p\mathcal{F}_{\mathcal{C}}^{-}$ and $p\mathcal{F}_{\mathcal{C}}^{+}$ on each other are given by
				grafting $\mathcal{C}$-labeled bits. For example, for 
				\begin{equation*}
								\begin{array}[H]{ll}
												P=\left\{ f:G\to \mathcal{C} \right\}\in p\mathcal{F}_{\mathcal{C}}^{-}(b_0;b_1\cdots
												b_{i-1}c_{1}\cdots c_{m}b_{i+1}\cdots b_{n}) \\\\
												P^{\prime}=\left\{ f^{\prime}:G^{\prime}\to \mathcal{C} \right\} \in
								p\mathcal{F}_{\mathcal{C}}^{+}(c_{1}\cdots c_{m}; b_{i})
								\end{array}
				\end{equation*}
				we get 
				\begin{equation}
								P \circleftarrow P^{\prime}:= \left\{ f\cup f^{\prime}\; :\;
								G \coprod G^{\prime}/\approx\; \to \mathcal{C}\right\}\in
								p\mathcal{F}_{\mathcal{C}}^{-}(b_{0};b_{1}\cdots b_{i}\cdots b_{n})
								\label{eq:5.5}
				\end{equation}
				where $\approx$ means that we identify $\unit_{i-1+j}(G)\approx 0_{j}(G^{\prime})$;
				pictorially, \\
				\begin{figure}[H]
								\centering
							\begin{tikzpicture}
												\tikzmath{
																let \lBpoly=3.5;
																let \hBpoly=3;
																let \bAng = 1;
																let \relHeight = -0.25*\hBpoly;
																let \simConst= 0.45 ;
																let \padding = 0;
																let \seg=1;
																let \spc = \lBpoly+2*\seg;
																let \over=0.25;
												};
												\def\Poly#1#2#3#4#5{ 
																				\draw[-]
																								#1--($#1+(#2,0)$)--($#1+(#2,-#3)$)--($#1+(0,-#3)$)--($#1-(#4,0.5*#3)$)--#1;
																				\node at ($#1+(0.5*#2,-0.5*#3)$) {#5};
															};

												\def\smallPoly#1#2#3#4#5{ 
																				\draw[-]
																				($#1+(#2,-#3)$)--($#1+(0,-#3)$)--#1--($#1+(#2,0)$)--($#1+(#2+#4,0)-(0,0.5*#3)$)--($#1+(#2,-#3)$);

																				\node at ($#1+(0.5*#2,-0.5*#3)$) {#5};
											};
										  \coordinate (A) at (0,0); 
											\Poly
																{(A)}
																{\lBpoly}
																{\hBpoly}
																{\bAng}{$P$};
											\smallPoly 
																{($(A)+(\spc,\relHeight)$)}
																{\simConst*\lBpoly}
																{\simConst*\hBpoly}
																{\simConst*\bAng}
																{$P^{\prime}$};
												\coordinate (ulc) at ($(A)+(\spc,\relHeight)$);
												\coordinate (urc) at ($(A)+(\spc,\relHeight)+(\simConst*\lBpoly,0)$);
												\coordinate (lrc) at ($(A)+(\spc,\relHeight)+(\simConst*\lBpoly,\simConst*\hBpoly)$);
												\coordinate (llc) at ($(A)+(\spc,\relHeight)+(0,-\simConst*\hBpoly)$);

												\coordinate (mid) at
												($(A)+(\spc,\relHeight)+(\simConst*\lBpoly+\simConst*\bAng,0)-(0,0.5*\simConst*\hBpoly)$);

												\coordinate (URC) at ($(A)+(\lBpoly,0)$);	
												\coordinate (ULC) at ($(A)$);	
												\coordinate (LRC) at ($(A)+(\lBpoly,-\hBpoly)$);	
												\coordinate (LLC) at ($(A)+(0,-\hBpoly)$);	
												\coordinate (LTP) at ($(A)-(\bAng,0.5*\hBpoly)$);
												\coordinate (MID) at ($(A)+(\lBpoly,-0.5*\hBpoly)$);

												\draw[-] (mid)--($(mid)+(\seg,0)$);
												\filldraw (mid) circle [radius=0.04cm];
												\filldraw ($(mid)+(\seg+\padding,0)$) circle [radius=0.04cm];
												\node at ($(mid)+(0.5*\seg,0)+(0,\over)$) {$\scriptstyle \unit(G^{\prime})$};
												\tikzmath{ let \segprop=0.9; };
												\filldraw (llc) circle [radius=0.04cm];
												\filldraw (ulc) circle [radius=0.04cm];

												\coordinate (A2) at ($(ulc)+(-\padding-\segprop*\seg,-0.128*\hBpoly)$);
												\coordinate (A1) at ($(URC)+(0,-0.375*\hBpoly)$);
												\coordinate (A3) at (ulc);
												\coordinate (A4) at ($(ulc)+(-\padding-\segprop*\seg,0)$);
												\fill[color=red,opacity=0.3] (A1)--(A2)--(A3)--(A4);
												\draw[-] (A1)--(A2)--(A3)--(A4)--(A1);
												\node at ($(A4)+(0.5*\seg,+\over)$) {$\scriptstyle 0_1(G^{\prime})$};
												\node at ($(llc)+(-0.5*\seg,+\over)$) {$\scriptstyle 0_m(G^{\prime})$};
												\filldraw ($(llc)+(-0.5*\seg,0.5cm)$) circle [radius=0.01cm];
												\filldraw ($(llc)+(-0.5*\seg,0.6cm)$) circle [radius=0.01cm];
												\filldraw ($(llc)+(-0.5*\seg,0.7cm)$) circle [radius=0.01cm];

												\filldraw (A1) circle [radius=0.04cm];
												\filldraw  (A2) circle [radius=0.04cm];
												\filldraw (A4) circle [radius=0.04cm];
												\coordinate (B2) at ($(ulc)+(-\padding-\segprop*\seg,-4.5*0.125*\hBpoly)$);
												\coordinate (B1) at ($(URC)+(0,-6.5*0.125*\hBpoly)$);
												\coordinate (B3) at (llc);
												\coordinate (B4) at ($(llc)+(-\padding-\segprop*\seg,0)$);
												\foreach \i in {1,2,3} {
																\draw[dashed]
																($(A1)+(\i*0.25*\seg+\i*0.25*\padding,0)$)--($(A4)+(\i*0.25*\seg+\i*0.25*\padding,0)$);
																\draw[dashed]
																($(B1)+(\i*0.25*\seg+\i*0.25*\padding,0)$)--($(B4)+(\i*0.25*\seg+\i*0.25*\padding,0)$);
												};
												\filldraw (B2) circle [radius=0.04cm];
												\filldraw (B1) circle [radius=0.04cm];
												\filldraw (B4) circle [radius=0.04cm];
												\draw[-] (B1)--(B2)--(B3)--(B4)--(B1);
												\fill[color=red,opacity=0.3] (B1)--(B2)--(B3)--(B4)--(B1);

												\filldraw ($(LTP)-(\padding,0)$) circle [radius=0.04cm];
												\draw[-] ($(LTP)-(2*\padding,0)$)--($(LTP)-(2*\padding+\seg,0)$);
																\node at ($(LTP)-(2*\padding+0.5*\seg,-\over)$)
																{$\scriptstyle 0(G)$} ;
																\filldraw ($(LTP)-(2*\padding+\seg,0)-(\padding,0)$) circle [radius=0.04cm];
												\draw[-] ($(URC)+(\padding,0)$)--($(URC)+(\padding+0.5*\seg,0)$);
												\draw[-] ($(URC)+(\padding,-0.25*\hBpoly)$)--($(URC)+(\padding+0.5*\seg,-0.25*\hBpoly)$);
												\draw[-] ($(LRC)+(\padding,0)$)--($(LRC)+(\padding+0.5*\seg,0)$);
												\node at ($(URC)+(\padding+0.5*\seg,\over)$) {$\scriptstyle \unit_1(G)$};
												\node at ($(URC)+(\padding+0.5*\seg,-0.530*\hBpoly+\over)$)
												{$\scriptstyle \unit_i(G)$};
												\filldraw ($(URC)+(2*\padding+0.5*\seg,0)$) circle [radius=0.04cm];
												\filldraw ($(LRC)+(2*\padding+0.5*\seg,0)$) circle [radius=0.04cm];
												\filldraw ($(LRC)+(0,0.75*\hBpoly)+(2*\padding+0.5*\seg,0)$) circle [radius=0.04cm];

												\filldraw ($(URC)+(0.3*\seg,-0.1*0.5*\hBpoly)$) circle [radius=0.01cm];
												\filldraw ($(URC)+(0.3*\seg,-0.2*0.5*\hBpoly)$) circle [radius=0.01cm];
												\filldraw ($(URC)+(0.3*\seg,-0.3*0.5*\hBpoly)$) circle [radius=0.01cm];

												\filldraw ($(LRC)+(0.3*\seg,0.05*0.5*\hBpoly)$) circle [radius=0.01cm];
												\filldraw ($(LRC)+(0.3*\seg,0.1*0.5*\hBpoly)$) circle [radius=0.01cm];
												\filldraw ($(LRC)+(0.3*\seg,0.15*0.5*\hBpoly)$) circle [radius=0.01cm];

												\filldraw ($(LRC)+(0.3*\seg,0.33*\hBpoly)$) circle [radius=0.01cm];
												\filldraw ($(LRC)+(0.3*\seg,0.3*\hBpoly)$) circle [radius=0.01cm];
												\filldraw ($(LRC)+(0.3*\seg,0.27*\hBpoly)$) circle [radius=0.01cm];
							\end{tikzpicture}
								\caption{Grafting action}
								\label{fig:2}
				\end{figure} \ \\
			The axioms for a planar bio are easily checked. The unit of adjunction \break
			$\varepsilon : \mathcal{C}\to U\mathcal{F}_{\mathcal{C}}$ is easy to describe:
			it is the identity on objects $\mathcal{C}^{0}$, and it takes
			$P\in \mathcal{C}^{-}(c_0;c_1\cdots c_n)$ (resp. $\mathcal{C}^{+}(c_1\cdots
			c_n;c_0)$), to the negative (resp. positive) $n$-corolla with  $\mathcal{C}$ labeling given
			by $f^\pm(v)=P$, $f^{1}(e_{i})=c_i$, pictorially
			\begin{equation}
							\begin{tikzpicture}[baseline=0mm]
		 							\tikzmath{
		 												let \xstart=0;
		 												let \ystart=0;
		 												let \dist=1.5cm;
		 												let \width=1.5cm;
		 												let \padding=0.1cm;
		 												let \over=0.25cm;
		 								};
		 								\coordinate (start) at (\xstart,\ystart);
		 								\coordinate (P) at ($(start)+(\dist,0)$) ;
		 								\coordinate (P1) at ($(start)+(2*\dist,\width)$) ;
		 								\coordinate (P2) at ($(start)+(2*\dist,-\width)$) ;
		 								\draw[-] (start)--(P);
		 								\draw[decorate,decoration={markings,mark=at position 0.5 with {\arrow[color=black]{<}}}] (start)--(P);

		 								\draw[-] ($(P)+(2*\padding,\padding)$)--($(P1)+(0,-\padding)$);
		 								\draw[decorate,decoration={markings,mark=at position 0.5 with
		 								{\arrow[color=black, line width=0.2mm]{<}}}]
										($(P)+(2*\padding,\padding)$)--($(P1)+(0,-\padding)$);

										\node at ($0.5*(P)+0.5*(P1)+(-0.1*\padding,4*\padding)$) {$c_1$};

										\node at ($0.5*(P)+0.5*(P2)+(-0.1*\padding,-4*\padding)$) {$c_n$};

		 								\draw[-]($(P)+(2*\padding,-\padding)$)--($(P2)+(0,+\padding)$); 
		 								\draw[decorate,decoration={markings,mark=at position 0.5 with
		 								{\arrow[color=black, line width=0.2mm]{<}}}]
										($(P)+(2*\padding,-\padding)$)--($(P2)+(0,+\padding)$);
										\filldraw ($(P)+(0.7*\dist,0)$) circle [radius=0.02cm];
										\filldraw ($(P)+(0.7*\dist,-0.3cm)$) circle [radius=0.02cm];
										\filldraw ($(P)+(0.7*\dist,+0.3cm)$) circle [radius=0.02cm];

										\node at ($0.5*(start)+0.5*(P)+(0,\over)$) {$c_0$};
										\node at ($(P)+(\padding,\over)$) {$P$};
			 							\filldraw ($(start)+(-\padding,0)$) circle [radius=0.04cm];
			 							\filldraw ($(P)+(\padding,0)$) circle [radius=0.04cm];
			 							\filldraw ($(P1)+(\padding,0)$) circle [radius=0.04cm];
			 							\filldraw ($(P2)+(\padding,0)$) circle [radius=0.04cm];

		 							\tikzmath{
		 												let \xstart2=11;
		 												let \ystart2=0;
		 												let \dist2=1.3cm;
		 												let \width2=1.3cm;
		 												let \padding2=-0.1cm;
		 												let \over2=0.25cm;
		 								};
										\def\decl#1#2{
														\draw[decorate,decoration={markings,mark=at position 0.5 with {\arrow[color=black]{<}}}] #1--#2;
										}
										\def\decr#1#2{
														\draw[decorate,decoration={markings,mark=at position 0.5 with {\arrow[color=black]{>}}}] #1--#2;
										}
		 								\coordinate (start2) at (\xstart2,\ystart2);
										\coordinate (PP) at ($(start2)-(\dist2,0)$);
										\filldraw (start2) circle [radius=0.04cm];
										\draw[-] ($(start2)+(\padding2,0)$)--(PP);
										\filldraw ($(PP)+(\padding2,0)$) circle [radius=0.04];
										\coordinate (AA2) at ($(PP)+(2*\padding2,\padding2)$);
										\coordinate (AA1) at ($(PP)+(2*\padding2,-\padding2)$);
										\draw[decorate,decoration={markings,mark=at position 0.5 with
										{\arrow[color=black]{>}}}] ($(start2)+(\padding2,0)$)--(PP);
										\coordinate (pp1) at ($(AA1)+(-\dist2,+\dist2)$);
										\coordinate (pp2) at ($(AA2)-(\dist2,+\dist2)$);
										\draw[-] (AA1)--($(pp1)$);	
										\decr{(AA1)}{($(pp1)$)};	
										\filldraw ($(pp1)+(\padding2,-\padding2)$) circle [radius=0.04];
										\filldraw ($(pp2)+(\padding2,\padding2)$) circle [radius=0.04];
										\draw[-] (AA2)--(pp2);	
										\decr{(AA2)}{($(pp2)$)};	

										\node at ($0.5*(PP)+0.5*(pp1)+(-0.1*\padding2,-4*\padding2)$) {$c_1$};
										\node at ($0.5*(PP)+0.5*(pp2)+(-0.1*\padding2,4*\padding2)$) {$c_n$};
										\node at ($0.5*(start2)+0.5*(PP)+(0,\over2)$) {$c_0$};
										\node at ($(PP)+(0,\over2)$) {$P$};

										\filldraw ($(PP)+(-0.7*\dist2,0)$) circle [radius=0.02cm];
										\filldraw ($(PP)+(-0.7*\dist2,-0.3cm)$) circle [radius=0.02cm];
										\filldraw ($(PP)+(-0.7*\dist2,+0.3cm)$) circle [radius=0.02cm];
										\node at ($(AA2)+(-4cm,0)$) {$\text{resp.}$};
			 				\end{tikzpicture}
							\label{eq:5.6}
			\end{equation}
		Given a planar bio $\biop=\left( \biop^{-},\biop^{\circ},\biop^{+} \right)$,
			and given an element 
			\begin{equation*}
							P=(f:G\rightarrow U\biop)\in p\mathcal{F}_{U\biop}^{-}(c_0;c_1\cdots
			c_n),
			\end{equation*}
				 we can define now $\mu^{-}(P)\in \biop^{-}(c_0;c_1\cdots c_n)$
			by induction:
			\begin{equation}
							\begin{array}[H]{l}
											\text{If } G\in\text{fBit}_{1,n}^{1} , \quad G=C^{-}(v), \quad
											\mu^{-}(P)=f^{-}(v)\in\biop^{-}(c_0;c_1\cdots c_n). \\\\
											\text{If}\; G\in\text{fBit}_{1,n}^{k+1}, \; \text{then either}\;
											G=G^{\prime} \circ_{i} 
												 C^{-}_{\ell}(v) \; \text{or}\; 
											G=G^{\prime}\hspace{-.2cm}
											\begin{array}[h]{c}
															\;\leftarrow\vspace{-.25cm}\\ \circ \vspace{-.2cm}\\
															\scriptstyle{j}
											\end{array} \textstyle C_{\ell}^{+}(v)
							\end{array}
							\label{eq:5.7}
			\end{equation}
			with $G^{\prime}\in\bigbit_{1,n^{\prime}}^{k}$, and we can define
			\begin{equation}
							\mu^{-}(P)=\mu^{-}(P^{\prime}) \circ_{i} f^{-}(v)\quad \text{or
							respectively}\quad
							\mu^{-}(P)=\mu^{-}(P^{\prime})\circleftarrow_{\hspace{-1.6mm}j} f^{+}(v).
							\label{eq:5.8}
			\end{equation}
			It is easy to see that $\mu^{-}$ is well define (independent of the order of the
			operations we used in the induction steps), and similarly defining inductively
			$\mu^{+}:p\mathcal{F}^{+}_{U\biop}(c_1\cdots c_n; c_0)\rightarrow\biop^{+}(c_1\cdots
			c_n; c_0)$ we get  the co-unit map
			\begin{equation*}
							\mu: p\mathcal{F}_{U\biop}\rightarrow \biop
			\end{equation*}
			\section{Bios via generators and relations}
			\begin{definition}
							For $\biop=(\biop^{+},\biop^{\circ},\biop^{-})\in\text{Bio}$, an
							\myemph{ideal} $\ideal\lideal\biop$ is a collection of equivalence
							relations $\approx$ on the sets $\biop^{-}(c_0;c_1\cdots c_n)$ and
							$\biop^{+}(c_1\cdots c_n;c_0)$ for all $c_0,c_1,\cdots , c_n\in
							\biop^{\circ}$, respecting the compositions, mutual actions, and
							$S_n$-actions, so that we get a quotient bio $\biop/\ideal$, with the same
							objects $\left( \biop/\ideal \right)^{\circ}=\biop^{\circ}$, 
							\begin{equation}
											\begin{array}[H]{l}
											\left( \biop/\ideal \right)^{-}(c_0;c_1\cdots
											c_n)=\biop^{-}(c_0;c_1\cdots c_n)/\approx \quad , \\\\
											\left(
															\biop/\ideal
											\right)^{+}(c_1\cdots c_n;c_0)=\biop^{+}(c_1\cdots c_n; c_0)/\approx ,
											\end{array}
											\label{eq:6.2}
							\end{equation}
							and a canonical surjective map of bios
							$\pi_{\ideal}:\biop\xtworightarrow{\quad}
							\biop/\ideal$ identity on objects. \\
							Given any map $\varphi\in\text{Bio}_{C}(\biop,\bioq)$ with
			$\varphi^{\circ}=\text{id}_{C}$ we get an ideal
			$\ker(\varphi)\lideal\biop$ by $P\approx P^{\prime}$ iff
			$\varphi(P)=\varphi(P^{\prime})$, and canonical factorization of $\varphi$
			\begin{equation}
							\begin{tikzpicture}[baseline=-10mm]
		 							\tikzmath{
		 												let \xstart=0;
		 												let \ystart=0;
		 												let \dist=3.5cm;
		 												let \d=1.5cm;
		 												let \over=0.30cm;
		 												let \buff=0.25cm;
		 								};
		 								\coordinate (start) at (\xstart,\ystart);
		 								\coordinate (Q) at (,\ystart);
										\node at (start) {$\biop$};
										\draw[->] ($(start)+(\buff,0)$)--(\dist,0);
										\node at ($0.5*(\dist,\over)$) {$\varphi$};
										\node at ($(\dist+\buff,0)$) {$\bioq$};
										\draw[<-{Hooks[right,length=5,width=6]}] (\dist+\buff,-\buff) -- (\dist+\buff,-\d);	
									 \node at ($(\dist+\buff,-\d-\buff)$) {$\varphi(\biop)$};
								   \draw[->] (3*\buff,-\d-\buff) -- (\dist-\buff,-\d-\buff);	
									 \node at  (0,-\d-\buff) {$\biop/_{\ker(\varphi)}$};
									 \draw[->>] ($(start)-(0,\buff)$) -- (0,-\d);	
									 \node  at ($0.5*(3*\buff,-\d-\buff) +
									 0.5*(\dist-\buff,-\d-\buff)+(0,0.5*\over)$) {$\sim$} ;	
							\end{tikzpicture}
							\label{eq:6.3}
			\end{equation}
			\label{def:6.1}
			\end{definition}
			\begin{remark}
							Given ideals $\ideal_{\alpha}\lideal \biop$, their intersection is again
							an ideal \break  $\bigcap_{\alpha}\ideal_{\alpha}\lideal \biop$, with
							$P\approx P^{\prime} \mod
							\bigcap_{\alpha}\ideal_{\alpha}\Longleftrightarrow P\approx P^{\prime}
							\mod \ideal_{\alpha}$  for all $\alpha$. \\ 
							Thus given any collection of
							pairs $\left\{ (P_{j},P_{j}^{\prime}) \right\}$ with
							$P_{j},P_{j}^{\prime}\in \biop^{\pm}(c_0;c_i)$ we can speak of the ideal they
							generate $\ideal = \left\{ (P_{j},P_{j}^{\prime}) \right\}_{j\in J}=\bigcap
							\ideal_{\alpha}$ the intersection of all ideals containing all the
							$(P_{j},P^{\prime}_{j})$'s, and we have the canonical quotient map 
							\begin{equation*}
											\pi : \biop \xtworightarrow{\qquad}\biop/\ideal := \biop/\left\{
															P_j\sim P_{j^{\prime}}
											\right\}_{j\in J}
							\end{equation*}
							Thus we can construct bios by ``generators and relations'': the generating
							objects $C^{\circ}$, and the generating (co)-operations $\left\{ Q_i
							\right\}$ between them gives a collection $C=(C^{-},C^{\circ},C^{+})\in
							\text{Coll}$, the relations $\left\{ P_{j}\sim P_{j}^{\prime} \right\}$
							generate an ideal $\ideal\lideal \mathcal{F}_{C}$ as above, and we have the
							bio 
							\begin{equation}
											\mathbb{F}_{C^{\circ}}\left[ Q_{i} \right]/\left\{ P_{j}\sim
															P_{j}^{\prime}
											\right\} := \mathcal{F}_{C}/\ideal .
											\label{eq:6.5}
							\end{equation}
							\label{remark:6.4}
			\end{remark}
			\begin{corollary}
							The category $\text{Bio}$ is complete and co-complete
							\label{cor:1}
			\end{corollary}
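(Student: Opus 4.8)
\emph{Plan.} I would prove completeness by computing limits objectwise, and cocompleteness by exhibiting every colimit as a quotient (Definition~\ref{def:6.1}, Remark~\ref{remark:6.4}) of a free bio (Section~5) built on the colimit of the underlying collections. For \textbf{completeness}, given a small diagram $D:\mathcal{J}\to\text{Bio}$, $j\mapsto\biop_j$, set $C:=\varprojlim_j\biop_j^{\circ}$ in $\text{Set}$ and, for a tuple $(c_0,\dots,c_n)\in C^{\,n+1}$ with images $c_i^{(j)}\in\biop_j^{\circ}$, put $\biop^{\pm}(c_0;c_1\cdots c_n):=\varprojlim_j\biop_j^{\pm}(c_0^{(j)};c_1^{(j)}\cdots c_n^{(j)})$. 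The $S_n$-actions~\eqref{eq:2.2}, compositions~\eqref{eq:2.3}, mutual actions~\eqref{eq:2.6} and units are induced on these limits by their universal properties, and the bio identities~\eqref{eq:2.4}--\eqref{2.10}, being equalities of composites, survive in the limit because a cone over a diagram of commuting diagrams has a commuting apex. Limits of points are points, so $\biop$ is again closed, and the $n=1$ identification $\biop^{-}(c_0;c_1)=\biop^{+}(c_0;c_1)$ is inherited; with the evident projections, $\biop$ is the limit. (The empty diagram gives the terminal bio: one object, every operation-set a point.)

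For \textbf{cocompleteness}, let $D:\mathcal{J}\to\text{Bio}$, $j\mapsto\biop_j$, be a small diagram. One first checks directly that $\text{Coll}$ is cocomplete --- colimits of object-sets are formed in $\text{Set}$, after which the operation-sets, the $n=1$ identifications and the units assemble into the evident colimit --- and forms $\mathcal{C}:=\varinjlim_j U\biop_j$ in $\text{Coll}$, with coprojections $\lambda_j:U\biop_j\to\mathcal{C}$. Using the free bio $\mathcal{F}_{\mathcal{C}}$ of Section~5 (left adjoint to $U$) and the counit $\mu_j:\mathcal{F}_{U\biop_j}\twoheadrightarrow\biop_j$, which is surjective (every operation of $\biop_j$ is $\mu_j$ of the corolla~\eqref{eq:5.6} labelled by it), \eqref{eq:6.3} gives the canonical presentation $\biop_j\cong\mathcal{F}_{U\biop_j}/\ker\mu_j$. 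Let $\ideal\lideal\mathcal{F}_{\mathcal{C}}$ be the ideal generated (Remark~\ref{remark:6.4}) by the images of all the $\ker\mu_j$ under the maps $\mathcal{F}_{U\biop_j}\to\mathcal{F}_{\mathcal{C}}$ obtained by functoriality from the $\lambda_j$, and set $Q:=\mathcal{F}_{\mathcal{C}}/\ideal$. Then for every $\bior\in\text{Bio}$ I would produce the chain of natural bijections
\begin{equation*}
\text{Bio}(Q,\bior)\;\cong\;\{\,\varphi\in\text{Bio}(\mathcal{F}_{\mathcal{C}},\bior):\ideal\subseteq\ker\varphi\,\}\;\cong\;\{\,f\in\text{Coll}(\mathcal{C},U\bior):\widehat{f\lambda_j}\ \text{kills}\ \ker\mu_j\ \text{for all}\ j\,\},
\end{equation*}
the first by Definition~\ref{def:6.1}, the second by $\mathcal{F}\dashv U$ (with $\widehat{(-)}$ the adjunct, $\widehat{f\lambda_j}=\widehat f\circ\mathcal{F}(\lambda_j)$, and the fact that a bio map kills an ideal iff it kills a generating set). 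Since $\mathcal{C}$ is a colimit, a collection map $f:\mathcal{C}\to U\bior$ is exactly a cocone $(f_j:U\biop_j\to U\bior)$; and from $\biop_j\cong\mathcal{F}_{U\biop_j}/\ker\mu_j$ one checks that $\widehat{f_j}$ kills $\ker\mu_j$ iff $f_j=U\tilde f_j$ for a (unique, since $U$ is faithful) bio map $\tilde f_j:\biop_j\to\bior$. Faithfulness of $U$ then lifts the cocone identities for $(f_j)$ to those for $(\tilde f_j)$, so the displayed set is precisely the set of cocones $D\Rightarrow\bior$ in $\text{Bio}$; hence $Q=\varinjlim_j\biop_j$, and $\text{Bio}$ is cocomplete.

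The points I would watch most carefully --- the real, if modest, obstacle, which is organisational rather than conceptual --- are three. (i) The object-sets must be glued \emph{before} passing to the free bio, since an ideal leaves the object-set of a bio unchanged; this is why $\mathcal{C}$ is the colimit, not merely the coproduct, of the $U\biop_j$, and it has the welcome effect that the cocone-compatibility $\lambda_j(P)=\lambda_{j'}(D(\alpha)P)$ already holds in $\mathcal{F}_{\mathcal{C}}$ for $\alpha:j\to j'$, leaving only the relations in $\ideal$ to be imposed. (ii) The equivalence ``$\widehat{f_j}$ kills $\ker\mu_j$'' $\Leftrightarrow$ ``$f_j$ underlies a bio map $\biop_j\to\bior$'' is a triangle-identity computation using the canonical presentation. (iii) One must confirm that every construction stays inside the category of \emph{closed} bios, i.e.\ that the $0$-ary operation-sets remain singletons: for $\mathcal{F}_{\mathcal{C}}$ this is built into Section~5 (via $C_1^{-}=I$, \eqref{eq:4.1}, \eqref{eq:5.4}, and unitality), and it is manifestly preserved by the quotient by $\ideal$ and by colimits in $\text{Coll}$.
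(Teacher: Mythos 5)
Your proposal is correct and follows essentially the same strategy as the paper: limits are computed objectwise in $\text{Set}$, and colimits are presented as quotients $\mathcal{F}_{C}/\ideal$ of a free bio on a collection assembled from the $\biop_j$. The only (cosmetic) difference is where you absorb the arrow-identifications $\varphi'_*(P)\sim\varphi''_*(P)$: you push them into the colimit in $\text{Coll}$ and keep only the $\ker\mu_j$ in $\ideal$, whereas the paper keeps the collection ``free'' over $\colim\biop_j^{\circ}$ and puts both kinds of relations into $\ideal$; these are equivalent decompositions of the same construction.
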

			\begin{proof}
							All limits are created in Set - for $(\biop_{j})\in\text{Bio}^{J}$,
							\begin{equation}
											( \lim_{\overset{\leftarrow}{J}}\biop_{j} )^{\circ} :=
											\lim_{\overset{\leftarrow}{J}}\biop^{\circ}_{j}
											\label{eq:6.6}
							\end{equation} 
							and for $c_{0}=c_{0}^{(j)},\; c_{1}=c_{1}^{(j)}\cdots c_n=c_{n}^{(j)}\in
							\lim\limits_{\overset{\leftarrow}{J}}\biop_{j}^{\circ}$ we have 
							\begin{equation}
											( \lim_{\overset{\leftarrow}{J}}\biop_{j} )^{-} (c_0;c_1\cdots
											c_n) :=
											\lim_{\overset{\leftarrow}{J}}\left(\biop_{j}^{-}(c_0^{(j)};c_1^{(j)}\cdots
											c_n^{(j)})\right)
											\label{eq:6.7}
							\end{equation}
							and similarly for $( \lim_{\overset{\leftarrow}{J}}\biop_{j} )^{+} (c_1\cdots
							c_n; c_0)$. The functor $\biop\mapsto \biop^{\circ}:\text{Bio}\to\text{Set}$
							preserves also co-limits, so let
							$C^{\circ}=(\colim\limits_{\overset{\rightarrow}{J}}\biop_{j})^{\circ}=\colim\limits_{\overset{\rightarrow}{J}}\biop_j^{\circ}$
							be the colimit in Set. The sets $\left\{ \biop_{j}^{\pm}(c_0;c_1\cdots c_n),
							j\in J, c_0,c_1\cdots c_n\in \biop_{j}^{\circ}\right\}$ induce a collection
							$C=(C^{-},C^{\circ},C^{+})$. The relations in each $\biop_{j},j\in J$, give
							relations on these generators. Also for arrows $\varphi^{\prime}:j\to
							j^{\prime}$, $\varphi^{''}:j\to j^{''}$ in $J$ with the same domain, and
							for any (co).operation $P\in\biop_{j}^{\pm}(c_0;c_1\cdots c_n)$ we have the
							relation $\varphi_{\ast}^{'}(P)\sim\varphi^{''}_{\ast}{(P)}$; these give all the
							relations and 
							\begin{equation}
											\colim\limits_{\overset{\rightarrow}{J}}
											\biop_{j}=\mathcal{F}_{C}/_{
											\left\{ \varphi^{'}_{\ast}(P)\sim \varphi^{''}_{\ast}(P) \right\} }
											\label{eq:6.8}
							\end{equation}
			\end{proof}
\section{Bipo $\equiv$ Bi-Pre-Orders}
			\begin{definition}
							For a set $C$ let $M(C)=\coprod_{n\ge 0}C^{n}$ denote the free (associative,
							unital) monoid on $C$; its elements are ``words''
							$\overline{c}=c_1\cdots c_n\in C^{n}$							  of ``length'' $n\ge 0$,
							including the unit: the empty word $\phi$ ($n=0$). \vspace{.1cm}\\
							A \myemph{planar Bipo}
							(= Bi-pre-order) is a set $C$ together with two relations \break $R^{-}\subseteq
							C\times M(C)$ and $R^{+}\subseteq M(C)\times C$; we write: 
							\begin{equation*}
							c_0\le c_1\cdots
							c_n\Leftrightarrow (c_0;c_1\cdots c_n)\in R^{-}\quad, \quad c_1\cdots c_n\le'
							c_0\Leftrightarrow (c_1\cdots c_n; c_0)\in R^{+},
							\end{equation*}
							such that 
							\begin{equation}
											c\le c \quad \text{and}\quad c\le' c \quad \text{for all}\quad c\in C.
											\label{eq:7.2}
							\end{equation}
						For $n=1$ we have
							\begin{equation}
											c_0\le c_1 \Longleftrightarrow c_0\le' c_1, 
															\label{eq:7.3}
							\end{equation}
							and we write $\le$ for $\le'$ with no danger of confusion.
							\begin{equation}
											\begin{array}[t]{l}
											b_0\le b_1\cdots b_n, \quad b_i\le \overline{c}_i = c_{i1}\cdots
											c_{i m_{i}}\Longrightarrow b_0\le \overline{c}_1\cdots
											\overline{c}_{n} \\\\
											 b_1\cdots b_n\le b_0, \quad  \overline{c}_i= c_{i1}\cdots
											c_{i m_{i}}\le b_i \Longrightarrow \overline{c}_1\cdots
											\overline{c}_{n} \le b_0
											\end{array}
											\label{eq:7.4}
							\end{equation}
							\begin{equation}
											\begin{array}[t]{l}
															b_0\le \overline{c}_1\cdots \overline{c}_{n},\quad
															\overline{c}_{i}=c_{i1}\cdots c_{im_{i}}\le
															b_{i}\Longrightarrow b_0\le b_1\cdots b_n \\\\
															\overline{c}_1\cdots \overline{c}_{n}\le b_0, \quad b_i\le
															\overline{c}_{i} = c_{i1}\cdots c_{im_{i}}
															\Longrightarrow b_1\cdots b_n\le b_0
											\end{array}
											\label{eq:7.5}
							\end{equation}
							(Notice that (\ref{eq:7.3}) follows from (\ref{eq:7.2}), (\ref{eq:7.4}) and
											(\ref{eq:7.5})).
							\label{def:7.1}
			\end{definition}
			A map of planar bipo $\varphi:C\to C'$ is a map of sets such that 
			\begin{equation}
							\begin{array}[H]{ll}
											c_0\le c_1\cdots c_n \Longrightarrow \varphi(c_0)\le
											\varphi(c_1)\cdots \varphi(c_n) \\\\
											c_1\cdots c_n\le c_0 \Longrightarrow \varphi(c_1)\cdots
											\varphi(c_n)\le \varphi(c_0)
							\end{array}
							\label{eq:7.6}
			\end{equation}
			Thus we have a category \myemph{pBipo}. \\ 
			A planar bipo $C\in p\text{Bipo}$ is
			called \myemph{symmetric} if for $c_0,c_1\cdots c_n\in C$, 
			\begin{equation}
							\begin{array}[H]{l}
							c_0\le c_1 \cdots c_n \Longleftrightarrow c_0\le c_{\sigma(1)}\cdots
							c_{\sigma(n)} \quad \text{any $\sigma\in S_n$} \\\\
						 c_1 \cdots c_n \le 	c_0 \Longleftrightarrow c_{\sigma(1)}\cdots
							c_{\sigma(n)} \le  c_0 \quad \text{any $\sigma\in S_n$} 
							\end{array}
							\label{eq:7.7}
			\end{equation}
			i.e. the relations $\le$, $\le'$ are induced from similar relations on the free
			\break
			\myemph{commutative} monoid on $C$, 
			\begin{equation}
			\text{CM}(C)=\coprod\limits_{n\ge 0} C^{n}/_{S_n}
			\label{eq:3.12}
			\end{equation}
			via the cannonical map $\pi: M(C)\xtworightarrow{\quad}\text{CM}(C)$. \\
			We let $\text{Bipo}\subseteq
			p\text{Bipo}$ denote the full subcategory of symmetric - bipos, and we call its
			objects simply ``bipos''. \vspace{.1cm}\\ 
			We say a bipo $C\in \text{Bipo}$ is \myemph{fermionic}
			if for $c_0,c_1\cdots c_n\in C$, 
			\begin{equation}
							\begin{array}[H]{l}
											c_0\le c_1\cdots c_n \Longrightarrow c_i\not= c_j \; \text{for
											$i\not= j$,  $i,j>0$.}\\\\
											c_1\cdots c_n\le c_0 \Longrightarrow c_i\not= c_j \; \text{for
											$i\not= j$, $i,j>0$.} 
							\end{array}
							\label{eq:7.9}
			\end{equation}
			We say $C$ is \myemph{simple} if,
			\begin{equation}
							c_0\le c_1\cdots c_n \;\; \text{and}\;\; c_1\cdots c_n\le
			c_0\Rightarrow c_0=c_1=\cdots = c_n.			 
			\label{eq:7.10}
			\end{equation}
			 We say $C$ is \myemph{closed} if,
			 \begin{equation}
							c_0\le c_1\cdots c_n, \;\;\; n>1\Rightarrow c_0\le c_1\cdots c_{n-1}
							\label{eq:7.11}
			 \end{equation}
			and similarly if
			\begin{equation*}
			c_1\cdots c_n\le c_0 \quad,\quad n>1\Rightarrow c_1\cdots c_{n-1}\le c_0.
			\end{equation*}
			Given $\varphi_{0},\varphi_{1},\cdots, \varphi_{n}\in\text{Bipo}(C,D)\equiv D^{C}$ define 
			\begin{equation}
							\begin{array}[H]{l}
											\varphi_0\le \varphi_1 \cdots \varphi_n \Longleftrightarrow
											\varphi_{0}(c)\le_{D}\varphi_{1}(c)\cdots \varphi_n(c) \quad
											\text{for all $c\in C$} \\\\
											 \varphi_1 \cdots \varphi_n \le \varphi_0 \Longleftrightarrow
											\varphi_{1}(c)\cdots \varphi_n(c)  \le_{D} \varphi_{0}(c)\quad
											\text{for all $c\in C$} 
							\end{array}
							\label{eq:7.12}
			\end{equation}
			This makes $D^{C}$ into a bipo, the ``internal Hom''. \vspace{.1cm}\\
			Given bipos $\le_{i}$ on the same underlying set $C$, the intersection
			$\bigcap\limits_{i}\le_i$ is again a bipo, $c_0\le c_1\cdots c_n\Leftrightarrow
			c_0\le_{i} c_1\cdots c_n$ for all $i$. Thus we can talk about the bipo generated by
			certain relations 
			\begin{equation*}
							\left\{ c_0^{(i)}\le c_{1}^{(i)}\cdots c_{n_i}^{(i)} \right\}_{i\in
							J^{-}}\cup
							\left\{ c_1^{\prime (i)} \cdots c_{n_i}^{\prime (i)}\le c_0^{\prime (i)} \right\}_{i\in
							J^{+}}
			\end{equation*}
			We let $B\otimes C$ denote the bipo on $B\times C$, for $B,C\in \text{Bipo}$,
			generated by the following relations
			\begin{equation}
							\begin{array}[H]{l}
											(b,c_0)\le (b,c_1)\cdots (b,c_n)\quad \text{all $b\in B$,
											$c_0\le c_1\cdots c_n$ in C} \\\\
											(b_0,c)\le (b_1,c)\cdots (b_n,c) \quad \text{all $c\in C$,
											$b_0\le b_1\cdots b_n$ in $B$} \\\\
											(b,c_1)\cdots (b,c_n)\le (b,c_0) \quad \text{all $b\in B$,
											$c_1\cdots c_n\le c_0$ in $C$} \\\\
											(b_1,c)\cdots (b_n,c)\le (b_0,c) \quad \text{all $c\in C$,
											$b_1\cdots b_n\le b_0$ in $B$}. 
							\end{array}
							\label{eq:7.13}
			\end{equation}
			\begin{proposition}
							We have the adjunction
							\begin{equation*}
											\text{Bipo}(B,D^{C}) \equiv \text{Bipo}(B\otimes C,D)
							\end{equation*}
							via the usual Set adjunction $\varphi_{b}(c)\leftrightarrow
							\varphi(b,c)$, and thus \text{Bipo} is complete, co-complete, closed
							symmetric monoidal category. 
							\label{prop:1}
			\end{proposition}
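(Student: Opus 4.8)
The plan is to show that the currying bijection of underlying sets $D^{B\times C}\xrightarrow{\ \sim\ }(D^{C})^{B}$, $\varphi\mapsto\big(b\mapsto\varphi(b,-)\big)$, restricts to a bijection between $\text{Bipo}(B\otimes C,D)$ and $\text{Bipo}(B,D^{C})$, and then to read off the remaining formal consequences. Throughout I use that $B\otimes C$ is by definition the smallest bipo on the set $B\times C$ containing the four families of relations \ref{eq:7.13}, that an intersection of bipos on a fixed set is again a bipo, and that $D^{C}$ carries the bipo structure \ref{eq:7.12}. For the forward direction, let $\varphi\colon B\otimes C\to D$ be a map of bipos. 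Applying $\varphi$ to the first and third families of \ref{eq:7.13} (vary the $C$-coordinate, $b$ fixed) shows that each $\varphi(b,-)\colon C\to D$ respects the relations \ref{eq:7.6}, so $\varphi(b,-)\in D^{C}$; applying $\varphi$ to the second and fourth families (vary the $B$-coordinate, $c$ fixed) gives, for every $c$ and every $b_{0}\le b_{1}\cdots b_{n}$ in $B$, the inequality $\varphi(b_{0},c)\le_{D}\varphi(b_{1},c)\cdots\varphi(b_{n},c)$ together with its $\le'$-analogue, which by the very definition \ref{eq:7.12} of the order on $D^{C}$ is exactly the assertion that $b\mapsto\varphi(b,-)$ is a map of bipos $B\to D^{C}$.

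For the reverse direction, which is the core of the argument, take $\psi\in\text{Bipo}(B,D^{C})$ with adjunct set-map $\varphi\colon B\times C\to D$. Define relations $\le_{\varphi},\le'_{\varphi}$ on $B\times C$ by declaring $x_{0}\le_{\varphi}x_{1}\cdots x_{n}$ iff $\varphi(x_{0})\le_{D}\varphi(x_{1})\cdots\varphi(x_{n})$, and dually for $\le'_{\varphi}$. Since $\varphi$ carries concatenation of $B\times C$-words to concatenation of $D$-words, the pair $(\le_{\varphi},\le'_{\varphi})$ inherits from $D$ reflexivity \ref{eq:7.2}, the composition-closure axioms \ref{eq:7.4}--\ref{eq:7.5}, and the symmetry \ref{eq:7.7}, hence is a bipo on $B\times C$. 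The hypotheses that $\psi(b)=\varphi(b,-)$ lies in $D^{C}$ for every $b$ and that $\psi$ respects the relations \ref{eq:7.6} say precisely that all four generating families of \ref{eq:7.13} hold for $(\le_{\varphi},\le'_{\varphi})$; by minimality this forces $\le_{B\otimes C}\ \subseteq\ \le_{\varphi}$ (and likewise $\le'$), i.e. $\varphi\colon B\otimes C\to D$ is a map of bipos. Naturality of the resulting bijection in $B$, $C$, $D$ is inherited from the Set adjunction, which gives $-\otimes C\dashv(-)^{C}$.

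It remains to harvest the rest. Completeness: the forgetful functor $\text{Bipo}\to\text{Set}$ creates limits — give $\lim_{j}B_{j}$ the limit of underlying sets and declare $x_{0}\le x_{1}\cdots x_{n}$ to hold iff it holds after every structure projection; reflexivity, the composition axioms, and symmetry are checked componentwise, so this is a bipo with the required universal property. Cocompleteness: argue as for Corollary \ref{cor:1} — form the colimit of underlying sets and equip it with the bipo generated, in the intersection sense, by the images of all relations of the $B_{j}$ together with the compatibility relations $\varphi'_{*}(P)\sim\varphi''_{*}(P)$. For the monoidal structure: $\otimes$ is a bifunctor by the universal property of generators; the monoidal unit is the free bipo on a single point (carrying only the reflexive relations \ref{eq:7.2}), since $\text{Bipo}$ out of it recovers the underlying set, whence the adjunction forces its tensor with any $C$ to be $C$ again; symmetry $B\otimes C\cong C\otimes B$ holds because the swap $B\leftrightarrow C$ interchanges the four families of \ref{eq:7.13}; and associativity follows by identifying both $(B\otimes C)\otimes D$ and $B\otimes(C\otimes D)$, through iterated use of the adjunction, with the bipo on $B\times C\times D$ that is universal among set-maps into an arbitrary $E$ which are bipo maps in each of the three coordinates separately. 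Since this identification is the identity on underlying sets, the pentagon and triangle coherences commute automatically; closedness is the adjunction already proved, with internal hom $D^{C}$.

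I expect the main obstacle to be precisely the reverse direction and its reuse in the associativity step: it is tempting to verify each relation of $B\otimes C$ one generator at a time, but a derived relation of $B\otimes C$ — or of a copy of $B\otimes C$ sitting inside $(B\otimes C)\otimes D$ — need not be visibly among the generators, so one must argue globally that the pullback of a bipo along a set-map is again a bipo and then invoke minimality of the generated bipo. Packaging associativity through the symmetric ``trilinear'' universal property, rather than exhibiting an explicit associator and checking the pentagon by hand, is what keeps that coherence bookkeeping trivial.
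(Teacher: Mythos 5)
Your proposal is correct and follows the same overall strategy as the paper (limits created in \text{Set}, colimits via the bipo generated by the images of relations, closedness from the adjunction), with the main added value being that you actually prove the adjunction that the paper merely asserts ``via the usual Set adjunction.'' In particular, your handling of the reverse direction — pulling back the bipo structure on $D$ along the set-map $\varphi$ to obtain a bipo $\le_{\varphi}$ on $B\times C$, checking that the four generating families of \ref{eq:7.13} lie in $\le_{\varphi}$, and then invoking minimality of the generated bipo — is exactly the right argument, and you correctly flag that checking generators alone would not suffice since derived relations of $B\otimes C$ need not be among them.
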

			\begin{proof}
							Limits in Bipo are created in Set. For co-limits
							$\colim\limits_{\xrightarrow{\quad}} C_j$, we take the co-limit in Set
							of the underlying sets $C_j$, and than the bipo generated by all the
							relations in the $C_j$'s. The functor $B,C\mapsto B\otimes C$ is clearly
							symmetric monoidal, and it is closed because of the adjunction. 
			\end{proof}
							\noindent For a bipo $B$, we have the bio $\ell_{!}B$, with
							$\ell_{!}B^{\circ}:= B$, and 
							\begin{equation}
											\begin{array}[H]{l}
															\ell_{!}B^{-}(b_0;b_1\cdots b_n)=
															\left\{\begin{array}[H]{cl}
																							\left\{ \ast \right\} & b_0\le b_1\cdots b_n
																							\\\\
																							\phi & \text{otherwise}
															\end{array}\right. \\\\
															\ell_{!}B^{+}(b_1\cdots b_n; b_0)=
															\left\{\begin{array}[H]{cl}
																							\left\{ \ast \right\} &  b_1\cdots b_n \le b_0
																							\\\\
																							\phi & \text{otherwise}
															\end{array}\right.
											\end{array}
											\label{eq:7.14}
							\end{equation}
							This gives a full and faithful embedding $\ell_{!}:\text{Bipo}\subseteq
							\text{Bio}$, where the image are the bios having at most one point in every
							hom - set (just like the pre-orders are the categories having at most one
							point in every hom-set). The functor $\ell_{!}$ has a left adjoint
							$\ell^{\ast}:\text{Bio}\to \text{Bipo}$, for $\biop\in\text{Bio}$,
							$\ell^{\ast}\biop$ is the bipo with the same objects as $\biop$ and for
							$b_0,\cdots, b_n\in\biop^{\circ}$, 
							\begin{equation}
											\begin{array}[H]{l}
															b_0\le_{\ell^{\ast}\biop} b_{1}\cdots b_n \Longleftrightarrow
															\biop^{-}(b_0;b_1\cdots b_n)\not = \phi \\\\
															b_1\cdots b_n\le_{\ell^{\ast}\biop} b_0 \Longleftrightarrow
															\biop^{+}(b_1\cdots b_n;b_0)\not = \phi.
											\end{array}
											\label{eq:7.15}
							\end{equation}
							\
\section{Natural transformations}
\begin{definition}
				Given $\varphi_0,\varphi_1\cdots\varphi_n\in\text{Bio}(\bioq,\bior)$ a
				\myemph{natural transformation} $\alpha^{-}\in \bior^{\bioq}(\varphi_0;\varphi_1\cdots
				\varphi_{n})$ is a map $\bioq^{\circ}\ni
				q\mapsto\alpha^{-}_{q}\in \bior^{-}(\varphi_0(q);\varphi_1(q)\cdots \varphi_n(q))$
				such that for $Q\in \bioq^{-}(q_0;q_1\cdots q_m)$ we  have \myemph{interchange}:
				\begin{equation}
								\varphi_{0}^{-}(Q)\circ \left( \alpha^{-}_{q_{j}} \right)\circ\sigma_{n,m}
								= \alpha_{q_{0}}^{-}\circ \left( \varphi_i^{-}(Q) \right)
								\label{eq:8.2}
				\end{equation}
				with $\sigma_{n,m}\in S_{n\cdot m}$ the ``rows-columns-interchange''
				\begin{equation}
								\sigma_{n,m}(a\cdot n+b+1) = b\cdot m+a+1, \quad 0\le a< m, \quad 0\le b < n
								\label{eq:8.3}
				\end{equation}
				Similarly $\alpha^{+}\in \bior^{\bioq}(\varphi_1\cdots \varphi_n;\varphi_0)$ is a map
				$\bioq^{\circ}\ni q\mapsto \alpha_{q}^{+}\in \bior^{+}(\varphi_1(q)\cdots
				\varphi_n(q);\varphi_0(q))$ such that for $Q\in \bioq^{+}(q_1\cdots q_m;q_0)$ we have
				interchange:
				\begin{equation}
								\sigma_{n,m}\circ (\alpha_{q_{j}}^{+})\circ\varphi^{+}_{0}(Q)=\left(
												\varphi_{i}^{+}(Q)
								\right)\circ\alpha_{q_0}^{+}.
								\label{eq:8.4}
				\end{equation}
				\label{def:8.1}
\end{definition}
We have composition and mutual actions of natural transformations given pointwise 
\begin{equation}
				\begin{array}[H]{l}
				\left( \alpha^{-}\circ (\alpha_{i}^{-}) \right)_{q} = \alpha^{-}_{q}\circ \left(
				(\alpha_{i}^{-})_q \right)\quad , \quad  
\left(  (\alpha_{i}^{+})  \circ \alpha^{+} \right)_{q} =  \left(
				(\alpha_{i}^{+})_q \right) \circ \alpha^{+}_{q}\\\\
				\left( \alpha^{-}\circleftarrow (\alpha_{i}^{+}) \right)_{q} =
				\alpha^{-}_{q}\circleftarrow \left(
				(\alpha_{i}^{+})_q \right)\quad , \quad  
\left(   (\alpha_{i}^{-}) \circrightarrow \alpha^{+} \right)_{q} =
\left( (\alpha_{i}^{-})_{q}\right)  \circrightarrow  \alpha^{+}_{q}.
				\end{array}
				\label{eq:8.5}
\end{equation}
These make $\bior^{\bioq}$ into a bio with objects $\left( \bior^\bioq
\right)^{\circ}=\text{Bio}(\bioq,\bior)$, and gives the internal hom functor
\begin{equation}
				\left( \text{Bio} \right)^{\op}\times \text{Bio}\to \text{Bio}\quad , \quad
				(\bioq,\bior)\mapsto \bior^{\bioq}.
				\label{eq:8.6}
\end{equation}
\section{Billinear maps and tensor products}
Fix $\varphi\in\text{Bio}(\biop,\bior^{\bioq})$. On objects $\varphi^{\circ}$ gives for each
$p\in\biop^{\circ}$ an object $\varphi^{\circ}(p)$ of $\bior^{\bioq}$, i.e.
$\varphi^{\circ}(p)\in \text{Bio}(\bioq,\bior)$, which on objects gives for $q\in
\mathscr{Q}^{\circ}$ the
object $\varphi^{\circ}(p,q)\in \bior^{\circ}$, thus we have a function 
\begin{equation}
				\varphi^{\circ}:\biop^{\circ}\times \bioq^{\circ}\to \bior^{\circ}.
				\label{eq:9.1}
\end{equation}
For $Q\in \bioq^{\pm}$, $\varphi^{\circ}(p)$ associate $\varphi^{\pm}(p,Q)\in \bior^{\pm}$,
preserving composition and actions. For $P\in \biop^{-}(p_0;p_1\cdots p_n)$, (resp.
$P\in\biop^{+}(p_1\cdots p_n;p_0)$), $\varphi$ associates the natural transformation
\begin{equation}
				\begin{array}[H]{c}
\varphi^{-}(P)\in \bior^{\bioq}\big(\varphi^{\circ}(p_0);\varphi^{\circ}(p_1)\cdots
\varphi^{\circ}(p_n)\big), \\\\
\Big(\text{resp. } \varphi^{+}(P)\in
\bior^{\bioq}\big(\varphi^{\circ}(p_1)\cdots\varphi^{\circ}(p_n);\varphi^{\circ}(p_0)\big)\,\Big),
				\end{array}
				\label{eq:9.2}
\end{equation}
thus for $q\in Q^{\circ}$ we have $\varphi^{-}(P,q)\in \bior^{-}\big(\varphi^{\circ}(p_0,q);\varphi^{\circ}(p_1,q),\cdots,
\varphi^{\circ}(p_n,q)\big)$, $\Big($resp. $\varphi^{+}(P,q)\in \bior^{+}\big(
\varphi^{\circ}(p_1,q)\cdots  \varphi^{\circ}(p_n,q),\varphi^{\circ}(p_0,q)\big)\Big)$ and
since the composition and actions are defined poinwise, for fixed $q\in \bioq^{\circ}$,
$P\mapsto\varphi(P,q)$ \break preserves compositions and actions. Moreover, we have interchange: 
for \break
$ Q\in \bioq^{-}(q_0;q_1\cdots q_m)$, $\Big($resp. $Q\in \bioq^{+}(q_1\cdots
q_m;q_0)\Big)$,
\begin{equation}
				\begin{array}[H]{l}
\varphi^{-}(p_0,Q)\circ\big(\varphi^{-}(P,q_{j})\big) = \varphi^{-}(P,q_0)\circ
\big(\varphi^{-}(p_i,Q)\big)\circ\sigma_{n,m}, \\
				\end{array}
				\label{eq:9.3}
\end{equation}
$\Big($resp. 
$\sigma_{n,m}\circ\big(\varphi^{+}(P,q_{j})\big)\circ\varphi^{+}(p_0,Q)=\big(\varphi^{+}(p_i,Q)\big)\circ\varphi^{+}(P,q_0)\Big)$.
\vspace{.1cm}\\
Writing $\varphi_{p}(-):=\varphi(p,-)$, $\psi_{q}(-):=\varphi(-,q)$, we have 
\begin{equation}
				\begin{array}[H]{l}
								\text{Bio}(\biop,\bior^{\bioq})\equiv \text{Bill}(\biop,\bioq;\bior) :\overset{\text{def}}{=}
								\\\\
								\left\{  \begin{array}[H]{l}
												(\varphi,\psi)\in\text{Bio}(\bioq,\bior)^{\biop^{\circ}}\times\text{Bio}(\biop,\bior)^{\bioq^{\circ}}\quad
												, \quad \varphi_{p}(q)=\psi_{q}(p)\;\text{for}\; p\in \biop^{\circ},\; q\in
												\bioq^{\circ} \\\\
												\varphi_{p_{0}}^{-}(Q)\circ \big(\psi_{q_j}^{-}(P)\big) =
												\psi_{q_0}^{-}(P)\circ\big(\varphi_{p_{i}}^{-}(Q)\big)\circ
												\sigma_{m,n}\quad , \quad  
												\begin{array}[t]{l}
												P\in\biop^{-}(p_0;p_1\cdots p_n)\; , \\
												Q\in \bioq^{-}(q_0;q_1\cdots q_m)
												\end{array}
												\\\\
												\sigma_{n,m}\circ\big(\psi_{q_j}^{+}(P)\big)\circ \varphi_{p_0}^{+}(Q)=
												\big(\varphi_{p_i}^{+}(Q)\big)\circ\psi_{q_{0}}^{+}(P) \quad,\quad 
												\begin{array}[t]{l}
												P\in\biop^{+}(p_1\cdots p_{n};p_0), \\
												Q\in \bioq^{+}(q_1\cdots q_m;q_0)
												\end{array}
								\end{array}\right\}
				\end{array}
				\label{eq:9.4}
\end{equation}
For fixed $\biop, \bioq$ the functor $\text{Bio}\to\text{Set}$,
$\bior\mapsto\text{Bill}(\biop,\bioq;\bior)$ is representable by a bio $\biop\otimes \bioq$, 
and we get the adjunction 
				\begin{equation}
								\text{Bio}(\biop,\bior^{\bioq})\equiv
								\text{Bill}(\biop,\bioq;\bior)\equiv\text{Bio}(\biop\otimes \bioq, \bior)
								\label{eq:9.5}
				\end{equation}
				Here $\biop\otimes \bioq$ is defined by generators and relations: 
				\begin{equation}
								\text{on objects} \quad \left( \biop\otimes \bioq \right)^{\circ}:=
								\biop^{\circ}\times \bioq^{\circ};
								\label{eq:9.6}
				\end{equation}
				For $q\in \bioq^{\circ}$ we have generators $\left\{ P\otimes q, P\in\biop^{\pm}
				\right\}$, and for $p\in \biop^{\circ}$ we have generators $\left\{ p\otimes Q,
								Q\in \bioq^{\pm}
				\right\}$; The generators $\left\{P\otimes q  \right\}_{P\in\biop^{\pm}}$,
				(resp $\left\{ p\otimes Q \right\}_{Q\in \bioq^{\pm}}$) satisfying the relations of
				$\biop$ (resp. of $\bioq$), so that for each $q\in \bioq^{\circ}$ (resp.
				$p\in\biop^{\circ}$) $j_{q}:\biop\rightarrow\biop\otimes\bioq$, $j_{q}(P)=P\otimes q$
				(resp. $j_{p}:\bioq\rightarrow\biop\otimes\bioq$, $j_{p}(Q)=p\otimes Q$)
				is a map of bios, and we have the interchange relations: 
				\begin{equation}
								\begin{array}[H]{l}
								(p_0\otimes Q)\circ (P\otimes q_{j}) = (P\otimes q_{0})\circ (p_{i}\otimes
								Q)\circ \sigma_{n,m}\quad , \quad 
								\begin{array}[t]{l}
								P\in \biop^{-}(p_0;p_1\cdots p_n)\;, \\
								Q\in \bioq^{-}(q_0,q_1\cdots q_m)
								\end{array}
								\\\\
								\sigma_{n,m}\circ (P\otimes q_{j})\circ (p_0\otimes Q) = (p_{i}\otimes
								Q)\circ (P\otimes q_0)\quad , \quad
								\begin{array}[t]{l}
								P\in \biop^{+}(p_1\cdots p_n; p_0)\; , \\
								Q\in \bioq^{+}(q_1\cdots q_m; q_0)
								\end{array}
								\end{array}
								\label{eq:9.7}
				\end{equation}
				The functor $\text{Bio}\times \text{Bio}\to \text{Bio}$, $(\biop,\bioq)\mapsto
				\biop\otimes \bioq$, makes Bio into a symmetric monoidal category with unit 
				\begin{equation}
				I: I^{\circ}=\left\{ e \right\}, \;\; I(e,e)=\left\{ \text{id}_{e} \right\}, \;\;
				I^{-}(e;\underbrace{e\cdots e}_{n})=\phi=I^{+}(\underbrace{e\cdots e}_{n};e)\;\;
				\text{for $n>1$; }
				\label{eq:9.8}
				\end{equation}
				so $\text{Bio}(I,\biop)\equiv \biop^{\circ}$, and $\text{Bio}/I\equiv
				\text{Cat}$. Thus we have, 
				\begin{theorem}
								Bio is a closed symmetric monoidal category. 
								\label{thm:1}
				\end{theorem}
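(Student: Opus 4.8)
The plan is to deduce the whole structure — associator, unitors, braiding, and all their coherence axioms — from the single universal property (\ref{eq:9.5}) by passing to \myemph{multilinear maps} and invoking Yoneda, so that no coherence diagram has to be chased by hand.

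For bios $\biop_1,\dots,\biop_n$ and a test bio $\mathscr{S}$ define $\text{Bill}(\biop_1,\dots,\biop_n;\mathscr{S})$ to consist of a function $\Phi\colon\prod_i\biop_i^{\circ}\to\mathscr{S}^{\circ}$ together with, for each index $i$ and each tuple $(p_j)_{j\neq i}$ of objects, a map of bios $\biop_i\to\mathscr{S}$ inducing on objects the corresponding slice of $\Phi$; these are required to agree with $\Phi$ on objects and to satisfy, for each pair $i\neq j$, the interchange relations of (\ref{eq:9.4}) in the variables $\biop_i,\biop_j$ with the remaining variables frozen. The key lemma is that \emph{every} bracketing of $\biop_1\otimes\dots\otimes\biop_n$ represents the functor $\mathscr{S}\mapsto\text{Bill}(\biop_1,\dots,\biop_n;\mathscr{S})$, the case $n=2$ being (\ref{eq:9.5}). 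To prove it one unwinds: by two applications of (\ref{eq:9.5}), a map of bios $(\biop\otimes\bioq)\otimes\bior\to\mathscr{S}$ amounts to a bio-map $\biop\to\mathscr{S}$ for each $(q,r)$, a bio-map $\bioq\to\mathscr{S}$ for each $(p,r)$, and a bio-map $\bior\to\mathscr{S}$ for each $(p,q)$, which agree on objects and satisfy three families of interchange relations — the $(\biop,\bioq)$--interchange coming from the inner bilinearity, and the $(\biop,\bior)$-- and $(\bioq,\bior)$--interchanges obtained by evaluating the outer bilinearity with $\bior$ on the generators $P\otimes q$ and $p\otimes Q$ of $\biop\otimes\bioq$ (a general operation of $\biop\otimes\bioq$ being a composite of such generators, for which interchange is forced because, by (\ref{eq:8.5}), the interchange condition is stable under composition and $S_n$-action). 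This is exactly the datum of $\text{Bill}(\biop,\bioq,\bior;\mathscr{S})$, and the same unwinding handles $\biop\otimes(\bioq\otimes\bior)$ and all longer bracketings.

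The step I expect to be the real obstacle is the bookkeeping of the row--column permutations $\sigma_{n,m}$ of (\ref{eq:8.3}): one must verify that the permutations produced by composing two interchange relations always coincide, i.e. that the $\sigma_{n,m}$ obey the evident coherence identities ($\sigma_{m,n}\circ\sigma_{n,m}=\text{id}$, together with the block identity relating $\sigma_{n,m_1+\dots+m_k}$ to the $\sigma_{n,m_i}$ up to the standard block shuffles), so that grouping the outer-interchange data on generators versus on composites, and permuting the three slots, both yield a consistent answer. Granting this, the lemma holds and everything follows by Yoneda: the associator $a_{\biop,\bioq,\bior}\colon(\biop\otimes\bioq)\otimes\bior\xrightarrow{\ \sim\ }\biop\otimes(\bioq\otimes\bior)$ is the unique isomorphism intertwining the two canonical identifications of either side with the representing object of $\text{Bill}(\biop,\bioq,\bior;-)$; the unitors come from $\text{Bill}(\biop,I;-)\cong\text{Bio}(\biop,-)\cong\text{Bill}(I,\biop;-)$, valid because $I$ has no non-unit higher (co)operations and so its interchange slot is vacuous; and the braiding $c_{\biop,\bioq}$ is induced by the transposition acting on $\text{Bill}(\biop,\bioq;-)\cong\text{Bill}(\bioq,\biop;-)$, well defined since the interchange relations are symmetric under swapping the two variables and replacing $\sigma_{n,m}$ by $\sigma_{m,n}=\sigma_{n,m}^{-1}$. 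The pentagon, triangle and hexagon identities, and $c\circ c=\text{id}$, then all hold automatically: each side of each diagram is a composite of the canonical isomorphisms above, and after transporting along the representing natural isomorphisms it becomes a single equality of permutations of the slots of $\text{Bill}$ (a relation already true in the relevant symmetric group), so the original diagram of bios commutes by the uniqueness clause of Yoneda. Functoriality of $\otimes$ in each variable is inherited from the contravariant functoriality of $\text{Bill}(-,\dots,-;\mathscr{S})$ in its arguments.

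Closedness then requires nothing further: the natural isomorphism $\text{Bio}(\biop\otimes\bioq,\bior)\equiv\text{Bio}(\biop,\bior^{\bioq})$ of (\ref{eq:9.5}) exhibits $(-)^{\bioq}$ as a right adjoint of $-\otimes\bioq$ for every $\bioq$, and its naturality in $\bioq$ together with the braiding makes the internal hom compatible with the symmetry. Hence Bio, equipped with $\otimes$, unit $I$, and the associator, unitors, braiding and internal hom constructed above, is a closed symmetric monoidal category.
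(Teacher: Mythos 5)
The paper offers no proof of this theorem --- Section~9 constructs $\biop\otimes\bioq$ by generators and relations, builds the internal hom in Section~8, asserts (\ref{eq:9.5}), and then declares the result --- so your proposal is not an alternative route but the argument the paper leaves to the reader. The strategy (an $n$-ary functor $\text{Bill}(\biop_1,\dots,\biop_n;-)$ requiring only pairwise interchange, the lemma that every bracketing represents it, coherence via Yoneda uniqueness) is the standard multilinear-maps approach and is correct in outline. Two minor imprecisions: $\sigma_{m,n}=\sigma_{n,m}^{-1}$, which makes the braiding well defined, should be checked explicitly, though it is a one-liner from (\ref{eq:8.3}); and $I$ does carry the nullary constants $0^{\pm}_{e}$ in the closed setting of Remark~\ref{remark:2.12}, but interchange against them lands in the one-point sets $\mathscr{S}^{\pm}(s;\phi)$, so your unitor argument survives.

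On the step you rightly flag as the obstacle: rather than proving separately that interchange on the generators $P\otimes q$, $p\otimes Q$ propagates to all composites of $\biop\otimes\bioq$, route the unwinding through the internal hom. By (\ref{eq:9.5}), a bio-map $(\biop\otimes\bioq)\otimes\bior\to\mathscr{S}$ is a bio-map $\biop\otimes\bioq\to\mathscr{S}^{\bior}$; by a second application of (\ref{eq:9.5}), now with target $\mathscr{S}^{\bior}$, this is a pair of families $\biop\to\mathscr{S}^{\bior}$, $\bioq\to\mathscr{S}^{\bior}$ agreeing on objects, with $(\biop,\bioq)$-interchange held pointwise over $\bior^{\circ}$. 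Since a bio-map into $\mathscr{S}^{\bior}$ sends objects to maps $\bior\to\mathscr{S}$ and (co)operations to natural transformations --- families satisfying (\ref{eq:8.2})/(\ref{eq:8.4}) --- the $(\biop,\bior)$- and $(\bioq,\bior)$-interchanges appear on the nose, and the ``generators suffice'' issue is absorbed into the assertion (\ref{eq:8.5}) that composites and actions of natural transformations are again natural transformations. The permutation bookkeeping you worry about ($\sigma_{m,n}^{-1}=\sigma_{n,m}$ together with the block compatibilities relating $\sigma_{n,\sum_i m_i}$ to the $\sigma_{n,m_i}$) is exactly what (\ref{eq:8.5}) needs in order to be well defined, and the paper takes it silently for granted there too; verifying it once discharges both the paper's implicit claim in Section~8 and your key lemma, and with that your proof stands.
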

				\begin{remark}
								We have the operads $(\biop\otimes \bioq)^{\pm}$, and there are map of operads 
								\begin{equation*}
												\biop^{-}\otimes_{BW} \bioq^{-}\longrightarrow (\biop\otimes
												\bioq)^{-} \quad \text{and}\quad
												\biop^{+}\otimes_{BW}\bioq^{+}\longrightarrow (\biop\otimes
												\bioq)^{+}
								\end{equation*}
								with $\otimes_{BV}$ the Boardman-Vogt tensor product; but these maps are
								not in general isomorphism!
								\label{remark:9.9}
				\end{remark}
\section{From Bits to Bios}
				Fix a bit $G=\big((G^{\unit})\rdarrow{}{d_{\pm}} G^{\circ}\big)$. Recall the
				$\unit$-sub bits $B$ of $G$, with linear order on $d_{\pm}B$,
				$\mathscr{S}_{G}^{\pm}$, and associate with $G$ the bio $\mathscr{S}_{G}$:
				\begin{equation}
								\begin{array}[H]{l}
												\mathscr{S}_{G}^{\circ}:=G^{\unit}\quad , \quad \text{for $c_0,c_1\cdots c_n\in
								G^{\unit}$}  \\\\
								\mathscr{S}_{G}^{-}(c_0;c_1\cdots c_n):= \left\{ B\in\mathscr{S}_{G}^{-},
								d_{-}B=\{c_0\}, d_{+}B=\left\{ c_1\cdots c_n \right\} \right\} \\\\
								\mathscr{S}_{G}^{+}(c_1\cdots c_n; c_0):= \left\{
												B \in\mathscr{S}_{G}^{+},d_{-}B=\left\{
								c_1\cdots c_n \right\}, d_{+}B=\left\{ c_0 \right\}\right\}
								\end{array}
								\label{eq:10.1}
				\end{equation}
				The compositions and actions are given by grafting sub-bits. The $S_n$-actions are
				given by reordering $d_{\pm}B$, and is free. Note that the associated bipo
				$\ell^{*}\mathscr{S}_{G}$, given by the set $G^{\unit}$ and 
				\begin{equation}
								\begin{array}[H]{l}
												c_0\le c_1\cdots c_n\Longleftrightarrow \exists
												B\in\mathscr{S}_{G}^{-} \quad , \quad d_{-}B=\left\{ c_0
												\right\} \quad , \quad d_{+}B=\left\{ c_1\cdots c_n \right\} \\\\
								c_{1}\cdots c_{n}\le c_{0} \Longleftrightarrow \exists B\in
								\mathscr{S}_{G}^{+}\quad , \quad d_{-}B=\left\{ c_1\cdots c_n \right\}\quad ,
								\quad d_{+}B=\left\{ c_0 \right\}
								\end{array}
								\label{eq:10.2}
				\end{equation}
				is simple, closed, and fermionic. \vspace{.1cm}\\
				A map
				of bios $f:\mathscr{S}_G\to \mathscr{S}_H$, $G,H\in\text{Bit}$, is given by 
				\begin{equation}
								\begin{array}[H]{l}
												f^{\unit}:G^{\unit}\to H^{\unit} \; \text{and} \;
												f^{\circ}:G^{\circ}\to \mathscr{S}^{\pm}_{G}\; 
												\text{with} \; d_{\pm}f^{\circ}(v)=f^{\unit}(d_{\pm}v)
												\; \text{for}\; v\in G^{\circ}.
								\end{array}
								\label{eq:10.3}
				\end{equation}
				We have the  following easy, 
				\begin{lemma}
								\hspace{.3cm} If  $\displaystyle H=[n]=\left\{
								\underset{\cdot}{v_0}\midarrow{1cm}{0.05cm}{e_1}{-0.2cm}\underset{\cdot}{v_1}
								\midarrow{1cm}{0.05cm}{e_2}{-0.2cm}\underset{\cdot}{v_2}
								\midarrow{1cm}{0.1cm}{}{-0.2cm}
								\begin{array}[b]{l}
												\vspace{-0.35cm} \ldots
								\end{array}
								\midarrow{1cm}{0.05cm}{e_n}{-0.2cm}\underset{\cdot}{v_n}
				\right\}$ \break 
				then \break
				$G=\coprod\limits_{\alpha\in Q}[m_{\alpha}]\; , \;
				f=\coprod\limits_{\alpha\in Q}f_{\alpha}\; , \;
				f_{\alpha}\in\bbDelta([m_{\alpha}],[n]). $
								\label{lema:1}
				\end{lemma}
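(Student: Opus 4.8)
The plan is to first determine the bio $\mathscr{S}_{[n]}$ completely, then exploit its lack of higher operations to force $G$ to be a disjoint union of linear bits, and finally identify the hom-set as a product of $\bbDelta$-hom-sets.

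For the first step I would observe that $[n]$ is a $\unit$-graph each of whose vertices lies in $C_{01}\cup C_{10}\cup C_{11}$ (it has no branching), and that a subgraph of a bit which is itself a bit has no isolated vertex, since the $\unit$-graph condition gives $C_{00}=\phi$. A disconnected sub-bit has at least two left stumps and at least two right stumps (each of its components contains a maximal path, hence a left and a right stump), so the only sub-bits of $[n]$ belonging to $\mathscr{S}^{\pm}$ are connected, i.e.\ contiguous directed sub-paths $[i,j]$ with $i<j$; each such lies in $\text{Bit}_{1,1}$, and for edges $e_a,e_b$ of $[n]$ there is a (necessarily unique) sub-bit with $d_-=\{e_a\}$, $d_+=\{e_b\}$ exactly when $a\le b$. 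Hence $\mathscr{S}_{[n]}$ has no $(\text{co})$operations of arity $\ne1$, every hom-set has at most one point, $\ell^{*}\mathscr{S}_{[n]}$ is the linear order $e_1<\cdots<e_n$ on the $n$ edges of $[n]$, and $\mathscr{S}_{[n]}=\ell_{!}\ell^{*}\mathscr{S}_{[n]}$ lies in the image of $\ell_{!}$.

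Now let $f\in\text{Bio}(\mathscr{S}_G,\mathscr{S}_{[n]})$. If $G$ had a vertex $v\in C_{1,k}(G)$ with $k\ge2$, the corolla $C_k^{-}(v)$ would be a sub-bit of $G$, hence an operation in $\mathscr{S}_G^{-}(c_0;c_1\cdots c_k)$ with $c_0=d_-v$ and $\{c_1,\dots,c_k\}=d_+v$, and $f$ would send it to an element of $\mathscr{S}_{[n]}^{-}(f^{1}(c_0);f^{1}(c_1)\cdots f^{1}(c_k))$ — an empty set by the first step. The case $v\in C_{k,1}(G)$ with $k\ge2$ is symmetric, and the remaining possibilities $C_{0,k}$, $C_{k,0}$ ($k\ge2$) and $C_{00}$ are already impossible in a bit. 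So every vertex of $G$ lies in $C_{01}\cup C_{10}\cup C_{11}$, and since no vertex is isolated, each connected component of $G$ is a directed path $[m_\alpha]$ with $m_\alpha\ge1$ (a connected simple graph with all degrees $\le1$ on each side and no isolated vertex); hence $G=\coprod_{\alpha\in Q}[m_\alpha]$ with $Q=\pi_0(G)$.

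For the last step, every sub-bit of $G$ lying in $\mathscr{S}^{\pm}$ is connected and therefore contained in one component $[m_\alpha]$, so neither the hom-sets nor the grafting operations of $\mathscr{S}_G$ ever mix components; thus $\mathscr{S}_G=\coprod_\alpha\mathscr{S}_{[m_\alpha]}$ in $\text{Bio}$ and $\text{Bio}(\mathscr{S}_G,\mathscr{S}_{[n]})=\prod_\alpha\text{Bio}(\mathscr{S}_{[m_\alpha]},\mathscr{S}_{[n]})$. For each $\alpha$, combining the first step with the adjunction $\ell^{*}\dashv\ell_{!}$ gives $\text{Bio}(\mathscr{S}_{[m_\alpha]},\mathscr{S}_{[n]})=\text{Bio}(\mathscr{S}_{[m_\alpha]},\ell_{!}\ell^{*}\mathscr{S}_{[n]})=\text{Bipo}(\ell^{*}\mathscr{S}_{[m_\alpha]},\ell^{*}\mathscr{S}_{[n]})$, which is the set of order-preserving maps between the linear orders of sizes $m_\alpha$ and $n$, i.e.\ $\bbDelta([m_\alpha],[n])$, the linear bits being precisely the objects of $\bbDelta$. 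Unwinding the coproduct yields $f=\coprod_\alpha f_\alpha$ with $f_\alpha\in\bbDelta([m_\alpha],[n])$. The only slightly delicate points are the bookkeeping of stumps versus interior edges in the analysis of sub-bits of $[n]$, and the observation that the $\unit$-graph constraint together with the absence of isolated vertices leaves $\mathscr{S}_{[n]}$ with morphisms of arity $1$ only; given that, everything is formal — which is why the lemma is ``easy''.
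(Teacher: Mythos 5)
Your proof is correct and follows the same route as the paper's one-sentence argument: the sub-bits of $[n]$ are intervals, so $\mathscr{S}_{[n]}$ has no (co)operations of arity $\geq 2$; a bio map $\mathscr{S}_G \to \mathscr{S}_{[n]}$ therefore cannot see any vertex of $G$ in $C_{1,k}$ or $C_{k,1}$ with $k\geq 2$, so every vertex lies in $C_{01}\cup C_{10}\cup C_{11}$ and $G$ splits into linear components. Closing the last step via $\mathscr{S}_G\cong\coprod_\alpha\mathscr{S}_{[m_\alpha]}$ and the $\ell^{*}\dashv\ell_{!}$ adjunction is a tidy way to spell out what the paper leaves implicit, and matches the paper's convention of identifying the linear bit $[m]$ with the object $[m]$ of $\bbDelta$ under the embedding of (12.4).
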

				Indeed there can be no $m$-array, $m>1$, that map to unary, and the sub-bits of
				$H$ are just intervals. \vspace{.1cm}\\
				Fix the map $f:\mathscr{S}_{G}\to \mathscr{S}_{H}$, and fix $e\in H^{\unit}$. By the lemma we
				can write 
				\begin{equation}
								f^{-1}(e)=\left\{
												\underset{\cdot}{v_{0}^{\alpha}}\midarrow{1cm}{0.1cm}{e_1^{\alpha}}{-0.2cm}\underset{\cdot}{v_1^{\alpha}}
												\midarrow{1cm}{0.1cm}{e_2^{\alpha}}{-0.2cm}
								\begin{array}[b]{l}
												\vspace{-0.35cm} \ldots
								\end{array}
								\midarrow{1cm}{0.1cm}{e_{m_\alpha}^{\alpha}}{-0.2cm}\underset{\cdot}{v_{m_\alpha}^{\alpha}}
				\right\}_{\alpha\in V(e)} 
				\label{eq:10.4}
				\end{equation}
				a collection of paths in $G$ with
				\begin{equation*}
								f(e_{1}^{\alpha})=\cdots =
								f(e_{m_{\alpha}}^{\alpha})=e \quad , \quad v_{1}^{\alpha}\cdots
								v_{m_{\alpha}-1}^{\alpha}\in C_{11}(G). 
				\end{equation*}
				Note that we can split paths to shorter paths, but if we take the paths to be
				\myemph{maximal} this representation of $f^{-1}(e)$ is unique.
				\begin{definition}
								The map $f:\mathscr{S}_{G}\to \mathscr{S}_{H}$ is \myemph{flat} if for all $e\in
								H^{\unit}$, 
								\begin{equation}
												f^{-1}(e)= \left\{
												\underset{\cdot}{v_0^{\alpha}}\midarrow{1cm}{0.1cm}{e_1^{\alpha}}{-0.2cm}
								\begin{array}[b]{l}
												\vspace{-0.35cm} \ldots
								\end{array}
								\midarrow{1cm}{0.1cm}{e_{m_\alpha}^{\alpha}}{-0.2cm}\underset{\cdot}{v_{m_\alpha}^{\alpha}}
				\right\}_{\alpha\in V(e)}
				\label{eq:10.6}
								\end{equation}
								and we have  $e_{1}^{\alpha}\in d_{-}G$ (resp. $e_{m_{\alpha}}^{\alpha}\in
								d_{+}G$) for all $\alpha$ in $V(e)$, except perhaps for one index
								$\alpha=\alpha_{-}$  (resp. $\alpha=\alpha_{+}$);
								\label{def:10.5}
				\end{definition}
\noindent Note that if $e_{1}^{\alpha_{-}}\not\in d_{-}G$  (resp.
$e_{m_{\alpha_{+}}}^{\alpha_{+}}\not\in d_{+}G$)  then $e\not\in
d_{-}H$ (resp. $e\not\in d_{+}H$). It follows that for every path
$e=(e_1,\cdots, e_{n})$, $e_{i}\in H^{1}$, 
$v_i=d_{+}e_{i}=d_{-}e_{i+1}\in C_{11}$  for $i=1\cdots n-1$, the set
$f^{-1}(e)$ has a similar representation. Thus if $g:\mathscr{S}_{H}\to
\mathscr{S}_{K}$ is flat too, then $g\circ f:\mathscr{S}_{G}\to \mathscr{S}_{K}$ is flat, and so we have a
category \myemph{Bit} with maps $f:G\to H$ the \myemph{flat} maps
\begin{equation}
				f:\mathscr{S}_G\to \mathscr{S}_H,
				\label{eq:10.7}
\end{equation}
and a functor 
\begin{equation}
				\mathscr{S}:\text{Bit}\to \text{Bio} \quad , \quad G\mapsto \mathscr{S}_{G}.
				\label{eq:10.8}
\end{equation}
For $f\in \text{Bit}(G,H)$, $e\in H^{1}$, $f^{-1}(e)= 
\left\{  \underset{\cdot}{v_0^{\alpha}}\midarrow{1cm}{0.1cm}{e_1^{\alpha}}{-0.2cm}
								\begin{array}[b]{l}
												\vspace{-0.35cm} \ldots
								\end{array}
								\midarrow{1cm}{0.1cm}{e_{n_\alpha}^{\alpha}}{-0.2cm}\underset{\cdot}{v_{n_\alpha}^{\alpha}}\right\}_{\alpha\in
								V(e)} $,
we can identify all the $e_{i}^{\alpha}$'s to one edge $\tilde{e}$, with
$d_{-}\tilde{e}=v_{0}^{\alpha_{-}}$ (resp.
$d_{+}\tilde{e}=v_{n_{\alpha_{+}}}^{\alpha_{+}}$) if $\alpha_{-}$ (resp. $\alpha_{+}$)
exists, and otherwise $d_{\pm}\tilde{e}$ is a stump. Doing this for all $e\in H^{\unit}$ we
get a factorisation \\
\begin{equation}
				\begin{tikzpicture}[baseline]
								\tikzmath{
												let \spc=5cm;
												let \buff=0.4cm;
								};
								\node at (0.73*\spc,0.5mm) {$f=\tilde{f}\circ \pi$:};
								\node at ($(\spc+0,0)$) {$\mathscr{S}_{G}$};
								\coordinate (a) at ($(\spc+\buff,0)$);
								\coordinate (b) at ($(2*\spc-\buff,0)$);
								\node at ($(b)+(\buff,0)$) {$\mathscr{S}_{H}$}; 
								\node at ($0.5*(b)+0.5*(a)+(0,0.3cm)$) {$f$};
								\coordinate (c) at ($0.5*(b)+0.5*(a)+(0,-0.5*\spc)$);
								\draw[->] (a)--(b);
								\draw[->>] ($(a)+(-0.3*\buff,-0.7*\buff)$)--($(c)-(\buff,-\buff)$);
								\draw[arrows={<->[harpoon,swap,scale=1.5]}] ($(b)+(0.3*\buff,-0.7*\buff)$)--($(c)+(\buff,\buff)$);
								\node at (c) {$\mathscr{S}_{G}/\ker(f)$};
								\node at ($0.5*(a)+0.5*(c)+(-0.5cm,0)$) {$\pi$};
								\node at ($0.5*(b)+0.5*(c)+(0.5cm,0)$) {$\tilde{f}$};
				\end{tikzpicture} 
				\label{eq:10.9}
\end{equation}
where $\pi$  is flat by definition, and is surjective on edges, and $\tilde{f}$ is
injective on edges (hence also flat).
\begin{remark}
		The map $f:G\to H$ given pictorially 
			\begin{figure}[H]
							\centering
							\begin{tikzpicture}[baseline, scale=1]
							\tikzmath{
											let \ax=0;
											let \ay=0;
											let \buff=0.1;
											let \down=-1;
											let \mid=3cm;
											let \mlen=1cm;
							};
							\def\hrarrow#1#2#3{
							   \draw[decorate,decoration={markings,mark=at position #3 with
											{\arrow[color=black]{>}}}] #1--#2;
								 \draw[-] #1--#2;
				 };
										\coordinate (a) at (\ax,\ay);
										\coordinate (b) at ($(-\mlen,0)+(-\buff,0)$);
										\coordinate (c) at ($(-\mid,0)+(-\buff,0)$);
										\coordinate (d) at ($(c)+(-\mlen,0)$);
										\coordinate (ad) at (\ax,\down);
										\coordinate (bd) at ($(-\mlen,\down)+(-\buff,0)$);
										\coordinate (cd) at ($(-\mid,\down)+(-\buff,0)$);
										\coordinate (dd) at ($(c)+(-\mlen,\down)$);
										\filldraw (a) circle  [radius=0.04cm];
										\hrarrow{(-.2cm,0)}{(-\mlen,0)}{0.5};
										\filldraw (b) circle [radius=0.04cm];
										\filldraw (c) circle [radius=0.04cm];
										\hrarrow{($(b)+(-.2cm,0)$)}{($(\buff,0)+(c)$)}{0.5};
										\filldraw (d) circle [radius=0.04cm];
										\hrarrow{($(c)+(-.2cm,0)$)}{($(\buff,0)+(d)$)}{0.5};

										\filldraw (ad) circle  [radius=0.04cm];
										\hrarrow{($(ad)+(-\buff,0)$)}{($(ad)+(-\mlen,0)$)}{0.5};
										\filldraw (bd) circle [radius=0.04cm];
										\filldraw (cd) circle [radius=0.04cm];
										\hrarrow{($(bd)+(-.2cm,0)$)}{($(\buff,0)+(cd)$)}{0.5};
										\filldraw (dd) circle [radius=0.04cm];
										\hrarrow{($(cd)+(-.2cm,0)$)}{($(\buff,0)+(dd)$)}{0.5};
										\hrarrow{($(bd)+(-\buff,\buff)$)}{($(\buff,-\buff)+(c)$)}{0.7};
										\hrarrow{($(b)+(-\buff,-\buff)$)}{($(\buff,\buff)+(cd)$)}{0.7};
										
										\node at ($0.5*(a)+0.5*(b)+(0,3*\buff)$) {$d_1$};
										\node at ($0.5*(b)+0.5*(c)+(0,3*\buff)$) {$b_1$};
										\node at ($0.5*(c)+0.5*(d)+(0,3*\buff)$) {$a_1$};
										\node at ($0.5*(c)+0.5*(bd)+(-6*\buff,1.5*\buff)$) {$c_1$};
										\node at ($0.1*(b)+0.9*(cd)+(\buff,2.5*\buff)$) {$c_2$};

										\node at ($0.5*(ad)+0.5*(bd)+(0,-3*\buff)$) {$d_2$};
										\node at ($0.5*(bd)+0.5*(cd)+(0,-3*\buff)$) {$b_2$};
										\node at ($0.5*(cd)+0.5*(dd)+(0,-3*\buff)$) {$a_2$};
										\coordinate (G) at ($(dd)+(\down,-0.5*\down)$);
										\node at (G) {$G$};
										\coordinate (H) at ($(G)+(0,2*\down)$);
										\draw[->] ($(G)-(0,0.4cm)$)--(H);
										\node at ($(H)-(0,0.4cm)$) {$H$};
										\coordinate (I) at ($(dd)+(0,1.7*\down)$);
										\coordinate (J) at ($(cd)+(0,1.7*\down)$);
										\coordinate (K) at ($(bd)+(0,1.7*\down)$);
										\coordinate (L) at ($(ad)+(0,1.7*\down)$);
										\filldraw (I) circle [radius=0.04cm];
										\filldraw (J) circle [radius=0.04cm];
										\filldraw (K) circle [radius=0.04cm];
										\filldraw (L) circle [radius=0.04cm];
										\hrarrow{($(-\buff,0)+(J)$)}{($(\buff,0)+(I)$)}{0.5};
										\node at ($0.5*(-\buff,0)+0.5*(J)+0.5*(\buff,0)+0.5*(I)+(0,2*\buff)$) {$a$};
										\draw[-] ($(K)+(-\buff,\buff)$) to [bend right] ($(J)+(\buff,\buff)$);
										\node at ($0.5*(-\buff,0)+0.5*(K)+0.5*(\buff,0)+0.5*(L)+(0,2*\buff)$) {$d$};
										\draw[decorate,decoration={markings,mark=at position 0.5 with {\arrow[color=black]{>}}}] ($(K)+(-\buff,\buff)$) to [bend right] ($(J)+(\buff,\buff)$);
										\node at ($0.5*(K)+0.5*(-\buff,10*\buff)+0.5*(J)+0.5*(\buff,\buff)$) {$b$};
										\draw[-] ($(K)+(-\buff,-\buff)$) to [bend left] ($(J)+(\buff,-\buff)$);
										\node at ($0.5*(K)+0.5*(-\buff,-\buff) + 0.5*(J)+0.5*(\buff,-10*\buff)$)
										{$c$};
										\draw[decorate,decoration={markings,mark=at position 0.5 with
										{\arrow[color=black]{>}}}] ($(K)+(-\buff,-\buff)$) to [bend left]
										($(J)+(\buff,-\buff)$);
										\hrarrow{($(-\buff,0)+(L)$)}{($(\buff,0)+(K)$)}{0.5};
			\end{tikzpicture}
			\caption{The map $f:G\to H$}
							\label{fig:6}
			\end{figure} \ \\
			is both a map of graphs, $f\in \text{Graph}(G,H)$, and is a map of bios,
			$f\in\text{Bio}\left( \mathscr{S}_{G},\mathscr{S}_{H} \right)$, but is Not flat, hence is not a
			map of bits: $f\not\in \text{Bit}(G,H)$.
			\label{remark:10.10}
\end{remark}
\section{Degeneracies and face maps}
For $G\in \text{Bit}$, $e\in G^{1}$, we can form the bit map 
\begin{equation}
				s_{e}=s_{_e}^{G}: G_{e}\overset{\text{def}}{=} \left(
								(G^{1}\setminus \left\{ e \right\}\cup \left\{ e_{-},e_{+}
								\right\}\rdarrow{}{d_{\pm}} G^{\circ}\cup \left\{ v_{e}
\right\})\right) \xtworightarrow{\qquad }{\ } G 
\label{eq:11.1}
\end{equation}
with $d_{-}e_{-}=d_{-}e$, $d_{+}e_{-}=v_{e}=d_{-}e_{+}$, $d_{+}e_{+}=d_{+}e$
\begin{center}
\begin{tikzpicture}[baseline]
				\tikzmath{
								let \x =1.2;
								let \y =0;
								let \w = 2;
								let \s = 0.2;
				};
				\def\mlarrow#1#2#3#4#5{
							\draw[-] #1--#2;
							\draw[decorate,decoration={markings,mark=at position 0.5 with {\arrow[color=black]{<}}}] #1--#2;
							\node at ($0.5*#1+0.5*#2+(0,0.3)$) {$#3$};
							\filldraw ($#1-(#5,0cm)$) circle [radius=.04cm];
							\node at ($#1-(0,-0.3cm)$) {$#4$};
			};
			\node at  (\x+\w,\y+0.3cm) {$v_{e}$};
			\mlarrow{(\x,\y)}{($(\x+\w,\y)$)}{e_{-}}{}{0}; 
			\mlarrow{(\x+\w,\y)}{($(\x+2*\w,\y)$)}{e_{+}}{}{0};
			\filldraw  (\x+2*\w+\s,\y) circle [radius=.04cm];
			\draw[->>]  (\x+2*\w+4*\s,\y)--(\x+2.5*\w+4*\s,\y);
			\node at ($0.5*(\x+2*\w+4*\s,\y)+0.5*(\x+2.5*\w+4*\s,\y)+(0,0.3cm)$)
			{$s_{e}$};
			\mlarrow{(\x+3*\w+4*\s,\y)}{(\x+4*\w+4*\s,\y)}{e}{}{\s};
			\filldraw ($(\x+4*\w+4*\s,\y)+(\s,0)$) circle [radius=0.04cm];
\end{tikzpicture} 
\end{center}
i.e. we add
the vertex $v_{e}$ in the middle of the edge $e$, splitting $e$ to the two halves
$e_{-}$ and $e_{+}$, 
$
s(e^{\prime})= 
\left\{ \begin{array}[H]{lc}
								e & e^{\prime}=e_{\pm} \\
								e^{\prime} & \text{otherwise}
\end{array}\right.
$.
We call such a map an ``elementary degeneracy''. \\
It will be convenient to  call
``elementary degeneracy'' also for the  maps 
\begin{equation}
				\begin{array}[H]{c}
								s_{)e(}=s_{_{)e(}}^{G}: G_{)e(}	\overset{\text{def}}{=} \left(
				(G^{1}\setminus \left\{ e \right\})\cup \left\{ e_{-}, e_{+}
				\right\}\,\rdarrow{}{d_{\pm}}\; G^{\circ}\cup \left\{ v_{e}^{-}, v_{e}^{+}
\right\}  \right)\xtworightarrow{\quad}{}  G \\\\
d_{-}e_{-} = d_{-}e\quad , \quad d_{+}e_{-}=v_{e}^{-}\quad , \quad d_{-}e_{+}=v_{e}^{+}\quad ,
\quad d_{+}e_{+}= d_{+} e 
				\end{array}
				\label{eq:11.2}
\end{equation}
\begin{figure}[H]
\begin{tikzpicture}[baseline]
				\tikzmath{
								let \x =1.2;
								let \y =0;
								let \w = 1.5;
								let \s = 0.2;
				};
				\def\mlarrow#1#2#3#4#5{ 
							\draw[-] #1--#2;
							\draw[decorate,decoration={markings,mark=at position 0.5 with {\arrow[color=black]{<}}}] #1--#2;
							\node at ($0.5*#1+0.5*#2+(0,0.3)$) {$#4$};
							\filldraw ($#1-(0.1cm,0cm)$) circle [radius=.04cm];
							\filldraw ($#2+(0.1cm,0cm)$) circle [radius=.04cm];
							\node at ($#1-(0.1cm,-0.3cm)$) {$#3$};
							\node at ($#2+(0.1cm,0.3cm)$) {$#5$};
			};
			\coordinate (a) at (\x,\y);
			\mlarrow{(a)}{($(a)+(\w,0)$)}{}{e_{-}}{v_{e}^{-}};
			\coordinate (b) at ($(\x,\y)+(\w+7*\s,0)$);
			\mlarrow{(b)}{($(b)+(\w,0)$)}{v_{e}^{+}}{e_{+}}{};
			\coordinate (c) at ($(b)+(\w+7*\s,0)$);
			\draw[->>]  (c)--($(c)+(\w,0)$);
			\node  at ($0.5*(c)+0.5*($(c)+(\w,0)$) +(0,0.25cm)$) {$s_{_{)e(}}$};
			\coordinate (d) at ($(c)+(\w+7*\s,0)$);
			\mlarrow{(d)}{($(d)+(\w,0)$)}{}{e}{};
\end{tikzpicture}
				\centering
				\caption{Composition map of two degeneracies and one face map}
				\label{fig:7}
\end{figure} \ \\
For an edge $e\in G^{1}$, we may add another copy $e^{\prime}$ to $G$ with
$v_{\pm}^{\prime}=d_{\pm}e^{\prime}$ both stumps, and identifying $e$ and $e^{\prime}$ we
get the ``elementary degeneracy''
\begin{equation}
				s_{{\widecheck{e}}} = s_{{\widecheck{e}}}^{^G}:
				G_{{{\widecheck{e}}}}\overset{\text{def}}{=} \left( G^{1}\cup \left\{
												e^{\prime} \right\} \rdarrow{}{d_{\pm}}G^{\circ}\cup \left\{
																v_{\pm}^{\prime}
								\right\}\right)\xtworightarrow{\qquad}{} G
								\label{eq:11.3}
\end{equation}
Note that $G_{e}$ $G_{_{)e(}}$, $G_{_{\widecheck{e}}}$ have one more edge than $G$. \\
For $f\in \text{Bit}(G,H)$ clearly $\pi\in\text{Bit}\left( G,G/\ker(f) \right)$ is a
composition of such elementary degeneracies $s_{_{e}}$, $s_{_{)e(}}$ and
$s_{_{\widecheck{e}}}$. \\
Given $G\in \text{Bit}$, any $e\in G^{1}$, we get a bit by
eliminating $e$ 
\begin{equation}
				\partial_{e}=\partial_{e}^{G}: G^{e}\overset{\text{def}}{=} \left(
								G^{1}\setminus\left\{ e \right\}\rdarrow{}{d_{\pm}} G^{\circ}\cup \left\{
								v_{\ell} \right\}_{\ell} 
				\right)\hookedrightarrow{1} G
				\label{eq:11.4}
\end{equation}
here the vertices $\left\{ v_{\ell} \right\}$ are added to keep $G^{e}$ a bit; pictorially
we have four such $0$-faces: 
\begin{equation}
				\begin{array}[H]{c}
								\left(\; \begin{array}[H]{l}
												- \;\; , \;\; - \\
												m \;\; , \;\; n 
				\end{array}\;\right): \\
				m\ge 1, \; n\ge 0
				\end{array}
				\begin{tikzpicture}[baseline,scale=0.8]
				\tikzmath{
								let \w = 1.5cm;
								let \buff = 0.2;
								let \y = 0.35cm;
				};

			\def\mline#1#2#3{
			 \draw[decorate,decoration={markings,mark=at position #3 with {\arrow[color=black]{<}}}] #1--#2;
			 \draw  #1--#2;
			 };
			 \def\plt#1#2#3#4#5#6#7{  
							\coordinate (t1) at ($#1$);
							\coordinate (t2) at ($#2$);
							\coordinate (t3) at ($#3*(t2)+(t1)-#3*(t1)$);
							\mline{(t1)}{(t2)}{#3};
							\node at  ($(t3)+(0,1.4*\buff)$) {$#4$};
							\coordinate (u1) at ($#2+(#5,0.5*\w+\y)$);
							\coordinate (u2) at ($#2+(#5,-0.5*\w-\y)$);
							\def\d{#6};
							\draw[-,color=red, dashed] ($\d*(u1)+#2-\d*#2$) to [bend left] ($\d*(u2)+#2-\d*#2$);
							\node at ($\d*(u2)+#2-\d*#2+(2*\buff,0)$) {#7};
							\mline{#2}{(u1)}{0.5};
							\mline{#2}{(u2)}{0.5};
							\filldraw[color=red] #2 circle [radius=.06cm];
			};
				\coordinate (a) at (0,\y);
				\coordinate (b) at ($(a)+(\w,0)$);
				\coordinate (c) at ($(b)+(1.5*\w,0)$);
				\coordinate (d) at ($(c)+(1.2*\w,0)$);
				\coordinate (d1) at ($(d)-(\buff,0)$);
				\coordinate (d2) at ($(d)+(\buff,0)$);
				\coordinate (e) at ($(d)+(1*\w,0)$);
				\coordinate (e1) at ($(e)-(\buff,0)$);
				\coordinate (e2) at ($(e)+(\buff,0)$);
				\coordinate (f) at ($(e)+(1*\w,0)$);
				\coordinate (f1) at ($(f)-(\buff,0)$);
				\coordinate (f2) at ($(f)+(\buff,0)$);
				\plt{(a)}{(b)}{0.5}{}{0.5*\w+0*\w}{1.1}{$\scriptstyle m$};
				\plt{(b)}{(c)}{0.7}{e}{0.5*\w}{0.9}{$\scriptstyle n$};
				\mline{(c)}{(d1)}{0.8}; 
		    \draw[<-{Hooks[left,length=5,width=6]}] (d2) -- (e1);	
				\plt{(e2)}{(f)}{0.5}{}{0.5*\w}{0.7}{$\quad \scriptstyle m-1$};
				\coordinate (y1) at ($(f)+(1.3*\w,0.3*\w)$);
				\coordinate (y2) at ($(f)+(1.3*\w,-0.3*\w)$);
				\coordinate (z1) at ($(f)+(2*\w,0.4*\w)$);
				\coordinate (z2) at ($(f)+(2*\w,-0.4*\w)$);
				\filldraw[color=red] (y1) circle [radius=.06cm]; 
				\filldraw[color=red] (y2) circle [radius=.06cm]; 
				\mline{(y1)}{(z1)}{0.5};
				\mline{(y2)}{(z2)}{0.5};
				\def\curvstart{0.5};
				\draw[-,color=red, dashed] ($\curvstart*(z1)+(y1)-\curvstart*(y1)$) to [bend left] ($\curvstart*(z2)+(y2)-\curvstart*(y2)$);
				\node at ($0.5*($\curvstart*(z2)+(y2)-\curvstart*(y2)+ (3*\buff,0)$) +
				0.5*($\curvstart*(z1)+(y1)-\curvstart*(y1)$)+(\buff,0)$) {$\scriptstyle n$};
\end{tikzpicture}
\label{eq:11.5}
\end{equation}
\
\begin{equation*}
				\begin{array}[H]{c}
								\left(\; \begin{array}[H]{l}
												+ \;\; , \;\; + \\
												m \;\; , \;\; n 
				\end{array}\;\right): \\
				m\ge 0, \; n\ge 1
				\end{array}
				\begin{tikzpicture}[baseline,scale=0.8]
				\tikzmath{
								let \w = 1.5cm;
								let \buff = 0.2;
								let \y = 0.35cm;
				};

			\def\mline#1#2#3{
			 \draw[decorate,decoration={markings,mark=at position #3 with {\arrow[color=black]{<}}}] #1--#2;
			 \draw  #1--#2;
			 };
			 \def\plt#1#2#3#4#5#6#7#8{  
							\coordinate (t1) at ($#1$);
							\coordinate (t2) at ($#2$);
							\coordinate (t3) at ($#3*(t2)+(t1)-#3*(t1)$);
							\mline{(t1)}{(t2)}{#3};
							\node at  ($(t3)+(0,1.4*\buff)$) {$#4$};
							\coordinate (u1) at ($#2+(#5,0.5*\w+\y)$);
							\coordinate (u2) at ($#2+(#5,-0.5*\w-\y)$);
							\def\d{#6};
							\draw[-,color=red, dashed] ($\d*(u1)+#2-\d*#2$) to [bend left=#8] ($\d*(u2)+#2-\d*#2$);
							\node at ($\d*(u2)+#2-\d*#2-(\buff,-0.7*\d)$) {#7};
							\mline{#2}{(u1)}{0.5};
							\mline{#2}{(u2)}{0.5};
							\filldraw[color=red] #2 circle [radius=.06cm];
			};
			 \def\pltt#1#2#3#4#5#6#7#8{  
							 \mline{#2}{($#2+(\w,0)$)}{0.5};
							\coordinate (t1) at ($#1$);
							\coordinate (t2) at ($#2$);
							\coordinate (t3) at ($#3*(t2)+(t1)-#3*(t1)$);
							\node at  ($(t3)+(0,1.4*\buff)$) {$#4$};
							\coordinate (u1) at ($#2+(#5,0.5*\w+\y)$);
							\coordinate (u2) at ($#2+(#5,-0.5*\w-\y)$);
							\def\d{#6};
							\draw[-,color=red, dashed] ($\d*(u1)+#2-\d*#2$) to [bend left=#8] ($\d*(u2)+#2-\d*#2$);
							\node at ($\d*(u2)+#2-\d*#2-(\buff,-0.7*\d)$) {#7};
							\mline{#2}{(u1)}{0.5};
							\mline{#2}{(u2)}{0.5};
							\filldraw[color=red] #2 circle [radius=.06cm];
			};
				\coordinate (a) at (0,\y);
				\coordinate (b) at ($(a)+(1.5*\w,0)$);
				\coordinate (c) at ($(b)+(1.2*\w,0)$);
				\coordinate (d) at ($(c)+(1*\w,0)$);
				\coordinate (d1) at ($(d)-(\buff,0)$);
				\coordinate (d2) at ($(d)+(\buff,0)$);
				\coordinate (e) at ($(d)+(1*\w,0)$);
				\coordinate (e1) at ($(e)-(\buff,0)$);
				\coordinate (e2) at ($(e)+(\buff,0)$);
				\coordinate (f) at ($(e)+(1*\w,0)$);
				\coordinate (f1) at ($(f)-(\buff,0)$);
				\coordinate (f2) at ($(f)+(\buff,0)$);
				\plt{(a)}{(b)}{0.2}{}{-0.5*\w}{1.1}{$\scriptstyle \hspace{-.5cm} m$}{-0.9cm};
				\plt{(b)}{(c)}{0.3}{e}{-0.5*\w}{0.9}{$\scriptstyle \hspace{-.4cm} n$}{-0.9cm};
				\mline{(c)}{(d1)}{0.5}; 
		    \draw[<-{Hooks[left,length=5,width=6]}] (d2) -- (e1);	

				\coordinate (y1) at ($(e)+(.3*\w,0.3*\w)$);
				\coordinate (y2) at ($(e)+(.3*\w,-0.3*\w)$);
				\coordinate (z1) at ($(e)+(1*\w,0.4*\w)$);
				\coordinate (z2) at ($(e)+(1*\w,-0.4*\w)$);
				\filldraw[color=red] (z1) circle [radius=.06cm]; 
				\filldraw[color=red] (z2) circle [radius=.06cm]; 
				\mline{(y1)}{(z1)}{0.5};
				\mline{(y2)}{(z2)}{0.5};
				\def\curvstart{0.5};
				\draw[-,color=red, dashed] ($\curvstart*(z1)+(y1)-\curvstart*(y1)$) to [bend
				left=-1cm] ($\curvstart*(z2)+(y2)-\curvstart*(y2)$);
				\node at ($0.5*($\curvstart*(z2)+(y2)-\curvstart*(y2)+ (3*\buff,0)$) +
				0.5*($\curvstart*(z1)+(y1)-\curvstart*(y1)$)+(-4*\buff,0)$) {$\scriptstyle m$};

				\coordinate (f) at ($(e)+(1*\w,0)$);
				\coordinate (e) at ($(f)+(1*\w,0)$);
				\pltt{(f)}{(e)}{0.4}{}{-0.5*\w}{0.7}{$\scriptstyle  \\ \hspace{-.7cm}  n-1$}{-1cm};
\end{tikzpicture}
\end{equation*}
\begin{equation*}
				\begin{array}[H]{c}
								\left(\; \begin{array}[H]{l}
												- \;\; , \;\; + \\
												m \;\; , \;\; n 
				\end{array}\;\right): \\
				m,n\ge 1
				\end{array}
				\begin{tikzpicture}[baseline,scale=0.8]
				\tikzmath{
								let \w = 1.5cm;
								let \buff = 0.2;
								let \y = 0.35cm;
				};

			\def\mline#1#2#3{
			 \draw[decorate,decoration={markings,mark=at position #3 with {\arrow[color=black]{<}}}] #1--#2;
			 \draw  #1--#2;
			 };
			 \def\plt#1#2#3#4#5#6#7#8{  
							\coordinate (t1) at ($#1$);
							\coordinate (t2) at ($#2$);
							\coordinate (t3) at ($#3*(t2)+(t1)-#3*(t1)$);
							\mline{(t1)}{(t2)}{#3};
							\node at  ($(t3)+(0,1.4*\buff)$) {$#4$};
							\coordinate (u1) at ($#2+(#5,0.5*\w+\y)$);
							\coordinate (u2) at ($#2+(#5,-0.5*\w-\y)$);
							\def\d{#6};
							\draw[-,color=red, dashed] ($\d*(u1)+#2-\d*#2$) to [bend left=#8] ($\d*(u2)+#2-\d*#2$);
							\node at ($\d*(u2)+#2-\d*#2-(\buff,-0.7*\d)$) {#7};
							\mline{#2}{(u1)}{0.5};
							\mline{#2}{(u2)}{0.5};
							\filldraw[color=red] #2 circle [radius=.06cm];
			};
			\def\pltt#1#2#3#4#5#6#7#8{  
							 \mline{#2}{($#2+(\w,0)$)}{0.5};
							\coordinate (t1) at ($#1$);
							\coordinate (t2) at ($#2$);
							\coordinate (t3) at ($#3*(t2)+(t1)-#3*(t1)$);
							\node at  ($(t3)+(0,1.4*\buff)$) {$#4$};
							\coordinate (u1) at ($#2+(#5,0.5*\w+\y)$);
							\coordinate (u2) at ($#2+(#5,-0.5*\w-\y)$);
							\def\d{#6};
							\draw[-,color=red, dashed] ($\d*(u1)+#2-\d*#2$) to [bend left=#8] ($\d*(u2)+#2-\d*#2$);
							\node at ($\d*(u2)+#2-\d*#2-(\buff,-1.5*\d)$) {#7};
							\mline{#2}{(u1)}{0.5};
							\mline{#2}{(u2)}{0.5};
							\filldraw[color=red] #2 circle [radius=.06cm];
			};
				\coordinate (a) at (0,\y);
				\coordinate (b) at ($(a)+(1*\w,0)$);
				\coordinate (c) at ($(b)+(1.5*\w,0)$);
				\coordinate (d) at ($(c)+(1.2*\w,0)$);
				\coordinate (d1) at ($(d)-(\buff,0)$);
				\coordinate (d2) at ($(d)+(\buff,0)$);
				\coordinate (e) at ($(d)+(1*\w,0)$);
				\coordinate (e1) at ($(e)-(\buff,0)$);
				\coordinate (e2) at ($(e)+(\buff,0)$);
				\coordinate (f) at ($(e)+(1*\w,0)$);
				\coordinate (f1) at ($(f)-(\buff,0)$);
				\coordinate (f2) at ($(f)+(\buff,0)$);
				\plt{(a)}{(b)}{0.5}{}{0.5*\w}{0.6}{$\scriptstyle \hspace{0.8cm} m$}{0.9cm};
				\plt{(b)}{(c)}{0.5}{e}{-0.5*\w}{0.7}{$\scriptstyle \hspace{.3cm} n$}{-0.9cm};
				\mline{(c)}{(d1)}{0.5}; 
		    \draw[<-{Hooks[left,length=5,width=6]}] (d2) -- (e1);	


				\plt{(e)}{(f)}{0.4}{}{0.5*\w}{0.7}{$\scriptstyle  \\ \hspace{1.3cm}  m-1$}{1cm};
				\coordinate (f) at ($(e)+(3*\w,0)$);
				\coordinate (e) at ($(f)+(1*\w,0)$);
				\pltt{(e)}{(f)}{0.4}{}{-0.5*\w}{0.7}{$\scriptstyle \\ \hspace{-.8cm}  n-1$}{-1cm};
\end{tikzpicture}
\end{equation*}
\
\begin{equation*}
				\begin{array}[H]{c}
								\left(\; \begin{array}[H]{l}
												+ \;\; , \;\; - \\
												m \;\; , \;\; n 
				\end{array}\;\right): \\
				m,n\ge 0
				\end{array}
				\begin{tikzpicture}[baseline,scale=0.8]
				\tikzmath{
								let \w = 1.5cm;
								let \buff = 0.2;
								let \y = 0.35cm;
				};

			\def\mline#1#2#3{
			 \draw[decorate,decoration={markings,mark=at position #3 with {\arrow[color=black]{<}}}] #1--#2;
			 \draw  #1--#2;
			 };
			 \def\plt#1#2#3#4#5#6#7#8{  
							\coordinate (t1) at ($#1$);
							\coordinate (t2) at ($#2$);
							\coordinate (t3) at ($#3*(t2)+(t1)-#3*(t1)$);
							\mline{(t1)}{(t2)}{#3};
							\node at  ($(t3)+(0,1.4*\buff)$) {$#4$};
							\coordinate (u1) at ($#2+(#5,0.5*\w+\y)$);
							\coordinate (u2) at ($#2+(#5,-0.5*\w-\y)$);
							\def\d{#6};
							\draw[-,color=red, dashed] ($\d*(u1)+#2-\d*#2$) to [bend left=#8] ($\d*(u2)+#2-\d*#2$);
							\node at ($\d*(u2)+#2-\d*#2-(\buff,-0.7*\d)$) {#7};
							\mline{#2}{(u1)}{0.5};
							\mline{#2}{(u2)}{0.5};
							\filldraw[color=red] #2 circle [radius=.06cm];
			};
				\coordinate (a) at (0,\y);
				\coordinate (b) at ($(a)+(1.5*\w,0)$);
				\coordinate (c) at ($(b)+(1.2*\w,0)$);
				\coordinate (d) at ($(c)+(1*\w,0)$);
				\coordinate (d1) at ($(d)-(\buff,0)$);
				\coordinate (d2) at ($(d)+(\buff,0)$);
				\coordinate (e) at ($(d)+(1*\w,0)$);
				\coordinate (e1) at ($(e)-(\buff,0)$);
				\coordinate (e2) at ($(e)+(\buff,0)$);
				\coordinate (f) at ($(e)+(1*\w,0)$);
				\coordinate (f1) at ($(f)-(\buff,0)$);
				\coordinate (f2) at ($(f)+(\buff,0)$);
				\plt{(a)}{(b)}{0.2}{}{-0.5*\w}{1.1}{$\scriptstyle \hspace{-.5cm} m$}{-0.9cm};
				\plt{(b)}{(c)}{0.5}{e}{0.5*\w}{0.9}{$\scriptstyle \hspace{0cm} n$}{0.9cm};
				\mline{(c)}{(d1)}{0.5}; 
		    \draw[<-{Hooks[left,length=5,width=6]}] (d2) -- (e1);	

				\coordinate (y1) at ($(e)+(.3*\w,0.3*\w)$);
				\coordinate (y2) at ($(e)+(.3*\w,-0.3*\w)$);
				\coordinate (z1) at ($(e)+(1*\w,0.4*\w)$);
				\coordinate (z2) at ($(e)+(1*\w,-0.4*\w)$);
				\filldraw[color=red] (z1) circle [radius=.06cm]; 
				\filldraw[color=red] (z2) circle [radius=.06cm]; 
				\mline{(y1)}{(z1)}{0.5};
				\mline{(y2)}{(z2)}{0.5};
				\def\curvstart{0.5};
				\draw[-,color=red, dashed] ($\curvstart*(z1)+(y1)-\curvstart*(y1)$) to [bend
				left=-1cm] ($\curvstart*(z2)+(y2)-\curvstart*(y2)$);
				\node at ($0.5*($\curvstart*(z2)+(y2)-\curvstart*(y2)+ (3*\buff,0)$) +
				0.5*($\curvstart*(z1)+(y1)-\curvstart*(y1)$)+(-4*\buff,0)$) {$\scriptstyle m$};

				\coordinate (f) at ($(e)+(1*\w,0)$);
				\coordinate (e) at ($(f)+(1*\w,0)$);
				
				\coordinate (y1) at ($(e)+(.3*\w,0.3*\w)$);
				\coordinate (y2) at ($(e)+(.3*\w,-0.3*\w)$);
				\coordinate (z1) at ($(e)+(1*\w,0.4*\w)$);
				\coordinate (z2) at ($(e)+(1*\w,-0.4*\w)$);
				\filldraw[color=red] (y1) circle [radius=.06cm]; 
				\filldraw[color=red] (y2) circle [radius=.06cm]; 
				\mline{(y1)}{(z1)}{0.5};
				\mline{(y2)}{(z2)}{0.5};
				\def\curvstart{0.5};
				\draw[-,color=red, dashed] ($\curvstart*(z1)+(y1)-\curvstart*(y1)$) to [bend
				left=-1cm] ($\curvstart*(z2)+(y2)-\curvstart*(y2)$);
				\node at ($0.5*($\curvstart*(z2)+(y2)-\curvstart*(y2)+ (3*\buff,0)$) +
				0.5*($\curvstart*(z1)+(y1)-\curvstart*(y1)$)+(-4*\buff,0)$) {$\scriptstyle n$};
\end{tikzpicture}
\end{equation*}
Note that when $n=0$ (resp. $m=0$) we are pruning $G$ near $d_{+}G$ (resp. $d_{-}G$). Note
that $G^{e}$ have one less \myemph{edge} than $G$ (and can have more vertices!). \\
Note that if $f\in\text{Bit}(G,H)$ with $f^{1}$ injective, and $f$ is also a map of graphs
(so that $f^{0}(v)\subseteq C \left( \tilde{f}^{0}(v) \right)$, $\tilde{f}^{0}:
G^{0}\to H^{0}$), then $f$ is the composition of the $0$-faces
$\partial_e, e \in H^{1}\setminus G^{1}$. \\
Finally, we have the \myemph{inner-faces}, where we contract edges,
and they come again in $4$ types: 
\begin{figure}[H]
				\centering
				\begin{tikzpicture}[baseline]
								\tikzmath{
												let \w = 1.2cm;
												let \buff = 0.2cm;
												let \y = 0.2cm;
												let \ax = 2.5cm;
												let \ay = 0.2cm;
												let \bx = \ax+1.5cm;
												let \by = \ay;
								};
							\def\mline#1#2#3{
							 \draw[decorate,decoration={markings,mark=at position #3 with {\arrow[color=black]{<}}}] #1--#2;
							 \draw  #1--#2;
							 };
							\node at (0,0) {$ \begin{array}[H]{c}
																				\left(\begin{array}[H]{l}
																								- \quad, \quad  - \\	
																								m \quad , \quad  n  
																								\end{array}\right)\; : \\
																								m,n\ge 1
																\end{array}$};
							\mline{(1.5cm,0.2cm)}{(2.5cm,0.2cm)}{0.5};
							\coordinate (a) at (\ax,\ay);
							\filldraw[color=red] (a) circle [radius=.06cm];
							\coordinate (b) at (\bx,\by);
							\filldraw[color=red] (b) circle [radius=.06cm];
							\def\e{0.6};
							\mline{(a)}{(b)}{\e};
							\node at ($\e*(b)+(a)-\e*(a)+(0cm,\buff)$) {$e$};
							\coordinate (b1) at ($(b)+(-0.9cm,1cm)$);
							\coordinate (b2) at ($(b)+(-0.6cm,0.6cm)$);
							\coordinate (b3) at ($(b)+(-0.9cm,-1cm)$);
							\mline{(a)}{(b1)}{0.5};
							\mline{(a)}{(b2)}{0.5};
							\mline{(a)}{(b3)}{0.5};
							\node at ($(b3)+(0.4cm,0)$) {$m$};
							\draw [dashed, color=red]  plot [smooth] coordinates {($(b1)+(.1cm,0)$)($(a)+(1.3cm,0)$)($(b3)+(.1cm,0)$)};
							\coordinate (c) at ($(b)+(0cm,0)$);
							\coordinate (d) at ($(c)+(1cm,0)$);
							\coordinate (e) at ($(d)+(1.2cm,0)$);
							\coordinate (f) at ($(e)+(1cm,0)$);
							\coordinate (g) at ($(f)+(1.6cm,0)$);
							\coordinate (c2) at ($(d)+(-0.4cm,0.6cm)$);
							\coordinate (c3) at ($(d)+(-0.4cm,-0.6cm)$);
							\mline{(c)}{(c2)}{0.5};
							\mline{(c)}{(c3)}{0.5};
							\node at ($(c3)+(0.4,0)$) {$n$};
							\draw [dashed, color=red]  plot [smooth] coordinates {($(c2)+(.1cm,0)$)($(c)+(1.1cm,0)$)($(c3)+(.1cm,0)$)};
							\mline{(c)}{(d)}{0.5};
							\draw[<-{Hooks[left,length=5,width=6]}] ($(d)+(.4cm,0)$) -- (e);	
							\node at ($0.5*(d)+0.5*(.4cm,0)+0.5*(e)+(0,.4cm)$) {$\partial_{e}^{\, -}$};
							\mline{($(e)+(.4cm,0)$)}{($(f)+(.5cm,0)$)}{0.5};	
							\filldraw[color=red]  ($(f)+(.5cm,0)$) circle [radius=0.06cm];
							\mline{($(f)+(.5cm,0)$)}{($(g)+(.8cm,0cm)$)}{0.5}; 
							\coordinate (g1) at ($(g)+(-0.6cm,0.6cm)$);
							\coordinate (g2) at ($(g)+(-0.6cm,-0.6cm)$);
							\coordinate (g3) at ($(g)+(-0.3cm,0.4cm)$);
							\coordinate (g4) at ($(g)+(-0.3cm,-0.4cm)$);
							\coordinate (g5) at ($(g)+(-0.1cm,0.2cm)$);
							\mline{($(f)+(0.5cm,0)$)}{(g2)}{0.8};
							\mline{($(f)+(0.5cm,0)$)}{(g1)}{0.5};
							\mline{($(f)+(0.5cm,0)$)}{(g3)}{0.8};
							\mline{($(f)+(0.5cm,0)$)}{(g4)}{0.8};
							\mline{($(f)+(0.5cm,0)$)}{(g5)}{0.8};
							\draw [dashed, color=red]  plot [smooth] coordinates
							{($(g1)+(.1cm,0)$)($(g)+(0.1cm,0)$)($(g2)+(.1cm,0)$)};
							\node at ($(g2)+(0.4cm,-.2cm)$) {$m+n-1$};
				\end{tikzpicture}
				\caption{-- composition face}
				\label{fig:8}
\end{figure}

\begin{figure}[H]
				\centering
				\begin{tikzpicture}[baseline]
								\tikzmath{
												let \w = 1.2cm;
												let \buff = 0.2cm;
												let \y = 0.2cm;
												let \ax = 2.5cm;
												let \ay = 0.2cm;
												let \bx = \ax+1.5cm;
												let \by = \ay;
								};
							\def\mline#1#2#3{
							 \draw[decorate,decoration={markings,mark=at position #3 with {\arrow[color=black]{>}}}] #1--#2;
							 \draw  #1--#2;
							 };
							\node at (0,0) {$ \begin{array}[H]{c}
																				\left(\begin{array}[H]{l}
																								+ \quad, \quad  + \\	
																								m \quad , \quad  n  
																								\end{array}\right)\; : \\
																								m,n\ge 1
																\end{array}$};
							\mline{(2.5cm,0.2cm)}{(1.5cm,0.2cm)}{0.5};
							\coordinate (a) at (\ax,\ay);
							\filldraw[color=red] (a) circle [radius=.06cm];
							\coordinate (b) at (\bx,\by);
							\filldraw[color=red] (b) circle [radius=.06cm];
							\def\e{0.6};
							\mline{(a)}{(b)}{\e};
							\node at ($\e*(b)+(a)-\e*(a)+(-.1cm,\buff)$) {$e$};
							\coordinate (b1) at ($(b)+(-1.9cm,0.6cm)$);
							\coordinate (b3) at ($(b)+(-1.9cm,-0.6cm)$);
							\mline{(a)}{(b1)}{0.5};
							\mline{(a)}{(b3)}{0.5};
							\node at ($(b3)+(-0.4cm,0)$) {$m$};
							\draw [dashed, color=red]  plot [smooth] coordinates
							{($(b1)+(-.1cm,0)$)($(a)+(-0.8cm,0)$)($(b3)+(-.1cm,0)$)};
							\coordinate (c) at ($(b)+(0cm,0)$);
							\coordinate (d) at ($(c)+(1cm,0)$);
							\coordinate (e) at ($(d)+(1.2cm,0)$);
							\coordinate (f) at ($(e)+(1cm,0)$);
							\coordinate (g) at ($(f)+(1.6cm,0)$);
							\coordinate (c2) at ($(d)+(-1.4cm,0.9cm)$);
							\coordinate (c3) at ($(d)+(-1.4cm,-0.9cm)$);
							\coordinate (c4) at ($(d)+(-1.8cm,0.5cm)$);
							\mline{(c)}{(c2)}{0.5};
							\mline{(c)}{(c3)}{0.5};
							\mline{(c)}{(c4)}{0.5};
							\draw [dashed, color=red]  plot [smooth] coordinates {($(c2)+(-.1cm,0)$)($(c)+(-1.1cm,0)$)($(c3)+(-.1cm,0)$)};
							\node at ($(c3)+(-0.4cm,0)$) {$n$};
							\mline{(c)}{(d)}{0.5};
							\draw[<-{Hooks[left,length=5,width=6]}] ($(d)+(.4cm,0)$) -- (e);	
							\node at ($0.5*(d)+0.5*(.4cm,0)+0.5*(e)+(0,.4cm)$) {$\partial_{e}^{\, +}$};
							\mline{(f)}{($(e)+(.4cm,0)$)}{0.5};	
							\filldraw[color=red]  ($(f)+(.5cm,0)$) circle [radius=0.06cm];
							\mline{($(f)$)}{($(g)+(.7cm,0cm)$)}{0.5};
							\coordinate (g1) at ($(g)+(-1.6cm,0.6cm)$);
							\coordinate (g2) at ($(g)+(-1.6cm,-0.6cm)$);
							\coordinate (g3) at ($(g)+(-1.9cm,0.4cm)$);
							\coordinate (g4) at ($(g)+(-1.9cm,-0.4cm)$);
							\coordinate (g5) at ($(g)+(-2cm,0.2cm)$);
							\mline{($(f)+(0.5cm,0)$)}{(g2)}{0.6};
							\mline{($(f)+(0.5cm,0)$)}{(g1)}{0.5};
							\mline{($(f)+(0.5cm,0)$)}{(g3)}{0.7};
							\mline{($(f)+(0.5cm,0)$)}{(g4)}{0.5};
							\mline{($(f)+(0.5cm,0)$)}{(g5)}{0.5};
							\draw [dashed, color=red]  plot [smooth] coordinates
							{($(g1)+(-.1cm,0)$)($(g)+(-2.1cm,0)$)($(g2)+(-.1cm,0)$)};
							\node at ($(g2)+(0.4cm,-.2cm)$) {$m+n-1$};
				\end{tikzpicture}
				\caption{+ composition face}
				\label{fig:9}
\end{figure} 
\begin{figure}[H]
				\centering
				\begin{tikzpicture}[baseline,scale=1]
								\tikzmath{
												let \w = 1.2cm;
												let \buff = 0.2cm;
												let \y = 0.2cm;
												let \ax = 2.5cm;
												let \ay = 0.2cm;
												let \bx = \ax+2cm;
												let \by = \ay;
								};
							\def\mline#1#2#3{
							 \draw[decorate,decoration={markings,mark=at position #3 with {\arrow[color=black]{<}}}] #1--#2;
							 \draw  #1--#2;
							 };
							\node at (0,0) {$ \begin{array}[H]{c}
																				\left(\begin{array}[H]{l}
																								- \quad, \quad  + \\	
																								m \quad , \quad  n  
																								\end{array}\right)\; : \\
																								m\ge n\ge 1
																\end{array}$};
							\mline{(1.5cm,0.2cm)}{(2.5cm,0.2cm)}{0.5};
							\coordinate (a) at (\ax,\ay);
							\filldraw[color=red] (a) circle [radius=.06cm];
							\coordinate (b) at (\bx,\by);
							\filldraw[color=red] (b) circle [radius=.06cm];
							\def\e{0.6};
							\draw[->] (b)--($(a)+(.05cm,0)$);
							\node at ($\e*(b)+(a)-\e*(a)+(-.4cm,0.15)$) {$e_{i}$};
							\node at ($\e*(b)+(a)-\e*(a)+(-.4cm,-0.05)$) {$.$};
							\node at ($\e*(b)+(a)-\e*(a)+(-.4cm,-0.1)$) {$.$};
							\node at ($\e*(b)+(a)-\e*(a)+(-.4cm,-0.15)$) {$.$};
							\node at ($\e*(b)+(a)-\e*(a)+(-.4cm,0.4)$) {$e_{1}$};
							\node at ($\e*(b)+(a)-\e*(a)+(-.4cm,-0.35)$) {$e_{n}$};
							\coordinate (b1) at ($(b)+(-1.8cm,1cm)$);
							\coordinate (b2) at ($(b)+(-1.4cm,0.8cm)$);
							\coordinate (b3) at ($(b)+(-1.8cm,-1cm)$);

							\draw [-> , color=brown]  plot [smooth] coordinates {($(b)$) ($0.5*(a)+0.5*(b)+(0,-0.2cm)$) ($0.7*(a)+0.3*(b)+(0,-0.2cm)$)($(a)+(0.2,-0.1)$) };
							\draw [-> , color=brown]  plot [smooth] coordinates {($(b)$)
							($0.5*(a)+0.5*(b)+(0,0.25cm)$) ($0.7*(a)+0.3*(b)+(0,0.25cm)$) ($(a)+(0.2,0.1)$)};
							\mline{(a)}{(b1)}{0.5};
							\mline{(a)}{(b2)}{0.5};
							\mline{(a)}{(b3)}{0.5};
							\node at ($(b3)+(0.4cm,0)$) {$m$};
							\draw [dashed, color=red]  plot [smooth] coordinates {($(b1)+(.1cm,0)$)($(a)+(1.3cm,0)$)($(b3)+(.1cm,0)$)};
							\coordinate (c) at ($(b)+(0cm,0)$);
							\coordinate (d) at ($(c)+(1cm,0)$);
							\coordinate (e) at ($(d)+(1.2cm,0)$);
							\coordinate (f) at ($(e)+(1cm,0)$);
							\coordinate (g) at ($(f)+(1.6cm,0)$);
							\coordinate (c2) at ($(d)+(-0.4cm,0.6cm)$);
							\coordinate (c3) at ($(d)+(-0.4cm,-0.6cm)$);
							\mline{(c)}{(d)}{0.5};
							\draw[<-{Hooks[left,length=5,width=6]}] ($(d)+(.4cm,0)$) -- (e);	
							\node at ($0.5*(d)+0.5*(.4cm,0)+0.5*(e)+(0,.4cm)$) {$\partial_{e}^{\,
							\leftarrow}$};
							\mline{($(e)+(.4cm,0)$)}{($(f)+(.5cm,0)$)}{0.5};	
							\filldraw[color=red]  ($(f)+(.5cm,0)$) circle [radius=0.06cm];
							\mline{($(f)+(.5cm,0)$)}{($(g)+(.7cm,0cm)$)}{0.5};
							\coordinate (g1) at ($(g)+(-0.6cm,0.6cm)$);
							\coordinate (g2) at ($(g)+(-0.6cm,-0.6cm)$);
							\coordinate (g3) at ($(g)+(-0.3cm,0.4cm)$);
							\coordinate (g4) at ($(g)+(-0.3cm,-0.4cm)$);
							\coordinate (g5) at ($(g)+(-0.1cm,0.2cm)$);
							\mline{($(f)+(0.5cm,0)$)}{(g2)}{0.8};
							\mline{($(f)+(0.5cm,0)$)}{(g1)}{0.5};
							\mline{($(f)+(0.5cm,0)$)}{(g3)}{0.8};
							\draw [dashed, color=red]  plot [smooth] coordinates
							{($(g1)+(.1cm,0)$)($(g)+(0.1cm,0)$)($(g2)+(.1cm,0)$)};
							\node at ($(g2)+(0.4cm,-.2cm)$) {$m-n+1$};
				\end{tikzpicture}
				\caption{- action face}
				\label{fig:10}
\end{figure} 
\begin{figure}[H]
				\centering
				\begin{tikzpicture}[baseline,scale=1]
								\tikzmath{
												let \w = 1.2cm;
												let \buff = 0.2cm;
												let \y = 0.2cm;
												let \ax = 4cm;
												let \ay = 0.2cm;
												let \bx = \ax+2cm;
												let \by = \ay;
								};
							\def\mline#1#2#3{
							 \draw[decorate,decoration={markings,mark=at position #3 with {\arrow[color=black]{<}}}] #1--#2;
							 \draw  #1--#2;
							 };
							\node at (0,0) {$ \begin{array}[H]{c}
																				\left(\begin{array}[H]{l}
																								- \quad, \quad  + \\	
																								m \quad , \quad  n  
																								\end{array}\right)\; : \\
																								1\le m \le  n
																\end{array}$};
							\mline{(1.5cm,0.2cm)}{(2.5cm,0.2cm)}{0.5};
							\coordinate (a) at (\ax,\ay);
							\filldraw[color=red] (a) circle [radius=.06cm];
							\filldraw[color=red] (2.5cm,0.2cm) circle [radius=.06cm];
							\coordinate (q) at (2.6cm,0.2cm);
							\mline{(q)}{(a)}{0.05};
							\coordinate (b) at (\bx,\by);
							\def\e{0.6};
							\mline{(a)}{(b)}{0.5};
							\node at ($\e*(b)+(a)-\e*(a)+(-2cm,0.15)$) {$e_{i}$};
							\node at ($\e*(b)+(a)-\e*(a)+(-2cm,-0.05)$) {$.$};
							\node at ($\e*(b)+(a)-\e*(a)+(-2cm,-0.1)$) {$.$};
							\node at ($\e*(b)+(a)-\e*(a)+(-2cm,-0.15)$) {$.$};
							\node at ($\e*(b)+(a)-\e*(a)+(-2cm,0.4)$) {$e_{1}$};
							\node at ($\e*(b)+(a)-\e*(a)+(-2cm,-0.35)$) {$e_{m}$};
							\coordinate (b1) at ($(b)+(-2.8cm,1.5cm)$);
							\coordinate (b2) at ($(b)+(-3cm,0.9cm)$);
							\coordinate (b3) at ($(b)+(-2.8cm,-1.5cm)$);

							\draw [-> , color=brown]  plot [smooth] coordinates {($(a)$)
							($0.5*(q)+0.5*(a)+(0,-0.2cm)$) ($0.7*(q)+0.3*(a)+(0,-0.2cm)$)($(q)+(-0.1,-0.1)$) };
							\draw [-> , color=brown]  plot [smooth] coordinates {($(a)$)
							($0.5*(q)+0.5*(a)+(0,0.25cm)$) ($0.7*(q)+0.3*(a)+(0,0.25cm)$) ($(q)+(-0.1,0.1)$)};
							\mline{(b1)}{(a)}{0.5};
							\mline{(b2)}{(a)}{0.5};
							\mline{(b3)}{(a)}{0.5};
							\node at ($(b3)+(-0.4cm,0)$) {$n$};
							\draw [dashed, color=red]  plot [smooth] coordinates
							{($(b1)+(-.1cm,0)$)($(a)+(-1.2cm,0)$)($(b3)+(-.1cm,0)$)};
							\coordinate (c) at ($(b)+(0cm,0)$);
							\coordinate (d) at ($(c)+(1cm,0)$);
							\coordinate (e) at ($(d)+(1.2cm,0)$);
							\coordinate (f) at ($(e)+(1cm,0)$);
							\coordinate (g) at ($(f)+(1.6cm,0)$);
							\coordinate (c2) at ($(d)+(-0.4cm,0.6cm)$);
							\coordinate (c3) at ($(d)+(-0.4cm,-0.6cm)$);
							\draw[<-{Hooks[left,length=5,width=6]}] ($(c)+(.4cm,0)$) -- (d);	
							\node at ($0.5*(c)+0.5*(.4cm,0)+0.5*(d)+(0,.4cm)$) {$\partial_{e}^{\,
							\rightarrow}$};
							\mline{($(d)+(.4cm,0)$)}{(f)}{0.5};	
							\filldraw[color=red]  ($(e)+(.5cm,0)$) circle [radius=0.06cm];
							\mline{($(e)+(.5cm,0)$)}{(f)}{0.5};
							\coordinate (f1) at ($(f)+(-1.2cm,0.9cm)$);
							\coordinate (f2) at ($(f)+(-1.2cm,-0.9cm)$);
							\coordinate (f3) at ($(f)+(-1.3cm,0.5cm)$);
							\mline{(f2)}{($(e)+(0.5cm,0)$)}{0.5};
							\mline{(f1)}{($(e)+(0.5cm,0)$)}{0.5};
							\mline{(f3)}{($(e)+(0.5cm,0)$)}{0.5};
							\draw [dashed, color=red]  plot [smooth] coordinates
							{($(f1)+(-.1cm,0)$)($(f)+(-1.5cm,0)$)($(f2)+(-.1cm,0)$)};
							\node at ($(g2)+(-1.7cm,-.2cm)$) {$n-m+1$};
				\end{tikzpicture}
				\caption{+ action face}
				\label{fig:11}
\end{figure} 
\noindent For the composition face $\partial_e^{\pm}: G(e)\, \hookedrightarrow{1} G$, $G(e)$ has one less edge
than $G$; for the action faces
$\partial_{e_i}^{\, \overset{\leftarrow}{\rightarrow} }:G(e_{i})\, \hookedrightarrow{1}G$,
$G(e_i)$  has $\min\left\{ m,n \right\}$ less edges than $G$ ! (all these faces have one less verex). Note: $m,n\ge
1$, these are indeed ``inner''. \vspace{.2cm}\\ 
From the inductive construction of the $1$-sub-bits
$\mathscr{S}_{G}^{\pm}$, it follows that every $B\in \mathscr{S}_{G}^{\pm} $ can be replaced by a corolla by
finite compositions of inner faces. We obtain, 
\begin{proposition}
				Every map $f\in \text{Bit}(G,H)$ can be factored into 
				\begin{equation*}
								f: G\xtworightarrow{\pi}{}G/\ker(f)
								\xrightarrow{\overset{i}{\sim}}{}
								f(G)\overset{\partial^{\text{in}}_{f}}{\,\hookedrightarrow{1}}
								\overline{f(G)}
								\overset{\partial^{\text{out}}_{f}}{\,\hookedrightarrow{1}} H
				\end{equation*}
				$\pi$ a composition of degeneracies, $i$ an isomorphism,
				$\partial_{f}^{\text{in}}$ a composition of inner-faces,
				$\partial_{f}^{\text{out}}$ a composition of $0$-faces. 		
				\label{prop:2}
\end{proposition}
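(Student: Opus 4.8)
The plan is to read off the three pieces of the factorisation one at a time, using the analysis of $\ker(f)$ behind \eqref{eq:10.9}, the $0$-faces $\partial_{e}$ of \eqref{eq:11.4} in their four pictorial flavours, and the statement recorded just before this proposition that every $1$-sub-bit can be contracted to a corolla by a finite composite of inner faces. First I would split off the degeneracies: by \eqref{eq:10.9} one already has $f=\tilde f\circ\pi$ with $\pi\colon G\to G/\ker(f)$ flat and surjective on edges and $\tilde f$ injective on edges, and --- as noted after \eqref{eq:11.3} --- such a $\pi$ is a finite composite of the elementary degeneracies $s_{e}$, $s_{)e(}$, $s_{\widecheck{e}}$: running over the edges of the target, each fibre-path of $\pi$ is rebuilt by reinserting interior $C_{11}$-vertices, stumped halves, or stumped duplicates. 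So it remains to factor the edge-injective flat map $\tilde f$, which I relabel $f\colon G^{\prime}\to H$ with $G^{\prime}=G/\ker(f)$.

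Next I would produce the outer part. Let $\overline{f(G)}\subseteq H$ be the sub-bit spanned by the edges $f^{1}((G^{\prime})^{1})$ together with the $1$-sub-bits $f^{\circ}(v)$, $v\in(G^{\prime})^{\circ}$, and the vertices of $H$ they touch, adjoining stump-vertices so as to make it a bit; here one must check that flatness of $f$ makes this a genuine sub-bit of $H$ through which $f$ still factors in $\text{Bit}$. The inclusion $\overline{f(G)}\hookrightarrow H$ is a map of graphs, injective on edges, so by the observation in \S11 preceding the inner-face figures it is a composite of the $0$-faces $\partial_{e}$, $e\in H^{1}\setminus\overline{f(G)}^{1}$ (pruning near $d_{\pm}H$ when the removed edge is a stump of $\overline{f(G)}$); this is $\partial^{\text{out}}_{f}$. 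Now $f$ factors as $G^{\prime}\xrightarrow{\,g\,}\overline{f(G)}\hookrightarrow H$ with the second arrow $\partial^{\text{out}}_{f}$ and with $g$ flat and --- by the edge-injectivity of $f$ and the minimality of $\overline{f(G)}$ --- bijective on edges.

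Finally I would handle the inner faces and the isomorphism. As a map of bios, $g$ carries each corolla $C^{\pm}(v)\in\mathscr{S}^{\pm}_{G^{\prime}}$ to a $1$-sub-bit $g^{\circ}(v)\in\mathscr{S}^{\pm}_{\overline{f(G)}}$, compatibly with the grafting that expresses $G^{\prime}$ out of its own corollas; flatness together with edge-bijectivity should force the $g^{\circ}(v)$ to be pairwise edge-disjoint away from their boundary stumps and to graft, along the images of the internal edges of $G^{\prime}$, onto all of $\overline{f(G)}$. Contracting each $g^{\circ}(v)$ to a corolla $C^{\pm}(\tilde v)$ by the inner faces supplied by the quoted statement --- these touch only the interior of $g^{\circ}(v)$, hence may be performed in any order --- turns $\overline{f(G)}$ into a bit $f(G)$, namely the grafting of the $C^{\pm}(\tilde v)$, equipped with $\partial^{\text{in}}_{f}\colon f(G)\hookrightarrow\overline{f(G)}$ a composite of inner faces and with an isomorphism of bits $i\colon G^{\prime}\to f(G)$, $v\mapsto\tilde v$; the order-independence, hence the well-definedness of $f(G),i,\partial^{\text{in}}_{f}$, is argued exactly as for $\mu$ in \S5. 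Composing $\pi$, $i$, $\partial^{\text{in}}_{f}$, $\partial^{\text{out}}_{f}$ then yields the claimed factorisation.

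I expect the genuine difficulty to be concentrated in this last step (together with the minimality claim for $\overline{f(G)}$ that feeds it): one must show that a flat, edge-bijective map of bios decomposes, up to an isomorphism of bits, as a composite of inner faces. This rests entirely on the inductive generation of $\mathscr{S}^{\pm}_{G}$ by the composition- and action-graftings --- equivalently, on the four inner-face types of Figures \ref{fig:8}--\ref{fig:11} --- and on verifying that it is flatness, and not merely edge-bijectivity, that makes the sub-bits $g^{\circ}(v)$ reassemble into $\overline{f(G)}$ with precisely the grafting pattern presenting $G^{\prime}$ from its corollas. The first two steps, by contrast, are essentially bookkeeping layered on statements already in the text.
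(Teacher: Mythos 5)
Your overall strategy is sound and follows the paper's implicit proof — the paper gives no formal proof, but Proposition~\ref{prop:2} is meant to be read off from the three preparatory observations in \S10--11: the factorisation in \eqref{eq:10.9} through $G/\ker(f)$, the remark after \eqref{eq:11.4} that an edge-injective graph map is a composite of $0$-faces, and the statement preceding the proposition that every $1$-sub-bit contracts to a corolla by inner faces. Your three steps correspond exactly to these, and the minimality and order-independence considerations you flag are the right places to sweat. One word of praise: recognising that $\overline{f(G)}$ must be \emph{defined} as the sub-bit spanned by the $1$-sub-bits $f^{\circ}(v)$, not merely by the edge-set $f^{1}((G')^{1})$, is exactly right, since the latter generally fails to be closed under $d_\pm$ and so is not a subgraph.

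However, there is a genuine error in the claim that $g\colon G'\to\overline{f(G)}$ is \emph{bijective} on edges, and it propagates into your diagnosis of where the difficulty lies. The map $g^{1}$ is injective (inherited from $\tilde f^{1}$), but it is typically not surjective: precisely because you defined $\overline{f(G)}$ to include all edges of the sub-bits $g^{\circ}(v)$, the \emph{interior} edges of any $g^{\circ}(v)$ that is not a corolla lie in $\overline{f(G)}^{1}\setminus g^{1}((G')^{1})$. The ``minimality of $\overline{f(G)}$'' only guarantees $\overline{f(G)}^{1}=\bigcup_{v}(g^{\circ}(v))^{1}$, which is strictly larger than $g^{1}((G')^{1})$ whenever inner faces are actually needed. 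Indeed this must be so for the proposition to have content: the composite $G/\ker(f)\xrightarrow{\sim}f(G)\hookrightarrow\overline{f(G)}$ is an iso followed by inner faces, inner faces strictly decrease the number of edges, so $|(G')^{1}|=|f(G)^{1}|<|\overline{f(G)}^{1}|$ precisely when $\partial^{\text{in}}_{f}$ is non-trivial. A concrete instance: take $G'=C_{2}^{-}$ with edges $e_{0},e_{1},e_{2}$, and $H$ with two non-stump vertices $v_{1}\in C_{1,2}$, $v_{2}\in C_{1,1}$ and edges $a,b,c,d$ where $d_{-}v_{1}=\{a\}$, $d_{+}v_{1}=\{b,c\}$, $d_{-}v_{2}=\{c\}$, $d_{+}v_{2}=\{d\}$; the map $f^{1}(e_{0})=a$, $f^{1}(e_{1})=b$, $f^{1}(e_{2})=d$, $f^{\circ}(v)=H\in\mathscr{S}_{H}^{-}(a;b,d)$ is a flat bit map with $\overline{f(G)}=H$, yet $c$ is not in the image of $g^{1}$. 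Consequently the lemma you isolate at the end --- ``a flat, edge-bijective map of bios decomposes as an isomorphism followed by inner faces'' --- is not what needs proving; an edge-bijective flat map (if also vertex-compatible) is essentially already an isomorphism. The correct target is: a flat, \emph{edge-injective} map $g\colon G'\to K$ such that the $g^{\circ}(v)$'s jointly span $K$ factors, up to an isomorphism of bits, through a composite of inner faces. Your contraction construction in the third step is the right mechanism for this and survives the correction verbatim --- it is only the adjective ``bijective'' and the resulting mislocation of the difficulty that need repair.
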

We write $\Phi(G)$ for the
				set of all $1$-faces $s_{e}: G^{e}\, \hookedrightarrow{1}G$, and we write
				$\Phi^{\text{in}}(G)\subseteq \Phi(G)$ for the ``inner $1$-faces'' for
				compositions and actions (which can reduce the number of edges by more than one
				$!$). 
				\begin{remark}
								We will only need the full subcategory 
								$ \text{Bit}_{\text{con}}\subseteq\text{Bit} $ consisting of
								$G\in\text{Bit}$ with $G$ a \underline{connected} graph (Indeed,
								$1$-bits are always connected!). The maps in $\text{Bit}_{\text{con}}$
								have similar description via faces and degeneracies, but one has to be
								careful when taking a $0$-face not to disconnect the graph. 
								\label{remark11.6}
				\end{remark}
\section{The categroy of Bital sets $b$Set}
We let  $ \text{bSet}:=\text{Set}^{\text{Bit}^{\op}} $ denote the category of
\underline{Bital-sets}. Its objects are $X=\left\{ X_{G} \right\}_{G\in\text{Bit}}$,\; a
collection of sets one for each $G\in \text{Bit}$, and function 
$\varphi^{\ast}:X_{H}\to X_{G}$ for each $\varphi\in\text{Bit}(G,H)$, $\left(
				\psi\circ\varphi
\right)^{\ast}=\varphi^{\ast}\circ\psi^{\ast}$,
$\text{id}_{G}^{\ast}=\text{id}_{X_{G}}$; \vspace{.1cm}\\
maps \; are \; natural \; transformations \; $f:X\to Y$, \;
$f_{G}\in \text{Set}(X_{G}, Y_{G})$, \;  $f_{G}\circ\varphi^{\ast}=\varphi^{\ast}\circ
f_{H}$ for $\varphi\in \text{Bit}(G,H)$. \vspace{.1cm}\\ 
We call an element $x\in X_{G} \;
\text{a "\underline{bitex} of type $G$" (plural: "bitices")} $ \\
The category $b$Set is complete and cocomplete, and for
$F=\left\{ F_{d} \right\}\in \text{bSet}^{D}$
the (co-)limit is obtained pointwise
$\left(\cocolim\limits_{d\in D} F_{d} \right)_{G}=\cocolim\limits_{d\in D} \left( F_{d}
\right)_{G}$. \vspace{.1cm}\\
Indeed for $G\in \text{Bit}$, the functor $\delta_{G}^{\ast}: \text{bSet}\to \text{Set}$,
$\delta_{G}^{\ast}(X)=X_{G}$, has both a left and a right adjoints. \\
Every $X\in \text{bSet}$ is a canonical colimit of representable functors 
\begin{equation}
				X = \colim\limits_{
				\begin{array}[b]{c}
							\scriptstyle	\xrightarrow{\qquad} \vspace{-2mm}\\
							\scriptstyle (\mathscr{Y}_{G}\to X)
				\end{array}
				\in \text{Bit}/X}
				\mathscr{Y}_{G}
				\label{eq:12.1}
\end{equation}
\begin{equation*}
				\mathscr{Y}:\text{Bit}\, \hookedrightarrow{1} \text{bSet} \; \text{the Yoneda functor,
				$\left( \mathscr{Y}_{H} \right)_{G}$:=\text{Bit}(G,H),} 
\end{equation*}
\begin{equation*}
				\begin{array}[H]{l}
							\text{Bit}/X = 
							\begin{array}[t]{l}
												\text{the "Kan-Grothendieck comma category",}\\ 
												 \begin{array}[H]{l}
																\text{objects:}\;\;\; (G,x),\;\; G\in \text{Bit},\;\; x\in X_{G}
																 \Longleftrightarrow x: \mathscr{Y}_{G}\to X
																 \\\\
																 \text{arrows:}\;\;\; \varphi: (G,x)\to (H,x^{1})\; \text{are
																 $\varphi\in\text{Bit}(G,H)$ with $\varphi^{\ast}(x^{1})=x.$} 
												 \end{array} \\\\
												 \text{there is an obvious functor $\text{Bit}/X\to
												 \text{Bit}\,\overset{y}{\hookedrightarrow{1}}\, \text{bSet}$}.
								\end{array}
				\end{array}
\end{equation*}
By Kan's abstract nonsense we get adjunction
\begin{equation}
				\begin{array}[H]{rlcll}
								\text{\underline{$\tau(X)$ the ``realization`` of $X$:}} & &  & &
								\underline{\text{$N(B)$ the "nerve" of $B$:}} \vspace{.3cm}\\
								X \quad & \in & \text{bSet} & \ni & N(B): N(B)_{G} = \text{Bio}\left(
												\mathscr{S}_{G},B
								\right) \\
								\tikz \draw[|->] (0,0)-- (0,-1cm);\quad \; & & 
								\begin{tikzpicture}[baseline]
												\draw[<-,color=red] (-0.5,0) to [bend left=2cm] ($(-0.5,1)$);
												\draw[->,color=red] (0.1,0) to [bend right=2cm] ($(0.1,1)$);
												\node at ($(0.6,0.5cm)$) {$N$};
												\node at ($(-1.1,0.5cm)$) {$\tau$};
\end{tikzpicture} \;\;
								& & \; \tikz \draw[|->] (0,-1cm)--(0,0); \\
								\tau(X)=\colim\limits_{(\mathscr{Y}_G\to X)\in\text{Bit}/X} \mathscr{S}_G \; :\tau(X) \;\; & \in &
								\text{Bio} & \ni & B
				\end{array}
				\label{eq:12.2}
\end{equation}
\begin{remark}
				The full embeddings 
				\begin{equation}
								\bbDelta \,
								\begin{array}[b]{c} i \vspace{-1.5mm} \\ \hookrightarrow \end{array}
								\Omega
								\,\begin{array}[b]{c} j \vspace{-1.5mm} \\ \hookrightarrow \end{array} \text{Bit}
								\label{eq:12.4}
				\end{equation}
				($\Omega$ the Dendroidal category of \myemph{closed} trees) \\
				give rise to Kan adjoints, compatible with the nerves and realizations functors, 
				\begin{figure}[H]
								\centering
								\begin{tikzpicture}
												\tikzmath{
																let \buff=0.3cm;
												}

												\node at (1.10cm,5.1cm) {$s\text{Set}\equiv \text{Set}^{\bbDelta^\op}$};
												\draw[<<-] (2.30cm,5.00cm)--(4.00cm,5.00cm);
								\draw [{Hooks[right,length=5,width=6]}->, color=red]  plot [smooth] coordinates {(2.3,5.3)  (3.15,5.6) (4cm,5.3cm)};
								\draw [->, color=red]  plot [smooth] coordinates {(2.3,4.7)  (3.15,4.4) (4cm,4.7cm)};
								\node at (3.15,4.2) {$i_{\ast}$};
								\node at (3.15,5.2) {$i^{\ast}$};
								\node at (3.15,5.8) {$i_!$};
								\node at (5.2cm,5.1cm) {$d\text{Set}\equiv \text{Set}^{{\Omega}^{\op}}$};
								\draw[<<-] (6.3cm,5cm)--(8cm,5cm);
								\node at  (9.4cm,5.1cm) {$\text{bSet}\equiv \text{Set}^{\text{Bit}^{\op}}$};
								\draw [{Hooks[right,length=5,width=6]}->, color=red]  plot [smooth] coordinates {(6.3,5.3)  (7.15,5.6) (8cm,5.3cm)};
								\draw [->, color=red]  plot [smooth] coordinates {(6.3cm,4.7cm) (7.15cm,4.4cm) (8cm,4.7cm)};
								\node at (7.15,4.2) {$j_{\ast}$};
								\node at (7.15,5.2) {$j^{\ast}$};
								\node at (7.15,5.8) {$j_!$};
								\def\doar#1#2{
												\draw[->>] #1--#2;
												\draw[<-{Hooks[left,length=5,width=6]}] ($#1+(3mm,0mm)$)--($#2+(3mm,0mm)$);
												\node at ($0.5*#1+0.5*#2+(6mm,0mm)$) {$N$};
												\node at ($0.5*#1+0.5*#2+(-3mm,0mm)$) {$\tau$};
								};
								\doar{(8mm,48mm)}{(8mm,20mm)};
								\doar{(49mm,48mm)}{(49mm,20mm)};
								\doar{(90mm,48mm)}{(90mm,20mm)};
								\node at (10mm,17mm) {Cat};
								\node at (51mm,17mm) {Oper};
								\node at (92mm,17mm) {Bio};
								\draw[{Hooks[right,length=5,width=6]}->] (14mm,18mm)--(46mm,18mm);
								\draw[<<-] (14mm,16mm)--(46mm,16mm);
								\draw[{Hooks[right,length=5,width=6]}->] (57mm,18mm)--(88mm,18mm);
								\draw[<<-] (57mm,16mm)--(88mm,16mm);
								\node at (30mm,21mm) {$i_!$};
								\node at (30mm,14mm) {$i^{\ast}$};
								\node at (30mm,8mm) {$i_{\ast}$};
								\draw [->, color=red]  plot [smooth] coordinates {(14mm,15mm)(30mm,11mm)(46mm,15mm)};
								;
								\node at (71.5mm,21mm) {$j_!$};
								\node at (71.5mm,14mm) {$j^{\ast}$};
								\end{tikzpicture}
								\caption{Realization functors}
								\label{fig:12}
				\end{figure} \  \\
				The category $b$Set has an involution $X\mapsto X^{\op}$ with
				$(X^{\op})_{G}:=X_{G^{\op}}$, compatible with the involution
				$\biop\mapsto\biop^{\op}$ on $\text{Bio}: N(\biop^{\op})=N(\biop)^\op$,
				$\tau(X^{\op})=\tau(X)^{\op}$.
				\label{remark12.4}
\end{remark}
\section{The symmetric ``monoidal'' structure on $b$Set}
The closed symmetric monoidal structure on Bio induces a closed ``almost-monoidal''
symmetric structure on $b$Set via Day convolution 
\begin{equation}
				X\otimes Y:=\hspace{-11mm}
				\colim\limits_{
								\hspace{10mm} \begin{array}[t]{l}
												\scriptstyle \xrightarrow{\qquad} \vspace{-.1cm}\\ 
												\scriptstyle {(\mathscr{Y}_{G}\to X)} \in \text{Bit}/X \vspace{-.1cm}\\ 
												\scriptstyle (\mathscr{Y}_{H}\to Y) \in \text{Bit}/Y 
								\end{array}
				} \hspace{-9mm}
								N\left( \mathscr{S}_{G}\otimes \mathscr{S}_{H} \right)
								\label{eq:13.1}
\end{equation}
with inner hom
\begin{equation}
				\left( Y^{X} \right)_{G} := \text{bSet}\left( X\otimes \mathscr{Y}_{G},Y \right)
				\label{eq:13.2}
\end{equation}
and adjunction
\begin{equation}
				\text{bSet}(X\otimes Y, Z) \equiv \text{bSet}(X,Z^{Y})
				\label{eq:13.3}
\end{equation}
so that $\_\otimes\_$ commutes with co-limits. This symmetric structure \break $X\otimes Y
\cong Y\otimes X$, is Not associative: we have inclusions that need not be
equalities: 
\begin{equation}
(X\otimes Y)\otimes Z\subseteq X\otimes Y \otimes Z \supseteq X\otimes (Y\otimes Z). 
\label{eq:13.4}
\end{equation}
\begin{remark}
				The tensor product of representable can be described as in \cite{cisinski2014note} via
				``perculations''. We have 
				\begin{equation}
								\mathscr{Y}_{G}\otimes \mathscr{Y}_{H} \equiv N(\mathscr{S}_{G}\otimes \mathscr{S}_{H}) =
								\bigcup\limits_{K} \mathscr{Y}_{K}
								\label{eq:13.6}
				\end{equation}
				the union taken over all ``shuffles`` $K$ of $G$ and $H$, with $K^{1}\subseteq G^{1}\times
				H^{1}$, \break $K^{0}\subseteq G^{1}\times H^{0}\coprod G^{0}\times H^{1}$. For
				$G\in \text{Bit}_{m_{G},n_{G}}$, $H\in \text{Bit}_{m_{H}, n_{H}}$, the shuffles
				come with a partial order, with the biggest shuffle $K_{\max}$ obtained by putting
				$m_{H}$ copies of $G$ on the left of $n_{G}$ copies of $H$, and grafting the
				$m_{H}\times n_{G}$ edges together: $m_{H}\cdot d_{+}G\sim n_{G}\cdot d_{-}{H}$ 
				\begin{equation}
								K_{\max}= G\times d_{-}H \coprod\limits_{d_{+}G\times d_{-}H}
								d_{+}G\times H
								\label{eq:13.7}
				\end{equation}
				Similarly, the smallest shuffle is 
				\begin{equation}
								K_{\min}=d_{-}G\times H\coprod\limits_{d_{-}G\times d_{+}H} G\times
								d_{+}H
								\label{eq:13.8}
				\end{equation}
				We go from $K_{\max}$ down to $K_{\min}$ in this partial order via the
				''perculation moves``, moving $G$ (co)-operations $G^{0}\times H^{1}$ to the right
				of $H$ (co)-operations $G^{1}\times H^{0}$, replacing a copy of $A\times
d_{-} B \hspace{-4mm} \coprod\limits_{d_{+}A\times d_{-}B} \hspace{-4mm} d_{+}A\times B $ by a copy of 
$d_{-}A\times B\hspace{-4mm}\coprod\limits_{d_{-}A\times d_{+}B} \hspace{-4mm} A\times d_{+ }B$  for sub-bits $A\in
				\text{Bit}_{m^{\prime},n}(G)$, $B\in \text{Bit}_{m,n^{\prime}}(H)$. \\ 
				Graphically, with $A_{j}\cong A$, $B_{i}\cong B$, 
				\begin{equation*}
								m\cdot m^{\prime} \left\{ 
												\begin{array}[H]{lll}
																\begin{tikzpicture}[baseline]
																				\coordinate (a1) at (5mm,35mm);
																				\coordinate (aj) at ($(a1)-(0,15mm)$);
																				\coordinate (am) at ($(a1)-(0,30mm)$);
																				\node at (a1) {$A_{1}$};
																				\node at (aj) {$A_{j}$};
																				\node at (am) {$A_{m}$};
																				\filldraw ($(a1)-(0mm,6.5mm)$) circle  [radius = 0.2mm];
																				\filldraw ($(a1)-(0mm,7.5mm)$) circle  [radius = 0.2mm];
																				\filldraw ($(a1)-(0mm,8.5mm)$) circle  [radius = 0.2mm];
																				\draw[color=black,thick=3mm] (0,30mm)--(0,40mm)--(10mm,40mm)--(10mm,30mm)--cycle ;
																				\filldraw[color=green,opacity=0.2] (0,30mm)--(0,40mm)--(10mm,40mm)--(10mm,30mm)--cycle ;
																				\draw[color=black,thick=3mm] (0,15mm)--(0,25mm)--(10mm,25mm)--(10mm,15mm)--cycle ;
																				\filldraw[color=green,opacity=0.2] (0,15mm)--(0,25mm)--(10mm,25mm)--(10mm,15mm)--cycle ;
																				\draw[color=black,thick=3mm] (0,0mm)--(0,10mm)--(10mm,10mm)--(10mm,0mm)--cycle ;
																				\filldraw[color=green ,opacity=0.2] (0,0mm)--(0,10mm)--(10mm,10mm)--(10mm,0mm)--cycle ;

																				\filldraw ($(a1)-(0mm,21.5mm)$) circle  [radius = 0.2mm];
																				\filldraw ($(a1)-(0mm,22.5mm)$) circle  [radius = 0.2mm];
																				\filldraw ($(a1)-(0mm,23.5mm)$) circle  [radius = 0.2mm];

																				\coordinate (b1) at (25mm,35mm);
																				\coordinate (bi) at ($(b1)-(0,15mm)$);
																				\coordinate (bn) at ($(b1)-(0,30mm)$);
																				\node at (b1) {$B_{1}$};
																				\node at (bi) {$B_{i}$};
																				\node at (bn) {$B_{n}$};
																				\filldraw ($(b1)-(0mm,6.5mm)$) circle  [radius = 0.2mm];
																				\filldraw ($(b1)-(0mm,7.5mm)$) circle  [radius = 0.2mm];
																				\filldraw ($(b1)-(0mm,8.5mm)$) circle  [radius = 0.2mm];
																				\filldraw[color=yellow,opacity=0.2] ($(b1)+(-5mm,-5mm)$)--($(b1)+(5mm,-5mm)$)--($(b1)+(5mm,5mm)$)--($(b1)+(-5mm,5mm)$)--cycle;
																				\draw[color=black,thick=3mm] ($(b1)+(-5mm,-5mm)$)--($(b1)+(5mm,-5mm)$)--($(b1)+(5mm,5mm)$)--($(b1)+(-5mm,5mm)$)--cycle;
																				\draw[color=black,thick=3mm] ($(b1)+(-5mm,-10mm)$)--($(b1)+(5mm,-10mm)$)--($(b1)+(5mm,-20mm)$)--($(b1)+(-5mm,-20mm)$)--cycle;
																				\filldraw[color=yellow,opacity=0.2] ($(b1)+(-5mm,-10mm)$)--($(b1)+(5mm,-10mm)$)--($(b1)+(5mm,-20mm)$)--($(b1)+(-5mm,-20mm)$)--cycle;
																				\draw[color=black,thick=3mm] ($(b1)+(-5mm,-25mm)$)--($(b1)+(5mm,-25mm)$)--($(b1)+(5mm,-35mm)$)--($(b1)+(-5mm,-35mm)$)--cycle;
																				\filldraw[color=yellow,opacity=0.2] ($(b1)+(-5mm,-25mm)$)--($(b1)+(5mm,-25mm)$)--($(b1)+(5mm,-35mm)$)--($(b1)+(-5mm,-35mm)$)--cycle;
																				\filldraw ($(b1)-(0mm,21.5mm)$) circle  [radius = 0.2mm];
																				\filldraw ($(b1)-(0mm,22.5mm)$) circle  [radius = 0.2mm];
																				\filldraw ($(b1)-(0mm,23.5mm)$) circle  [radius = 0.2mm];
																				\draw[color=red] (10mm,0mm)--(20mm,0mm);
																				\node at (11mm,-1mm) {$\scriptstyle n$};
																				\node at (18.5mm,-1mm) {$\scriptstyle m$};
																				\draw[color=red] (10mm,5mm)--(12mm,5mm)--(18mm,15mm)--(20mm,15mm);
																				\node at (11mm,3.5mm) {$\scriptstyle i$};
																				\node at (19mm,13.5mm) {$\scriptstyle m$};
																				\draw[color=red] (10mm,10mm)--(12mm,10mm)--(18mm,30mm)--(20mm,30mm);
																				\node at (11mm,8.5mm) {$\scriptstyle 1$};

																				\draw[color=red] (10mm,15mm)--(12mm,15mm)--(18mm,5mm)--(20mm,5mm);
																				\node at (11mm,13.5mm) {$\scriptstyle n$};
																				\node at (19mm,3.5mm) {$\scriptstyle j$};
																				\node at (11mm,18.5mm) {$\scriptstyle i$};
																				\node at (19mm,18.5mm) {$\scriptstyle j$};
																				\node at (11mm,23.5mm) {$\scriptstyle 1$};
																				\node at (19mm,23.5mm) {$\scriptstyle 1$};
																				\draw[color=red] (10mm,20mm)--(20mm,20mm);
																				\draw[color=red] (10mm,25mm)--(12mm,25mm)--(18mm,35mm)--(20mm,35mm);

																				\draw[color=red] (10mm,30mm)--(12mm,30mm)--(18mm,10mm)--(20mm,10mm);
																				\node at (19mm,28.5mm) {$\scriptstyle m$};
																				\node at (11mm,28.5mm) {$\scriptstyle n$};
																				\node at (19mm,33.5mm) {$\scriptstyle j$};
																				\node at (11mm,33.5mm) {$\scriptstyle i$};
																				\node at (19mm,38.5mm) {$\scriptstyle 1$};
																				\node at (11mm,38.5mm) {$\scriptstyle 1$};
																				\node at (19mm,8.5mm) {$\scriptstyle 1$};
																				\draw[color=red] (10mm,35mm)--(12mm,35mm)--(18mm,25mm)--(20mm,25mm);
																				\draw[color=red] (10mm,40mm)--(20mm,40mm);
																				\node at (-1.5mm,-1.5mm) {$\scriptstyle m^{\prime}$};
																				\draw[color=red] (0mm,0mm)--(-2mm,0mm);
																				\node at (-1.5mm,13.5mm) {$\scriptstyle m^{\prime}$};
																				\node at (-1.5mm,28.5mm) {$\scriptstyle m^{\prime}$};
																				\draw[color=red] (0mm,15mm)--(-2mm,15mm);
																				\draw[color=red] (0mm,30mm)--(-2mm,30mm);
																				\foreach \i in {10mm,25mm,40mm} {
																								\node at (-1.5mm,\i-1.5mm) {$\scriptstyle 1$};
																								\draw[color=red] (-1.5mm,\i)--(0mm,\i);
																								\node at (31.5mm,\i-1.5mm) {$\scriptstyle 1$};
																								\draw[color=red] (31.5mm,\i)--(30mm,\i);
																				}

																				\node at (31.5mm,-1.5mm) {$\scriptstyle n^{\prime}$};
																				\draw[color=red] (30mm,0mm)--(32mm,0mm);
																				\node at (31.5mm,13.5mm) {$\scriptstyle n^{\prime}$};
																				\node at (31.5mm,28.5mm) {$\scriptstyle n^{\prime}$};
																				\draw[color=red] (30mm,15mm)--(32mm,15mm);
																				\draw[color=red] (30mm,30mm)--(32mm,30mm);
																				\draw[double distance=2mm,-{Implies[]}] (34mm,20mm)--(50mm,20mm);
																\end{tikzpicture}  
																\begin{tikzpicture}[baseline]

																				\coordinate (a1) at (5mm,35mm);
																				\coordinate (aj) at ($(a1)-(0,15mm)$);
																				\coordinate (am) at ($(a1)-(0,30mm)$);
																				\node at (a1) {$B_{1}$};
																				\node at (aj) {$B_{i}$};
																				\node at (am) {$B_{m^{\prime}}$};
																				\filldraw ($(a1)-(0mm,6.5mm)$) circle  [radius = 0.2mm];
																				\filldraw ($(a1)-(0mm,7.5mm)$) circle  [radius = 0.2mm];
																				\filldraw ($(a1)-(0mm,8.5mm)$) circle  [radius = 0.2mm];
																				\draw[color=black,thick=3mm] (0,30mm)--(0,40mm)--(10mm,40mm)--(10mm,30mm)--cycle ;
																				\filldraw[color=green,opacity=0.2] (0,30mm)--(0,40mm)--(10mm,40mm)--(10mm,30mm)--cycle ;
																				\draw[color=black,thick=3mm] (0,15mm)--(0,25mm)--(10mm,25mm)--(10mm,15mm)--cycle ;
																				\filldraw[color=green,opacity=0.2] (0,15mm)--(0,25mm)--(10mm,25mm)--(10mm,15mm)--cycle ;
																				\draw[color=black,thick=3mm] (0,0mm)--(0,10mm)--(10mm,10mm)--(10mm,0mm)--cycle ;
																				\filldraw[color=green ,opacity=0.2] (0,0mm)--(0,10mm)--(10mm,10mm)--(10mm,0mm)--cycle ;

																				\filldraw ($(a1)-(0mm,21.5mm)$) circle  [radius = 0.2mm];
																				\filldraw ($(a1)-(0mm,22.5mm)$) circle  [radius = 0.2mm];
																				\filldraw ($(a1)-(0mm,23.5mm)$) circle  [radius = 0.2mm];

																				\coordinate (b1) at (25mm,35mm);
																				\coordinate (bi) at ($(b1)-(0,15mm)$);
																				\coordinate (bn) at ($(b1)-(0,30mm)$);
																				\node at (b1) {$A_{1}$};
																				\node at (bi) {$A_{j}$};
																				\node at (bn) {$A_{n^{\prime}}$};
																				\filldraw ($(b1)-(0mm,6.5mm)$) circle  [radius = 0.2mm];
																				\filldraw ($(b1)-(0mm,7.5mm)$) circle  [radius = 0.2mm];
																				\filldraw ($(b1)-(0mm,8.5mm)$) circle  [radius = 0.2mm];
																				\filldraw[color=yellow,opacity=0.2] ($(b1)+(-5mm,-5mm)$)--($(b1)+(5mm,-5mm)$)--($(b1)+(5mm,5mm)$)--($(b1)+(-5mm,5mm)$)--cycle;
																				\draw[color=black,thick=3mm] ($(b1)+(-5mm,-5mm)$)--($(b1)+(5mm,-5mm)$)--($(b1)+(5mm,5mm)$)--($(b1)+(-5mm,5mm)$)--cycle;
																				\draw[color=black,thick=3mm] ($(b1)+(-5mm,-10mm)$)--($(b1)+(5mm,-10mm)$)--($(b1)+(5mm,-20mm)$)--($(b1)+(-5mm,-20mm)$)--cycle;
																				\filldraw[color=yellow,opacity=0.2] ($(b1)+(-5mm,-10mm)$)--($(b1)+(5mm,-10mm)$)--($(b1)+(5mm,-20mm)$)--($(b1)+(-5mm,-20mm)$)--cycle;
																				\draw[color=black,thick=3mm] ($(b1)+(-5mm,-25mm)$)--($(b1)+(5mm,-25mm)$)--($(b1)+(5mm,-35mm)$)--($(b1)+(-5mm,-35mm)$)--cycle;
																				\filldraw[color=yellow,opacity=0.2] ($(b1)+(-5mm,-25mm)$)--($(b1)+(5mm,-25mm)$)--($(b1)+(5mm,-35mm)$)--($(b1)+(-5mm,-35mm)$)--cycle;
																				\filldraw ($(b1)-(0mm,21.5mm)$) circle  [radius = 0.2mm];
																				\filldraw ($(b1)-(0mm,22.5mm)$) circle  [radius = 0.2mm];
																				\filldraw ($(b1)-(0mm,23.5mm)$) circle  [radius = 0.2mm];
																				\draw[color=red] (10mm,0mm)--(20mm,0mm);
																				\node at (11mm,-1mm) {$\scriptstyle n^{\prime}$};
																				\node at (18.5mm,-1mm) {$\scriptstyle m^{\prime}$};
																				\draw[color=red] (10mm,5mm)--(12mm,5mm)--(18mm,15mm)--(20mm,15mm);
																				\node at (11mm,3.5mm) {$\scriptstyle j$};
																				\node at (19mm,13.5mm) {$\scriptstyle m^{\prime}$};
																				\draw[color=red] (10mm,10mm)--(12mm,10mm)--(18mm,30mm)--(20mm,30mm);
																				\node at (11mm,8.5mm) {$\scriptstyle 1$};

																				\draw[color=red] (10mm,15mm)--(12mm,15mm)--(18mm,5mm)--(20mm,5mm);
																				\node at (11mm,13.5mm) {$\scriptstyle n^{\prime}$};
																				\node at (19mm,3.5mm) {$\scriptstyle i$};
																				\node at (11mm,18.5mm) {$\scriptstyle j$};
																				\node at (19mm,18.5mm) {$\scriptstyle i$};
																				\node at (11mm,23.5mm) {$\scriptstyle 1$};
																				\node at (19mm,23.5mm) {$\scriptstyle 1$};
																				\draw[color=red] (10mm,20mm)--(20mm,20mm);
																				\draw[color=red] (10mm,25mm)--(12mm,25mm)--(18mm,35mm)--(20mm,35mm);

																				\draw[color=red] (10mm,30mm)--(12mm,30mm)--(18mm,10mm)--(20mm,10mm);
																				\node at (19mm,28.5mm) {$\scriptstyle m^\prime$};
																				\node at (11mm,28.5mm) {$\scriptstyle n^\prime$};
																				\node at (19mm,33.5mm) {$\scriptstyle i$};
																				\node at (11mm,33.5mm) {$\scriptstyle j$};
																				\node at (19mm,38.5mm) {$\scriptstyle 1$};
																				\node at (11mm,38.5mm) {$\scriptstyle 1$};
																				\node at (19mm,8.5mm) {$\scriptstyle 1$};
																				\draw[color=red] (10mm,35mm)--(12mm,35mm)--(18mm,25mm)--(20mm,25mm);
																				\draw[color=red] (10mm,40mm)--(20mm,40mm);
																				\node at (-1.5mm,-1.5mm) {$\scriptstyle m$};
																				\draw[color=red] (0mm,0mm)--(-2mm,0mm);
																				\node at (-1.5mm,13.5mm) {$\scriptstyle m$};
																				\node at (-1.5mm,28.5mm) {$\scriptstyle m$};
																				\draw[color=red] (0mm,15mm)--(-2mm,15mm);
																				\draw[color=red] (0mm,30mm)--(-2mm,30mm);
																				\foreach \i in {10mm,25mm,40mm} {
																								\node at (-1.5mm,\i-1.5mm) {$\scriptstyle 1$};
																								\draw[color=red] (-1.5mm,\i)--(0mm,\i);
																								\node at (31.5mm,\i-1.5mm) {$\scriptstyle 1$};
																								\draw[color=red] (31.5mm,\i)--(30mm,\i);
																				}

																				\node at (31.5mm,-1.5mm) {$\scriptstyle n$};
																				\draw[color=red] (30mm,0mm)--(32mm,0mm);
																				\node at (31.5mm,13.5mm) {$\scriptstyle n$};
																				\node at (31.5mm,28.5mm) {$\scriptstyle n$};
																				\draw[color=red] (30mm,15mm)--(32mm,15mm);
																				\draw[color=red] (30mm,30mm)--(32mm,30mm);
																\end{tikzpicture}
												\end{array}
								\right\} n\cdot n^{\prime}	
				\end{equation*}
				\label{remark:13.5}
\end{remark}
\section{The image of $N:\text{Bio}\,\hookrightarrow \, \text{bSet}$}
For a face map in Bit, $\partial_{e}: G^{e}\hookrightarrow G$, we let 
\begin{equation}
				\partial_{e}\mathscr{Y}_{G}=\text{Image}\left\{
								\partial_{e*}:\mathscr{Y}_{G^{e}}\to \mathscr{Y}_{G}
				\right\}\subseteq \mathscr{Y}_{G}
				\label{eq:14.1}
\end{equation}
so
\begin{equation*}
				\left( \partial_{e}\mathscr{Y}_{G} \right)_{K}=\left\{ \partial_{e}\circ \varphi,
				\varphi\in\text{Bit}\left( K, G^{e} \right) \right\}
\end{equation*}
We write the boundary, 
\begin{equation}
				\partial \mathscr{Y}_{G}:= \bigcup_{\partial_{e}\in
				\Phi(G)}\partial_{e}\mathscr{Y}_{G} \subseteq
				\mathscr{Y}_{G}
				\label{eq:14.2}
\end{equation}
and the $\partial_{e}$-horn
\begin{equation}
				\Lambda^{\partial_{e}}\mathscr{Y}_{G}:= \bigcup_{\partial^{\prime}\in
				\Phi(G)\setminus\left\{ \partial_{e} \right\}}
				\partial^{\prime}\mathscr{Y}_{G} \subseteq \partial \mathscr{Y}_{G}. 
				\label{eq:14.3}
\end{equation}
and spine 
\begin{equation}
				\text{Sp}\, \mathscr{Y}_{G}:= \bigcup_{v}\mathscr{Y}_{C(v)},
				\label{eq:14.4}
\end{equation}
the union over all $v\in C^{\pm}(G)$, $C(v)$ the corolla at $v$.
\begin{definition}
				$X\in \text{bSet}$ is a (strict) inner Kan bital set if the diagram has a
				(unique) filling
				\begin{equation}
								\begin{tikzpicture}[scale=1.2]
												\node at (0mm,38mm) {$f:\Lambda^{\partial}\mathscr{Y}_{G}$};
												\draw[->] (8mm,37mm)--(25mm,37mm);
												\draw[{Hooks[left,length=5,width=6]}->] (4mm,34mm)--(4mm,24mm);
												\node at (28mm,37mm) {$X$};
												\draw[->,dashed] (6mm,23mm)--(25mm,35mm);
												\node at (3mm,21mm) {$\mathscr{Y}_{G}$};
								\end{tikzpicture}
								\label{eq:14.6}
				\end{equation}
				for every ($\pm$-composition or $\overset{\leftrightarrows}{o}$-action) inner face
				$\partial$. The inner Kan bital sets will be also called $\infty$-bios and we
				denote them by $\infty\text{Bio}\subseteq \text{bSet}$. \\ 
				For $\biop\in
				\text{Bio}$, $X=N(\biop)$, $X_{G}=\text{Bio}(\mathscr{S}_{G},\biop)$, a bitex $x\in
				X_{G}$ is given by compatible 
				\begin{equation}
								\begin{array}[H]{l}
								f_{x}^{1}: G^{1}\longrightarrow \biop^{0} \\\\
								f_{x}^{\pm}: \mathscr{S}_{G}^{\pm}\longrightarrow \biop^{\pm} \quad , \quad
								f_{x}^{1}(d_{\pm} B) = d_{\pm} f(B), 
								\end{array}
								\label{eq:14.7}
				\end{equation}
				and $f_{x}^{\pm}$ is completely determined (by compositions and actions) by its
				restrictions $f_{x}^{\pm}: C^{\pm}(G)\longrightarrow \biop^{\pm}$ to the corollas of
				$G$, $C^{\pm}(G)$ being the generators of $\mathscr{S}_{G}$. 
				\label{def:14.5}
\end{definition}
A map $f:\Lambda^{e}\mathscr{Y}_{G}\to X=N(\biop)$ again gives such compatible data \break $f^{1}:
G^{1}\to \biop^{0}$, $f^{\pm}: C^{\pm}(G)\to \biop^{\pm}$, and is determined uniquely by
this data, so
\begin{equation}
				\begin{array}[H]{l}
								\text{bSet}\left( \text{Sp} \mathscr{Y}_{G},X \right)=\text{bSet}\left(
								\Lambda^{e}\mathscr{Y}_{G},X \right)\equiv \text{bSet}\left(
												\mathscr{Y}_{G},X \right) \equiv X_{G} 
				\end{array}
				\label{14.8}
\end{equation}
for $X$ (the nerve of) a bio, and $X$ is strict inner Kan.
More precisely we have 
\begin{theorem}
				For $X\in \text{bSet}$, the following are equivalent
				\begin{itemize}
								\item[(1) ] $X$ is strict inner Kan
								\item[(2) ] $X\cong N(\biop)$ for $\biop\in\text{Bio}$
								\item[(3) ] The unit of adjunction $X\xrightarrow{\sim}{} N\tau(X)$ is an
												isomorphism
								\item[(4) ] For every $G\in \text{Bit}$, the restriction 
												\begin{equation*}
																X_{G}=\text{bSet}\left( \mathscr{Y}_{G},X \right) \longrightarrow
																\text{bSet}\left(  \text{Sp}\mathscr{Y}_{G},X \right)
												\end{equation*}
												is a bijection. \\
												And so $N$ gives  a full and faithful embedding of
												$\text{Bio}$ into $\text{bSet}$ with image the strict Kan
												bital sets. 
				\end{itemize}
				\label{thm:2}
\end{theorem}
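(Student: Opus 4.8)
The plan is to prove the cycle of implications $(2)\Rightarrow(3)\Rightarrow(4)\Rightarrow(1)\Rightarrow(2)$, using the inductive structure of $1$-sub-bits and the factorization of Proposition~\ref{prop:2} as the main technical engine. First I would establish the key local fact, already sketched in the text around \eqref{14.8}: for $\biop\in\text{Bio}$ and $X=N(\biop)$, a bitex $x\in X_G=\text{Bio}(\mathscr{S}_G,\biop)$ is completely and freely determined by the data $f^1:G^1\to\biop^{\circ}$ together with $f^{\pm}:C^{\pm}(G)\to\biop^{\pm}$ on the corollas, subject only to the source/target compatibility $f^1(d_{\pm}v)=d_{\pm}f^{\pm}(v)$. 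The reason is that $\mathscr{S}_G$ is \emph{generated} (under compositions and actions) by the corollas $C(v)$, $v\in C^{\pm}(G)$, as follows from the inductive construction of $\mathscr{S}_G^{\pm}$ in \S10--11, and the bio axioms \eqref{eq:2.4}--\eqref{2.10} guarantee that any assignment on generators extends to a well-defined map of bios exactly when it respects the (free!) $S_n$-actions and the source/target data — there are no further relations among sub-bits of a \emph{simple} graph because $\ell^*\mathscr{S}_G$ is simple, closed, and fermionic \eqref{eq:10.2}. This gives the chain of bijections \eqref{14.8}, i.e. $\text{bSet}(\text{Sp}\,\mathscr{Y}_G,X)\cong\text{bSet}(\mathscr{Y}_G,X)$ for $X=N(\biop)$, and simultaneously shows any inner horn $\Lambda^{\partial}\mathscr{Y}_G\to N(\biop)$ has a unique filler, since an inner horn still contains all corollas of $G$ (inner faces only contract composition/action edges, never delete a corolla). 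Hence $(2)\Rightarrow(1)$ and $(2)\Rightarrow(4)$.

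Next, $(3)\Rightarrow(2)$ is immediate (take $\biop=\tau(X)$), and $(2)\Rightarrow(3)$ amounts to showing that for $\biop\in\text{Bio}$ the counit $\tau N(\biop)\to\biop$ is an isomorphism; this is the statement that $N$ is full and faithful, which I would deduce from the local freeness above together with the colimit formula \eqref{eq:12.2} for $\tau$: evaluating $\tau N(\biop)$ at an object reduces to $\biop^{\circ}$ since $\biop\mapsto\biop^{\circ}$ preserves colimits (Corollary~\ref{cor:1}), and evaluating the $(\pm)$-operation sets uses that $\mathscr{Y}_G\otimes(-)$-style cofinality arguments collapse the comma category $\text{Bit}/N(\biop)$ onto the corolla-data, exactly matching $\biop^{\pm}$. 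For $(4)\Rightarrow(1)$: if $X_G\to\text{bSet}(\text{Sp}\,\mathscr{Y}_G,X)$ is always a bijection, then since every inner horn $\Lambda^{\partial}\mathscr{Y}_G$ sits between $\text{Sp}\,\mathscr{Y}_G$ and $\mathscr{Y}_G$ (the spine is contained in every inner horn, because removing one inner face leaves all corollas present), the restriction $X_G\to\text{bSet}(\Lambda^{\partial}\mathscr{Y}_G,X)$ factors the bijection $X_G\to\text{bSet}(\text{Sp}\,\mathscr{Y}_G,X)$ through it, forcing it to be a bijection too — i.e. strict inner Kan.

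The substantive implication is $(1)\Rightarrow(2)$: from a strict inner Kan $X$ I must manufacture a bio $\biop$ with $N(\biop)\cong X$. I would set $\biop^{\circ}:=X_I$ (bitices of the unit bit $I$), $\biop^{-}(c_0;c_1\cdots c_n):=\{x\in X_{C_n^{-}}: \text{boundary edges have the prescribed values in }\biop^{\circ}\}$, and dually for $\biop^{+}$, using the evident edge-restriction maps. The compositions, actions, units and $S_n$-actions are then \emph{defined} by the unique inner-horn fillers: e.g. to compose $P\in\biop^-(b_0;b_1\cdots b_n)$ with $(P_i)$ one assembles a spine of the relevant grafted bit $G$, uses $(4)$ (equivalently, iterated inner-Kan) to get the unique bitex of type $G$, and restricts along the outer-composition $0$-face to read off the composite corolla. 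Associativity, unitality, $S_n$-equivariance and the linearity/interchange axioms \eqref{eq:2.4}--\eqref{2.10} all follow from uniqueness of fillers: two \emph{a priori} different ways of building the same composite both restrict from the \emph{same} bitex of the \emph{same} bit $G$ (the two sides of each axiom correspond to two sub-bit decompositions of one $G\in\mathscr{S}^{\pm}$, which agree because $\mathscr{S}_G^{\pm}$ is generated by corollas with no extra relations — the same fact used in the first paragraph, in the guise of Proposition~\ref{prop:2}). Finally $N(\biop)\cong X$: both are strict inner Kan, and by $(4)$ both are determined by their corolla-sets $X_{C_n^{\pm}}$ with their restriction maps, which agree by construction. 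The main obstacle I anticipate is precisely the bookkeeping in $(1)\Rightarrow(2)$ — verifying that \emph{all} of the bio axioms \eqref{eq:2.4}--\eqref{2.10}, including the four ``linearity'' interchanges and $S_n$-covariance of mixed composition/action, each correspond to a genuine coincidence of two sub-bit expansions inside a single bit, and that the requisite inner horns (for the multi-step graftings involved) are available as compositions of the four elementary inner faces of \S11; this requires care but no new ideas beyond the inductive description of $\mathscr{S}_G^{\pm}$ and Proposition~\ref{prop:2}.
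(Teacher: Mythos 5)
Your overall structure — establish that $\mathscr{S}_G$ is generated by its corollas (so that maps of bios $\mathscr{S}_G\to\biop$ are determined by corolla data), deduce $(2)\Rightarrow(1),(4)$ and reduce $(2)\Leftrightarrow(3)$ to full-and-faithfulness of $N$, then build $\biop$ from a strict inner Kan $X$ by defining compositions and actions via unique horn-fillers — is the standard strategy (cf.\ the dendroidal analogues), and the paper itself gives no proof to compare against. Your sketch of $(1)\Rightarrow(2)$ and the counit/unit reduction look sound modulo bookkeeping, and you correctly identify that the payload of the argument lives in checking that all the bio axioms of \eqref{eq:2.4}--\eqref{2.10} reduce to coincidence of fillers of a single bitex.

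There is, however, a genuine gap in $(4)\Rightarrow(1)$. You assert that because $\text{Sp}\,\mathscr{Y}_G\subseteq\Lambda^{\partial}\mathscr{Y}_G\subseteq\mathscr{Y}_G$, the factorization
\[
X_G\longrightarrow\text{bSet}\bigl(\Lambda^{\partial}\mathscr{Y}_G,X\bigr)\longrightarrow\text{bSet}\bigl(\text{Sp}\,\mathscr{Y}_G,X\bigr)
\]
of a bijection ``forces'' the first arrow to be a bijection. It does not: a factorization of a bijection only forces the first arrow to be injective and the second to be surjective. For surjectivity of the first arrow (i.e.\ that every inner horn has a filler) you additionally need injectivity of the restriction $\text{bSet}(\Lambda^{\partial}\mathscr{Y}_G,X)\to\text{bSet}(\text{Sp}\,\mathscr{Y}_G,X)$, and this is not automatic. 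A map out of $\Lambda^{\partial}\mathscr{Y}_G$ is a compatible family of maps on the face subobjects $\partial'\mathscr{Y}_G$, and while $(4)$ applied to each $G^{\partial'}$ says each such piece is determined by the spine of $G^{\partial'}$, those spines are \emph{not} contained in $\text{Sp}\,\mathscr{Y}_G$ when $\partial'$ is itself inner: the merged corolla of $G^{\partial'}$ corresponds to a two-vertex sub-bit of $G$, not to a single corolla of $G$. The fix is a two-step (or inductive) application of $(4)$: first apply $(4)$ to the two-vertex sub-bit $B\subseteq G$ covering the merged corolla, whose own spine \emph{does} land in $\text{Sp}\,\mathscr{Y}_G$; this pins down the value on $\mathscr{Y}_B$ and hence on the merged corolla of $G^{\partial'}$, and only then does a second application of $(4)$ to $G^{\partial'}$ pin down the value on $\mathscr{Y}_{G^{\partial'}}$. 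Without this the implication as written is unjustified. A minor secondary point: invoking that $\ell^{*}\mathscr{S}_G$ is simple/closed/fermionic as the reason $\mathscr{S}_G$ is freely generated by corollas conflates properties of the underlying bipo with a freeness claim about the bio; what one actually needs is that $G$ is a simple graph, so that every $1$-sub-bit admits a unique decomposition into corolla graftings, as furnished by the inductive description of $\mathscr{S}_G^{\pm}$ in \S4 and \S10.
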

\section{The homotopy functor $\tau=\text{ho}:\infty\cdot \text{Bio}\rightarrow \text{Bio}$}
For $X\in \infty\text{Bio}$ the associated Bio $\tau(X)$ has a homotopical description as
$\text{ho}(X)$ which we next briefly describe. The objects of $\text{ho}(X)$ are 
\begin{equation}
				\text{ho}(X)^{0} := X_{I} = X\left(
								\overset{0}{\bullet}\,\midarrow{1cm}{0.4cm}{t}{0.05cm} \, \overset{1}{\bullet}
\right).
\label{eq:15.1}
\end{equation}
Thus for a bitex $x\in X_{G}$, and an edge $e\in G^{1}$, the face $\delta_{e}: I
\rightarrow G$ gives $x|_{e}:= \delta_{e}^{*}(x)\in \text{ho}(X)^{0}$. 
\begin{equation*}
				\begin{array}[H]{l}
				\text{For}\;  x\in X_{C_{m}^{-}}=X\left( 
								\begin{tikzpicture}[baseline=-1mm,scale=0.8]
							  \def\mline#1#2#3{
												\draw[decorate,decoration={markings,mark=at position #3 with {\arrow[color=black]{<}}}] #1--#2;
												\draw  #1--#2;
							   };

								 \mline{(0mm,0.5mm)}{(10mm,0.5mm)}{0.5};
								 \node at (6mm,4mm) {$e_0$};
								 \filldraw[color=red]  (11mm,0.5mm) circle [radius=0.8mm];
								 \mline{(12mm,1.2mm)}{(20mm,5mm)}{0.5};
								 \node at (25mm,6mm) {$e_{1}$};
								 \node at (25mm,-6mm) {$e_{m}$};
								 \mline{(12mm,-0.2mm)}{(20mm,-5mm)}{0.5};
								 \filldraw  (19mm,-1.5mm) circle [radius=0.1mm];
								 \filldraw  (19mm,0mm) circle [radius=0.1mm];
								 \filldraw  (19mm,1.5mm) circle [radius=0.1mm];
 \end{tikzpicture} \right), \quad
				\text{for}\; y\in X_{C_{n}^{-}}=X\left( 
								\begin{tikzpicture}[baseline=-1mm,scale=0.8]
							  \def\mline#1#2#3{
												\draw[decorate,decoration={markings,mark=at position #3 with {\arrow[color=black]{<}}}] #1--#2;
												\draw  #1--#2;
							   };

								 \mline{(0mm,0.5mm)}{(10mm,0.5mm)}{0.5};
								 \node at (6mm,4.5mm) {$\ell_0$};
								 \filldraw[color=red]  (11mm,0.5mm) circle [radius=0.8mm];
								 \mline{(12mm,1.2mm)}{(20mm,5mm)}{0.5};
								 \node at (25mm,6mm) {$\ell_{1}$};
								 \node at (25mm,-6mm) {$\ell_{n}$};
								 \mline{(12mm,-0.2mm)}{(20mm,-5mm)}{0.5};
								 \filldraw  (19mm,-1.5mm) circle [radius=0.1mm];
								 \filldraw  (19mm,0mm) circle [radius=0.1mm];
								 \filldraw  (19mm,1.5mm) circle [radius=0.1mm];
				\end{tikzpicture}
				\right),  \\\\
				\text{for}\; z\in X_{C_{m+n-1}^{-}}=X\left( 
								\begin{tikzpicture}[baseline=-1mm,scale=1]
							  \def\mline#1#2#3{
												\draw[decorate,decoration={markings,mark=at position #3 with {\arrow[color=black]{<}}}] #1--#2;
												\draw  #1--#2;
							   };

								 \mline{(0mm,0.5mm)}{(10mm,0.5mm)}{0.5};
								 \node at (6mm,3mm) {$e_0$};
								 \filldraw[color=red]  (11mm,0.5mm) circle [radius=0.8mm];
								 \mline{(12mm,1mm)}{(20mm,7.5mm)}{0.5};
								 \node at (24mm,7.5mm) {$e_{i-1}$};
								 \node at (22mm,15mm) {$e_{1}$};
								 \node at (22.5mm,-7.5mm) {$\ell_{n}$};
								 \node at (22.5mm,0mm) {$\ell_{1}$};
								 \node at (22.5mm,-3mm) {$\vdots$};
								 \node at (22.5mm,12.5mm) {$\vdots$};
								 \mline{(12mm,-0.2mm)}{(20mm,-7.5mm)}{0.5};
								 \mline{(12mm,0mm)}{(20mm,0mm)}{0.5};
								 \mline{(12mm,2mm)}{(20mm,15mm)}{0.5};
								 \mline{(12mm,-1mm)}{(20mm,-15mm)}{0.5};
								 \node at (23mm,-16mm) {$e_{m}$};
								 \node at (22.5mm,-11mm) {$\vdots$};
				\end{tikzpicture}
\right), 
\end{array}
\end{equation*}

define $x\circ_{i} y \approx z$ if and only if there exists a homotopy $h:
x\circ_{i} y\approx z$, so that
\begin{equation}
				h\in X\left(
								\begin{tikzpicture}[baseline=-1mm,scale=1]
							  \def\mline#1#2#3{
												\draw[decorate,decoration={markings,mark=at position #3 with {\arrow[color=black]{<}}}] #1--#2;
												\draw  #1--#2;
							   };
												\mline{(0mm,0mm)}{(10mm,0mm)}{0.5};
												\node at (5mm,2.5mm) {$\scriptstyle e_{0}$};
												\node at (12.5mm,4.5mm) {$\scriptstyle e_{1}$};
												\node at (12.5mm,-4.5mm) {$\scriptstyle e_{m}$};
												\filldraw[color=red] (10mm,0mm) circle [radius=0.6mm];
												\mline{(10mm,0mm)}{(17mm,5mm)}{0.5};
												\mline{(10mm,0mm)}{(17mm,-5mm)}{0.5};
												\mline{(10mm,0mm)}{(25mm,0mm)}{0.5};
												\node at (20mm,2.5mm) {$\scriptstyle \ell_{0}=e_{i}$};

												\mline{(25mm,0mm)}{(32mm,5mm)}{0.5};
												\mline{(25mm,0mm)}{(32mm,-5mm)}{0.5};
												\filldraw[color=red] (25mm,0mm) circle [radius=0.6mm];

												\node at (28mm,4.5mm) {$\scriptstyle \ell_{1}$};
												\node at (28mm,-4.5mm) {$\scriptstyle \ell_{n}$};
												\filldraw[color=red] (30mm,2mm) circle [radius=0.2mm];
												\filldraw[color=red] (30mm,0mm) circle [radius=0.2mm];
												\filldraw[color=red] (30mm,-2mm) circle [radius=0.2mm];
								\end{tikzpicture}
				\right)
				\label{eq:15.2}
\end{equation}
and $h$ restrict to $x,y,z$ under the appropriate  face maps. \vspace{.1cm}\\
For $x,z\in X_{C_{m}^{-}}$, $x\circ_{i} 1_{e_{i}}\approx z$ is independent of $i$, and gives an
equivalence relation on $X_{C_{m}^{-}}$. \vspace{.1cm} \\ 
We define for $x_{i}\in \text{ho}(X)^{0}$ 
\begin{equation}
				\text{ho}(X)^{-}(x_0; x_{1}\cdots x_{n}):= \left\{ x\in X_{C_{n}^{-}},
				x|_{e_{i}} = x_{i} \right\}/\approx
				\label{eq:15.3}
\end{equation} 
and similarly $\text{ho}(X)^{+}(x_{1}\cdots x_{n}; x_{0})$.
For $x\in \text{ho}(X)^{-}(x_0;x_1\cdots x_n)$, \break 
$y\in \text{ho}(X)^{-}(x_i;y_1,\cdots y_m)$ we can
define $x\circ_{i}y:= z$ if $h:x\circ_{i}y\approx z$; the existence of $h$, and $z$,
follows from filling the $\ell_0=e_{i}$ horn in (\ref{eq:15.2}). This is well defined, independent of
the representatives chosen, and makes $\text{ho}(X)^{-}$ into an operad, and similarly for
$\text{ho}(X)^{+}$. For $m\ge n+j-1$ 
\begin{equation}
				\begin{array}[h]{l}
				 x\in X_{C_{m}^{-}}=X\left( 
				\begin{tikzpicture}[baseline=-1mm,scale=1]
							  \def\mline#1#2#3{
												\draw[decorate,decoration={markings,mark=at position #3 with {\arrow[color=black]{<}}}] #1--#2;
												\draw  #1--#2;
							   };

								 \mline{(0mm,0.5mm)}{(10mm,0.5mm)}{0.5};
								 \node at (6mm,3mm) {$e_0$};
								 \filldraw[color=red]  (11mm,0.5mm) circle [radius=0.8mm];
								 \mline{(12mm,1.2mm)}{(20mm,5mm)}{0.5};
								 \node at (16mm,5.5mm) {$e_{1}$};
								 \node at (16mm,-5.5mm) {$e_{m}$};
								 \mline{(12mm,-0.2mm)}{(20mm,-5mm)}{0.5};
								 \filldraw[color=red] (20mm,5mm) circle[radius=0.6mm];
								 \filldraw[color=red] (20mm,-5mm) circle[radius=0.6mm];
								 \filldraw[color=red] (0mm,0.5mm) circle[radius=0.6mm];
								 \filldraw  (19mm,-1.5mm) circle [radius=0.1mm];
								 \filldraw  (19mm,0mm) circle [radius=0.1mm];
								 \filldraw  (19mm,1.5mm) circle [radius=0.1mm];
 \end{tikzpicture} \right), \quad
				y\in X_{C_{n}^{+}}=X\left( 
				\begin{tikzpicture}[baseline=-1mm,scale=1]
							  \def\mline#1#2#3{
												\draw[decorate,decoration={markings,mark=at position #3 with {\arrow[color=black]{<}}}] #1--#2;
												\draw  #1--#2;
							   };

								 \mline{(12mm,0.5mm)}{(20mm,0.5mm)}{0.5};
								 \filldraw[color=red] (20mm,0.5mm) circle [radius=0.6mm];
								 \node at (16mm,4.5mm) {$\ell_0$};
								 \filldraw[color=red]  (11mm,0.5mm) circle [radius=0.8mm];
								 \mline{(0mm,5mm)}{(10mm,1mm)}{0.5};
								 \filldraw[color=red] (0mm,5mm) circle [radius=0.6mm];
								 \filldraw[color=red] (0mm,-5mm) circle [radius=0.6mm];
								 \node at (5mm,6mm) {$\ell_{1}$};
								 \node at (5mm,-6mm) {$\ell_{n}$};
								 \mline{(0mm,-5mm)}{(10mm,0mm)}{0.5};
								 \filldraw  (3mm,-1.5mm) circle [radius=0.1mm];
								 \filldraw  (3mm,0mm) circle [radius=0.1mm];
								 \filldraw  (3mm,1.5mm) circle [radius=0.1mm];
				\end{tikzpicture}
				\right),  \\\\
				 z\in X_{C_{m-n+1}^{-}}=X\left( 
								\begin{tikzpicture}[baseline=9mm,scale=1]
							  \def\mline#1#2#3{
												\draw[decorate,decoration={markings,mark=at position #3 with {\arrow[color=black]{<}}}] #1--#2;
												\draw  #1--#2;
							   };

								 \mline{(0mm,10mm)}{(10mm,10mm)}{0.5};
								 \mline{(10mm,10mm)}{(23mm,10mm)}{0.5};
								 \node at (19mm,11.5mm) {$\scriptstyle \ell_{0}$};
								 \filldraw[color=red]  (23mm,10mm) circle [radius=0.6mm];
								 \node at (6mm,12mm) {$e_0$};
								 \filldraw[color=red]  (10mm,10mm) circle [radius=0.6mm];
								 \filldraw[color=red]  (15mm,20mm) circle [radius=0.6mm];
								 \filldraw[color=red]  (15mm,0mm) circle [radius=0.6mm];
								 \mline{(10mm,10mm)}{(15mm,20mm)}{0.5};

								 \draw[decorate,decoration={markings,mark=at position 0.5 with
								 {\arrow[color=black]{<}}}] (10mm,10mm)--(20mm,15mm);
								 \draw  (10mm,10mm)--(20mm,15mm); 
								 \filldraw[color=red] (20mm,15mm) circle [radius=0.6mm];
								 \node at (13mm,20mm) {$\scriptstyle e_{1}$};
								 \mline{(10mm,10mm)}{(20mm,6.5mm)}{0.5};
								 \mline{(10mm,10mm)}{(15mm,0mm)}{0.5};
								 \node at (19mm,16.5mm) {$\scriptstyle e_{j-1}$};
								 \filldraw[color=red] {(20mm,6.5mm)} circle [radius=0.6mm];
								 \filldraw[color=red] {(0mm,10mm)} circle [radius=0.6mm];
								 \node at (20mm,8.2mm) {$\scriptstyle e_{j+n}$};
								 \node at (12mm,2mm) {$\scriptstyle e_{m}$};
								 \filldraw  (16mm,7mm) circle [radius=0.1mm];
								 \filldraw  (15mm,6mm) circle [radius=0.1mm];
								 \filldraw  (14mm,5mm) circle [radius=0.1mm];
								 \filldraw  (14mm,16mm) circle [radius=0.1mm];
								 \filldraw  (15mm,15mm) circle [radius=0.1mm];
								 \filldraw  (16mm,14mm) circle [radius=0.1mm];
				\end{tikzpicture}
\right), 
				\end{array}
				\label{15.4}
\end{equation}
we say $x \overleftarrow{\circ}_{\hspace{-1mm}j}\, y\approx z$, if there is a homotopy
$h:x \overleftarrow{\circ}_{\hspace{-1mm}j}\, y \approx z$, i.e. if there is an element
$h$, 
\begin{equation}
				 h\in X\left( 
								\begin{tikzpicture}[baseline=9mm,scale=1]
							  \def\mline#1#2#3{
												\draw[decorate,decoration={markings,mark=at position #3 with {\arrow[color=black]{<}}}] #1--#2;
												\draw  #1--#2;
							   };
												\mline{(0mm,10mm)}{(10mm,10mm)}{0.5};
								 \node at (6mm,13mm) {$\scriptstyle e_0$};
								 \filldraw[color=red]  (10mm,10mm) circle [radius=0.6mm];
								 \filldraw[color=red]  (25mm,10mm) circle [radius=0.6mm];
								 \filldraw[color=red]  (35mm,10mm) circle [radius=0.6mm];
								 \mline{(25mm,10mm)}{(35mm,10mm)}{0.5}; 
								 \node at (30mm,13mm) {$\scriptstyle \ell_0$};
								 \mline{(10mm,10mm)}{(15mm,20mm)}{0.5};
								 \mline{(10mm,10mm)}{(15mm,0mm)}{0.5};
								 \draw[decorate,decoration={markings,mark=at position 0.5 with {\arrow[color=black]{<}}}] (10mm,13mm)--(25mm,13mm);
								 \draw plot [smooth] coordinates {(10mm,10mm)(17.5mm,13mm)(25mm,10mm)};
								 \draw[decorate,decoration={markings,mark=at position 0.5 with
								 {\arrow[color=black]{<}}}] (10mm,7mm)--(25mm,7mm);
								 \draw plot [smooth] coordinates {(10mm,10mm)(17.5mm,7mm)(25mm,10mm)};
								 \node at (12mm,20mm) {$\scriptstyle e_{1}$};
								 \node at (12mm,0mm) {$\scriptstyle e_{m}$};
								 \node at (22mm,15mm) {$\scriptstyle e_{j}=\ell_1$};
								 \node at (25mm,5mm) {$\scriptstyle e_{j+n-1}=\ell_n$};
								 \filldraw (15mm,11mm) circle [radius=0.1mm];
								 \filldraw (14.9mm,10mm) circle [radius=0.1mm];
								 \filldraw (14.8mm,9mm) circle [radius=0.1mm];
								 \filldraw (14.8mm,7mm) circle [radius=0.1mm];
								 \filldraw (14.7mm,6mm) circle [radius=0.1mm];
								 \filldraw (14.6mm,5mm) circle [radius=0.1mm];
				\end{tikzpicture}
\right)
\label{eq:15.5}
\end{equation}
that restrict to $x,y,z$ under the obvious face maps. Again, given $x,y$ the existence of
$h$ and $z$, follows from filling the $\ell_k=e_{j+k-1}$ horn in (\ref{eq:15.5}), and putting
$x \overleftarrow{\circ}_{\hspace{-1mm}j}\, y:=z$ is well defined, independent of the
representatives $x,y,z$ chosen, and gives the action of $\text{ho}(X)^{+}$ on
$\text{ho}(X)^{-}$. Similarly we have the action of $\text{ho}(X)^{-}$ on
$\text{ho}(X)^{+}$, and it is easy to see these are linear so that we have a bio
$\text{ho}(X)$, clearly functorial in $X$, so we have a functor 
\begin{equation}
				\text{ho}: \infty\text{Bio}\longrightarrow \text{Bio}
				\label{eq:15.6}
\end{equation}
When $X=N\biop$ we get 
\begin{equation*}
				\begin{array}[h]{l}
								\text{ho}(N\biop)^{0}=(N\biop)_{I}=\text{Bio}(I,\biop)=\biop^{0} \;\; ,
								\;\;
								\text{and for $c_0,c_1,\cdots c_n\in\biop^{0}$} \\\\
								\text{ho}(N\biop)^{-}(c_0;c_1\cdots c_n)=\left\{ f\in \text{Bio}\left(
								C_{n}^{-},\biop \right), f(e_{i})=c_{i}
				\right\}/_{\approx}=\biop^{-}(c_0;c_1,\cdots c_n),
				\end{array}
\end{equation*}
the homotopy relation reduces to identity, and 
\begin{equation}
				\text{ho}(N\biop)\equiv \biop
				\label{15.7}
\end{equation}
For a general $X\in \infty\text{Bio}$, and $\varphi\in \text{bSet}(X,N\biop)$,
$\biop\in\text{Bio}$, we get the adjoint map of bios 
\begin{equation}
				\text{ho}(\varphi): \text{ho}(X)\longrightarrow \text{ho}(N\biop)\equiv \biop. 
				\label{eq:15.8}
\end{equation}
\section{Normal monomorphisms}
The category Bit, like the dendroidal category of trees $\Omega$, and unlike the
simplicial category $\bbDelta$, has automorphisms and so the \myemph{cofibrations} of
$b$Set $f:X\to Y$ are defined to be the monomorphisms that are \myemph{normal}: for each
$G\in \text{Bit}$ the group $\text{Aut}(G)$ acts freely on $Y_{G}\setminus f(X_{G})$. An
object $X\in \text{bSet}$ is \myemph{cofibrant} or \myemph{normal} if $\phi\to X$ is
normal, i.e. for each $G\in \text{Bit}$ the group $\text{Aut}(G)$ acts freely on
$X_{G}$; e.g. every representable $\mathscr{Y}_{G}$ is normal since
$\mathscr{S}_{G}=\sum\mathscr{S}_{\overline{G}}$ has free $S_{n}$-actions. For $X\in
\text{bSet}$, let $\sk_{n}(X)\subseteq X$ the sub-$\text{bSet}$ 
\begin{equation}
				\sk_{n}(X)_{G}:= \left\{ x\in X_G, x=\alpha^{\ast}(\overline{x}),
				\alpha:G\to H, \# H^{1}\le n \right\}	
				\label{eq:16.1}
\end{equation}
generated by biticies coming from bits $H$ with at most $n$ \myemph{edges}. We obtain the
skeltal filteration 
\begin{equation}
				\sk_{1}(X)\subseteq \sk_{2}(X)\subseteq \cdots \subseteq \sk_{n}(X)\subseteq
				\cdots \bigcup_{n}\sk_{n}(X)\equiv X
				\label{eq:16.2}
\end{equation}
Similarly, for a normal mono $f:X\hookrightarrow Y$ define 
\begin{equation}
				\begin{array}[H]{c}
				\sk_{n}(Y,X):= X\cup \sk_{n}(Y) \\\\
				X=\sk_{0}(Y,X)\subseteq \cdots \subseteq \sk_{n}(Y,X)\subseteq\cdots \subseteq
				\bigcup_{n}\sk_{n}(Y,X)\equiv Y
				\end{array}
				\label{eq:16.3}
\end{equation}
We get push out squares
\begin{align}
				\label{eq:16.4} \ \ \\
				\notag \begin{tikzpicture}
								\node at (5mm,35mm) {$\coprod\limits_{G,x}\partial \mathscr{Y}_{G}$};
												\draw[{Hooks[right,length=5,width=6]}->] (15mm,36mm)--(35mm,36mm);
												\node at (50mm,36mm) {$\sk_{n-1}(Y,X)$};
												\draw[{Hooks[left,length=5,width=6]}->] (45mm,30mm)--(45mm,15mm);
												\node at (48mm,10mm) {$\sk_{n}(Y,X)$};
												\draw[{Hooks[left,length=5,width=6]}->] (8mm,30mm)--(8mm,15mm);
												\node at (7mm,10mm) {$\coprod\limits_{G,x} \mathscr{Y}_{G}$};
												\draw[{Hooks[right,length=5,width=6]}->] (15mm,10mm)--(35mm,10mm);
												\draw (42mm,22mm)--(25mm,22mm)--(25mm,13mm);
				\end{tikzpicture}
\end{align}
the sum taken over isomorphism classes of $(G,x)$, $G\in\text{Bit}$, $x\in Y_{G}$
non-degenerate and not in the image of $X_{G}$. \\
Similarly, a normal $X\in \text{bSet}$ can be represented as a union of the
$\sk_{n}(X)$ that are given by push-outs 
\begin{align}
				\label{eq:16.5} \ \ \\
				\notag\begin{tikzpicture}
								\node at (5mm,35mm) {$\coprod\limits_{G,x}\partial \mathscr{Y}_{G}$};
												\draw[{Hooks[right,length=5,width=6]}->] (15mm,36mm)--(35mm,36mm);
												\node at (50mm,36mm) {$\sk_{n-1}(X)$};
												\draw[{Hooks[left,length=5,width=6]}->] (45mm,30mm)--(45mm,15mm);
												\node at (48mm,10mm) {$\sk_{n}(X)$};
												\draw[{Hooks[left,length=5,width=6]}->] (8mm,30mm)--(8mm,15mm);
												\node at (7mm,10mm) {$\coprod\limits_{G,x} \mathscr{Y}_{G}$};
												\draw[{Hooks[right,length=5,width=6]}->] (15mm,10mm)--(35mm,10mm);
												\draw (42mm,22mm)--(25mm,22mm)--(25mm,13mm);
				\end{tikzpicture}
\end{align}
\noindent the sum taken over isomorphism classes of $(G,x)$, $G\in \text{Bit}$, $x\in
X_{G}^{n.d.}\cdot$, $\# G^{1}=n$. It follows that the set of cofibraions of
$\text{bSet}$ 
\begin{equation}
				\mathcal{C}_{\text{bSet}}:=\left\{ f\in \text{bSet}(X,Y), f\; \text{normal mono} \right\}
				\label{eq:16.6}
\end{equation}
is closed under composition, push-outs, sums, retracts, and sequential limits.
Moreover, the push-out squares (\ref{eq:16.4}) show that $\mathcal{C}_{\text{bSet}}$ is the
smallest class that is closed under pushouts, compositions and sequential limits and
contains the generating cofibraions, 
\begin{equation}
				\mathcal{G}\mathcal{C}_{\text{bSet}}=\left\{ \partial \mathscr{Y}_{G} \;\hookedrightarrow{1}
				\mathscr{Y}_{G} \right\}_{G\in\text{Bit}}. 
				\label{eq:16.7}
\end{equation}
The small object argument gives a factorization of any map $f\in b\text{Set(X,Y)}$ as
$f=p\circ j$, with $j$ a normal mono and $p$ having RLP (-right lifting property) with
respect to all normal monos ($p$ a ``trivial fibration''). For $f:\phi\to X$ we get that
every $X\in \text{bSet}$ has a normalization 
\begin{equation}
				p:\tilde{X}\to X
				\label{eq:16.8}
\end{equation}
We have 
\begin{equation}
\mathcal{C}_{\text{bSet}}\equiv \text{\phantom{1}}^{\perp}(\mathcal{C}_{\text{bSet}}^{\perp})
\label{eq:16.9}
\end{equation}
where $\mathcal{C}^{\perp}$ (resp. $\text{\phantom{}}^{\perp}\mathcal{C}$) is the set of
arrows satisfying the right (resp. left) lifting property with respect to
$\mathcal{C}$. The inclusion $\subseteq$ is tautological; for the converse $\supseteq$ use
the retract argument: for
$f\in \text{\phantom{}}^{\perp}\left(\mathcal{C}^{\perp}_{\text{bSet}}\right)$,  factor
$f=p\circ j$ with $p\in \mathcal{C}_{\text{bSet}}^{\perp}$, so there is a lifting 
\begin{equation}
				\begin{tikzpicture}
												\node at (15mm,37mm) {$j$};
												\node at (3mm,27mm) {$f$};
												\node at (30mm,27mm) {$p$};
												\draw[->,dashed] (7mm,20mm)--(25mm,33mm);
												\node at (5mm,35mm) {$X$};
												\draw[->] (8mm,35mm)--(25mm,35mm);
												\node at (28mm,35mm) {$Y^{\prime}$};
												\draw[->] (27mm,33mm)--(27mm,20mm);
												\node at (27mm,18mm) {$Y$};
												\draw[-] (25mm,18.5mm)--(8mm,18.5mm);
												\draw[-] (25mm,17.5mm)--(8mm,17.5mm);
												\draw[->] (5mm,33mm)--(5mm,20mm);
												\node at (5mm,18mm) {$Y$};
				\end{tikzpicture}
				\label{eq:16.11}
\end{equation}
\noindent so $f$ is a retract of $j\in\mathcal{C}_{\text{bSet}}$, and so
$f\in\mathcal{C}_{\text{bSet}}$ too. \\
Given normal monos $f:A\to X$, $g:B\to Y$, the \myemph{pushout-product}
\begin{equation}
				\left\{ f\boxtimes g\; :\; A\otimes Y \coprod\limits_{A\otimes B}X\otimes B
				\longrightarrow X\otimes Y \right\} \in \mathcal{C}_{\text{bSet}}
				\label{eq:16.12}
\end{equation}
is again a normal mono; this follows from the generaing case of boundary inclusions
$f: \partial \mathscr{Y}_{F}\to \mathscr{Y}_{F}$, $g:\partial \mathscr{Y}_{G}\to
\mathscr{Y}_{G}$, $F,G\in\text{Bit}$, which
reduce to $\left( \partial \mathscr{Y}_{F}\otimes \mathscr{Y}_{G} \right)\cap \left(
				\mathscr{Y}_{F}\otimes \partial
\mathscr{Y}_{G} \right)=\partial \mathscr{Y}_{F}\otimes \partial \mathscr{Y}_{G}$ (c.f.
				\cite[4.26 (ii)]{heuts2018trees}). \\ 
Similarly denoting by $\mathscr{A}\subseteq \mathcal{C}_{\text{bSet}}$ the sturation of the inner
(composition or action) faces, the ``\myemph{anodyne extensions}'' (the ``via'' of
\cite{MR4222648}), for $f\in \mathscr{A}$, $g\in \mathcal{C}_{\text{bSet}}$, we have $f\boxtimes
g\in \mathscr{A}$ is again anodyne; this follows from the generating cases
$f:\Lambda^{e}\mathscr{Y}_{F}\hookrightarrow \mathscr{Y}_{F},\, e$  a composition (resp action)
$\text{edge}(s)$ of $F$, and $g:\partial \mathscr{Y}_{G}\hookrightarrow \mathscr{Y}_{G}$
(c.f. 
\cite[6.3]{MR4222648}). \\ 
It follows by adjunction that we have ``Joyal's exponential property``: \break
For $X (=\colim\limits_{\rightarrow}\sk_n(X))$ normal, $\bioq\in \infty\text{Bio}$ every map
$\Lambda^{e} \mathscr{Y}_{G}\otimes X\to \bioq$ extends to $\mathscr{Y}_{G}\otimes X\to \bioq$, so have
$\bioq^{X}\in\infty\text{Bio}$. Similarly, for a normal mono $f:X\hookrightarrow Y$, with
$X,Y$ normal, and any $\bioq\in\infty\text{Bio}$, the map $f^{\ast}:\bioq^{Y}\to
\bioq^{X}$ is an inner fibration between $\infty\text{Bio}$; it is actually a Kan
fibration between the underlying $\infty$categories (cf. \cite[7.2]{MR4222648})!
\section{Quillen Model structure. Ein M\"{a}rchen}
\emph{Notation:} We shall write $\biop^{\pm}(c_0;\overline{c})$  as a short hand for 
'' $\biop^{-}(c_0;c_1,\cdots , c_n)$ and $\biop^{+}(c_{1},\cdots , c_{n};c_{0})$``. \\
The category $\text{sBio}_{C}$, of simplicial bios with a fixes set of objects $C$
(and identify maps on $C$), has the \myemph{projective} model structure, where a map
\break $f:\biop\to \bioq$ is a weak equivalence (resp. fibration) iff for all
$c_0,\overline{c}=c_{1}\cdots c_{n}\in C$, $\biop^{\pm}(c_{0};\overline{c})\to
\bioq^{\pm}(c_0;\overline{c})$ are weak equivalences (resp. fibrations) in
$s\text{Set}$. \vspace{.1cm}\\
There is an equivalent \myemph{Reedy} model structure, with the same weak-equivalences,
and fibrations $f:\biop\to\bioq$ are maps such that for all $c_0;\overline{c}=c_1,\cdots ,
c_n\in C$ the maps
\begin{equation}
				\biop^{\pm}(c_0;\overline{c})\longrightarrow
				\widecheck{\biop}^{\pm}(c_0;\overline{c})\prod\limits_{\widecheck{\bioq}^{\pm}(c_0;\overline{c})}
				\bioq^{\pm}(c_0;\overline{c})
				\label{eq:17.1}
\end{equation}
are fibration in $s\text{Set}$, where 
\begin{equation}
				\widecheck{\biop}^{\pm}(c_{0}; \overline{c}):=\lim\limits_{I\subsetneqq \left\{ 1,\ldots
				, n \right\}}\biop^{\pm}(c_0;c_{I}) \quad, \quad c_{I}=c_{i_{1}},\cdots c_{i_{k}}\; \text{for}\; I=\left\{
				i_1<\cdots < i_{k} \right\}
				\label{eq:17.2}
\end{equation}
For $X\in s\text{Set}$ and $c_{0}; \overline{c}=c_{1},\cdots , c_{n}\in C$, we have the
''Corollas`` simplicial bios (in fact ordinary simplicial operads!) $C^{-}_{c_0;c_1\ldots
c_n}(X)$ and  $C^{+}_{c_0\ldots c_n;c_0}(X)$, to be denoted commonely by
$C^{\pm}_{c_0;c_1\ldots c_n}(X)$, satisfying adjunction 
\begin{equation}
				s\text{Bios}_{C}\left( C^{\pm}_{c_0;\overline{c}}(X),\biop \right)\equiv
				s\text{Set}\left( X,\biop^{\pm}(c_{0};\overline{c}) \right)
				\label{eq:17.3}
\end{equation}
Similarly we have the simplicial bios
\begin{equation}
				\widecheck{C}^{\pm}_{c_0;\overline{c}}(X) := \colimrightarrow\limits_{I\subsetneqq \left\{ 1,\ldots ,
												n
				\right\}} C^{\pm}_{c_0;c_I}(X)
				\label{eq:17.4}
\end{equation}
so that 
\begin{equation}
				\text{sBio}_{C}\left( \widecheck{C}^{\pm}_{c_0,\overline{c}},(X) , \biop \right) \equiv
				s\text{Set}\left( X,\widecheck{\biop}^{\pm}(c_0;\overline{c}) \right)
				\label{eq:17.5}
\end{equation}
The Reedy model structure is cofibrantly generated, with (resp. trivial) fibrations given
by 
\begin{equation}
				\widecheck{\mathcal{F}}_{\text{sBio}_{\scriptstyle c}}\equiv \left\{
				C_{c_{0},\overline{c}}^{\pm}\left( \Lambda_m^{e} \right)
\coprod\limits_{\widecheck{C}^{\pm}_{c_{0},\overline{c}}(\Lambda_{m}^{e})}
\widecheck{C}_{c_{0},\overline{c}}^{\pm}\left( \Delta(m) \right)\to
C^{\pm}_{c_{0},\overline{c}}\left( \Delta(m) \right)  \right\}^{\perp}
\label{eq:17.6}
\end{equation}
resp. 
\begin{equation}
				\mathcal{W}_{\text{sBio}_{\scriptstyle c}}\cap
				\widecheck{\mathcal{F}}_{\text{sBio}_{\scriptstyle c}}\equiv \left\{
								C_{c_{0},\overline{c}}^{\pm}\left( \partial\Delta(m) \right)
								\coprod\limits_{\widecheck{C}^{\pm}_{c_{0},\overline{c}}(\partial\Delta(m))}
								\widecheck{C}_{c_{0},\overline{c}}^{\pm}\left( \Delta(m) \right)\to
C^{\pm}_{c_{0},\overline{c}}\left( \Delta(m) \right)  \right\}^{\perp}
\label{eq:17.7}
\end{equation}
The main lemma here is that for $X\subseteq Y$, and for $c_{0}\,\overline{c}=c_{1}\cdots
c_{n}\in C$, the push-out 
\begin{equation}
				\begin{tikzpicture}
												\node at (5mm,35mm)
												{$C^{\pm}_{c_0;\overline{c}}(X)\coprod\limits_{\widecheck{C}^{\pm}_{c_{0};\overline{c}}(X)}\widecheck{C}^{\pm}_{c_{0};\overline{c}}(Y)$};
												\draw[->] (25mm,36mm)--(55mm,36mm);

												\node at (65mm,36mm) {$C^{\pm}_{c_{0};\overline{c}}(Y)$};
												\draw[->] (65mm,30mm)--(65mm,15mm);
												\node at (65mm,10mm) {$\bioq$};
												\draw[->] (8mm,30mm)--(8mm,15mm);
												\node at (7mm,10mm) {$\biop$};
												\draw[->] (10mm,10mm)--(60mm,10mm);
												\node at (30mm,13mm) {$f$};
												\draw (62mm,22mm)--(40mm,22mm)--(40mm,13mm);
				\end{tikzpicture}
				\label{eq:17.8}
\end{equation}
gives a weak-equivalence $\left\{ f:\biop\rightarrow \bioq \right\}\in
\mathcal{W}_{\text{sBio}_{C}}$. \vspace{.1cm}\\ 
The category of \myemph{open} simplicial bio
$\text{sBio}^{\phi}$ (so for $\biop\in \text{sBio}^{\phi}$, $\biop^{\pm}(c_{0};
\phi)=\phi$), has the cofibrantly generated projective model structure with fibrations 
\begin{equation}
				\mathcal{F}_{\text{sBio}^\phi}= 
				\left\{ 
								\begin{array}[H]{ll}
												f:\biop\to\bioq, \left\{
												\biop^{\pm}(c_0;\overline{c})\to\bioq^{\pm}
												 \left(f^{0}(c_{0});f^{0}(\overline{c}) \right)
												\right\}\in \mathcal{F}_{s\text{Set}}
				\\\\ 
				\text{all}\; c_0,\overline{c}=c_1\cdots c_{n}\in \biop^{0}  \;
				\text{and}\; \left\{ \pi_{0}i^{\ast}j^{\ast}\biop \longrightarrow
				\pi_{0}i^{\ast}j^{\ast}\bioq \right\}\in \mathcal{F}_{\text{Cat}}
								\end{array}
				\right\}
				\label{eq:17.9}
\end{equation}
(here $\mathcal{F}_{\text{Cat}}$ are the fibrations of the naive model structure on Cat), and
weak-equivalences
\begin{equation}
				\mathcal{W}_{\text{sBio}^{\phi}}=\left\{ 
								\begin{array}[H]{l}
												f:\biop\to \bioq, \left\{
																\biop^{\pm}(c_{0};\overline{c})\to\bioq^{\pm}\left(
																				f^{0}(c_{0});f^{0}(\overline{c})
												\right) \right\}\in\mathcal{W}_{s\text{Set}},\;  \\\\ 
												\text{all $c_0,\overline{c}\in \biop$}
																\text{and}\; \pi_0
																i^{\ast}j^{\ast}\biop\to\pi_0i^{\ast}j^{\ast}\bioq
																\;\text{essentially surjective}
								\end{array}
				\right\}
				\label{eq:17.10}
\end{equation}
We have the (resp trivial) generating cofibrations, so that 
\begin{equation}
				\mathcal{W}_{\text{sBio}^\phi}\cap \mathcal{F}_{\text{sBio}^{\phi}}= \left\{
								\left\{ \phi\to g^{\ast}I \right\}\cup \left\{ g^{\ast}
								C_{n}^{\pm}(\partial \Delta(m))\to g^{\ast}C_{n}^{\pm}(\Delta(m))
				\right\}
				\right\}^{\perp}
				\label{eq:17.11}
\end{equation}
\begin{equation}
				\mathcal{F}_{\text{sBio}^\phi} = \left\{ 
								\left\{ g^{\ast}I\rightrightarrows g^{\ast}N(0\leftrightarrow 1) \right\}\cup
								\left\{ g^{\ast}C_{n}^{\pm}(\Lambda_{m}^{e})\to 
								g^{\ast}C_{n}^{\pm}(\Delta(m)) \right\}
				\right\}^{\perp}
				\label{eq:17.12}
\end{equation}
Here $g^{\ast}:\text{sBio}\to \text{sBio}^{\phi}$ is the functor that simply forgets the
constants, \break  $\biop^{\pm}(c_0;\phi)=\left\{ 0_{c_{0}}^{\pm} \right\}\mapsto
				(g^{\ast}\biop)^{\pm} (c_{0};\phi)=\phi$, so that we have the adjunctions 
				\begin{equation}
								\begin{tikzpicture}
												\node at (35mm,35mm) {$\text{sBio}^{\phi}$};
												\draw[<-] (35mm,32mm)--(35mm,15mm);
												\draw[->] plot[smooth ] coordinates {(33mm,31mm)(27mm,23mm)(33mm,15mm) };
												\draw[->] plot[smooth ] coordinates {(37mm,31mm)(43mm,23mm)(37mm,15mm) };
												\node at (35mm,12mm) {$\text{sBio}$};
												\node at (24mm,23mm) {$g_{\text{\phantom{\,}}_!}$};
												\node at (38mm,23mm) {$g^{\ast}$};
												\node at (46mm,23mm) {$g_{\ast}$};
								\end{tikzpicture}
								\label{eq:17.13}
				\end{equation}
				\begin{equation*}
								(g_{_!}\biop)^{\pm}(c_0;\overline{c})=\colimrightarrow\limits_{\left\{ {  c_{1}^{\prime}\ldots
				c_{\ell}^{\prime} }\right\}\subseteq \biop^{0}}
				\biop^{\pm}(c_{0};\overline{c}c^{\prime}_{1}\cdots c_{\ell}^{\prime}) ,
				\end{equation*}
				\begin{equation*}
				\left( g_{\ast}\biop \right)^{\pm}(c_{0};\overline{c})=\prod_{\phi\not= I\subseteq
				\left\{ 1\ldots n \right\}} \biop^{\pm}(c_0;c_{I})
				\end{equation*}
				The category of closed simplicial bios $\text{sBio}$ gets a transfered
				\myemph{projective} model structure via $g^{\ast}$ from $\text{sBio}^{\phi}$, so
				that 
				\begin{equation}
								\mathcal{F}_{\text{sBio}}\equiv \left( g^{\ast} \right)^{-1}\left(
								\mathcal{F}_{\text{sBio}^\phi}\right)\quad , \quad
								\mathcal{W}_{\text{sBio}}=(g^{\ast})^{-1}\left(\mathcal{W}_{\text{sBio}^\phi}
								\right)
								\label{eq:17.14}
				\end{equation}
				The main lemma here is that the transfer condition are satisfied, and this follows
				since $g^{\ast}g_{!} g^{\ast}C_{n}(X)=\coprod\limits_{\phi\not= I \subseteq\left\{ 1\ldots n \right\}} g^{\ast}
				C_{I}(X)$,  and so the functor $g^{\ast}g_{!}$ takes the generating
				trivial cofibrations to trivial cofibrations. \\
				There is an equivalent \myemph{Reedy} model structure on $\text{sBio}$, with the
				same weak-equivalences, and with fibrations 
				\begin{equation}
								\begin{array}[H]{lll}
												\widecheck{\mathcal{F}}_{\text{sBio}} &\equiv&
												\mathcal{F}_{\text{sBio}}\cap \left\{ f:\biop\to\bioq, \left\{
												\biop\to (f^{0})^{\ast}\bioq \right\}\in
								\widecheck{\mathcal{F}}_{\text{sBio}_{\biop^{0}}} \right\} \\\\
								&\equiv& \left\{ \left\{ I\rightrightarrows N(0\leftrightarrow 1) \right\}\cup
								\left\{
C_{n}^{\pm}(\Lambda^{e}_{m})\coprod\limits_{\widecheck{C}^{\pm}_{n}(\Lambda_{m}^{e})}\widecheck{C}_{n}^{\pm}\left(\Delta(m)\right)\to
C_{n}^{\pm}\left( \Delta(m) \right)\right\}\right\}^{\perp}
								\end{array}
								\label{eq:17.15}
				\end{equation}
The main lemma being again that any push-out of the generating trivial
cofibrations is a weak-equivalence. \vspace{.1cm}\\
Recall that (the small object argument gives) a
normalization $\pi:\tilde{X}\rightarrow X$ for any $X\in \text{bSet}$, and moreover, any
map $f:X\rightarrow Y$ in $\text{bSet}$ is covered by a normal map between normal objects
$\tilde{f}:\tilde{X}\rightarrow \tilde{Y}$, (can take
$\tilde{X}=X\prod\limits_{Y}\tilde{Y}$). \\
The category of bital sets $\text{bSet}$ has a
model structure with the cofibrations $\mathcal{C}_{\text{bSet}}\equiv \left\{
				\text{normal monos}
\right\}$, the fibrant objects are $\infty\text{Bio}$, and the weak-equivalences 
\begin{equation}
				\mathcal{W}_{\text{bSet}}\equiv \left\{ f:X\rightarrow Y, \tilde{f}^{\ast}:
				\bioq^{\tilde{Y}}\rightarrow \bioq^{\tilde{X}}\in \mathcal{W}_{s\text{Set}} \;
\text{for all $\bioq\in \infty\text{Bio}$} \right\}
\label{eq:17.16}
\end{equation}
It is cofibrantly generated, so that we have for the (resp. trivial) fibration
\begin{equation}
				\begin{array}[H]{c}
				\mathcal{F}_{\text{bSet}}\equiv \left( \mathcal{C}_{\text{bSet}}\cap
				\mathcal{W}_{\text{bSet}} \right)^{\perp} \equiv \Big\{ \left\{ I\hookrightarrow
								N(0\leftrightarrow 1)\right\}\cup \left\{ \Lambda^{e}\mathscr{Y}_{G}\hookrightarrow
				\mathscr{Y}_{G} \right\}_{G\in\text{Bit}, e\in \Phi^{\text{in}}(G)} \Big\}^{\perp} \\\\
				\mathcal{W}_{\text{bSet}}\cap \mathcal{F}_{\text{bSet}}\equiv
				\mathcal{C}_{\text{bSet}}^{\perp} \equiv \left\{ \partial \mathscr{Y}_{G}\hookrightarrow
				\mathscr{Y}_{G} \right\}^{\perp}_{G\in \text{Bit}}
				\end{array}
				\label{eq:17.17}
\end{equation}
Moreover it seems that the pushout product property (for $f,g\in
\mathcal{C}_{\text{bSet}}$ with $f\in\mathcal{W}_{\text{bSet}}$, $f\boxtimes g\in
\mathcal{W}_{g\text{Set}}^{k}$), as well as the
associativity property (inclusions $(X\otimes Y)\otimes Z\hookrightarrow X\otimes
				Y\otimes Z \hookleftarrow X\otimes (Y\otimes Z)$ are \myemph{trivial}
cofibrations) hold, and so one gets an induced associative symmetric product $\otimes$ on
the homotopy category $\text{Ho}(\text{bSet})$. \vspace{.1cm}\\
A modification of the
Berger-Moerdijk-Boardman-Vogt resolution,, associating with a bit $G \in \text{Bit}$ the
resolution $W\mathscr{S}_G$ of the bio $\mathscr{S}_{G}$, gives a functor
$W:\text{Bit}\to \text{sBio}$, something like
\begin{equation}
				\left( W\mathscr{S}_{G} \right)^{\pm}\left( e_0;e_{i} \right):=
				\coprod_{B\in\mathscr{S}_{G}^{\pm},\, d_{\pm}B=e_{0},\, d_{\mp}B=\left\{
												e_{i}
				\right\}} \Delta[1]^{\Phi^{\text{in}}(B)}
				\label{eq:17.18}
\end{equation}
By Kan extension get the adjunction
\begin{equation}
				\begin{tikzpicture}[baseline=20mm]
												\node at (5mm,20mm) {$W_{!} : \text{bSet}$}; 
												\draw[->] (14mm,20.5mm)--(30mm,20.5mm);
												\draw[<-] (14mm,19mm)--(30mm,19mm);
												\node at (40mm,20mm) {$\text{sBio}: W^{\ast}$};
				\end{tikzpicture}
				\label{eq:17.19}
\end{equation}
\noindent and this should give a Quillen equivalence between the model structures on
$\text{bSet}$ and $\text{sBio}$.
\section{Self-duality}
Having the involution, $\text{Bio}\righttoleftarrow$, $\biop\mapsto\biop^{\text{op}}$, we
can speak of the self-dual bios,  $\text{Bio}^{t}$, with objects the bios with involution,
\begin{equation}
P\mapsto
P^{t}:\biop=(\biop^{-},\biop^{+})\xrightarrow{\sim}\biop^{\text{op}}=(\biop^{+},\biop^{-}),
\label{eq:18.1}
\end{equation}
and with maps of bios commuting with the involution (the involution is always the identity
on $\biop^{0}$). There is a forgetfull functor
$U:\text{Bio}^{t}\to\text{Bio}$. \vspace{.1cm}\\ 
For $\biop=(\biop^{-},\biop^{+})\in\text{Bio}^{t}$, we
can forget $\biop^{+}$ since it is isomorphic to $\biop^{-}$, and we only have to remember
the action $\circleftarrow$ of $\biop^{-}\equiv \biop^{+}$ on $\biop^{-}$. Thus the
objects of $\text{Bio}^{t}$ are (closed, symmetric, colored) operads $\biop$, together
with a \myemph{contraction} operation: For a word $\overline{c}=\overline{c}_{1}\ldots
\overline{c}_{n}$, $\overline{c}_{i}=(c_{i1},\cdots , c_{i m_{i} } )$, $b_0,b_1\cdots
b_n\in\biop^{0}$, we have $S_n$-covariant and $S_{m_{1}}\times\cdots \times
S_{m_{n}}$ - invariant functions
\begin{equation}
				\begin{array}[H]{c}
								// : \biop(b_0;\overline{c})\times \left[
								\biop(b_1;\overline{c}_{1})\times \cdots \times
				\biop(b_n;\overline{c}_{n}) \right]\longrightarrow \biop(b_0;b_1\cdots b_n) \\\\
				(P,P_i)\xmapsto{\qquad} P//(P_i):=P\circleftarrow (P_i^{t}).
				\end{array}
				\label{eq:18.2}
\end{equation}
This contraction is unital
\begin{equation}
				P//_{(\unit_{c_i})}=P
				\label{eq:18.3}
\end{equation}
and associative 
\begin{equation}
				\left( P// (P_{ij}) \right)//(P_{j}) = P//\left( P_j\circ (P_{ij}) \right)
				\label{eq:18.4}
\end{equation}
The linearity axioms read
\begin{equation}
				\begin{array}[H]{l}
				\left( P\circ (P_{i}) \right)// (P_{ij}) = P\circ \left( P_i// P_{ij} \right) \\\\
				P //  \left( P_i// (P_{ij}) \right)=  \left( P\circ (P_{ij}) \right)// (P_i)
				\end{array}
				\label{eq:18.5}
\end{equation}
Note that we get an involution for the underlying category $i^{\ast}\biop \cong
(i^{\ast}\biop)^{\text{op}}$, for $b,c\in\biop^{0}$, 
\begin{equation}
				\begin{array}[H]{c}
								\biop(b,c)\xrightarrow{\quad \sim\quad} \biop(c,b) \\\\
								P\xmapsto{\qquad} P^{t}:= \unit_{c}// P \\\\
								(P_1\circ P_2)^{t} = P_2^{t}\circ P_1^{t}\quad , \quad
								\unit_{c}^{t}= \unit_{c}\quad , \quad P^{tt}=P
				\end{array}
				\label{eq:18.6}
\end{equation}
Similarly we have the category of self-dual bital sets $\text{bSet}^{t}$, its objects are
bital sets $X\in \text{bSet}$ together with an involution $X\cong X^{\text{op}}$, i.e. for
each $G\in \text{Bit}$ we have 
\begin{equation}
x\xmapsto{\quad} x^{t}: \;\; X_{G}\xmapsto{\;\; \sim\;\;} X_{G^{\text{op}}}\;\;\;
				, \;\;\; x^{tt}=x,
				\label{eq:18.7}
\end{equation} 
and natural in $G$; the maps in $\text{bSet}^{t}$ are maps in $\text{bSet}$ that commute
with the involution; we have a forgetfull functor $U:\text{bSet}^{t}\to \text{bSet}$. \\
We get an induced ``self-dual'' picture 
\begin{align}
				\ \ \label{eq:18.8} \\
				\notag \begin{tikzpicture}
												 \node at (5mm,35mm) {$s\text{Set}^{t}$};
												 \node at (50mm,35mm) {$\text{bSet}^{t}$};
												 \draw[{Hooks[right,length=5,width=6]}->] (12mm,36mm)--(43mm,36mm);
												 \draw[<<-] (12mm,32mm)--(43mm,32mm);
												 \node at (28mm,38mm) {$i_{!}$};
												 \node at (30mm,34mm) {$i^{\ast}$};
												 \draw[->>] (3mm,30mm)--(3mm,10mm);
												 \draw[{Hooks[left,length=5,width=6]}->] (6mm,10mm) -- (6mm,30mm);
												 \node at (4mm,8mm) {$\text{Cat}^{t}$};
												 \draw [{Hooks[right,length=5,width=6]}->] (10mm,9mm)--(45mm,9mm);
												 \draw [<<-] (10mm,7mm)--(45mm,7mm);
												 \node at (50mm,8mm) {$\text{Bio}^{t}$};
												 \draw[->>] (48mm,30mm)--(48mm,10mm);
												 \draw[<-{Hooks[left,length=5,width=6]}] (51mm,30mm)--(51mm,10mm); 
												 \node at (1.5mm,20mm) {$\tau$};
												 \node at (8mm,22.5mm) {$N$};
												 \node at (25mm,11mm) {$i_{!}$};
												 \node at (27mm,5mm) {$i^{\ast}$};
												 \node at (53.5mm,22mm) {$N$};
												 \node at (46.5mm,20mm) {$\tau$};
				\end{tikzpicture}
\end{align}
with $\text{Cat}^{t}$ (resp. $s\text{Set}^{t}$) the similarly defined categories of
``categories with an involution'' (resp ``simplicial set with an involution'').
\section{Rigs}
\begin{definition}
				A \myemph{Rig} ($\equiv$ Ring without Negatives) is a set $R$ with two
				operations of addition and multiplication 
				\begin{equation}
								R\times R \rightrightarrows R \quad , \quad (x,y)\xmapsto{\quad}
								\begin{array}[H]{l}
												x+y\\
												x\cdot y
								\end{array},
								\label{def:19.2}
				\end{equation}
				both associative and unital 
				\begin{equation}
								\begin{array}[H]{lll}
												(x+y)+z=x+(y+z)\quad , \quad (x\cdot y)\cdot z = x\cdot (y\cdot z)
												\\\\
												x+0=x=0+x \quad , \quad x\cdot 1=x=1\cdot x
								\end{array}
								\label{def:19.3}
				\end{equation}
				with addition always commutative $x+y=y+x$, and distributive 
				\begin{equation}
								\begin{array}[H]{l}
								(x_1+x_2)\cdot y = (x_1\cdot y)+(x_2\cdot y)\quad , \quad
								x\cdot(y_1+y_2)=(x\cdot y_1)+(x\cdot y_2) \\\\
								x\cdot 0 = 0 = 0\cdot x
								\end{array}
								\label{def:19.4}
				\end{equation}
				\label{def:19.1}
\end{definition}
A rig with involution is a rig $R$ with an involution 
\begin{equation}
				\begin{array}[H]{l}
				x\mapsto x^{t}: R\righttoleftarrow \quad , \quad \text{satisfying}\; x^{tt}=x \;
				\text{and}\; 0^{t}=0\quad , \quad 1^{t}=1 \\\\
				(x+y)^{t}= x^{t}+y^{t}\quad ,\quad (x\cdot y)^{t}=y^{t}\cdot x^{t}
				\end{array}
				\label{def:19.5}
\end{equation}
A rig is commutative $R\in C\text{Rig}$ if multiplication is commutative,
\begin{equation*}
x\cdot y=y\cdot x 
\end{equation*}
A map of rigs $\varphi: R\to R^{\prime}$ is a set map preserving the operations and
constants 
\begin{equation}
				\varphi(0)=0 \quad , \quad \varphi(1)=1\quad ,\quad  
				\varphi(x+y)=\varphi(x)+\varphi(y)\quad , \quad \varphi(x\cdot y)=\varphi(x)\cdot
				\varphi(y)
				\label{def:19.6}
\end{equation}
and in the self-dual case $\text{Rig}^{t}$, $\varphi$ should also preserve the involution, 
\begin{equation*}
				\varphi(x^{t})=\varphi(x)^{t}. 
\end{equation*}
Thus we have categories and functors
\begin{align}
				\begin{array}[H]{l}
				 \begin{tikzpicture}
								 \node at (10mm,30mm) {$\text{Rig}^{t}$};
								 \node at (40mm,30mm) {$\text{Rig}$};
								 \node at (40mm,15mm) {$C\text{Rig}$};
								 \node at (40mm,22.5mm) {\rotatebox{90}{\Huge $\subseteq$}};
								 \draw[->] (15mm,30mm)--(35mm,30mm);
								 \node at (25mm,32mm) {$U$};
								 \draw[<-] (15mm,27mm)--(35mm,17mm);
				\end{tikzpicture}
				\end{array}
				\label{eq:19.7}
\end{align}
(for a commutative rig the identity is an involution). \vspace{.1cm}\\
There is a similar diagram  with
Ring instead of Rig, the inclusion $\text{Ring}\hookrightarrow \text{Rig}$ has the left
adjoint functor $K=$Grothendieck functor localizing addition. \vspace{.1cm}\\ 
E.g. we have the commutative
rigs
\begin{equation}
				\mathscr{B}=\left\{ 0,1 \right\}\hookedrightarrow{1} \mathscr{I}=[0,1]
				\hookedrightarrow{1} \mathscr{R}=[0,\infty]
				\label{def:19.8}
\end{equation}
with the usual multiplication $x\cdot y$, and with the ``tropical'' addition
\begin{equation}
				x+y:\overset{def}{=}\max\left\{ x,y \right\}.
				\label{def:19.9}
\end{equation}
\vspace{.1cm}
For $A\in \text{Rig}$ we have the \myemph{monochromatic} operad $\biop_{A}$ (i.e.
$\biop^{0}=\left\{ \ast \right\}$, and we write $\biop_{A}^{-}(n)\equiv
\biop_{A}(\ast;\underbrace{\ast\cdots\ast}_{n})$)
with $\biop_{A}^{-}(n)\equiv A^{n}$, and with compositions 
\begin{equation}
				o_{i}: A^{n}\times A^{m} \xrightarrow{\qquad} A^{n+m-1}
				\label{def:19.10}
\end{equation}
\begin{equation*}
				(a_1,\cdots , a_{i}, \cdots a_{n})\circ_{i}(b_1,\cdots , b_{m}):= (a_1,\ldots,
				a_{i-1},a_{i}\cdot b_{1},a_{i}\cdot b_2 \cdots a_i\cdot b_{m},a_{i+1}\cdots
a_{m})
\end{equation*}
We have similarly the operad $\biop_{A}^{+}\equiv \biop_{A^{\text{op}}}$ of colume vectors 
\begin{equation}
				\begin{array}[H]{c}
				\biop_{A}^{+}(\underbrace{\ast\cdots \ast}_{n}; \ast)\equiv
				\biop_{A}^{+}(n)\equiv A^{n}\quad , \quad \text{with composition} \\\\
				A^{n}\times A^{m}\xrightarrow{\quad} A^{m+n-1} \;\;
				\begin{pmatrix}
								a_1 \\ \vdots \\ a_n
				\end{pmatrix} \circ_j 
				\begin{pmatrix}
								b_1 \\ \vdots \\ b_{j}\\ \vdots \\ b_m
				\end{pmatrix} := 
				\begin{pmatrix}
								b_1 \\ \vdots \\ a_1\cdot b_{j} \\ \vdots \\ a_n\cdot b_{j} \\  \vdots \\
								b_{m}
				\end{pmatrix}
				\end{array}
				\label{def:19.11}
\end{equation}
Moreover, we have mutual actions, for $1\le k\le m\le n$, 
\begin{equation}
				\begin{array}[H]{c}
								\overleftarrow{\circ}_{k}: A^{n}\times A^{m-k+1} \xrightarrow{\qquad} A^{n-m+k} \\\\
\begin{pmatrix}
				a_1\cdots a_k\cdots a_m\cdots a_n
\end{pmatrix} \circleftarrow_{k} 
\begin{pmatrix}
				b_0\\ b_1 \\ \vdots \\ b_{m-k} 
\end{pmatrix} := 
\\
\begin{pmatrix}
				a_1 \cdots a_{k-1}, a_{k}\cdot b_0+\cdots + a_{k+j}\cdot b_j+\cdots +
				a_{m}\cdot b_{m-k}, a_{m+1}\cdots a_n
\end{pmatrix}
				\end{array}
				\label{def:19.12}
\end{equation}
and similarly for $\circrightarrow$. \\
This construction gives a full and faithful embedding 
\begin{equation}
				\text{Rig}\; \hookedrightarrow{1} \text{Bio}_{ \left\{ \ast \right\} } \quad , \quad
				A\xmapsto{\qquad} \biop_{A}.
				\label{def:19.13}
\end{equation}
Moreover, when the rig $A$ has an involution, the bio $\biop_{A}=\left(
\biop^{-}_{A},\biop^{+}_{A} \right)$ has an involution
\begin{equation}
				(a_1,\cdots , a_{n})^{t} := 
				\begin{pmatrix}
								a_1^{t}\\ \vdots \\ a_{n}^{t}
				\end{pmatrix}
				\label{eq:19.14}
\end{equation}
giving 
\begin{equation}
\text{Rig}^{t} \hookedrightarrow{2}\text{Bio}_{ \left\{ \ast \right\}}^{t}
\label{eq:19.15}
\end{equation}
The initial object of \text{Rig}, and of $\text{Rig}^{t}$, is $\N$ the rig of natural
numbers, \break therefore, the above functor gives $\text{Bios}$ under $\biop_{\N}$. Having the
vectors \break $(1,1)\in \biop_{\N}^{\pm}(2)\to  \biop^{\pm}(2)$   we can define addition on
$\biop(1)$ making it into a rig 
\begin{equation}
				a+b:= (1,1)\circleftarrow (a,b)\circleftarrow 
				\begin{pmatrix}
								1 \\ 1
				\end{pmatrix}.
				\label{19.16}
\end{equation}
We get adjunctions,
\begin{equation}
				\begin{tikzpicture}[baseline=35mm]
								 \node at (5mm,35mm) {$\biop_{A}$};
								 \node at (28mm,35mm) {$\displaystyle \biop_{\N}^{\text{\small $\backslash \text{Bio}^{t}_{ \left\{
								 \ast \right\}}$}}$};
								 \node at (62mm,35mm) {$\displaystyle \biop_{\N}^{\text{ \small $\backslash \text{Bio}_{ \{\ast \} }$}}$};
								 \node at (90mm,35mm) {$\biop$};
								 \node at (5mm,15mm) {$A$};
								 \node at (30mm,15mm) {$\text{Rig}^{t}$};
								 \node at (60mm,15mm) {$\text{Rig}$};
								 \node at (90mm,15mm) {$\biop(1)$};
								 \draw[->] (35mm,35mm)--(54mm,35mm);
								 \draw[->] (35mm,15mm)--(54mm,15mm);
								 \node at (45mm,37mm) {$U$};
								 \node at (45mm,17mm) {$U$};
								 \draw[{Hooks[right,length=5,width=6]}->] (28mm,18mm)--(28mm,31mm); 
								 \draw[<<-] (31mm,18mm)--(31mm,31mm); 
								 \draw[{Hooks[right,length=5,width=6]}->] (58mm,18mm)--(58mm,31mm); 
								 \draw[<<-] (61mm,18mm)--(61mm,31mm); 
								 \draw[|->] (5mm,18mm)--(5mm,32mm);
								 \draw[<-|] (90mm,18mm)--(90mm,32mm);
				\end{tikzpicture}
				\label{eq:19.17}
\end{equation}
\section{The $\ell_{p}$ bio}
Fix $p,q\in [1,\infty]$, with $1/p + 1/q =1$. We have the monochromatic operad
$\biop_{\ell_p}\subseteq \biop_{\R}$, with 
\begin{equation}
				\begin{array}[H]{l}
				\biop_{\ell_p}(n):= \left\{ (x_1,\cdots, x_{n})\in \biop_{\R}(n)\equiv \R^{n},
				x_1^{p}+\cdots + x_{n}^{p}\le 1 \right\} \\\\
				(x_1,\cdots ,x_n)\circ_i (y_1,\cdots , y_m):=(x_1,\cdots, x_{i-1},x_{i}\cdot
				y_1,\ldots , x_i\cdot y_m,x_{i+1},\cdots , x_n)
				\end{array}
				\label{eq:20.1}
\end{equation}
We write the elements of $\biop_{\ell_q}$ as column vector, and we have the mutual actions
induced from $\biop_{\R}$,
\begin{equation}
				(a_1\cdots a_{k}\cdots a_n)\circleftarrow_{k}
				\begin{pmatrix}
								b_0\\b_1\\\vdots\\b_{m-k}				
				\end{pmatrix}
				:= (a_1\cdots a_{k-1},a_k\cdot b_0+\cdots
								+a_{k+j}\cdot b_{j}+\cdots + a_{m}\cdot b_{m-k},a_{m+1},\cdots ,
				a_{n})
				\label{eq:20.2}
\end{equation}
and similarly for $\circrightarrow$. \\
The rig $\R$, being commutative has an (identity) involution, hence $\biop_{\R}$ has an
involution 
\begin{equation}
				(a_1\cdots a_n)^{t}:=
				\begin{pmatrix}
								a_1\\\vdots\\ a_n
				\end{pmatrix}
				\label{20.3}
\end{equation}
The sub-bio $\left( \biop_{\ell_{p}}, \biop_{\ell_{q}} \right)\subseteq \left(
				\biop_{\R}^{-},\biop_{\R}^{+}
\right)$ is stable under this involution iff $p=q=2$, and we are in the self-dual case
\begin{equation}
				\biop_{\ell_2}(n)\equiv \left\{ (x_1,\cdots , x_{n})\in \R^n, x_1^{2}+\cdots
				+x_n^{2}\le 1 \right\}
				\label{eq:20.4}
\end{equation} the $\ell_2$-unit ball. \vspace{.1cm}\\
\noindent The bio $\mathbb{Z}_{\mathbb{R}}:=(\biop_{\ell_2},\biop_{\ell_2})\subseteq \biop_{\R}$ is the ``Real integers'',
analogue at the ``Real prime'' of the $p$-adic integers $\Z_{p}\subseteq
\Q_{p}$, $p$ a prime. We similarly have the ``complex integers'' $\Z_{\C}\subseteq
\biop_{\C}$ given by the unit $\ell_{2}$ complex balls.
\section{Distributive bios}
Let $\biop=(\biop^{-},\biop^{+})$ be a monochromatic bio, $\biop^{0}=\left\{ \ast
\right\}$. \\ 
We say $\biop$ is \myemph{distributive} if the actions and compositions
\myemph{interchange}: \\ 
for $P\in\biop^{-}(n)$, $P^{\prime}\in \biop^{+}(m)$,
$P^{\prime\prime}\in \biop^{-}(\ell)$, 
\begin{equation}
				\left( P \overleftarrow{\circ}_{\hspace{-1mm}j}\, P^{\prime}
				\right){\circ}_{j} P^{\prime\prime}\equiv ( P \circ
								\underbrace{(P^{\prime\prime},\cdots, P^{\prime\prime})}_{m})
								\sigma_{m,\ell}\circleftarrow \underbrace{\left( P^{\prime},\cdots ,
								P^{\prime} \right)}_{\ell}
								\label{eq:21.1}
\end{equation}
and similarly,
\begin{equation}
				P^{\prime\prime}\circ_{j} \left( P^{\prime} \overrightarrow{\circ}_{\hspace{-1mm}j}\, P \right) \equiv
				(\underbrace{P^{\prime},\cdots , P^{\prime}}_{\ell}) \circrightarrow
				\sigma_{\ell,m} ( (\underbrace{P^{\prime\prime},\cdots ,
				P^{\prime\prime}}_{m})\circ P)
				\label{eq:21.2}
\end{equation}
for $P\in \biop^{+}(n)$, $P^{\prime}\in \biop^{-}(m)$, $P^{\prime\prime}\in
\biop^{+}(\ell)$. \\
Pictorially, 
\begin{equation}
				\begin{array}[H]{l}
				\begin{tikzpicture}[baseline]
								  \def\mline#1#2#3{
												\draw[decorate,decoration={markings,mark=at position #3 with {\arrow[color=black]{<}}}] #1--#2;
												\draw  #1--#2;
							   };
								 \mline{(0mm,20mm)}{(5mm,20mm)}{0.5};
								 \draw (8mm,20mm) circle [radius=3mm];
								 \node at (8mm,20mm) {$P$};
								 \mline{(9mm,23mm)}{(15mm,30mm)}{0.5};
								 \mline{(8.5mm,23mm)}{(12mm,30mm)}{0.5};
								 \mline{(11mm,20mm)}{(20mm,20mm)}{0.5};
								 \mline{(9mm,17mm)}{(15mm,10mm)}{0.5};
								 \draw[-] plot[smooth ] coordinates {(10.8mm,21mm)(15.4mm,23mm)(20mm,21mm) };
								 \draw[decorate,decoration={markings,mark=at position 0.5 with {\arrow[color=black] {<}}}] plot[smooth ] coordinates { (10.8mm,21mm)(15.4mm,23mm)(20mm,21mm)};
								 \draw[decorate,decoration={markings,mark=at position 0.5 with {\arrow[color=black] {<}}}] plot[smooth ] coordinates {(10.8mm,19mm)(15.4mm,17mm)(20mm,19mm) };
								 \draw plot[smooth ] coordinates {(10.8mm,19mm)(15.4mm,17mm)(20mm,19mm) };
								 \draw (23mm,20mm) circle [radius=3mm];
								 \node at (23mm,20mm) {$P^{\prime}$};
								 \draw
								 (40mm,17.5mm)--(40mm,22.5mm)--(34mm,22.5mm)--(34mm,17.5mm)--(40mm,17.5mm); 
								 \node at (37mm,20mm) {$P^{\prime\prime}$};
								 \mline{(26mm,20mm)}{(34mm,20mm)}{0.5};
								 \mline{(40mm,22mm)}{(45mm,22mm)}{0.5};
								 \mline{(40mm,18mm)}{(45mm,18mm)}{0.5};
								 \draw[color=red,thick=3mm] (-1mm,9mm)--(46mm,9mm)--(46mm,31mm)--(-1mm,31mm)--(-1mm,9mm);
								 \node at (50mm,20mm) {$\equiv$};
								 \mline{(55mm,20mm)}{(60mm,20mm)}{0.5};
								 \draw (63mm,20mm) circle [radius=3mm];
								 \node at (63mm,20mm) {$P$};
								 \mline{(64mm,22.8mm)}{(70mm,30mm)}{0.5};
								 \mline{(65mm,22mm)}{(73mm,28mm)}{0.5};
								 \mline{(66mm,21mm)}{(75mm,25mm)}{0.5};
								 \mline{(66mm,20mm)}{(75mm,20mm)}{0.5};
								 \def\stag#1#2{
												 \draw
												 ($#1+(2mm,2mm)$)--($#1+(2mm,-2mm)$)--($#1+(-2mm,-2mm)$)--($#1+(-2mm,2mm)$)--($#1+(2mm,2mm)$);
												 \node at #1 {$#2$};
								 }
								 \stag{(77mm,25mm)}{P^{\prime\prime}};
								 \stag{(77mm,20mm)}{P^{\prime\prime}};
								 \stag{(77mm,15mm)}{P^{\prime\prime}};
								 \mline{(65mm,17.8mm)}{(75mm,15mm)}{0.5};
								 \mline{(64mm,17.1mm)}{(73mm,10mm)}{0.5};
								 \draw (95mm,25mm) circle [radius=2.5mm];
								 \draw (95mm,15mm) circle [radius=2.5mm];
								 \node at (95mm,25mm) {$P^{\prime}$};
								 \node at (95mm,15mm) {$P^{\prime}$};
								 \mline{(79mm,26.5mm)}{(93mm,26.5mm)}{0.5};
								 \mline{(79mm,24mm)}{(93mm,16.5mm)}{0.07};
								 \mline{(79mm,21mm)}{(92.5mm,24.5mm)}{0.5};
								 \mline{(79mm,19mm)}{(92.5mm,15.5mm)}{0.5};
								 \mline{(79mm,13.5mm)}{(92.9mm,13.5mm)}{0.5};
								 \mline{(79mm,16mm)}{(92.9mm,23.5mm)}{0.18};
								 \mline{(97.5mm,25mm)}{(102mm,25mm)}{0.5};
								 \mline{(97.5mm,15mm)}{(102mm,15mm)}{0.5};
								 \draw[color=red,thick=3mm]
								 (54mm,9mm)--(104mm,9mm)--(104mm,31mm)--(54mm,31mm)--(54mm,9mm); 
				\end{tikzpicture}
				\end{array}
				\label{eq:21.3}
\end{equation}
\noindent We let $\text{CBio}\subseteq \text{Bio}$ denote the full subcategory of
distributive bios. \vspace{.1cm}\\
For a planer bit $G\in p\text{Bit}$, and $e\in G^{1}$ with
$d_{-} e =v_{-}\in C_{n1}(G)$, \break
$d_{+}e=v_{+}\in C_{1m}(G)$, we define $GZe$, the
\myemph{interchange of $G$ at $e$}, by applying to $G$ the \myemph{perculation move}:
moving $C_{v_{+}}$ to the left of $C_{v_{-}}$, pictorially
\begin{equation}
				\begin{array}[H]{l}
				\begin{tikzpicture}[baseline=35mm]
								  \def\mline#1#2#3{
												\draw[decorate,decoration={markings,mark=at position #3 with {\arrow[color=black]{>}}}] #1--#2;
												\draw  #1--#2;
							   };
								 \node at (70mm,31mm) {$e_{1,1}$};
								 \node at (70mm,9mm) {$e_{n,m}$};
								 \filldraw[color=red] (10mm,20mm) circle [radius=0.6mm];
								 \node at (11mm,22mm) {$v_{-}$};
								 \filldraw[color=red] (25mm,20mm) circle [radius=0.6mm];
								 \node at (24.5mm,22mm) {$v_{+}$};
								 \mline{(25mm,20mm)}{(10mm,20mm)}{0.5};
								 \node at (18mm,22.5mm) {$e$};
								 \mline{(10mm,20mm)}{(0mm,30mm)}{0.5};
								 \node at (6mm,27mm) {$e_{1}$};
								 \node at (6.5mm,13mm) {$e_{n}$};
								 \mline{(10mm,20mm)}{(0mm,10mm)}{0.5};
								 \filldraw (2mm,20mm) circle [radius=0.2mm];
								 \filldraw (2mm,17mm) circle [radius=0.2mm];
								 \filldraw (2mm,23mm) circle [radius=0.2mm];

								 \mline{(35mm,30mm)}{(25mm,20mm)}{0.6};
								 \mline{(35mm,10mm)}{(25mm,20mm)}{0.6};
								 \filldraw (33mm,20mm) circle [radius=0.2mm];
								 \filldraw (33mm,17mm) circle [radius=0.2mm];
								 \filldraw (33mm,23mm) circle [radius=0.2mm];
								 \node at (28.5mm,27.5mm) {$\ell_{1}$};
								 \node at (29.5mm,12.5mm) {$\ell_{m}$};
								 \draw[color=red,thick=3mm] (-1mm,9mm)--(36mm,9mm)--(36mm,31mm)--(-1mm,31mm)--(-1mm,9mm);
								 \node at (40mm,20mm) {$\Rrightarrow$};

								 \filldraw[color=red] (60mm,11mm) circle [radius=0.6mm];
								 \filldraw[color=red] (60mm,20mm) circle [radius=0.6mm];
								 \filldraw[color=red] (60mm,29mm) circle [radius=0.6mm];
								 \mline{(60mm,11mm)}{(45mm,11mm)}{0.5};
								 \mline{(60mm,20mm)}{(45mm,20mm)}{0.5};
								 \mline{(60mm,29mm)}{(45mm,29mm)}{0.5};
								 \node at (48mm,26mm) {$\vdots$};
								 \node at (48mm,17.5mm) {$\vdots$};
								 \node at (52mm,31mm) {$e_{1}$};
								 \node at (52mm,22mm) {$e_{i}$};
								 \node at (52mm,13mm) {$e_{n}$};
								 \node at (62mm,32.5mm) {$v^{(1)}_{+}$};
								 \node at (61mm,23.5mm) {$v_{+}^{(i)}$};
								 \node at (61mm,14.5mm) {$v_{+}^{(n)}$};
									
								 \coordinate (a) at	(60mm,29mm); 
								 \coordinate (b) at	(60mm,20mm);
								 \coordinate (c) at	(60mm,11mm);
								 \coordinate (a1) at	(80mm,29mm) circle;
								 \coordinate (b1) at	(80mm,20mm) circle;
								 \coordinate (c1) at	(80mm,11mm) circle;
								 \filldraw[color=red] (a1) circle [radius=0.6mm];
								 \filldraw[color=red] (b1) circle [radius=0.6mm];
								 \filldraw[color=red] (c1) circle [radius=0.6mm];
								 \mline{(a1)}{(a)}{0.5};
								 \mline{(a1)}{(b)}{0.3};
								 \mline{(a1)}{(c)}{0.4};
								 \mline{(b1)}{(a)}{0.6};
								 \mline{(c1)}{(a)}{0.4};
								 \mline{(b1)}{(b)}{0.65};
								 \mline{(b1)}{(c)}{0.65};
								 \mline{(c1)}{(b)}{0.45};
								 \mline{(c1)}{(c)}{0.5};

								 \node at (82mm,32.5mm) {$v^{(1)}_{-}$};
								 \node at (81mm,23.5mm) {$v_{-}^{(i)}$};
								 \node at (81mm,14.5mm) {$v_{-}^{(n)}$};

								 \mline{(95mm,11mm)}{(80mm,11mm)}{0.5};
								 \mline{(95mm,20mm)}{(80mm,20mm)}{0.5};
								 \mline{(95mm,29mm)}{(80mm,29mm)}{0.5};
								 \node at (88mm,26mm) {$\vdots$};
								 \node at (88mm,17.5mm) {$\vdots$};
								 \node at (91mm,31mm) {$\ell_{1}$};
								 \node at (91mm,22mm) {$\ell_{j}$};
								 \node at (91mm,13mm) {$\ell_{m}$};
								 \draw[color=red,thick=3mm] (44mm,4mm)--(44mm,36mm)--(96mm,36mm)--(96mm,4mm)--(44mm,4mm);
				\end{tikzpicture}
\end{array}
				\label{eq:21.4}
\end{equation}
Using these percolation moves we can bring all the $C_{v}, v\in C^{-}(G)$, to the left of
all $C_{u},u \in C^{+}(G)$. Thus for $G\in p\text{Bit}_{n,m}(G)$, we obtain after a finite
number of such perculations 
\begin{equation}
				G \Rrightarrow GZ e_1  \Rrightarrow GZe_1Z e_2 \Rrightarrow \cdots GZe_{1}\cdots
				Ze_{\ell}\equiv (T_{i}, \sigma,T_j^{\prime}) 
				\label{eq:21.5}
\end{equation}
with $\left\{ T_{i} \right\}_{i=1\ldots n}$, $\left\{ T_{j}^{\prime} \right\}_{j=1\ldots
m}$ are forest of planer trees, and $\sigma$ is a bijection between their leaves
$\sigma:\coprod\limits_{1\le i \le n}\partial T_i
\xrightarrow[\qquad]{\sim}\coprod\limits_{1\le j\le m} \partial T_{j}^{\prime}$. \\
For a distributive bio $\biop$, $G\in \text{Bit}$, $e\in G^{1}$ with
$v_{-}=d_{-}e \in C^{+}(G)$, \break
$v_{+}=d_{+}e\in C^{-}(G)$, we clearly have 
\begin{equation}
				\begin{array}[H]{lll}
				(N\biop)_{G} &\equiv& \text{Bio}(\mathscr{S}_{G},\biop) \\\\
				&\equiv& \left\{ 
								\begin{array}[H]{ll}
								\varphi\in
								\text{Bio}(\mathscr{S}_{Gze},\biop), \\ 
								\varphi^{\circ}(e_{i,j})=\varphi^{\circ}(e_{i^{\prime},j^{\prime}}), \\
								\varphi(v_{-}^{(i)})=
								\varphi(v_{-}^{(i^{\prime})}),\\ 
								\varphi(v_{+}^{(j)})=\varphi(v_{+}^{(j^{\prime})})
								\end{array}
				\right\} \\\\
				&\subseteq& (N\biop)_{GZe}\subseteq\cdots \subseteq
				(N\biop)_{(T_{i},\sigma,T_{j}^{\prime})}
				\end{array}
				\label{eq:21.6}
\end{equation}
For a distributive bio $\biop$, the underliying (associative, unital)
monoid $\biop(\ast;\ast)=\biop^{\pm}(1)$ is \myemph{commutative}. Indeed, for unary
operations ($n=m=p=1$) interchange reduces to commutativity 
\begin{equation}
				(P\circleftarrow P^{\prime})\circ P^{\prime\prime}\equiv (P\circ
				P^{\prime\prime})\circleftarrow P^{\prime}
				\label{eq:21.7}
\end{equation}
and for $P=1_{\ast}$, $P^{\prime}=Q\circrightarrow 1_{\ast}=\overrightarrow{Q}$, this gives
\begin{equation}
				\begin{array}[H]{lll}
Q\circ P^{\prime\prime}=\overset{\leftrightarrows}{Q}\circ P^{\prime\prime} &\equiv
(1_{\ast}\circleftarrow \overrightarrow{Q})\circ P^{\prime\prime}
\equiv (1_{\ast}\circ P^{\prime\prime})\circleftarrow (Q\circrightarrow 1_{\ast})
\\\\ 
&\equiv \left( P^{\prime\prime}\circ Q \right)\circleftarrow 1_{\ast}
\equiv P^{\prime\prime}\circ Q
				\end{array}
				\label{eq:21.8}
\end{equation}
Moreover, interchange imply this monoid $\biop(1)$ acts \myemph{centrally} on operations
and co-operations, in the sense that for $a\in \biop(1)$, $P\in \biop^{-}(n)$
(resp. $P\in \biop^{+}(n)$) we have 
\begin{equation}
				a\circ P = P\circ (\underbrace{a,\cdots, a}_{n}) \quad , \quad (\text{resp.
				$P\circ a=(\underbrace{a,\cdots , a}_{n})\circ P$})
				\label{21.9}
\end{equation}
For a \myemph{multiplicative subset} $S\subseteq\biop(1)$ 
\begin{equation}
				s_{1},s_{2}\in S \Longrightarrow  s_{1}\circ s_{2}\in S,\;\; \unit\in S, 
				\label{21.10}
\end{equation}
we have the \myemph{localization} of $\biop$ at $S$, $S^{-1}\biop$:
\begin{equation}
				\begin{array}[H]{l}
				S^{-1}\biop \equiv \biop[S^{-1}]\equiv (S^{-1}\biop^{-},S^{-1}\biop^{+}) \\\\
				S^{-1}\biop^{\pm}= \biop^{\pm}\times S /\sim 
				\end{array}
				\label{21.11}
\end{equation}
with $(P^{\prime\prime},s^{\prime\prime})\sim (P^{\prime},s^{\prime})$ iff $s\circ
s^{\prime}\circ P^{\prime\prime} = s\circ
s^{\prime\prime}\circ P^{\prime}$ for some $s\in S$; one writes
$P/s=\frac{1}{s}\circ P$ for the equivalence class $(P,s)/\sim$. \vspace{.2cm}\\
For a commutative rig
$A$ the bio $\biop_{A}$ is distributive; this is true in particular for commutative
rings $A$, and we shall continue and define ideals, primes, spectrum, schemes etc. for
distributive bios making sure that we get the right definitions for
$\biop=\biop_{A}$, $A\in C\text{Ring}$.
\begin{remark}
				For any rig $A$, the associated bio $\biop_{A}$ contains the (distributive,
				self-dual) sub-bio $\mathbb{F}\subseteq \biop_{A}$ with
				\begin{equation}
								\begin{array}[H]{ll}
								\mathbb{F}^{-}(n) &= \left\{ 0=(0,\cdots , 0), \delta_{1}=(1,0,\cdots , 0),
								\cdots , \delta_{n}=(0,\cdots 0,1) \right\} \\\\
								\mathbb{F}^{+}(n) &= \left\{ 0^{t}=
																\left(\begin{matrix}
																								0 \\ \vdots \\ 0
																				\end{matrix}\right) , 
																				\delta_{1}^{t} = 
																								\left(\begin{matrix}
																								1 \\ 0 \\ \vdots \\ 0
																				\end{matrix}\right) ,  \cdots , 
																				\delta_{n}^{t}=
																				\left(\begin{matrix}
																								0 \\ \vdots \\ 0 \\ 1
																				\end{matrix}\right)
								\right\}
								\end{array}
								\label{eq:21.13}
				\end{equation}
				the (co)-vectors with at most one $1$ coordinate (note that $\mathbb{F}$ is closed
				under the composition and action operations); we call $\mathbb{F}$
				``\myemph{the field with one element}''. Although we can develop everything
				using arbitrary distributive bios, we will
				restrict our attention to ``$\mathbb{F}$-algegras'', and always assume our
				distributive bio $\biop$ contains the bio $\mathbb{F}\subseteq \biop$ (note that
				$\mathbb{F}$ is indeed a ``field'': if $\varphi\in \text{Bio}(\mathbb{F},\biop)$
is not an injection then $\varphi(0)=\varphi(1)\in \biop(1)$, and $\biop^{\pm}(n)\equiv
\left\{ \ast \right\}$ reduce to a point, $\biop$ is the zero-bio, the final object of
bio). We will use $\mathbb{F}\subseteq \biop$ beginning in section \ref{sec:29}, to have
the sphere spectrum and ``homological algebra''. This is just for convenience (one can replace
everywhere ``$\phi_{\ast}(V_{+})$'' by the ``free $\biop\text{-set}$ on $V$''). Thus our basic building blocks for
geometry will be the (non-zero) objects of the full subcategory 
\begin{equation}
				C\text{Bio}\subseteq \mathbb{F}^{\displaystyle \backslash \text{Bio}}
				\label{eq:21.14}
\end{equation}
of distributive bios under $\mathbb{F}$.
				\label{rem:21.12}
\end{remark}
\section{The spectrum}
Fix $\biop=(\biop^{-},\biop^{+})\in \text{CBio}$. 
\begin{definition}
				A \myemph{$1$-ideal} is a subset $\oneideal\subseteq\biop(1)$ such that for
				$a_{1}\cdots a_{n}\in \oneideal$, $b\in \biop^{-}(n)$, $d\in\biop^{+}(n)$, the
				\myemph{``linear combination''} 
				\begin{equation}
								\left\{ b,(a_{i}), d  \right\}\overset{\text{def}}{=} \left( b\circ
												(a_{i})
								\right)\circleftarrow d = b\circrightarrow \left( (a_{i})\circ d \right)\in
				\oneideal \;\;\; \text{is in $\oneideal $}
								\label{eq:22.2}
				\end{equation}
				\label{def:22.1}
\end{definition} \vspace{-5mm}
\noindent Note that intersection of ideals is again an ideal, so we can speak of the $1$-ideal $\oneideal$
generated by a set $\left\{ a_{i} \right\}_{i\in I}\subseteq\biop(1);\oneideal$ is also given as
the collection of all linear - combinations of the $a_{i}$'s (with repetition). \\
A proper $1$-ideal $\pid\subseteq\biop(1)$, with $1\not\in \pid$, is called \myemph{prime} if
$S_{\pid}=\biop(1)\setminus \pid$ is multiplicative. The set of primes is denoted
$\text{spec}(\biop)$.  \\
The set $\text{spec}(\biop)$ is not empty: a maximal proper ideal $1\not\in
\pid\subseteq\biop(1)$, which exists by Zorn's lemma, is always prime. Indeed, if
$a,a^{\prime}\in \biop(1)\setminus \pid$, then for some linear combinations 
\begin{equation*}
				\left\{ b,(a_{i})_{i\le n}, d \right\}=\unit = \left\{
				b^{\prime},(a^{\prime}_{j})_{j\le m},d^{\prime} \right\},b,b^{\prime}\in
				\biop^{-}, d,d^{\prime}\in\biop^{+}, 
\end{equation*}
$a_{i}\in \pid$ or $a_{i}=a$, $a_{j}^{\prime}\in \pid$ or 
$a_{j}^{\prime}=a^{\prime}$ and by distributivity we have
\begin{equation*}
				\begin{array}[H]{lll}
								1 = 1\circ 1 &=& \left(\left(b\circ (a_{i}) \right)\circleftarrow
								d\right)\circ \left(\left(
												b^{\prime}\circ (a_{j}^{\prime})
				\right)\circleftarrow d^{\prime}\right) \\\\
								&=& \left( b\circ (\underbrace{b^{\prime},\cdots b^{\prime}}_{n})
								\right)\sigma \circ
								(a_{i}\circ a_{j}^{\prime})\circleftarrow \sigma^{\prime}\left(
												(\underbrace{d,\cdots , d}_{m})\circ d^{\prime}
								\right)
				\end{array}
\end{equation*}
with $a_{i}\circ a_{j}^{\prime}\in \pid$ or $a_{i}\circ a_{j}^{\prime}= a\circ a^{\prime}$,
and so $a\circ a^{\prime}\not\in \pid$. \\
Moreover, $\text{spec}(\biop)$ is a (compact,
sober$\equiv$Zariski) topological space with respect to the \myemph{Zariski topology} with
closed sets
\begin{equation}
				\begin{array}[H]{l}
				V(\oneideal):= \left\{ \pid\in \text{spec}(\biop),\pid\supseteq \oneideal \right\} \quad ,
				\quad \oneideal\subseteq \biop(1)\; \text{ideal}, \\\\
				V(\oneideal)\cap V(\oneideal^{\prime})=V(\oneideal\cdot \oneideal^{\prime})\quad ,
				\quad \oneideal\cdot\oneideal^{\prime}=\text{the ideal generated by $\left\{
												a\cdot a^{\prime},a\in \oneideal, a^{\prime}\in
												\oneideal^{\prime}
				\right\}$}; \\\\
				\bigcup\limits_{i}V(\oneideal_{i})=V(\sum\limits_{i}\oneideal_{i}) \quad , \quad
				\sum\limits_{i}\oneideal_{i}\;\;\text{the ideal generated by
				$\cup_{i} \oneideal_{i}$}; \\\\
				V(0)=\text{spec}(\biop)\quad , \quad V(1)=\phi
				\end{array}
				\label{eq:22.3}
\end{equation}
We have a basis for the topology by \myemph{basic open sets}
\begin{equation}
				\begin{array}[H]{c}
				D(f):=\left\{ \pid\in \text{spec}(\biop), \pid\not\ni f \right\}\quad , \quad
				f\in\biop(1). \\\\
				D(f_1)\cap D(f_2)=D\left(f_1\circ f_2\right) \quad , \quad D(1)=\text{spec}(\biop)\quad ,
				\quad D(0)=\phi, \\\\
				\text{spec}(\biop)\setminus V(\oneideal)=\bigcup_{f\in \mathfrak{a}}D(f)
				\end{array}
				\label{eq:22.4}
\end{equation}
For a map of distributive bios $\varphi\in \text{CBio}(\biop,\bioq)$, the pull back of a
(prime) ideal is a (prime) ideal, and we obtain
\begin{equation}
				\begin{array}[H]{c}
				\text{spec}(\varphi)=\varphi^{\ast}: \text{spec}(\bioq)\longrightarrow
				\text{spec}(\biop) \\\\
				\varphi^{\ast}(\mathfrak{q})=\varphi^{-1}(\mathfrak{q})=\left\{
				a\in\biop(1),\varphi(a)\in \mathfrak{q} \right\}.
				\end{array}
				\label{eq:22.5}
\end{equation}
It is a continuous map: 
\begin{equation}
				\varphi^{\ast^{-1}}\big(V(\oneideal)\big) = \left\{ \mathfrak{q}\in
				\text{spec}(\bioq),\varphi^{-1}(\mathfrak{q})\supseteq \oneideal \right\} =
				V\big(\varphi(\oneideal)\big)
				\label{eq:22.6}
\end{equation}
and similarly the pull back of basic open set is again basic open 
\begin{equation}
				\varphi^{\ast^{-1}}\left( D(f) \right) = \left\{ \mathfrak{q}\in \text{spec}(\bioq),
				\varphi^{-1}(\mathfrak{q})\not\ni f \right\} = D\big(\varphi(f)\big). 
				\label{eq:22.7}
\end{equation}
Thus we have a functor 
\begin{equation}
				\text{spec}: (\text{CBio})^{\text{op}} \longrightarrow \text{Top} \quad
				\text{(compact, sober)}
				\label{eq:22.8}
\end{equation}
We have a Galois correspondence ($\equiv$ adjunction of order sets)
\begin{equation}
				\begin{tikzpicture}[baseline=5mm]
								 \node at (0mm,5mm) {$\left\{ \oneideal \subseteq \biop(1)\;\text{ideal} \right\}$};
								 \draw[->] plot[smooth ] coordinates {(15mm,6mm)(25mm,8mm)(35mm,6mm)};
								 \draw[<-] plot[smooth ] coordinates {(15mm,4mm)(25mm,2mm)(35mm,4mm)};
								 \node at (25mm,10mm) {$V$};
								 \node at (25mm,0mm) {$I$};
								 \node at (53mm,5mm) {$\left\{ Z\subseteq \text{spec}(\biop)\;\text{closed} \right\}$};
				\end{tikzpicture}
				\label{eq:22.9}
\end{equation}
\begin{equation*}
				\bigcap\limits_{\pid\in Z} \pid =: I(Z) \; \tikz \draw[<-|] (0mm,0mm)--(20mm,0mm);\; Z
\end{equation*}
We have $VI(Z)=Z$, and 
\begin{equation}
				IV(\oneideal)=\bigcap\limits_{\mathfrak{p}\supseteq \mathfrak{a}} \pid = \left\{ a\in \biop(1),
								a^{n}=\underbrace{a\circ\ldots \circ a }_{n} \in \oneideal\;\text{for $n>>0$}
\right\}=\sqrt{\oneideal}
\label{eq:22.10}
\end{equation}
We get induced bijections
\begin{equation}
				\begin{tikzpicture}[baseline=5mm]
								 \node at (35mm,6mm) {$\sim$};
								 \node at (10mm,5mm) {$\text{spec}(\biop)$}; 
								 \node at (80mm,5mm) {$\left\{ Z\subseteq\text{spec}(\biop)\;\text{closed irreducible} \right\}$}; 
								 \node at (10mm,15mm) {\rotatebox{90}{\Huge $\subseteq$}};
								 \node at (75mm,15mm) {\rotatebox{90}{\Huge $\subseteq$}};
								 \node at (80mm,23mm) {$\left\{ Z\subseteq\text{spec}(\biop)\;\text{closed } \right\}$}; 
								 \node at (10mm,23mm) {$\left\{ \oneideal\subseteq\biop(1),\oneideal=\sqrt{\oneideal} \right\}$}; 
								 \draw[->] (30mm,24mm)--(60mm,24mm);
								 \draw[arrows={<-}] (30mm,21mm)--(60mm,21mm);
								 \draw[arrows={<->}] (20mm,5mm)--(50mm,5mm);
								 \node at (45mm,26mm) {$V$};
								 \node at (45mm,22mm) {$\sim$};
								 \node at (45mm,19mm) {$I$};
				\end{tikzpicture}
				\label{eq:22.11}
\end{equation}
For a multiplicative set $S\subseteq \biop(1)$, let $\phi_{S}:\biop\rightarrow
S^{-1}\biop$, $\phi_{S}(P)=P/1$, be the canonical map, it induces a homeomorphism 
\begin{equation}
				\begin{tikzpicture}[baseline=5mm]
								 \node at (0mm,5mm) {$\left\{ \pid\in \text{spec}(\biop), \pid\cap S=\phi \right\}$};
								 \draw[<-] plot[smooth ] coordinates {(20mm,6mm)(32.5mm,8mm)(45mm,6mm)};
								 \draw[->] plot[smooth ] coordinates {(20mm,4mm)(32.5mm,2mm)(45mm,4mm)};
								 \node at (32mm,11mm) {$\phi_{S}^{\ast}$};
								 \node at (32mm,0mm) {$S^{-1}$};
								 \node at (32mm,4mm) {$\sim$};
								 \node at (56mm,5mm) {$\text{spec}(S^{-1}\biop)$};
				\end{tikzpicture}
				\label{eq:22.12}
\end{equation}
In particular, for $S_{f}=\left\{ f^{\N} \right\}=\left\{ f^{n},n\ge 0 \right\}$,
$f\in\biop(1)$, we get 
\begin{equation}
				\text{spec}(\biop)\supseteq
				D(f)\xleftrightarrow[\qquad]{\sim}\text{spec}(\biop_{f}), \qquad
				\biop_{f}:= \left\{ f^{\N} \right\}^{-1}\biop
				\label{eq:22.13}
\end{equation}
and for $S_{\pid}=\biop(1)\setminus \pid$, $\pid$ prime we get 
\begin{equation}
				\left\{ q\in\text{spec}(\biop),q\subseteq \pid \right\}
				\xleftrightarrow[\qquad]{\sim}\text{spec}(\biop_{\pid}),\;\biop_{\pid}:=S_{\pid}^{-1}\biop
				\label{eq:22.14}
\end{equation}
i.e. $\biop_{\pid}\in \text{CBio}_{\text{loc}}$ is a \myemph{local} bio in the sense that
\begin{equation}
				m_{\pid}:=\biop_{\pid}(1)\setminus \left\{ x\in \biop_{\pid}(1),\exists x^{-1}\;\text{with}\;
				x\circ x^{-1}=1=x^{-1}\circ x \right\}=S_{\pid}^{-1}(\pid)
				\label{eq:22.15}
\end{equation}
is the unique maximal ideal of $\biop_{\pid}$. \\
We make $\text{CBio}_{\text{loc}}$ into a category by 
\begin{equation}
				\text{CBio}_{\text{loc}}(\biop,\bioq)=\left\{ \varphi\in
				\text{CBio}(\biop,\bioq),\varphi\;\text{is local:
$\varphi^{\ast}(m_{\bioq})=m_{\biop}$} \right\}
\label{eq:22.16}
\end{equation}
\section{The structure sheaf $\mathscr{O}_{\biop}$}
Fix $\biop\in \text{CBio}$. For an open set $\mathscr{U}\subseteq \text{spec}(\biop)$ define 
\begin{equation}
				\mathscr{O}^{\pm}_{\biop}(n):=\left\{ f:\mathscr{U}\to\coprod\limits_{\pid\in
								\mathscr{U}}\biop^{\pm}_{\mathfrak{p}}(n),
								f(\mathfrak{p})\in\biop^{\pm}_{\mathfrak{p}}(n), \;\text{and $f$ is
\myemph{locally a fraction}} \right\}	
\label{eq:23.1}
\end{equation}
$f$ \myemph{locally a fraction}: for all $\mathfrak{p}\in \mathscr{U}$, there exists open
$\mathfrak{p}\in
U_{\mathfrak{p}}\subseteq U$, and $P\in \biop^{\pm}(n)$, $s\in \biop(1)\setminus
\bigcup\limits_{\mathfrak{q}\in U_{\mathfrak{p}}} \mathfrak{q}$, such that for all
$\mathfrak{q}\in \mathscr{U}_{\mathfrak{p}}$ we have $f(\mathfrak{q})\equiv
P/ s $ in $\biop_{\mathfrak{q}}$. \\
Note that $\mathscr{O}_{\biop}(\mathscr{U})=\left(
\mathscr{O}_{\biop}^{-}(\mathscr{U}),\mathscr{O}_{\biop}^{+}(\mathscr{U}) \right)$ is a distributive bio, the
operations of compositions and actions are defined pointwise (in each $\biop_{\mathfrak{p}}$), and the
``local fraction condition'' is preserved. For open sets $\mathscr{V}\subseteq \mathscr{U}\subseteq
\text{spec}(\biop)$ we have the restriction maps 
\begin{equation}
				\rho_{\mathscr{V}}^{\mathscr{U}}: \biop(\mathscr{U})\to \biop(\mathscr{V})
				\label{eq:23.2}
\end{equation}
making $\mathscr{U}\mapsto \biop(\mathscr{U})$ a pre-sheaf of bios over $\text{spec}(\biop)$; by the local
nature of the ``locally-fraction-condition'' it is clearly a sheaf. \\
For $\mathfrak{p}\in \text{spec}(\biop)$, the \myemph{stalk} of $\mathscr{O}_{\biop}$ at
$\mathfrak{p}$ is given by 
\begin{equation}
				\begin{array}[H]{c}
								\mathscr{O}_{\biop, \mathfrak{p}}:=
								\limrightarrow\limits_{\mathscr{U}\ni
								\mathfrak{p}}\mathscr{O}_{\biop}(\mathscr{U})\xrightarrow{\sim}\biop_{\mathfrak{p}} \\\\
								(\mathscr{U},f)_{/\approx} \xmapsto{\quad} f(\mathfrak{p})
				\end{array}
				\label{eq:23.3}
\end{equation}
it is well defined, surjective, and injective. \\
The global sections of $\mathscr{O}_{\biop}$ are given by 
\begin{theorem}
				For a basic open set $D(s)\subseteq \text{spec}(\biop)$, $s\in \biop(1)$,
				\begin{equation*}
								\begin{array}[H]{c}
								\Psi:\biop_{s}=\biop\left[ \frac{1}{s} \right]\xrightarrow{\quad\sim\quad}
								\mathscr{O}_{\biop}\left( D(s) \right) \\\\
								\Psi\left( P/s^{n} \right):= \left\{ f(\mathfrak{p})\equiv P/s^{n} \; \text{in
								$\biop_{\mathfrak{p}}$ for all $\mathfrak{p}\in D(s)$} \right\}.
								\end{array}
				\end{equation*}
				\label{thm:3}
\end{theorem}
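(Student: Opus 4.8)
This is the generalized-bio analogue of the classical fact that $\Gamma(D(s),\mathscr O_{\operatorname{Spec} A}) = A_s$. I will follow Hartshorne's proof of that classical statement (Prop. II.2.2) essentially verbatim, checking at each step that the only ring-theoretic inputs used are ones that survive in the distributive-bio setting: finitely generated $1$-ideals, the description \eqref{eq:22.10} of $\sqrt{\oneideal}$, quasi-compactness of $D(s)$, and the localization relations \eqref{21.11}–\eqref{eq:22.13}. Throughout, $\Psi$ is well-defined because the map $\biop_s \to \biop_\pid$ (for $\pid \in D(s)$) sends $P/s^n$ to the class of $P/s^n$, and the assignment $\pid \mapsto P/s^n$ is visibly "locally a fraction" (take $U_\pid = D(s)$, witness $(P,s^n)$), so it lands in $\mathscr O_\biop(D(s))$. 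It is a map of distributive bios since composition and action in both $\biop_s$ and $\mathscr O_\biop(D(s))$ are computed componentwise in the localizations.

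\medskip
\noindent\emph{Step 1: $\Psi$ is injective.} Suppose $\Psi(P/s^n) = \Psi(P'/s^{n'})$, i.e.\ $P/s^n = P'/s^{n'}$ in $\biop_\pid$ for every $\pid \in D(s)$. Fix such a $\pid$. By the equivalence relation defining $S_\pid^{-1}\biop$ there is $t \notin \pid$ with $t\circ s^{n'}\circ P \;=\; t\circ s^{n}\circ P'$ in $\biop^\pm$. Let $\oneideal$ be the $1$-ideal of all $u \in \biop(1)$ with $u\circ s^{n'}\circ P = u\circ s^{n}\circ P'$; I must check this is indeed a $1$-ideal — closure under "linear combinations" $\{b,(u_i),d\}$ follows because $\circ$ and $\circleftarrow$ distribute over linear combinations by \eqref{eq:21.1}–\eqref{eq:21.2} and commute with multiplication by the fixed elements $s^{n'}\circ P$ etc.\ via centrality \eqref{21.9} and linearity \eqref{eq:2.9}–\eqref{2.10}. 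We have shown $\oneideal \not\subseteq \pid$ for every $\pid \in D(s)$, hence $V(\oneideal) \cap D(s) = \emptyset$, so $s \in \sqrt{\oneideal}$ by \eqref{eq:22.10}, i.e.\ $s^N \in \oneideal$ for some $N$. Then $s^N \circ s^{n'}\circ P = s^N\circ s^n\circ P'$, which says $P/s^n = P'/s^{n'}$ already in $\biop_s$.

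\medskip
\noindent\emph{Step 2: $\Psi$ is surjective.} Let $f \in \mathscr O_\biop^\pm(D(s))(n)$. Cover $D(s)$ by opens $U_\alpha$ on which $f \equiv P_\alpha / t_\alpha$ with $t_\alpha \notin \bigcup_{\mathfrak q \in U_\alpha}\mathfrak q$; shrinking, I may assume each $U_\alpha = D(h_\alpha)$ is basic. Then $D(h_\alpha) \subseteq D(t_\alpha)$ gives $h_\alpha \in \sqrt{(t_\alpha)}$, so $h_\alpha^{m} = c_\alpha \circ t_\alpha$ for some $c_\alpha \in \biop(1)$ and $m$; replacing $P_\alpha$ by $c_\alpha \circ P_\alpha$ and $t_\alpha$ by $h_\alpha^m$ (which does not change the fraction $P_\alpha/t_\alpha$ in any $\biop_{\mathfrak q}$, $\mathfrak q \in D(h_\alpha)$, since $c_\alpha$ is invertible there) I may assume $f \equiv P_\alpha/h_\alpha^{\,k_\alpha}$ on $D(h_\alpha)$, and then, absorbing powers, that $f \equiv P_\alpha/h_\alpha$ on $D(h_\alpha)$. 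By quasi-compactness of $D(s)$ (equivalently: $D(s) = \operatorname{spec}(\biop_s)$ and every spectrum is quasi-compact — this is implicit in \eqref{eq:22.8}) finitely many $D(h_1),\dots,D(h_r)$ cover $D(s)$.

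\medskip
\noindent\emph{Step 3: patching.} On $D(h_\alpha)\cap D(h_\beta) = D(h_\alpha\circ h_\beta)$ the two fractions $P_\alpha/h_\alpha$ and $P_\beta/h_\beta$ agree, so by the \emph{injectivity} already proved (Step 1, applied to $\biop_{h_\alpha \circ h_\beta}$) there is $N$ with $(h_\alpha\circ h_\beta)^N\circ(h_\beta\circ P_\alpha) = (h_\alpha\circ h_\beta)^N\circ(h_\alpha\circ P_\beta)$; taking $N$ uniform over the finitely many pairs and replacing $P_\alpha$ by $h_\alpha^N\circ P_\alpha$, $h_\alpha$ by $h_\alpha^{N+1}$, I get $h_\beta\circ P_\alpha = h_\alpha\circ P_\beta$ for all $\alpha,\beta$. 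Now $D(s) = \bigcup D(h_\alpha)$ means $V(s) \subseteq V(h_1,\dots,h_r)$, i.e.\ $s \in \sqrt{(h_1,\dots,h_r)}$, so $s^M = \{\,b,(h_\alpha)_{\alpha},d\,\}$ is a linear combination $ (b\circ(h_\alpha))\circleftarrow d$ for some $b \in \biop^-(r)$, $d\in\biop^+(r)$. Set $P := (b\circ(P_\alpha))\circleftarrow d \in \biop^\pm(n)$. Using $h_\beta\circ P_\alpha = h_\alpha\circ P_\beta$ together with the linearity/interchange axioms \eqref{eq:2.9}–\eqref{2.10}, \eqref{eq:21.1}–\eqref{eq:21.2} and centrality \eqref{21.9}, one computes $h_\beta \circ P = s^M \circ P_\beta$ for each $\beta$ — this is the bio-theoretic version of the classical one-line identity $h_\beta\!\left(\sum b_\alpha P_\alpha\right) = \sum b_\alpha h_\beta P_\alpha = \sum b_\alpha h_\alpha P_\beta = s^M P_\beta$. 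Hence on each $D(h_\beta)$, $P/s^M \equiv P_\beta/h_\beta \equiv f$, so $\Psi(P/s^M) = f$.

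\medskip
\noindent\textbf{Main obstacle.} The genuinely new work is \emph{Step 3}, specifically verifying that the identity $h_\beta\circ P = s^M\circ P_\beta$ really does follow from the bio axioms, since a "linear combination" $(b\circ(P_\alpha))\circleftarrow d$ is a composite of a negative operation with the tuple $(P_\alpha)$ followed by an action of $d$, and one must push the scalar $h_\beta$ past both the composition and the action and then use the relations $h_\beta\circ P_\alpha = h_\alpha\circ P_\beta$ inside. The distributivity axioms \eqref{eq:21.1}–\eqref{eq:21.2} are exactly what is needed to commute the action $\circleftarrow d$ past the composition $b\circ(-)$, and the linearity axioms \eqref{eq:2.9}–\eqref{2.10} handle moving a scalar into one slot of a composition; centrality \eqref{21.9} converts "multiply $P$ by $h_\beta$" into "substitute $h_\beta$ in every slot", which is the form in which the relations $h_\beta\circ P_\alpha = h_\alpha\circ P_\beta$ can be applied slotwise. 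I expect this to be a somewhat intricate but purely formal diagram-chase; everything else is a direct transcription of Hartshorne, with $\sqrt{(-)}$, quasi-compactness of $\operatorname{spec}$, and the localization relation $s\circ s^{n'}\circ P = s\circ s^n\circ P' \iff P/s^n = P'/s^{n'}$ in $\biop_s$ supplying the ring-theoretic steps.
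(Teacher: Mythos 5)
Your proposal follows the paper's proof step for step: the same annihilator-ideal argument for injectivity, the same compactness-plus-refinement normalization to bring each local witness to the form $P_\alpha/h_\alpha$, the same use of injectivity on overlaps to force $h_\beta\circ P_\alpha=h_\alpha\circ P_\beta$ on the nose, and the same construction of $P$ as a linear combination $b\circ(P_{i(j)})\circleftarrow(d,\dots,d)$. The one thing you defer as a ``somewhat intricate diagram-chase'' the paper carries out in four lines and is easier than you fear: $h_\beta\circ P = b\circ(h_\beta\circ P_{i(j)})\circleftarrow(d,\dots,d) = b\circ(c_j\circ P_\beta)\circleftarrow(d,\dots,d) = \big((b\circ(c_j))\circleftarrow d\big)\circ P_\beta = s^M\circ P_\beta$, where the third equality is a single application of distributivity (\ref{eq:21.1}); also note $P$ must be contracted against $k$ copies of $d$ (one per input slot of the $P_\alpha$), not a single $d$.
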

For $s=1$ we obtain the global sections 
\begin{equation*}
				\biop\xrightarrow{\quad \sim \quad}\mathscr{O}_{\biop}(\text{spec}(\biop))
\end{equation*}
\begin{proof}
				The map $\Psi$ which takes $P/s^{n}\in \biop_{s}$ to the constant section
				$f$, $f(\mathfrak{p})\equiv P/s^{n}$, is a well defined map of bios. Take $P\in
				\biop^{-}(k)$, the case $P\in\biop^{+}(k) $ is similar. \\
				\myemph{$\Psi$ is injective}: Assume $\Psi\left( P_{1}/s^{n_1}
				\right)=\Psi(P_2/s^{n_2})$, and define 
				\begin{equation*}
								\oneideal := \text{An}\left( s^{n_2}\circ P_{1} ; s^{n_1}\circ P_2
								\right)=\left\{ a\in \biop(1), a\circ s^{n_2}\circ P_{1}=a\circ
								s^{n_{1}}\circ P_2 \right\}; 
				\end{equation*}
				by distributivity it follows $\oneideal $ is a $1$-ideal. We have
				\begin{equation*}
								P_1/s^{n_1}=P_2/s^{n_2}\;\; \text{in}\;\; \biop_{\mathfrak{p}} \;\;\text{for all}\;\; \mathfrak{p}\in D(s)
				\end{equation*}
				\begin{description}
								\item[$\Rightarrow$ ]  $s_{\mathfrak{p}}\circ s^{n_2}\circ
												P_1=s_{\mathfrak{p}}\circ
												s^{n_1}\circ P_2$ for some $s_{\mathfrak{p}}\in \biop(1)\setminus
												\mathfrak{p}$,
												all $\mathfrak{p}\in D(s)$. 
								\item[$\Rightarrow$ ]  $\oneideal\not\subseteq\mathfrak{p}$, all
												$\mathfrak{p}\in
												D(s)$
								\item[$\Rightarrow$ ]  $V(\oneideal)\cap D(s)=\emptyset\Rightarrow
												V(\oneideal)\subseteq V(s)\Rightarrow s\in
												IV(\oneideal)=\sqrt{\oneideal}$ 
								\item[$\Rightarrow$ ] $s^{n}\in \oneideal$ for $n>>0 \Rightarrow
												s^{n+n_{2}}\circ P_{1}=s^{n+n_1}\circ P_{2}$ for $n>>0$
								\item[$\Rightarrow$ ] $P_1/s^{n_1}=P_2/s^{n_2}$ in $\biop_{s}$.
				\end{description}
				\myemph{$\Psi$ is surjective}: Fix $f\in
				\mathscr{O}_{\biop}(D(s))^{-}(k)$. Since $D(s)$ is compact we can cover it by
				finite collection of basic open sets 
				\begin{equation*}
								D(s)=D(s_{1})\cup\cdots\cup D(s_{N})
				\end{equation*}
				with 
				\begin{equation*}
								f(\mathfrak{p})\equiv P_{i}/t_{i} \;\text{for}\; \mathfrak{p} \in D(s_{i})\quad ;\quad
								t_{i}\in \biop(1)\setminus\bigcup\limits_{\mathfrak{p}\in D(s_{i})}\pid.
				\end{equation*}
				We have $V(t_{i})\subseteq V(s_{i})$, so $s_{i}^{n_i}=c_{i}\circ t_{i}$ for some
				$c_{i}\in \biop(1)$, and for $\mathfrak{p}\in D(s_{i})$, $f(\mathfrak{p})=P_{i}/t_{i}=c_{i}\circ
				P_{i}/s_{i}^{n_i}$. We can replace $s_{i}$ by $s_{i}^{n_i}$,
				($ D(s_{i})=D(s_{i}^{n_{i}})$), and $P_{i}$ by $c_{i}\circ P_{i}$, so 
				\begin{equation*}
								f(\mathfrak{p})\equiv P_{i}/s_{i}\quad \text{for}\; \mathfrak{p}\in D(s_{i}).
				\end{equation*}
				On the set $D(s_{i})\cap D(s_{j})=D(s_{i}\circ s_{j})$, the section $f$ is given
				by both $P_{i}/s_{i}$ and $P_{j}/s_{j}$, and by the injectivity  of $\Psi$ 
				\begin{equation*}
								(s_{i}\circ s_{j})^{n}\circ s_{j}\circ P_{i} = (s_{i}\circ
								s_{j})^{n}\circ s_{i}\circ P_{j}
				\end{equation*}
				By finiteness we may assume one $n$ works for all $i,j$. \vspace{.1cm}\\ 
				Replacing $s_{i}$ by
				$s_{i}^{n+1}$, and replacing $s_{i}^{n}\circ P_{i}$ by $P_{i}$, we may assume 
				\begin{equation*}
								\begin{array}[H]{l}
												f(\mathfrak{p})\equiv P_{i}/s_{i} \quad \text{for all
																$\mathfrak{p}\in
												D(s_{i})$} \\\\
												s_{j}\circ P_{i}= s_{i}\circ P_{j} \quad \text{all $i,j$.}
								\end{array}
				\end{equation*}
				Since $D(s)\subseteq \bigcup\limits_{i}D(s_{i})$ we have that some power
				$s^{M}$ is a linear-combination of the $s_{i}$
				\begin{equation*}
								\begin{array}[H]{l}
								s^{M}=\left\{ b,(c_{j}),d \right\} = b\circ (c_{j})\circleftarrow d \quad
								, \quad c_{j}=s_{i(j)}\quad ,\quad b,d\in \biop^{\pm}(\ell)\quad ,\\\\
								j\in \left\{ 1,\cdots , \ell \right\}\xrightarrow{i}\left\{
												1,\cdots, N
								\right\}.
								\end{array}
				\end{equation*}
				Define $P=b\circ (P_{i(j)})\circleftarrow (\underbrace{d,\ldots ,d}_{k})$. \\
				We have, 
				\begin{equation*}
								\begin{array}[H]{lll}
												s_{j}\circ P &=&  b\circ \left( s_{j}\circ P_{i(j)}
												\right)\circleftarrow (d,\cdots , d)\equiv  b\circ \left(
												s_{i(j)}\circ P_{j} \right)\circleftarrow (d,\cdots , d) \\\\
												&=& (b\circ (c_{j}\circ P_{j}))\circleftarrow (d,\cdots , d) \\\\
												&=& ((b\circ(c_{j}))\circleftarrow d)\circ P_{j}\qquad
												\text{by distributivity} \\\\
												&=& s^{M}\circ P_{j}
								\end{array}
				\end{equation*}
				Thus $f(\mathfrak{p})\equiv P_{j}/s_{j}\equiv P/s^{M}$ is constant. 
\end{proof}
\section{Generalized Schemes}
For a topological space $\xspace$, we have the category $\text{CBio}/_{\xspace}$ of
sheaves of $\text{CBio}$ over $\xspace$, its maps are natural transformations
$\varphi=\left\{ \varphi_{\mathscr{U}} \right\}$, $\varphi_{\mathscr{U}}\in
\text{CBio}(\biop(\mathscr{U}),\biop^{\prime}(\mathscr{U}))$. Putting all these categories together we have
the category $\text{CBio}/\text{Top}$: its object are pairs $(\xspace,\biop)$,
$\xspace\in \text{Top}$, $\biop\in \text{CBio}/\xspace$, and its maps
$f:(\xspace,\biop)\to(\xspace^{\prime},\biop^{\prime})$ are pairs $f\in
\text{Top}(\xspace,\xspace^{\prime})$ and $f^{\natural}\in
\text{CBio}/_{\xspace^{\prime}} (\biop^\prime,f_{\ast}\biop) $; explicitly, $f$ is a
continuous function, and for $\mathscr{U}\subseteq \xspace^{\prime}$ open, we have the map of bios
$f_{\mathscr{U}}^{\natural}\in \text{CBio}\left( \biop^{\prime}(\mathscr{U}),\biop\left(
f^{-1}(\mathscr{U}) \right) \right)$,
these maps being compatible with restrictions. 
\begin{remark}
				For $f\in \text{Top}(\xspace,\xspace^{\prime})$ we have adjunction
\begin{figure}[H]
				\centering
				\begin{tikzpicture}[baseline=0mm]
								\node at (41.5mm,22mm) {$\text{CBio}/\xspace$};
								 \draw[<-] plot[smooth ] coordinates {(38mm,20mm)(35mm,14mm)(38mm,7mm)};
								 \draw[->] plot[smooth ] coordinates {(45mm,20mm)(48mm,14mm)(45mm,7mm)};
								 \node at (41.5mm,4mm) {$\text{CBio}/\xspace^{\prime}$};
								 \node at (32mm,14mm) {$f^{\ast}$};
								 \node at (51mm,14mm) {$f_{\ast}$};
								 \node at (0mm,12mm) {$f^{\ast}\biop^{\prime} = $ sheaf associated};
								 \node at (75mm,12mm) {$f_{\ast}\biop(\mathscr{U})=\biop(f^{-1}\mathscr{U})$ };
								 \node at (10mm,4mm) {to the pre-sheaf};
				\end{tikzpicture}
\end{figure}
\vspace{-0.8cm}
\begin{equation*}
				\hspace{-6cm}  \xspace \supseteq \mathscr{V} \xmapsto{\qquad} \hspace{-10mm} \limrightarrow\limits_{
								\begin{array}[H]{l}
												\hspace{10mm} \mathscr{U}\subseteq \xspace^{\prime}\,\text{open} \\
												\hspace{10mm} f(\mathscr{V})\subseteq \mathscr{U}
								\end{array}
				} 
				\hspace{-7mm}\biop^{\prime}(\mathscr{U})
\end{equation*}
For a map $f\in \text{CBio}/{\text{Top}} \left(
(\xspace,\biop),(\xspace^{\prime},\biop^{\prime}) \right)$, and for a point $x\in
\xspace$, we get an induced map of stalks 
\begin{equation}
				f_{x}^{\natural}:\biop^{\prime}_{f(x)}= \limrightarrow\limits_{f(x)\in \mathscr{V}\subseteq
				\xspace^{\prime}}
				\biop^{\prime}(\mathscr{V})\xrightarrow{\qquad}\limrightarrow\limits_{f(x)\in
								\mathscr{V}\subseteq
				\xspace^{\prime}} \biop(f^{-1}\mathscr{V})\to \limrightarrow\limits_{x\in
								\mathscr{U}\subseteq
				\xspace}\biop(\mathscr{U})=\biop_{x} 
				\label{eq:24.2}
\end{equation}
\label{remark:24.1}
\end{remark}
\begin{definition}
				The category of \myemph{locally-bio-spaces}
				$\text{CBio}_{\text{loc}}/_{\text{Top}}$, is the category with object
				$(\xspace,\biop)\in \text{CBio}/_{\text{Top}}$, such that for all $x\in \xspace$
				the stalk $\biop_{x}$ is a local bio with the unique maximal proper $1$-ideal
				$m_{x}\subseteq\biop_{x}(1)$; the maps 
				$f\in \text{CBio}_{\text{loc}}/_{\text{Top}} \left(
				(\xspace,\biop),(\xspace^{\prime},\biop^{\prime}) \right) $ are maps $f\in
				\text{CBio}/_{\text{Top}}\left( (\xspace,\biop),(\xspace^{\prime},\biop^{\prime})
				\right)$ such that for all $x\in X$, $f^{\natural}_{x}\in \text{CBio}_{\text{loc}}\left(
				\biop^{\prime}_{f(x)},\biop_{x} \right)$ is a local map:
				$f_{x}^{\natural}(m_{f(x)})\subseteq m_{x}$.
				\label{def:24.3}
\end{definition}
\begin{theorem}
				We have the adjunction
\begin{figure}[H]
				\centering
				\begin{tikzpicture}[baseline=0mm]
								 \node at (41.5mm,23mm) {$\left(\text{CBio}_{\text{loc}}/_{\text{Top}}\right)^{\text{op}}$};
								 \draw[<-] plot[smooth ] coordinates {(38mm,20mm)(35mm,14mm)(38mm,7mm)};
								 \draw[->] plot[smooth ] coordinates {(45mm,20mm)(48mm,14mm)(45mm,7mm)};
								 \node at (41.5mm,4mm) {$\text{CBio}$};
								 \node at (6mm,14mm) {$\text{spec}(\biop):=\left(
								 \text{spec}(\biop),\mathscr{O}_{\biop} \right)$ \hspace{6mm} spec};
								 \node at (68mm,14mm) {$\Gamma$ \hspace{4mm} $\Gamma(\xspace, \biop):=\biop(\xspace)$};
								 \node at (70mm,9mm) {the global sections};
				\end{tikzpicture}
\end{figure}
\begin{equation*}
				\text{CBio}_{\text{loc}}/_{\text{Top}} \left(
				(\xspace,\biop),\text{spec}(\biop^{\prime}) \right)\equiv \text{CBio}\left(
								\biop^{\prime},\biop(\xspace)
				\right)
\end{equation*}
				\label{thm:adjunction}
\end{theorem}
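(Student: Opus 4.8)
The plan is to mimic the classical Grothendieck adjunction between locally ringed spaces and affine schemes (\cite{MR3075000}), with the commutative monoid $\biop(1)$ playing the role of the ring, $1$-ideals the role of ideals, and the ``linear combinations'' $\{b,(a_i),d\}$ the role of sums. The two genuinely non-formal inputs will be Theorem~\ref{thm:3} (which computes $\mathscr{O}_{\biop'}$ on a basic open $D(s)$ as $\biop'[1/s]$) and the facts recorded in \S\,22 that a maximal proper $1$-ideal is prime and that pull-backs of prime $1$-ideals are prime. I will exhibit maps in both directions of the claimed bijection and check they are mutually inverse and natural.

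In the direction $\text{CBio}_{\text{loc}}/_{\text{Top}}\big((\xspace,\biop),\text{spec}(\biop')\big)\to\text{CBio}(\biop',\biop(\xspace))$ I would simply apply global sections $\Gamma$ and precompose with the isomorphism $\biop'\xrightarrow{\sim}\mathscr{O}_{\biop'}(\text{spec}(\biop'))$ from Theorem~\ref{thm:3} (case $s=1$); functoriality is immediate. The real work is the reverse direction. Given $\varphi\in\text{CBio}(\biop',\biop(\xspace))$, for each $x\in\xspace$ I form the stalk composite $\varphi_x\colon\biop'\xrightarrow{\varphi}\biop(\xspace)\to\biop_x$ and set $f(x):=\varphi_x^{-1}(m_x)$; this is a prime $1$-ideal because $m_x$, being the unique maximal proper $1$-ideal of the local bio $\biop_x$, is prime (\S\,22) and pull-backs of primes are prime. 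Continuity of $f$ follows from $f^{-1}(D(s))=\{x:\varphi(s)|_x\text{ is invertible in }\biop_x(1)\}$, which is open since an invertible germ is invertible on some neighbourhood, together with the fact that the $D(s)$ form a basis. To define the comorphism $f^{\natural}$ it suffices to work on basic opens: on $f^{-1}(D(s))$ the section $\varphi(s)$ is locally invertible by the previous step and the local inverses glue because inverses in a monoid are unique, so $\varphi(s)$ is invertible in $\biop(f^{-1}(D(s)))(1)$; the universal property of the localization $\biop'\to\biop'[1/s]$ then yields a unique factorization $f^{\natural}_{D(s)}\colon\mathscr{O}_{\biop'}(D(s))=\biop'[1/s]\to\biop(f^{-1}(D(s)))$ (Theorem~\ref{thm:3}). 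Uniqueness in that universal property makes the $f^{\natural}_{D(s)}$ agree on overlaps $D(s)\cap D(t)=D(s\circ t)$, so they glue to a map of sheaves of bios; and the induced stalk map $f^{\natural}_x\colon\biop'_{f(x)}\to\biop_x$ is local, since it composes with $\biop'\to\biop'_{f(x)}$ to $\varphi_x$, so $S_{f(x)}$ is carried to units and hence $m_{f(x)}=S_{f(x)}^{-1}(f(x))$ into $m_x$.

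It remains to check the two constructions are inverse. Starting from $\varphi$, running it through the reverse construction and back through $\Gamma$ returns $f^{\natural}_{D(1)}$, which is $\varphi$ by construction. For the other direction, the point — exactly as for schemes — is that a morphism into $\text{spec}(\biop')$ is rigid: given a morphism $f$ with $\varphi=\Gamma(f)$, locality of $f^{\natural}_x$ gives $(f^{\natural}_x)^{-1}(m_x)=m_{f(x)}$ (one inclusion is locality, the reverse uses that $m_{f(x)}$ is exactly the non-units), and since the maximal ideal of $\biop'_{f(x)}$ pulls back to $f(x)$ along $\biop'\to\biop'_{f(x)}$ one recovers $\varphi_x^{-1}(m_x)=f(x)$, so the underlying map is determined; and over each $D(s)$ the comorphism recovered from $\varphi$ is the unique localization factorization, which $f^{\natural}_{D(s)}$ already realizes. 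Naturality in $(\xspace,\biop)$ and in $\biop'$ is then formal.

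The main obstacle I anticipate is not a single calculation but the correct assembly of $f^{\natural}$ together with the rigidity argument: one must verify that Theorem~\ref{thm:3} and the localization universal property for distributive bios (the explicit model of \S\,21) interact exactly as in the ring case, and the only genuinely delicate use of the locality hypothesis is in pinning down $f(x)=\varphi_x^{-1}(m_x)$, without which the correspondence breaks. That is where I would spend the most care, in particular checking that ``$\pid$ prime $\Leftrightarrow$ $\biop(1)\setminus\pid$ multiplicative'' substitutes faithfully for the ring-theoretic step ``$uv\in\pid,\ u\notin\pid\Rightarrow v\in\pid$'' used when pulling back maximal ideals of localizations.
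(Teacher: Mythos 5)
Your proposal is correct and takes essentially the same route as the paper: both define the underlying map by $x\mapsto\varphi_x^{-1}(m_x)$, prove continuity by local invertibility of $\varphi(s)$ at a point where it avoids $m_x$, glue local inverses to get a comorphism on each $D(s)$ via the localization universal property and Theorem~\ref{thm:3}, and use locality of the induced stalk maps. The only difference is presentational: the paper phrases the construction as exhibiting the co-unit $\pid\colon(\xspace,\biop)\to\text{spec}(\biop(\xspace))$ and leaves ``one checks these are inverse bijections'' implicit, whereas you build the bijection directly for an arbitrary $\varphi$ and spell out the rigidity argument (that locality forces $f(x)=\varphi_x^{-1}(m_x)$ and pins down $f^\natural$) that the paper elides.
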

\begin{proof}
				For a point $x\in \xspace$, we have canonical map $\phi_x\in \text{CBio}\left(
				\biop(\xspace),\biop_{x} \right)$, $\phi_{x}P=P|_{x}$ the stalk of the global
				section $P$ at the point $x$, and we get a prime
				$\pid_{x}:=\phi_{x}^{-1}(m_{x})\in \text{spec}\left(\biop(\xspace)\right)$, $m_{x}\subseteq
				\biop_{x}(1)$ the maximal ideal. For a basic open set $D(s)\subseteq
				\text{spec}\left(\biop(\xspace)\right)$, $s\in \biop(\xspace)(1)$, we have $\left\{ x\in
								X,\pid_{x}\in D(s)
				\right\}=\left\{ x\in X,\phi_{x}(s)\not\in m_{x} \right\}$ is \myemph{open} in
				$\xspace$: if $\phi_{x}(s)\not\in m_{x}$, it is invertible in $\biop_{x}$,
				$a_{x}\circ \phi_{x}(s)=1$, $a_{x}\in\biop_{x}$ and there is an open
				$\mathscr{U}_{x}\ni x$,
				$a\in \biop(\mathscr{U}_{x})$ with $a_{x}=a|_{x}$; taking $\mathscr{U}_{x}$ smaller we have $a\circ
				s|_{\mathscr{U}_{x}}=1$ already in $\biop(\mathscr{U}_x)$, and so $\phi_{x^{\prime}}(s)\not\in
				m_{x^{\prime}}$ for all $x^{\prime}\in \mathscr{U}_{x}$. Thus the map
				$x\mapsto\pid_{x}$ is a continuous map
				$\pid:\xspace\xmapsto{\quad}\to\text{spec}\left(\biop(\xspace)\right)$.
				The uniqueness of the inverse $\left( s|_{u_{x}} \right)^{-1}$, shows these
				local inverses glue to a global inverse $s^{-1}\in \biop\left( \pid^{-1}\left(
				D(s) \right) \right)$ and we get the map of $\text{CBio}$
				\begin{equation}
								\pid_{D(s)}^{\natural}: \biop(\xspace)_{s}\equiv \left\{ s^{\N}
								\right\}^{-1}\biop(\xspace)\xrightarrow{\qquad}\biop\left(\pid^{-1}\left(
																D(s)
								\right)\right)
								\label{eq:24.4}
				\end{equation}
				These maps are compatible on intersections, $D(s_{1})\cap D(s_{2})=D(s_{1}\circ
				s_{2})$, so by the sheaf property give $\pid_{u}^{\natural}\in \text{CBio}\left(
								\mathscr{O}_{\text{spec}\; \biop(\xspace)}(\mathscr{U}),
								\biop(\pid^{-1}\mathscr{U})
				\right)$ for any open $\mathscr{U}\subseteq \text{spec}\left(\biop(\xspace)\right)$, compatible with
				restrictions, so 
				\begin{equation}
								\pid = (\pid,\pid^{\natural})\in \text{CBio}/_{\text{Top}}\left(
								(\xspace,\biop),\text{spec}\left(\biop(\xspace)\right) \right)
								\label{eq:24.5}
				\end{equation}
				For $x\in\xspace$, we get 
				\begin{equation}
								\pid_{x}^{\natural}=\limrightarrow\limits_{\phi_{x}(s)\not\in m_{x}}
								\pid^{\natural}_{D(s)}\in \text{CBio}_{\text{loc}}\left(
								\biop(\xspace)_{\pid_{x}},\biop_{x} \right)
								\label{eq:24.6}
				\end{equation}
				is \myemph{local}, so $\pid\in
				\text{CBio}_{\text{loc}/\text{Top}}\left((\xspace,\biop),\text{spec}\left(\biop( \xspace )\right)\right)$
is the co-unit of adjunction. \vspace{.1cm}\\
				Given $\varphi\in \text{CBio}\left(
				A,\biop(\xspace) \right)$ we get $\text{spec}(\varphi)\circ\pid\in
				\text{CBio}_{\text{loc}/\text{Top}}\left( (\xspace,\biop),\text{spec} (A) \right)$.
				\vspace{.1cm}\\
				Given  $f\in \text{CBio}_{\text{loc}/\text{Top}}\left(
				(\xspace,\biop),\text{spec}(A) \right)$ we get the global sections 
				\begin{equation}
								\Gamma(f)=f_{\text{spec}(A)}^{\natural}\in \text{CBio}(A,\biop(\xspace)).
								\label{eq:24.7}
				\end{equation}
				One  checks these are inverse bijections.
\end{proof}
\noindent Following the footsteps of Grothendieck
we can define the category of \break
\myemph{Generalized Schemes} $\text{GSch} \subseteq
\text{CBio}_{\text{loc}}/_{\text{Top}}$ to be the full subcategory of
				$\text{CBio}_{\text{loc}}/_{\text{Top}}$ consisting of the object
				$(\xspace,\mathscr{O}_{\xspace})$ which are locally affine: we have some open
				cover $\xspace=\bigcup
				\mathscr{U}_{i}$, and $(\mathscr{U}_{i},\mathscr{O}_{\xspace}|_{\mathscr{U}_{i}})\cong
				\text{spec}\left(\mathscr{O}_{\xspace}(\mathscr{U}_{i})\right)$. \vspace{.1cm}\\
An open subset of a scheme is again a scheme. \vspace{.1cm}\\
				Schemes can be glued along open subsets and consistent glueing data.
				\vspace{.1cm}\\
				Since ordinary commutative rings $A$ give distributive bios $\biop_{A}\in
				\text{CBio}$ and since all our definitions reduce to their classical analogues for
				$\biop=\biop_{A}$, $A\in C\text{Ring}$, ordinary schemes embeds fully faithfully in generalized
				schemes, 
				$$\text{Sch}\hookrightarrow \text{GSch},
(\xspace,\mathscr{O}_{\xspace})\mapsto (\xspace,\biop_{\mathscr{O}_{\xspace}})$$
\begin{theorem}
				The category $\text{GSch}$ has fiber products, for  $f_{i}\in\text{GSch}(X_{i},Y)$:
\begin{figure}[H]
				\centering
				\begin{tikzpicture}[baseline=0mm]
								 \node at (50mm,30mm) {$X_{0}\prod\limits_{Y}X_{1}$};
								 \node at (35mm,15mm) {$X_{0}$};
								 \node at (65mm,15mm) {$X_{1}$};
								 \node at (50mm,0mm) {$Y$};
								 \draw[->] (54mm,28mm)--(64mm,18mm);
								 \draw[->] (46mm,28mm)--(36mm,18mm);
								 \draw[-] (43mm,23mm)--(50mm,18mm)--(57mm,23mm);
								 \draw[->] (36mm,13mm)--(47mm,2.5mm);
								 \draw[->] (63mm,13mm)--(52mm,2.5mm);
								 \node at (38mm,8mm) {$f_{0}$};
								 \node at (61mm,8mm) {$f_{1}$};

				\end{tikzpicture}
\end{figure}
				\label{thm:category}
\end{theorem}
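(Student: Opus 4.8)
The plan is to transcribe Grothendieck's construction of fibre products of schemes, replacing the tensor product of rings by the pushout in $\text{CBio}$ and the ring--spectrum adjunction by Theorem~\ref{thm:adjunction}. First I would settle the \emph{affine case}: let $Y=\text{spec}(C)$, $X_0=\text{spec}(A)$, $X_1=\text{spec}(B)$, corresponding under $\Gamma$ to maps $C\to A$ and $C\to B$ in $\text{CBio}$, and form the pushout $A\coprod_{C}B$. Such pushouts exist because $\text{CBio}$ is cocomplete: it is a reflective subcategory of the cocomplete category $\mathbb{F}\backslash\text{Bio}$, the reflection being the ``distributivization'' quotient by the interchange relations in the sense of Remark~\ref{remark:6.4} (equivalently, $\Z\coprod\Z$ and its relatives are constructed directly by generators and relations, cf. Corollary~\ref{cor:1}). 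Then I claim $\text{spec}(A\coprod_C B)$ with its two projections is the fibre product. Indeed, for any $(\mathcal{X},\biop)\in\text{CBio}_{\text{loc}}/_{\text{Top}}$, Theorem~\ref{thm:adjunction} identifies $\text{CBio}_{\text{loc}}/_{\text{Top}}\big((\mathcal{X},\biop),\text{spec}(A\coprod_C B)\big)$ with $\text{CBio}(A\coprod_C B,\biop(\mathcal{X}))$, and the universal property of the pushout rewrites this as the set of pairs of maps $A\to\biop(\mathcal{X})$, $B\to\biop(\mathcal{X})$ agreeing over $C$, i.e. as the fibre product over $\text{spec}(C)$ of the sets of morphisms from $(\mathcal{X},\biop)$ to $\text{spec}(A)$ and to $\text{spec}(B)$. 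Hence $\text{spec}(A\coprod_C B)$ is the fibre product already in $\text{CBio}_{\text{loc}}/_{\text{Top}}$; being affine it lies in $\text{GSch}$, so it is the fibre product in $\text{GSch}$ as well.

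Next I would record the gluing lemma: for an open immersion $j\colon U\hookrightarrow X_0$ and any fibre product $X_0\times_YX_1$ with first projection $p_0$, the open subscheme $p_0^{-1}(U)$ is a fibre product $U\times_YX_1$, since a morphism $\mathcal{Z}\to X_0\times_YX_1$ factors (uniquely) through $p_0^{-1}(U)$ exactly when $\mathcal{Z}\to X_0$ factors through $U$ --- using that open subsets of schemes are again schemes and that open immersions are monomorphisms in $\text{GSch}$, both recorded just before the theorem. Armed with this I would first treat the case $Y$ affine: pick affine open covers $X_0=\bigcup_a U_a$, $X_1=\bigcup_b W_b$, form the affine fibre products $U_a\times_YW_b$, and glue them along the canonically identified open subschemes $(U_a\cap U_{a'})\times_Y(W_b\cap W_{b'})$; schemes glue along open subschemes with consistent gluing data, so this yields a scheme $P$ covered by affines with compatible projections to $X_0,X_1$, and its universal property follows because a compatible pair $\mathcal{Z}\rightrightarrows X_0,X_1$ restricts to maps into the local fibre products which then glue (morphisms of schemes being checkable locally on the source). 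Finally, for general $Y$, cover $Y=\bigcup_k V_k$ by affine opens, form $P_k=f_0^{-1}(V_k)\times_{V_k}f_1^{-1}(V_k)$ by the previous step, glue the $P_k$ along the open subschemes over $V_k\cap V_{k'}$ supplied by the lemma, and verify the universal property the same way; the result is $X_0\times_YX_1$.

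The routine part is exactly this reduction-and-gluing skeleton, which goes through verbatim once open subschemes and gluing are available. The genuinely new, and delicate, point sits entirely in the affine case: one must know that $\text{CBio}$ actually possesses the pushouts $A\coprod_C B$ \emph{and} that distributivity (the interchange axiom of \S21) is preserved under forming them, and one must check that the projections $\text{spec}(A\coprod_C B)\rightrightarrows\text{spec}A,\text{spec}B$ and the comparison maps into them are morphisms in the \emph{local} category $\text{CBio}_{\text{loc}}/_{\text{Top}}$, i.e. that all induced stalk maps are local. This last is precisely what Theorem~\ref{thm:adjunction} delivers, which is why the whole argument is routed through that adjunction rather than through $\text{CBio}/_{\text{Top}}$. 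One caveat worth flagging, exactly as in the classical theory: the underlying space of $X_0\times_YX_1$ is \emph{not} the topological fibre product of the underlying spaces, so there is no short cut via a space-level construction --- the passage through affines and gluing is unavoidable.
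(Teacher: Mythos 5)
Your proposal is correct and follows essentially the same route as the paper's proof, which simply writes $X_0\prod_Y X_1 \equiv \coprod_{i,j_0,j_1}\text{spec}\bigl(A^{0}_{j_0,i}\coprod_{B_i}A^{1}_{j_1,i}\bigr)$ and glues, noting (as you do) that the pushout in $\text{CBio}$ replaces the classical tensor product; the paper omits the verification details that you supply (cocompleteness of $\text{CBio}$ via distributivization, the role of Theorem~\ref{thm:adjunction} for the affine universal property, the gluing lemma and the reduction to affine $Y$).
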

\begin{proof}
				Exactly as for ordinary schemes. Write
				$Y=\bigcup\limits_{i}\text{spec}(B_{i})$, 
								$X_{\varepsilon}=\bigcup\limits_{i,j}
								\text{spec}(A_{j,i}^{\varepsilon})$, $\varepsilon=0,1$
								with $f_{\varepsilon}\left(\text{spec}(A_{j,i}^{\varepsilon})\right)\subseteq
								\text{spec}\; (B_{i})$,  and glue:
								\begin{equation}
								X_0\prod\limits_{Y} X_{1}\equiv
								\coprod_{i,j_{0},j_{1}}\text{spec}\left(
								A^{0}_{j_{0},i}\coprod\limits_{B_{i}}A_{j_{1},i}^{1} \right).
												\label{eq:24.8}
								\end{equation}
								Note that one uses the push-out
								$A^{0}_{j_0,i}\coprod\limits_{B_{i}}A_{j_{1},i}^{1}$ in $\text{CBio}$
								(Not the tensor product!).
\end{proof}
\begin{remark}
				For (the initial object) $\Z\in C\text{Ring}\subseteq C\text{Rig}$, the
				``arithmetical surface''
				\begin{equation*}
								\text{spec}\left(\biop_{\Z}\coprod \biop_{\Z}\right) \equiv
								\text{spec}\left( \biop_{\Z} \right)\prod \text{spec}\left( \biop_{\Z} \right)
				\end{equation*}
				does not reduce to its diagonal, as it does in classical algebraic geometry where 
				\begin{equation*}
								\text{spec}(\Z)\prod \text{spec}(\Z) = \text{spec}(\Z\otimes\Z) =
								\text{spec}(\Z) = \left\{ (0); (2),(3),(5),(7)\cdots \right\}.
				\end{equation*}
				\label{remark:24.9}
\end{remark}
\section{$A$-Sets}
We will denote our new ``generalized rings'', the distributive bios under
$\mathbb{F}$,  $\text{CBio}$, by the
letters $A,B,\cdots ,$
\begin{definition}
				For $A\in \text{CBio}$ an $A\text{-set}$ is a pointed set $M\in
				\text{Set}_{0}$ together with an $A\text{-action}$: for $n\ge 1$ we have maps
				\begin{equation*}
								\begin{array}[H]{l}
								A^{-}(n)\times M^{n}\times A^{+}(n) \xrightarrow{\qquad} M \\\\
								(b,(m_{j}),d) \xmapsto{\qquad} \left<b,(m_j)d\right>_{n}
								\end{array}
				\end{equation*}
				These maps are required to satisfy the following axioms: \vspace{.1cm}\\
				\begin{description}
								\item[\myemph{zero} ]
												$\left<b,(m_{j}),d\right>_{n}=\left<b\circ_{j_{0}} 0,
												(m_{j})_{j\not = j_{0}}, 0 \circ_{j_{0}}d\right>_{n-1}$ if $m_{j_{0}}=0$.
								\item[\myemph{$S_{n}$-variance} ] $\left<b,(m_{j}),d\right>_{n}=\left<
												b\sigma, (m_{\sigma(j)}),\sigma^{-1}d\right>_{n}$
								\item[\myemph{Associativity} ] For $b\in A^{-}(n)$, $d\in A^{+}(n)$ 
																				$b_{i}\in A^{-}(k_{i}),d_{i}\in A^{+}(k_{i})$,  \break
																				$m=(\overline{m}_{i})\in M^{\sum k_{i}}$, 
												\begin{equation*}
												\left<b\circ (b_{i}), m, (d_{i})\circ d \right>_{\sum k_{i}}=
																				\left<b,\left<b_i,\overline{m}_{i},d_{i}\right>_{k_{i}},d\right>_{n}
												\end{equation*}
								\item[\myemph{Unit}: ] $\left< 1,m,1\right>_{1}=m$
								\item[\myemph{Distributivity}: ] For \hspace{2mm} $b\in A^{-}(n)$,
												\hspace{2mm} $d\in A^{+}(\ell),$ \hspace{2mm}
												$a\in A^{\pm}(|n-\ell|+1)$,  \hspace{2mm}
												$m=(m_{i})\in M^{\min\left\{ m,\ell
												\right\}}$,  \hspace{2mm}
												$\tilde{m}=( m_{1},\cdots m_{i-1}
												\underbrace{m_{i}\cdots m_{i}}_{|n-\ell|+1},\cdots )$ 
												\begin{equation*}
																\begin{array}[H]{c}
																				\left< b\circleftarrow_{i}a, m, d
																				\right>_{\ell}= \left<
																				b,\tilde{m},a\circ_{i}d\right>_{n}, \quad n\ge
																				\ell \\\\
																				\left< b,m,a\circrightarrow_{i}d \right>_{n}= \left<
																				b\circ_{i}a,\tilde{m},d\right>_{\ell} , \quad n\le
																				\ell
																\end{array}
												\end{equation*}
												For $a\in A(1)$, $m\in M$, we write 
												\begin{equation*}
																a\cdot m := \left< a, m, 1\right>_{1} =
																\left<1,m,a\right>_{1}
												\end{equation*}
												giving an action of the monoid $A(1)$ on $M$. \\
												A map $\varphi: M\to N$ of $A\text{-Set}$ is a set map $\varphi\in
												\text{Set}_{0}(M,N)$, preserving the $A\text{-action}$
												\begin{equation*}
																\varphi\left( \left<b,(m_{j}),d
																\right>_{n}\right)=\left<b,\left( \varphi(m_j)
																\right),d\right>_{n}
												\end{equation*}
												Thus we have a category $A$-Set. 
				\end{description}
				\label{def:a-set}
\end{definition}
\subsection*{Example of $A$-Sets}
\begin{equation}
				\begin{array}[H]{l}
					\text{Given $\varphi\in \text{CBio}\left( A,B \right)$, the sets $B^{-}(m)$ and $B^{+}(m)$ are
									$A$-Sets} \\\\
				\left<b,(x_{i}),d\right>_n :=\varphi(b)\circ (x_{i})\circ\sigma_{n,m}\circ\left(
				\underbrace{\varphi(d),\cdots ,\varphi(d)}_{m} \right)\quad , \quad x_{i}\in
				B^{-}(m).
				\end{array}
				\label{eq:25.2}
\end{equation}
\begin{equation}
				\ \hspace{-3.5cm} \text{A sub-$A$-set of $A(1)$ is just an ideal. }
				\label{eq:25.3}
\end{equation}
\begin{equation}
\begin{array}[H]{c}
				\text{For $A=\mathbb{F}$ we have $\mathbb{F}\text{-Set}\equiv \text{Set}_{0}$ and for
								$M\in\text{Set}_{0}$ there is a unique $\mathbb{F}$-action:} \\\\
								\left< b,m_{j},d\right>_{n} =
								\begin{cases}
												m_{i} & b=\delta_{i},\; d=\delta_{i}^t \\
												0 & \text{otherwise}
								\end{cases}
				\end{array}
\label{eq:25.4}
\end{equation}
\begin{equation}
				\begin{array}[H]{l}
								\text{For a commutative ring $A$ we have $A\text{-Set}\equiv A\text{-mod}$} \\ \text{the category of $A$ modules. }
				\end{array}
				\label{eq:25.5}
\end{equation}
\begin{equation}
				\begin{array}[H]{l}
								\text{For $A=\Z_{\R}$ (resp. $\Z_{\C}$), the sub-$A$-sets of $\R^{n}$
				(resp $\C^{n}$)} \\ 
				\text{ are the convex symmetric subsets $M\subseteq \R^{n}$ (resp
				$M\subseteq\C^{n}$)} \\ 
				\text{ (since the pointwise product of two vectors in the
				$\ell_{2}$-unit} \\ 
\text{ball is a vector in the $\ell_{1}\text{-unit}$ ball).}
				\end{array}
				\label{eq:25.6}
\end{equation}
\noindent The category $A$-Set is complete and co-complete. Inverse limits, and filtered
co-limits are formed in $\text{Set}_{0}$, co-limits and sums are more
complicated. Given a set $V$ the free $A$-Set on $V$, $A^{V}$ is given by 
\begin{equation}
				A^{V}=\coprod\limits_{n\ge 1} A^{-}(n)\times V^{n}\times
				A^{+}(n)/_{\sim}
				\label{eq:25.7}
\end{equation}
where $\sim$ is the equivalence relation generated by zero, $S_{n}$-invariance, and
distributivity. The element of $A^{V}$ can be written, non-uniquely, as
$\left<b,v,d\right>_{n}$, $b\in A^{-}(n)$, $v\in V^{n}$, $d\in A^{+}(n)$. For $M\in
A\text{-Set}$, we have
\begin{equation}
				\begin{array}[H]{l}
								\text{Set}(V,M) \equiv A\text{-Set}(A^{V},M) \\
								\varphi\xmapsto{\qquad} \tilde{\varphi}\left(
								\left<b,v_{j},d\right>_{n} \right)=\left< b,\varphi(v_{j}),d\right>_{n}
				\end{array}
				\label{eq:25.8}
\end{equation}
When $V=\left\{ v_{0} \right\}$ is a singleton $A^{\left\{ v_{0} \right\}}\equiv
A(1)\cdot v_{0}$, and the free $A$-Set on one generator is just $A(1)$ \\
Given a homomorphism $\varphi\in \text{CBio}(B,A)$, we have an adjunction (we use
``geometric'' notation):
\begin{equation}
				\begin{tikzpicture}
								 \node (a) at (4,4) {$\text{$A$-set}$};
								 \node (b) at (4,2) {$\text{$B$-set}$};
								 \draw [->] (a) to [out=330,in=30,right] node{$\varphi_{*}$} (b);
								 \draw [<-] (a) to [out=210,in=150,left] node{$\varphi^{*}$} (b);
				\end{tikzpicture}
				\label{eq:25.9}
\end{equation}
where $\varphi_* N\equiv N$ with the $B$-action $\langle
b,m_j,b^{\prime}\rangle_n := \langle
\varphi(b),m_j,\varphi(b^{\prime})\rangle_n$. \vspace{.1cm}\\ 
The left adjoint is
$\varphi^*M=\left( \coprod\limits_{n} A_n\times M^n\times A_n
\right)\big/\sim$, where $\sim$ is the equivalence relation generated by
zero, $S_{n}$-invariance, distributivity,  and $B$-linearity: 
\begin{equation}
				\langle a, \langle b_{j},m_i,b^{\prime}_{j}\rangle, a^{\prime}\rangle_{n} =
				\langle a \circ \varphi(b),m,\varphi(b^{\prime})\circ a\rangle_m 
				\label{eq:25.10}
\end{equation}
$a\in A^{-}(n)$, $a^{\prime}\in A^{+}(n)$, $b_{j}\in B^{-}(k_{j})$, $b_{j}^{\prime}\in
B^{+}(k_{j})$, $m=\overline{m}_{1},\cdots , \overline{m}_{n}\in M^{\sum k_{i}}$. \\
In particular, for $B=\mathbb{F}$ and $\phi\in
\text{CBio}(\mathbb{F},A)$ the unique homomorphism, $\phi_*$ is just
the functor forgetting the $A$-action, and for $V\in \text{Set}_0$, 
$\phi^{*} V= A^{V\setminus \langle 0 \rangle}$ is the free
$A$-set on $V\setminus \left\{ 0 \right\}$. \vspace{.2cm}\\
More generally, for a simplicial distributive bio $A=(A_{n})_{n\ge 0}\in
s\text{CBio}\equiv (\text{CBio})^{\bbDelta^{\text{op}}}$ we have the category of
simplicial $A\text{-Sets}$, $A\text{-sSet}$, with objects the pointed simplicial sets
$M=(M_{m})_{m\ge 0}$, with compatible $A_{m}-\text{action}$ in dimension $m\ge 0$, 
\begin{equation}
				\begin{array}[H]{c}
								A_{m}^{-}(n)\times (M_{m})^{n}\times A_{m}^{+}(n) \xrightarrow{\qquad}
								M_{m} \\\\
								(b, (m_{i}), d) \xrightarrow{\qquad} \left< b,(m_{i}),d\right>_{n}
				\end{array}
				\label{eq:25.11}
\end{equation}
and for $\ell\in\bbDelta(m,m^{\prime})$
\begin{equation}
				\ell^{\ast}\left( \left< b,(m_{i}),d\right>_{n} \right) = \left<
				\ell^\ast(b),(\ell^{\ast}m_{i}),\ell^{\ast}(d)\right>_{n}
				\label{eq:25.12}
\end{equation}
The category $A\text{-sSet}$ has a simplicial, cellular, Quillen model structure given by
\begin{equation}
		 \begin{array}[h]{l}
		 \begin{array}[h]{lll}
						 (\romannumeral 1) \text{ \underline{Fibrations}: } &
						 \mathcal{F}_{A} \equiv \phi_{*}^{-1}(\mathcal{F}_{\mathbb{F}}), &
						 \mathcal{F}_{\mathbb{F}}\equiv\text{Kan fibrations.}  \\\\
						 (\romannumeral 2) \text{ \underline{Weak-equivalence}: } &
						 \mathcal{W}_{A} \equiv
						 \phi_{*}^{-1}(\mathcal{W}_{\mathbb{F}}), &
						 \mathcal{W}_{\mathbb{F}}\equiv 
						 \hspace{-0.1cm} \left[ \hspace{-0.1cm}
						 \begin{array}[H]{l}
										 \parbox{2cm}{Weak-equivalence of (pointed) simplicial sets}
		 \end{array}\hspace{-0.25cm}\right] \\\\

(\romannumeral 3) 
								\text{ \underline{Cofibrations}: } &
												 \mathcal{C}_A\equiv
												\phantom{}^{\perp}(\mathcal{W}_{A}\cap \mathcal{F}_{A}), \\
												\hspace{2cm} 
		\end{array}
						\vspace{-.2cm}\\
												\hspace{2cm} \parbox[t][][t]{6.9cm}{the maps satisfying the left lifting property
																						 with respect to the trivial fibrations.}
		 \end{array}
		 \label{eq:25.13}
\end{equation}
The cofibrations can also be characterized as the retracts of the \underline{free
maps}, where a map $f:M_{.}\to N_{.}$ is \underline{free} if there exists subsets
$V_n\subseteq N_n$ with $\ell^{*}(V_{n^{\prime}})\subseteq V_n$ for all surjective
$\ell\in\BDelta(n,n^{\prime})$, and such that $f_n$ induces isomorphism 
\begin{equation}
		 M_n\coprod \phi_{A}^{*}(V_n)\xrightarrow{\sim}N_n
		 \label{eq:25.14}
\end{equation}
where $\phi_{A}^{*}(V_n)$ is the free $\aset$ on $V_n$. \vspace{.1cm}\\
This model structure is cofibrantly generated 
\begin{equation}
		 \begin{array}[H]{ll}
						 (\romannumeral  1) & \mathcal{F}_A\equiv
						 \left\{\phi_{A}^{*}(\Lambda_{n+}^k)\to
						 \phi_{A}^{*}\big(\Delta(n)_{+}\big) \right\}^{\perp}_{0\le k\le n >0} \\\\
		 (\romannumeral  2) & \mathcal{W}_{A}\cap\mathcal{F}_A\equiv
		 \left\{\phi_{A}^{*}(\partial\Delta(n)_{+})\to\phi_A^{*}\left(
										 \Delta(n)_{+}
		 \right)\right\}^{\perp}_{n>0}
		 \end{array}
		 \label{eq:25.15}
\end{equation}
\section{Commutativity}
\begin{definition}
				For $A\in \text{CBio}$, $M\in A\text{-set}$ will be called \myemph{commutative} if for
				$b\in A^{-}(n)$, $d\in A^{+}(n)$, $b^{\prime}\in A^{-}(k)$, $d^{\prime}\in
				A^{+}(k)$, $m=(m_{ij})\in M^{n\times k}$, we have \\
				\underline{interchange:} \\
				\begin{equation}
								\big< b\circ \underbrace{(b^{\prime}\cdots
								b^{\prime})}_{n}\sigma_{n,k},m,\underbrace{(d^{\prime},\cdots ,
								d^{\prime})}_{n}\circ d \big>_{n\cdot k} = \big< b^{\prime}\circ
								\underbrace{(b,\cdots , b)}_{k},m,\sigma_{n,k}\underbrace{(d,\cdots , d)}_{k}\circ
								d\big>_{n\cdot k}
				\end{equation}
				We let $CA\text{-Set}\subseteq A\text{-Set}$ (resp. $CA\text{-sSet}\subseteq
				A\text{-sSet}$) denote the full subcategory of commutative $\text{A-Sets}$
				(resp. in each dimension $n\ge 0$). 
				\label{def:commuative-A-set}
\end{definition}
\begin{definition}
				We say $A\in \text{CBio}$ is \myemph{$n$-commutative} if the sets $A^{-}(n)$ and
				$A^{+}(n)$ are commutative, and we let $C_{n}\text{Bio}\subseteq \text{CBio}$
				denote the full subcategory of $n$-commutative distributive bios. We let
				$C_{\infty}\text{Bio}=\bigcap\limits_{n\ge 1}C_n\text{Bio}$. We say $A$ is
				totally-commutative if for $b\in A^{-}(n)$, $b^{\prime}\in A^{-}(k)$ (resp.
				$b\in A^{+}(n)$, $b^{\prime}\in A^{+}(k)$), we have 
				\begin{equation*}
								b\circ \underbrace{\left(b^{\prime},\cdots , b^{\prime}
								\right)}_{n} \sigma_{n,k} = b^{\prime}\circ\underbrace{(b,\cdots , b)}_{k} \qquad
								\text{resp.} \; (b^{\prime}\cdots b^{\prime})\circ b = \sigma_{n,k} (b,\cdots , b)\circ
								b^{\prime}. 
				\end{equation*}
				We let $C_{\text{tot}}\text{Bio}\subseteq C_{\infty}\text{Bio}$ denote the full
				subcategory of totally-commutative bios; note that $C\text{Rig}\subseteq
				C_{\text{tot}}\text{Bio}$, and so also $\mathbb{F}$, $\mathbb{Z}_{R}$,
				$\mathbb{Z}_{C}\in C_{\text{tot}}\text{Bio}$. 
				\label{def:c-bio-n-commutative}
\end{definition}
\noindent For $A\in C_{\text{tot}}\text{Bio}$ we have $CA\text{-Set}\equiv A\text{-Set}$. \vspace{.1cm}\\
The inclusion $CA\text{-Set}\hookrightarrow A\text{-Set}$ has a left adjoint $M\mapsto
M_{C}$; in particular, we have the free-commutative-$A\text{-set}$ generated by a set
$V$, $A_{C}^{V}$.
\section{Symmetric monoidal structure}
The category $A$-Set (resp. $A\text{-sSet}$) has a symmetric monoidal structure. \\ 
We only describe the
symmetric monoidal structure on $CA\text{-Set}$ (resp. $CA\text{-sSet}$), which is closed.
\\
\noindent For $M,N,K\in CA\text{-set}$, let the ``bilinear maps'' be defined by 
\begin{equation}
				\text{Bil}_{A}(M,N;K)= 
				\left\{ \begin{array}[H]{ll}
								\varphi: M\wedge N\to K, & 
								\begin{array}[H]{l}
												\varphi\left( \left< a,m_{j},a^{\prime}\right>,n \right) =
												\left<a,\varphi\left( m_{j}\wedge n \right),a^{\prime}\right>, \\
												\varphi\left( m,\left<a,n_{j},a^\prime\right> \right) = \left<
												a,\varphi\left( m\wedge n_{j} \right),a^{\prime}\right>
								\end{array}
\end{array}\right\}
\label{eq:27.1}
\end{equation}
It is a functor in $K$, and as such it is representable 
\begin{equation}
				\text{Bil}_{A}\left( M,N;K \right)\equiv CA\text{-set}\left( M\otimes_{A}N,K \right)
				\label{eq:27.2}
\end{equation}
Where $M\otimes_{A} N$ is the free-commutative-A-Set on $M\wedge N$ modulo the equivalance
relation generated by the $A$-bilinear relations, and where $\otimes:M\wedge N\to
M\otimes_{A}N$ is the universal $A$-bilinear map. The elements of $M\otimes_{A} N$ can be
written, non-uniquely, as $\left<a,m_{j}\otimes n_{j},a^{\prime}\right>_{n}$. We thus get a
bi-functor
\begin{equation}
				\text{\phantom{k}}_{\text{\phantom{k}}_{-}}\otimes_{A_{-}}:CA\text{-Set}\times CA\text{-Set}\to
				CA\text{-Set}
				\label{eq:27.3}
\end{equation}
giving a symmetric monoidal structure on $CA\text{-Set}$, with unit $A(1)_{C}$.
\vspace{.1cm} \\This
symmetric monoidal structure on $CA\text{-Set}$ is closed,
\begin{equation}
				CA\text{-Set}\left( M\otimes_{A}N,K \right)\equiv CA\text{-Set}\left(
								M,\text{Hom}_{A}(N,K)
				\right)
				\label{eq:27.4}
\end{equation}
with the internal Hom functor 
\begin{equation}
				\begin{array}[H]{c}
				\text{Hom}_{A}(_{-},_{-}): \left( A\text{-Set} \right)^{\text{op}}\times
				CA\text{-Set}\to CA\text{-Set} \\
				\text{Hom}_{A}(M,N):=A\text{-Set}(M,N)
				\end{array}
				\label{eq:27.5}
\end{equation}
where the $A$-action on $\text{Hom}_{A}(M,N)$ is given by
\begin{equation*}
				\left< b, \varphi_{j},d\right>_{n}(m):= \left<
				b,\varphi_{j}(m),d\right>_{n}, \quad b\in A^{-}(n),\; d\in A^{+}(n)\quad
				\varphi_{j}\in \text{Hom}_{A}(M,N)\quad m\in M.
\end{equation*}
This in itself is a map of $A$\text{-Set} because of commutativity, and all the other
properties: (associativity, unit, distributivity commutativity) follow from their validity
in $N$. \\
The tensor product commute with extension of scalars: for $\varphi \in
\text{CBio}(B,A)$, and for $M,N\in CB\text{-Set}$, we have
\begin{equation}
				\varphi^{\ast}\left( M\otimes_{B} N \right)\cong \varphi^{\ast}(M)\otimes_{A}\varphi^{\ast}(N)\quad , \quad
				\varphi^{\ast}B(1)_{C}=A(1)_{C}
				\label{eq:27.6}
\end{equation}
We have therefore the adjunction formula 
\begin{equation}
				\varphi_{\ast}\text{Hom}_{A}\left( \varphi^{\ast}M,N \right)\cong \text{Hom}_{B}\left( M,\varphi_{\ast}N \right)
				\label{eq:27.7}
\end{equation}
The tensor product is distributive over sums, 
\begin{equation}
				M\otimes_{A}\left( \coprod\limits_{i}N_{i} \right)\cong
				\coprod_{i}(M\otimes_{A}N_{i}).
				\label{eq:27.8}
\end{equation}
and more generally commutes with colimits. \vspace{.1cm}\\
The category $CA\text{-sSet}$ inherit a closed symmetric monoidal structure 
\begin{equation}
				\begin{array}[H]{c}
				\text{\phantom{k}}_{-}\otimes_{A-}:\, CA\text{-sSet}\otimes
				CA\text{-sSet}\to{}
				CA\text{-sSet} \vspace{.2cm}\\
				(M\otimes_{A}N)_{n}:= M_{n}\otimes_{A_{n}}N_{n} \\\\
				\text{Hom}_{A}(_{-},_{-}):\left( CA\text{-sSet} \right)^{\text{op}}\otimes
				CA\text{-sSet}\xmapsto{} CA\text{-sSet} \\\\
				\text{Hom}_{A}(M_{\cdot},N_{\cdot})_{n}:= CA\text{-sSet}\left(
				M_{\cdot}\otimes \phi_{A}^{\ast}\left( \Delta(n)_{+} \right), N_{\cdot} \right)
				\end{array}
				\label{eq:27.9}
\end{equation}
Here $\phi_{A}^{\ast}\left( \Delta(n)_{+} \right)$ is the free-\myemph{commutative}
$A$-sSet
on $\Delta(n)$. \vspace{.1cm}\\
The category $CA\text{-sSet}$ has a simplicial, cellular, Quillen model structure, exactly
as in (\ref{eq:25.13}); for the free maps, and for the generating (trivial) cofibrations we take sums,
and free objects $\phi^{\ast}_{A}(_{-})$, in the category $CA\text{-sSet}$. \\
The model structure is compatible with the symmetric monoidal structure: \\
For $\left\{ i_{\cdot}\; :\; N\to N^{\prime}_{\cdot} \right\}$, $\left\{ j_{\cdot}\; :\;
M_{\cdot}\to M_{\cdot}^{\prime} \right\}$ in $\mathcal{C}_{A}$, we have the
push-out-product 
\begin{equation}
				i_{\cdot}\boxtimes  j_{\cdot}  \, : \,  \left( N_{\cdot} \otimes_{A} M_{\cdot}^{\prime} \right)
				\coprod\limits_{N_{\cdot}\otimes_{A} M_{\cdot}} \left(
				N_{\cdot}^{\prime}\otimes_{A}M_{\cdot} \right)\to
				N^{\prime}_{\cdot}\otimes_{A}M_{\cdot}^{\prime}
				\label{eq:27.10}
\end{equation}
It is also a cofibration in $\mathcal{C}_{A}$, and moreover, if $i_{\cdot}$ is in $\mathcal{W}_{A}$, also
$i_{\cdot}\boxtimes j_{\cdot}$ is in $\mathcal{W}_{A}$. \vspace{.1cm}\\
To see this compatibility of the
monoidal and model structures, we may assume $i_{\cdot}$ and $j_{\cdot}$ are free 
\begin{equation*}
				N_{n}\coprod\phi^{\ast}_{A_{n}}(V_{n})\xrightarrow{\;\sim\;}{} N^{\prime}_{n},
				\quad  M_{n}\coprod\phi^{\ast}_{A_{n}}\left( W_{n}
				\right)\xrightarrow{\;\sim\;}{} M^{\prime}_n
\end{equation*}
and then 
\begin{equation}
				\left( N_{n}\otimes_{A_{n}}
								M_{n}^{\prime}\coprod\limits_{N_{n}\otimes_{A}M_{n}}
								N_{n}^{\prime}\otimes_{A_{n}} M_{n} \right) \coprod\phi^{\ast}_{A_{n}}(V_{n})
								\otimes_{A_{n}}\phi^{\ast}_{A_{n}}(W_{n})\xrightarrow{\;\;\sim\;\;} N^{\prime}_{n}\otimes_{A_{n}}
M^{\prime}_{n}
\label{eq:27.11}
\end{equation}
and since $\phi^{\ast}_{A_{n}}(V_{n})\otimes_{A_{n}}\phi^{\ast}_{A_{n}}(W_{n})\equiv
\phi^{\ast}_{A_{n}} (V_{n}\prod W_{n})$, we see that $i_{\cdot}\boxtimes j_\cdot$ is also
free.\vspace{.1cm}\\
A map $\varphi\in s\text{CBio}(B_{\cdot},A_{\cdot})$ induces a Quillen adjunction 
\begin{equation}
				\begin{tikzpicture}[baseline=20mm]

								\node (a) at (4,4) {$\text{$CA_{\cdot}$-sSet}$};
								\node (b) at (4,2) {$\text{$CB_{\cdot}$-sSet}$};
								 \draw [->] (a) to [out=330,in=30,right] node{$\varphi_{*}$} (b);
								 \draw [<-] (a) to [out=210,in=150,left] node{$\varphi^{*}$} (b);
				\end{tikzpicture}
				\label{eq:27.12}
\end{equation}
When $A$ is an ordinary commutative ring, the bio $\biop_{A}$ is totally commutative, and
we have by the Dold-Kan correspondence 
\begin{equation}
				C\biop_{A}\text{-sSet}\equiv \biop_{A}\text{-sSet} \equiv \left(
				\biop_{A}\text{-Set} \right)^{\bbDelta^{\text{op}}} \equiv Ch_{\ge 0}\left(
								A\text{-mod}
				\right).
				\label{eq:27.13}
\end{equation}
The model structure on $C\biop_{A}\text{-sSet}$ corresponds to the projective model
structure on $Ch_{\ge 0}\left( A\text{-mod} \right)$, which embeds in the
\myemph{stable} model structure $Ch_{+}(A\text{-mod})$. For general $A_{\cdot}\in
s\text{CBio}$, the model structure on $CA_{\cdot}\text{-sSet}$ is not stable, and we shall
stabilize it, preserving the symmetric monoidal structure by using the symmetric spectra
of \cite{MR1860878}, \cite{MR1695653}.
\section{Symmetric Sequences}
Let $S_n$ denote the symmetric group on $n$ letters, and let
$\sum=\coprod\limits_{n\ge 0} S_{n}$ denote the category of finite bijection; it is
equivalent to the category $\text{Iso}(\text{Fin})$ of bijections of finite sets. The
category of symmetric sequences in $CA\text{-sSet}$ is 
\begin{equation}
				\sum(A_{\cdot}):= \left( CA_{\cdot}\text{-sSet} \right)^{\sum}\cong \left(
								CA_{\cdot}\text{-sSet}
				\right)^{\text{Iso}(\text{Fin})}
				\label{eq:28.1}
\end{equation}
It has objects $M^{\cdot}=\left\{ M_{\cdot}^{n} \right\}_{n\ge 0} $ with
$M_{\cdot}^{n}\in \left( CA.\text{-sSet} \right)^{S_{n}}$ a simplicial commutative
$A_{\cdot}\text{-set}$ with an action of $S_{n}$. The category $\sum(A_{\cdot})$ is
complete and co-complete. \\
The category $\sum(A)$ has a closed symmetric monoidal structure 
\begin{equation}
				\begin{array}[H]{c}
								\phantom{\;}_{-}\otimes_{\sum(A)^{-}}: \sum(A)\times \sum(A)\to \sum(A) \\\\	
				\left( M^{\cdot}\otimes_{\sum(A)}N^{\cdot} \right)^{n} :=
				\coprod\limits_{p+q=n} S_{n} 
				\begin{array}[t]{c}
								\times \vspace{-.2cm}\\ \scriptstyle S_{p}\times S_{q}
				\end{array} 
				\left( M^{p}\otimes_{A} N^{q} \right)
				\end{array}
				\label{eq:28.2}
\end{equation}
Here the induction functor is the left adjoint of the forgetful functor
\begin{equation}
				\left(CA_{\cdot}\text{-sSet}\right)^{S_{n}}\to \left(
				CA_{\cdot}\text{-sSet} \right)^{S_{p}\times S_{q}}
				\label{eq:28.3}
\end{equation}
and is given by 
\begin{equation}
				S_{n}
				\begin{array}[t]{c}
								\times \vspace{-.2cm}\\ \scriptstyle S_{p}\times S_{q}
				\end{array} (M) := 
\coprod\limits_{S_{n}/S_{p}\times S_{q}} M
\label{eq:28.4}
\end{equation}
Equivalently, writing $M^{\cdot},N^{\cdot}\in \sum(A)$ as functor
$\text{Iso}(\text{Fin})\to CA\text{-sSet}$ we have 
\begin{equation}
				\left( M^{\cdot}_{\cdot}\otimes_{\sum(A)} N^{\cdot}_{\cdot} \right)^{n}
				= \coprod_{n=n_{0}\coprod n_{1}} M^{n_{0}}_{\cdot}\otimes_A N^{n_{1}}. 
				\label{eq:28.9}
\end{equation}
the sum over all decomposition of $n$ as a disjoint union of subsets $n_{0},n_{1}\subseteq
n$. The unit of this monoidal structure is the symmetric sequence 
\begin{equation}
				\unit_{A}:= \left( A(1)_{C},0,0,0,\cdots  \right)
				\label{eq:28.10}
\end{equation}
\begin{remark}
				Note that this monoidal structure is symmetric, 
				\begin{equation*}
								\mathfrak{T}_{M^{\cdot},N^{\cdot}} : M^{\cdot}\otimes_{\sum(A)}
								N^{\cdot}\cong N^{\cdot}\otimes_{\sum(A)} M^{\cdot}
				\end{equation*}
				This symmetry is clear in the formula (\ref{eq:28.9}) $M^{n_{0}}\otimes_{A} N^{n_{1}}\cong
				N^{n_{1}}\otimes_{A}M^{n_0}$  but in formula (\ref{eq:28.2}) the symmetry isomorphisms 
				\begin{equation*}
								S_{n} 
												\begin{array}[t]{c}
																\times \vspace{-.2cm}\\ \scriptstyle S_{p}\times S_{q}
												\end{array}
								\left( M^{p}\otimes_{A} N^{q} \right) \cong S_{n}
												\begin{array}[t]{c}
																\times \vspace{-.2cm}\\ \scriptstyle S_{q}\times S_{p}
												\end{array}
												\left( N^{q}\otimes_{A} M^{p} \right)
				\end{equation*}
				involves the $(p,q)$-shuffle $\omega_{p,q}\in S_n$ that conjugates $S_{p}\times
				S_{q}$ to $S_{q}\times S_{p}$. \vspace{.2cm}\\
				The internal Hom is given by 
				\begin{equation}
								\begin{array}[H]{c}
												\text{Hom}_{\sum(A)}(_{-},_{-}) : \sum(A)^{\text{op}}\times
												\sum(A)\rightarrow \sum(A) \\\\
												\text{Hom}_{\sum(A)}(M^{\cdot},N^{\cdot})^{n} := \prod_{k\ge 0}
												\text{Hom}_{A}\left(M^{k},N^{k+n}\right) \\\\
												\sum(A)\left( M^{\cdot}\otimes_{\sum(A)} N^{\cdot},K^{\cdot}
												\right)\cong \sum(A)\left(
												M^{\cdot},\text{Hom}_{\sum(A)}(N^{\cdot},K^{\cdot}) \right)
								\end{array}
								\label{eq:28.12}
				\end{equation}
				For a homomorphism $\varphi\in s\text{CBio}(B,A)$, we have adjunction
				\begin{equation}
								\begin{tikzpicture}[baseline=20mm]
												\node (a) at (4,4) {$\sum(A)$};
												\node (b) at (4,2) {$\sum(B)$};
												 \draw [->] (a) to [out=330,in=30,right] node{$\varphi_{*}$} (b);
												 \draw [<-] (a) to [out=210,in=150,left] node{$\varphi^{*}$} (b);
								\end{tikzpicture}
								\label{eq:28.13}
				\end{equation}
and $\varphi^{*}$ is strict-monoidal 
\begin{equation}
				\varphi^{*}\left( M^{\cdot}\otimes_{\Sigma(B)}N^{\cdot} \right)\cong
				\varphi^{*}(M^{\cdot})\otimes_{\Sigma(A)}\varphi^{*}(N^{\cdot}), \quad
				\varphi^{*}\left( \characteristic_{B} \right)\cong \characteristic_{A}.
				\label{eq:28.14}
\end{equation}
\label{rem:28.7}
\end{remark}
\begin{remark}
				One can think of the elements of $\sum(A)$ as ``Fourier-coefficients'', and
				associate with $M^{\cdot}\in \sum(A)$ the ``Fourier-transform'' given by the
				``analytic'' functor
				\begin{equation*}
								\begin{array}[H]{l}
								\widehat{M^{\cdot}}\, :\, CA\text{-sSet}\longrightarrow
								CA_{\cdot}\text{-sSet} \\\\
								\widehat{M}^{\cdot}(X):=\coprod\limits_{n\ge 0}
								M^{n}\otimes_{A}^{S_{n}}X^{\otimes n} \\\\
								M^{n}\otimes_{A}^{S_{n}} X^{\otimes n}:=
								M^{n}\otimes_{A}(\underbrace{X_{\cdot}\otimes\cdots \otimes
								X_{\cdot}}_{n})/_{\sigma m\otimes x_{1}\otimes \cdots \otimes x_{n} \sim
								m\otimes x_{\sigma(1)}\otimes \cdots \otimes x_{\sigma(n)} }
								\end{array}
				\end{equation*}
				The Fourier-transform converts ``convolution'' ($\equiv$ the symmetric product in
				$\Sigma(A)$) to ``multiplication''
				\begin{equation}
								\begin{array}[H]{lll}
												\left( M^{\cdot}\otimes_{\Sigma(A)}N^{\cdot} \right)^{\Lambda}(X) &=
												\coprod\limits_{n\ge 0} \left( M^{\cdot}\otimes_{\Sigma(A)}
												N^{\cdot} \right)^{n} \otimes_{A}^{S_{n}} X^{\otimes n}	 \\\\
												&= \coprod\limits_{p,q\ge> 0} \left( M^{p}\otimes_{A} N^{q}
												\right) 
																\begin{array}[H]{c}
																								\scriptstyle \times \vspace{-.2cm}\\
																								\scriptstyle S_{p}\times S_{q} 
																\end{array}
												 S_{p+q}
												\otimes_{A}^{S_{p+q}} X^{\otimes(p+q)} \\\\
												&= \coprod_{p,q\ge 0} M^{p}\otimes_{A} N^{q}
												\otimes_{A}^{S_{p}\times S_{q}} X^{\otimes (p+q)} \\\\
												&= \left( \coprod\limits_{p\ge 0} M^{p}\otimes_{A}^{S_{p}}
												X^{\otimes p} \right)\otimes_{A} \left( \coprod\limits_{q\ge 0}
												N^{q}\otimes_{A}^{S_{q}} X^{\otimes q} \right) \\\\
												&= \widehat{M^{\cdot}}(X) \otimes_{A} \widehat{N^{\cdot}}(X)
								\end{array}
								\label{eq:28.16}
				\end{equation}
				\label{rem:28.11}
\end{remark}
\section{The sphere spectrum}
\label{sec:29}
The categogry $\Sigma(\mathbb{F})\equiv (\text{Set}_0)^{\Sigma_{.}\times \BDelta^{\op}}$ is the
usual category of symmetric sequence of pointed simplicial sets, and in particular contains
the \underline{sphere-spectrum}: 
\begin{equation}
				S^{\cdot}_{\mathbb{F}} := \left\{ S^{n}=\underbrace{S^{1}\wedge\dots \wedge
				S^{1}}_{n} \right\}_{n\ge 0} 
				\label{eq:29.1}
\end{equation}
with the permutation action of $S_{n}$ on $S^{n}$. \vspace{.1cm}\\
The sphere-spectrum is a monoid
object of $\Sigma(\mathbb{F})$, with multiplication
\begin{equation}
				\begin{array}[H]{c}
								m:
								S^{\cdot}_{\mathbb{F}}\otimes_{\Sigma(\mathbb{F})}S^{\cdot}_{\mathbb{F}}\to
								S^{\cdot}_{\mathbb{F}} \\\\
								m(S^{n}\otimes_{\mathbb{F}} S^{m}) \equiv m(S^n\wedge S^m)
								\equiv S^{n+m} 
				\end{array}
				\label{eq:29.2}
\end{equation}
Note that it is a \underline{commutative} monoid,
$m=m\circ\mathfrak{T}_{{S_{\mathbb{F}}^{\cdot}},S^{\cdot}_{\mathbb{F}}}$, cf. remark
(\ref{rem:28.7}). \\
The unit is given by the embedding 
\begin{equation}
				\varepsilon: \characteristic_{\mathbb{F}}\equiv \left(
				\mathbb{F}(\unit),0,0,\dots \right) \equiv \left( S^{0},0,0,\dots
				\right)\hookrightarrow S^{\cdot}_{\mathbb{F}}, \; \text{using}\;
				\mathbb{F}(1)=\left\{ 0,1 \right\} \equiv S^{0}.
				\label{eq:29.3}
\end{equation}
We write $S^{\cdot}_{A}=\phi^{*}_{A}S^{\cdot}_{\mathbb{F}}$ for the corresponding
commutative monoid object of $\Sigma(A)$. We let $S^{\cdot}_{A}\mymod\subseteq
\Sigma(A)$ denote the sub-category of $S_{A}^{\cdot}$-modules, this is the category of
``symmetric spectra''. It has objects the symmetric sequences \linebreak $M^{\cdot}=\left\{
				M^{n}
\right\}\in \Sigma(A)$, together with associative unital 
 $S_{A}^{\cdot}$-action \linebreak
$S_{A}^{\cdot}\otimes_{\Sigma(A)}M^{\cdot}\xrightarrow{m}M^{\cdot}$, or equivalently,
associative unital, $S_{p}\times S_{q}\hookrightarrow S_{p+q}$ covariant,
action $S^{p}\wedge M^{q}\to M^{p+q}$. The maps in $S_{A}^{\cdot}\mymod$ are the maps in
$\Sigma(A)$ that preserve the $S_{A}^{\cdot}$-action. The category $S_{A}^{\cdot}\mymod$ is
complete and co-complete. \vspace{.1cm}\\
The category $S_{A}^{\cdot}\mymod$ has a closed symmetric monoidal structure 
\begin{equation}
				\begin{array}[H]{c}
								\_\otimes_{S^{\cdot}_{A}}\_: S^{\cdot}_{A}\mymod\times
								S_{A}^{\cdot}\mymod\to S^{\cdot}_{A}\mymod \\\\
								M^{\cdot}\otimes_{S_{A}^{\cdot}}N^{\cdot} := \text{Cok}\left\{
												M^{\cdot}\otimes_{\Sigma(A)}S_{A}^{\cdot}\otimes_{\Sigma(A)}N^{\cdot}\overset{\xrightarrow{m\otimes\text{id}_{N^{\cdot}}}}{
								\xrightarrow[\text{id}_{M^{\cdot}}\otimes m]{}} M^{\cdot}\otimes_{\Sigma(A)}N^{\cdot} \right\}
								\end{array}
								\label{eq:29.4}
\end{equation}
The unit is
\begin{equation}
				S_{A}^{\cdot} := \left\{ \phi_{A}^{*}S^{0},\phi_{A}^{*}S^{1},\ldots ,
				\phi_{A}^{*}S^{n},\ldots \right\}. 
				\label{eq:29.5}
\end{equation}
The internal $\myhom$ is given by 
\begin{equation}
				\begin{array}[H]{cc}
				\myhom_{S_{A}}(\_,\_): \left( S_{A}^{\cdot}\mymod \right)^{\op}\times
				S_{A}^{\cdot}\mymod\to S_{A}^{\cdot}\mymod \\\\
				\myhom_{S_A}(M^{\cdot},N^{\cdot}):= \kernel \left\{
				\myhom_{\Sigma(A)}(M^{\cdot},N^{\cdot})\rightrightarrows 
				\myhom_{\Sigma(A)}\left( S^{\cdot}_{A}\underset{{\Sigma(A)}}{\otimes} M^{\cdot},N^{\cdot}
\right)\right\}.
				\end{array}
				\label{eq:29.6}
\end{equation}
\begin{equation}
				S_A\mymod\left(M^{\cdot}\otimes_{S^{\cdot}_{A} }N^{\cdot},K^{\cdot}  \right)
				\equiv S_{A}\mymod\left(
				M^{\cdot},\myhom_{S^{\cdot}_{A}}(N^{\cdot},K^{\cdot}) \right)
				\label{eq:29.7}
\end{equation}
The category $S_{A}^{\cdot}\mymod$ is tensored, co-tensored, and enriched over pointed
simplicial sets  
\begin{equation}
				s\text{Set}_{0} \equiv \left( \text{Set}_{0} \right)^{\bbDelta^{\text{op}}} \equiv
				\mathbb{F}\text{-sSet}.
				\label{eq:29.8}
\end{equation}
The enrichment is given via the mapping space
\begin{equation}
				\left({N^{\cdot}}^{M^{\cdot}}\right)_{n} \equiv S_{A}\text{-mod}\left( M^{\cdot}
				\otimes_{A}\phi_{A}^{*}\left( \Delta(n)_{+} \right),N^{\cdot} \right)
				\label{eq:29.9}
\end{equation}
For $M\in CA\text{-sSet}$, the ``frea-$S_{A}$-module of level $n$'' on
$M_{\cdot}$ 
\, ,
$F_{n}(M_{\cdot})$, is the value at $M_{\cdot}$ of the left-adjoint $F_{n}$ of the
forgetfull functor
\begin{equation}
				\begin{array}[H]{c}
				CA_{\cdot}\text{-sSet}\longleftarrow \left( CA_{\cdot}\text{-sSet}
				\right)^{S_{n}} \longleftarrow S_{A}^{\cdot}\text{-mod} \\\\
				F_{n}(M_{\cdot}) = S^{\cdot}_{A}\otimes_{\sum(A)} \left( S_{n}\times M_{\cdot}
				\right)[n] = \left( 0,\dots , 0 , S_{n}\times M_{\cdot},\cdots ,
				S_{n+p}\times_{S_{p}} S^{p}\otimes_{A} M_{\cdot}, \cdots \right)
				\end{array}
				\label{eq:29.10}
\end{equation}
We say $M^{\cdot}\in S^{\cdot}_{A}\mymod$ is \underline{an $\Omega$-spectrum}, or a
fibrant object $M^{\cdot}\in \left( S^{\cdot}_{A}\mymod \right)_{\mathcal{F}}$ if
$M^{\cdot}$ is levelwise fibrant, and if the adjoint of the action map  
\begin{equation}
				m^{1,n}: S_{A}^{1} \otimes_{A} M^{n}\to M^{n+1} 
				\label{eq:29.11}
\end{equation}
is a weak equivalence
\begin{equation}
				\left( m^{1,n} \right)^{\natural} : M^{n}\xrightarrow{\sim}\myhom_{A}\left(
				S_{A}^{1},M^{n+1} \right) = (M^{n+1})^{S^{1}}:= \Omega M^{n+1}
				\label{eq:29.12}
\end{equation}
The \underline{stable} model structure on $S^{\cdot}_{A}\mymod$ is a Bousfield localization
of the projective model structure, having the same cofibrations, but with the fibrant
objects being $\left( S^{\cdot}_{A}\mymod \right)_{\mathcal{F}}$ 
\begin{equation}
				\begin{array}[H]{ll}
								\text{\underline{Cofibrations}:}			&
								\mathcal{C}_{S^{\cdot}_{A}}= \text{\phantom{\ }}^{\perp}\left\{
												\mathcal{W}_{S^{\cdot}_{A}}^{\text{lev}}\cap
				\mathcal{F}_{S^{\cdot}_{A}}^{\text{lev}} \right\}												 \\\\
				\text{\underline{Weak equivalences}:} &	 \mathcal{W}_{S^{\cdot}_{A}}=\left\{ 
									\begin{array}[H]{l}
													f^{\cdot}\in S^{\cdot}_{A}\mymod\left( M^{\cdot},N^{\cdot} \right), \\
													\left( f^{*}: X^{N^{\cdot}}\to X^{M^{\cdot}} \right) \in
													\mathcal{W}_{\mathbb{F}}  \\
													\text{for all } X^{\cdot}\in \left( S^{\cdot}_{A}\mymod \right)_{\mathcal{F}} 
									\end{array} 
								\right\}  \\\\
				\text{\underline{Fibrations}:} &	\mathcal{F}_{S^{\cdot}_{A}} =  \left\{
												\mathcal{C}_{{S}^{\cdot}_{A}} \cap \mathcal{W}_{{S}^{\cdot}_{A}}
								\right\}^{\perp}
				\end{array}
				\label{eq:29.13}
\end{equation}
It is left proper, simplicial, cellular, Quillen Model structure compatible with its
symmetric monoidal structure. It is moreover \underline{stable}. We let
\begin{equation}
				\mathbb{D}(A)=\text{Ho}\left( S^{\cdot}_{A}\mymod \right)\cong
				S^{\cdot}_{A}\mymod\left[ \mathcal{W}_{S^{\cdot}_{A}}^{-1} \right]
				\label{eq:29.14}
\end{equation}
denote the associated homotopy category, this is the {\underline{derived}} category of
$\aset$. It is a triangulated symmetric monoidal category. \vspace{.1cm}\\ 
Given a homomorphism $\varphi\in s\text{CBio}(B_{\cdot},A_{\cdot})$ we get an induce
adjunction
\begin{equation}
				\begin{tikzpicture}[baseline=20mm]
								\node (a) at (4,4) {$\mathbb{D}(A)$};
								\node (b) at (4,2) {$\mathbb{D}(B)$};
								\draw [->] (a) to [out=330,in=30,right] node{$\mathbb{R}\varphi_{*}$} (b);
								\draw [<-] (a) to [out=210,in=150,left] node{$\mathbb{L}\varphi^{*}$} (b);
				\end{tikzpicture}
				\label{eq:29.15}
\end{equation}
and $\mathbb{L}\varphi^{\ast}$ is a monoidal functor. We have the adjunction formula
for $M^{\cdot}\in \mathbb{D}(B)$, $N^{\cdot}\in \mathbb{D}(A)$, 
\begin{equation}
				\mathbb{R}\varphi_{*}\left( \mathbb{R}\text{Hom}_{S^{\cdot}_{A}} \left(
												\mathbb{L}\varphi^{\ast}M^{\cdot},N^{\cdot}
				\right) \right) \equiv \mathbb{R}\text{Hom}_{S^{\cdot}_{B}}\left(
				M^{\cdot},\mathbb{R}\varphi_{*}N^{\cdot} \right)
				\label{eq:29.16}
\end{equation}
\section{Quasi-coherent $\mathscr{O}_{X}$-modules}
\begin{definition}
				For $X\in \text{GSch}$, a pre-$\mathscr{O}_{X}$-Set (resp. pre-$C\mathscr{O}_{X}\text{-Set}$,
				pre-$C\mathscr{O}_{X}\text{-sSet}$) is an assignment $M$ for each open
				$\mathscr{U}\subseteq X$ of an 
				$\mathscr{O}(\mathscr{U})\text{-Set} \; M(\mathscr{U})$  (resp.
								$C\mathscr{O}_{X}(\mathscr{U})\text{-Set}$,
				$C\mathscr{O}_{X}(\mathscr{U})\text{-sSet}$), and for $\mathscr{V}\subset \mathscr{U}$, on
				$\mathscr{O}_{X}(\mathscr{U})\text{-map}$ $r_{\mathscr{V},\mathscr{U}}:
				M(\mathscr{U})\to
				M(\mathscr{V})$, and $r_{\mathscr{W},\mathscr{V}}\circ
				r_{\mathscr{V},\mathscr{U}}=r_{\mathscr{W},\mathscr{U}}$,
				$r_{\mathscr{U},\mathscr{U}}=\text{id}_{M(\mathscr{U})}$. The maps are natural transformation commuting
				with the $\mathscr{O}_{X}$-action (resp. and the simplicial action
				$\bbDelta^{\text{op}}$); We have the full subcategories of sheaves, denoted by omitting the prefix
				``pre''. Thus we have full-embeddings and adjunctions (using the left adjoint of
				the inclusion of sheaves in pre-sheaves, $M\mapsto M^{\natural}$ the shiftication functor)
\begin{equation}
				\begin{tikzpicture}[baseline=0mm]
								\node  at (10mm,40mm) {pre-$C\mathscr{O}_{X}$-Set};
								\node  at (40mm,40mm) {pre-$\mathscr{O}_{X}$-Set};
								\node  at (10mm,20mm) {pre-$C\mathscr{O}_{X}$-sSet};
								\node  at (40mm,20mm) {$C\mathscr{O}_{X}$-Set};
								\node  at (70mm,20mm) {$\mathscr{O}_{X}$-Set};
								\node  at (40mm,5mm) {$C\mathscr{O}_{X}$-sSet};
								\coordinate (a) at (21mm,42mm);
								\coordinate (b) at (31mm,42mm);
								\draw[<<-] plot[smooth ] coordinates {(a)(26mm,43mm)(b)};
								\draw[{Hooks[right,length=5,width=6]}->] ($(a)-(0,3mm)$)--($(b)-(0,3mm)$); 
								\node at (27mm,45mm) {$\scriptstyle (\;)_{c}$};
								\draw[ arrows={[harpoon, left]<->>} ] (4mm,23mm)--(4mm,38mm);
								\draw[<-{Hooks[right,length=5,width=6]}] (10mm,23mm)--(10mm,38mm);
								\node at (1mm,30mm) {$\pi_{0}$};
								\draw[{Hooks[left,length=5,width=6]}->] (35mm,23mm)--(21mm,38mm);
								\draw[<<-] (32mm,23mm)--(18mm,38mm);
								\node at (21mm,30mm) {$\scriptstyle (\;)^{\natural}$};

								\draw[ arrows={[harpoon, left]<->>} ] (38mm,7mm)--(38mm,18mm);
								\draw[<-{Hooks[right,length=5,width=6]}] (41mm,7mm)--(41mm,18mm);
								\node at (35.5mm,12mm) {$\pi_{0}$};
				\draw[<<-] plot[smooth ] coordinates {(48mm,22mm)(55mm,23mm)(63mm,22mm)};
				\draw[{Hooks[right,length=5,width=6]}->] (48mm,19mm)--(63mm,19mm);
				\node at (55mm,25mm) {$\scriptstyle (\;)_{c}$};
				\node at (50mm,30mm) {$\scriptstyle (\;)^{\natural}$};

				\draw[{Hooks[left,length=5,width=6]}->] (65mm,23mm)--(48mm,38mm);
				\draw[<<-] (62mm,23mm)--(45mm,38mm);

				\draw[{Hooks[left,length=5,width=6]}->] (34mm,6mm)--(20mm,18mm);
				\draw[<<-] (31mm,6mm)--(17mm,18mm);
				\node at (22mm,10mm) {$\scriptstyle (\;)^{\natural}$};

				\end{tikzpicture}
				\label{eq:30.2}
				\end{equation}
All these categories are complete and co-complete. Limits and colimits for pre-sheaves
(resp. sheaves) are created sectionwise over open subset (resp. for co-limit of sheaves use
shification). They have a symmetric monoidal structure, but it is closed only in the
commutative case: for pre-sheaves (resp. sheaves) the symmetric monoidal structure denoted
$\overset{\sim}{\otimes}_{\mathscr{O}_{X}}$ (resp. $\otimes_{\mathscr{O}_{X}}$) is defined
sctiowise 
\begin{equation}
				\left( M\overset{\sim}{\otimes}_{\mathscr{O}_{X}} N \right) (\mathscr{U}) :=
				M(\mathscr{U})\otimes_{\mathscr{O}_{X}(\mathscr{U})} N(\mathscr{U}) \quad , \quad \Big(\text{resp.} \;
				M\otimes_{\mathscr{O}_{X}}N:= \left(
M\overset{\sim}{\otimes}_{\mathscr{O}_{X}} N \right)^{\natural}\Big).
\label{eq:30.3}
\end{equation}
\label{def:30.4}
\end{definition}
The internal Hom, defined only in the commutative case, will be denoted by
$\text{Hom}_{\mathscr{O}_{X}}$, and is defined for pre-sheaves or sheaves, so that
$\text{Hom}_{\mathscr{O}_{X}}(M,M^{\prime})(\mathscr{U})=\mathscr{O}_{U}\text{-set}(M|_{\mathscr{U}},M^{\prime}|_{\mathscr{U}})$
is the equalizer 
\begin{equation}
				\begin{array}[H]{c}
								\ker \left\{ \prod\limits_{\mathscr{V}\subseteq \mathscr{U}}
												\text{Hom}_{\mathscr{O}_{X}/_{\mathscr{V}}}, \; \left(
				M(\mathscr{V}),M^{\prime}(\mathscr{V}) \right) \rightrightarrows
				\prod\limits_{\mathscr{W}\subseteq\mathscr{V}\subseteq \mathscr{U}}
				\text{Hom}_{\mathscr{O}_{X}(\mathscr{V})} \left(
				M(\mathscr{V}),M^{\prime}(\mathscr{W}) \right) \right\} \\\\
				\begin{tikzpicture}[baseline=0]
								\node at (0mm,0mm) {\normalsize{$\Big\{ \varphi_{_\mathscr{V}} \Big\}$}};
								\draw[|->] (6mm,2mm)-- (30mm,2mm);
								\draw[|->] (6mm,-2mm)-- (30mm,-2mm);
								\node at (40mm,2mm) {$\displaystyle \varphi_{\mathscr{W}}\circ r_{\mathscr{W},\mathscr{V}}$};
								\node at (40mm,-3mm) {$\displaystyle
								r_{\mathscr{W},\mathscr{V}}\circ \varphi_{\mathscr{V}}$};
				\end{tikzpicture}
				\end{array}
				\label{eq:30.4}
\end{equation}
Given $A\in \text{CBio}$, $S\subseteq A(1)$ a multiplicative subset, and $M\in
A\text{-Set}$, (commutative or not!), we can localize $M$ to obtain $S^{-1}M\in
S^{-1}A\text{-Set}$ in the usual way: $S^{-1}M:= (M\times S)/{\simeq}$, with the
equivalence relation $\simeq$ given by 
\begin{equation}
				(m_{1},s_{1})\simeq (m_{2},s_{2})\; \text{if} \; s\cdot s_{2}\cdot m_{1} = s\cdot
				s_{1}\cdot m_{2} \; \text{for some $s\in S$}.
				\label{eq:30.5}
\end{equation}
We denote by $m/s$ the equivalence class of $(m,s)$. \\
The functor $M\mapsto S^{-1}M$ commutes with colimits, finite-limits, tensor products and
Hom's. We obtain the localization functors
\begin{equation}
				\begin{tikzpicture}[baseline=0mm]

								\node  at (10mm,60mm) {$C\mathscr{O}_{A}\text{-Set}$};
								\node  at (60mm,60mm) {$\mathscr{O}_{A}\text{-Set}$};
								\node  at (10mm,30mm) {$C\mathscr{O}_{A}\text{-sSet}$};
								\node  at (60mm,30mm) {$CA\text{-Set}$};
								\node  at (100mm,30mm) {$A\text{-Set}$};
								\node  at (60mm,10mm) {$CA\text{-sSet}$};
								\node at (90mm,15mm) {$M^{\natural}$};
								\node at (100mm,20mm) {$\mathscr{M}$};
								\node at (113mm,10mm) {$\mathscr{M}(\text{spec}A)$};
								\node at (100mm,5mm) {$M$};
								\draw[<<-] plot[smooth ] coordinates {(19mm,62mm)(35mm,64mm)(51mm,62mm)};
								\node at (35mm,68mm) {$(\;)_{c}$};
								\draw[{Hooks[right,length=5,width=6]}->] (19mm,59mm)--(51mm,59mm); 
								\draw[<<-] (7mm,57mm)--(7mm,33mm);
								\draw[{Hooks[right,length=5,width=6]}->] (11mm,57mm)--(11mm,33mm);
								\draw[->>] (19mm,57mm)--(55mm,33mm);
								\draw[<-{Hooks[left,length=5,width=6]}] (15mm,57mm)--(51mm,32mm);
								\draw[<<-] plot[smooth ] coordinates {(68mm,32mm)(80mm,34mm)(92mm,32mm)};
								\draw[{Hooks[right,length=5,width=6]}->] (68mm,29mm)--(92mm,29mm);
								\node at (80mm,37mm) {$(\;)_{c}$};
								\draw[->>] (17mm,28mm)--(53mm,13mm);
								\draw[<-{Hooks[left,length=5,width=6]}] (10mm,27.5mm)--(49mm,11mm);
								\draw[->>] (58mm,13mm)--(58mm,28mm);
								\draw[{Hooks[right,length=5,width=6]}->] (62mm,28mm)--(62mm,13mm);
								\node at (55mm,22mm) {$\pi_{0}$};
								\draw[|->] (102mm,18mm)--(108mm,12mm);
								\draw[|->] (98mm,7mm)--(92mm,13mm);

								\coordinate (a) at (21mm,42mm);
								\coordinate (b) at (31mm,42mm);
								
								\draw[->>] (65mm,57mm)--(102mm,33mm);
								\draw[<-{Hooks[left,length=5,width=6]}] (60mm,57mm)--(97mm,33mm);
								\node at (4mm,45mm) {$\pi_{0}$};								
				\end{tikzpicture}
				\label{eq:30.6}
\end{equation}
With the localization functor $M\mapsto M^{\natural}$ given by 
\begin{equation}
				M^{\natural}(\mathscr{U}):= \left\{ \sigma\; :\; \mathscr{U}\to
								\coprod_{\mathfrak{p}\in \mathscr{U}} S_{\mathfrak{p}}^{-1}M, \;
				\sigma(\mathfrak{p})\in S_{\mathfrak{p}}^{-1} M,\; \sigma \; \text{locally constant} \right\}
				\label{eq:30.7}
\end{equation}
where $\sigma$ is \myemph{locally constant} if for all $\mathfrak{p}\in \mathscr{U}$, there is
neighbourhood $\mathfrak{p}\in D(s)\subseteq \mathscr{U}$, and $m\in M$ such that for $q\in
D(s), \sigma(q)\equiv m/s \in S_{q}^{-1}M$. \\
We have an identification of the stalk of $M^{\natural}$ at $\plusprimeideal\in
\text{spec}(A)$, 
\begin{equation}
								M^{\natural}\Big|_{\plusprimeideal} =
								\lim\limits_{\xrightarrow[\mathscr{U}\ni\plusprimeideal]{}}
								M^{\natural}(\mathscr{U})
								\xrightarrow{\;\sim\;} S^{-1}_{\plusprimeideal}M := M_{\plusprimeideal},
								\qquad 
								\left( \sigma,\mathscr{U} \right)/\sim \xmapsto{\quad\;} \sigma(\plusprimeideal)
								\label{eq:30.8}
\end{equation}
Moreover, we have the identification of the global sections of $M^{\natural}$ over an affine basic
open subset $D(s)\subseteq\text{spec}(A)$, 
\begin{equation}
				\begin{array}[H]{rl}
								\Psi : M\left[ 1/s \right]:= \left\{ s^{\mathbbm{N}}\right\}^{-1}M 
								&\xrightarrow{\;\sim\;}M^{\natural}\left( D(s) \right) \\
								\frac{m}{s^n} &\xmapsto{\quad\;}\sigma(\plusprimeideal)\equiv
								\frac{m}{s^{n}}\in S^{-1}_{\plusprimeideal}M
				\end{array}
				\label{eq:30.9}
\end{equation}
(the proof that $\Psi$ is a bijection is a repetition of the  proof of  Theorem (\ref{thm:3})). \vspace{.1cm}\\
Given a generalized scheme $X$, we have the full subcategory of ``quasi-coherent'' sheaves of
$\mathscr{O}_{X}\text{-Sets}$ (resp. $C\mathscr{O}_{X}\text{-Set}$),
where an object $M$ is quasi-coherent if there is a covering of $X$ by open affines
$X=\cup_{i}\text{spec}(A_i)$, and there are $M_i\in A_i\text{-Set}$
(resp. $CA_{i}\text{-Set}$), with 
$M\big|_{\text{spec}(A_i)}\cong M_{i}^{\natural}$. Equivalently, for all open affine
$\text{spec}(A)\subseteq X$, we have 
\begin{equation}
M\big|_{{\text{spec}}(A)}\cong M\left( \text{spec}(A) \right)^{\natural}.
\label{eq:30.10}
\end{equation}
We denote the subcategories of quasi-coherent sheaves by the prefix ``q.c.'', so that for
affine scheme $X=\text{spec}(A)$, q.c. $\mathscr{O}_{X}\text{-Set}\cong A\text{-Set}$.
\vspace{.1cm}\\
The subcategories of quasi-coherent sheaves are closed under colimits and finite-limits in
the associated categories of sheaves, and are therefore co-complete and finite-complete.
Moreover they are closed under the symmetric monoidal structure, so that for
quasi-coherent (resp. commutative) $\mathscr{O}_{X}\text{-Sets}$ $M,N$, and for any affine
open subset $\mathscr{U}=\text{spec}(A)\subseteq X$ we have
\begin{equation}
				\begin{array}[H]{c}
								M\otimes_{\mathscr{O}_{X}} N\big|_{\mathscr{U}} \cong \left(
												M(\mathscr{U})\otimes_{A} N(\mathscr{U})
				\right)^{\natural} \\\\
				\text{resp. }\; \text{Hom}_{\mathscr{O}_{X}}\left( M,N \right)\big|_{\mathscr{U}} \cong
				\text{Hom}_{A}\left( M(\mathscr{U}), N(\mathscr{U}) \right)^{\natural}
				\end{array}
				\label{eq:30.11}
\end{equation}
For a mapping of generalized schemes $f\in \text{GSch}(X,Y)$, the functor
$f^{\ast}$ takes quasi-coherent $\mathscr{O}_{Y}\text{-sets}$ to quasi-coherent
$\mathscr{O}_{X}\text{-sets}$.
Moreover, for affine open subsets $U=\text{spec}(A)\subseteq X$,
$\mathcal{V}=\text{spec}(B)\subseteq Y$, with  $f(\mathscr{U})\subseteq \mathcal{V}$ and with
$\varphi=f^{\#}:B\to A$ the associated homomorphism, we have for quasi-coherent
sheaf $M$ on $Y$, 
\begin{equation}
				f^{*}M\Big|_{\mathscr{U}} \cong \left( \varphi^{*} M(\mathscr{U}) \right)^{\natural}.
				\label{eq:30.12}
\end{equation}

\begin{definition} \label{def:30.13}
				A mapping of generalized schemes $f:X\to Y$ is called a
				$c\text{\normalfont-map}$
				(``compact and quasi-separated'') if we can cover $Y$ by open affine
				subsets (or equivalently, if for all open affine)
				$\mathscr{U}=\text{\normalfont spec}(B)\subseteq Y$, we have that
				$f^{-1}(\mathscr{U})$ is compact, so that
				\[
								f^{-1}(\mathscr{U})=\bigcup\limits_{i=1}^{N}\mathcal{V}_{i}, \qquad
								\mathcal{V}_{i}= \text{\normalfont spec}(A_i),
				\]
				is a \underline{finite} union of open affine subsets, and if moreover for
				$i,j\le N$, we have $\mathcal{V}_{i}\cap \mathcal{V}_{j}$ also compact, so that 
												\[
																\mathcal{V}_{i}\cap \mathcal{V}_j = \bigcup_{k=1}^{N_{i
																j}}\mathcal{W}_{i j k}, \qquad \mathcal{W}_{i j k} =
																\text{\normalfont spec}\left( A_{i j k} \right). 
												\]
\end{definition}
For a $c\text{\normalfont -map}$ $f:X\to Y$, the functor $f_{*}$ takes quasi-coherent
sheaves to quasi-coherent sheaves, so we get adjuctions 
\begin{equation}
				\begin{tikzpicture}[baseline=40mm]
							\node (a) at ({4*5mm},{13*5mm}) {q.c. $C\mathscr{O}_{X}\text{-sSet}$};
							\node (b) at ({4*5mm},{9*5mm}) {q.c. $C\mathscr{O}_{Y}\text{-sSet}$};
							\draw[<-] plot[smooth ] coordinates {($(a)+(-6mm,-3mm)$)($0.5*(a)+0.5*(b)+(-9mm,0mm)$)($(-6mm,3mm)+(b)$)};
							\draw[->] plot[smooth ] coordinates {($(a)+(6mm,-3mm)$)($0.5*(a)+0.5*(b)+(9mm,0mm)$)($(6mm,3mm)+(b)$)};
							\node at ($0.5*(a)+0.5*(b)+(-12mm,0mm)$) {$f^{\ast}$};
							\node at ($0.5*(a)+0.5*(b)+(12mm,0mm)$) {$f_{\ast}$};
							\node (a) at ({12.5*5mm},{13*5mm}) {q.c. $C\mathscr{O}_{X}\text{-Set}$};
							\node (b) at ({12.5*5mm},{9*5mm}) {q.c. $C\mathscr{O}_{Y}\text{-Set}$};
							\draw[<-] plot[smooth ] coordinates {($(a)+(-6.5mm,-3mm)$)($0.5*(a)+0.5*(b)+(-9.5mm,0mm)$)($(-6.5mm,3mm)+(b)$)};
							\draw[->] plot[smooth ] coordinates {($(a)+(6.5mm,-3mm)$)($0.5*(a)+0.5*(b)+(9.5mm,0mm)$)($(6.5mm,3mm)+(b)$)};
							\node at ($0.5*(a)+0.5*(b)+(-12mm,0mm)$) {$f^{\ast}$};
							\node at ($0.5*(a)+0.5*(b)+(12mm,0mm)$) {$f_{\ast}$};
							\node (a) at ({21*5mm},{13*5mm}) {q.c. $\mathscr{O}_{X}\text{-Set}$};
							\node (b) at ({21*5mm},{9*5mm}) {q.c. $\mathscr{O}_{Y}\text{-Set}$};
							\draw[<-] plot[smooth ] coordinates {($(a)+(-6mm,-3mm)$)($0.5*(a)+0.5*(b)+(-9mm,0mm)$)($(-6mm,3mm)+(b)$)};
							\draw[->] plot[smooth ] coordinates {($(a)+(6mm,-3mm)$)($0.5*(a)+0.5*(b)+(9mm,0mm)$)($(6mm,3mm)+(b)$)};
							\node at ($0.5*(a)+0.5*(b)+(-12mm,0mm)$) {$f^{\ast}$};
							\node at ($0.5*(a)+0.5*(b)+(12mm,0mm)$) {$f_{\ast}$};
				\end{tikzpicture}
				\label{eq:30.14}
\end{equation}
Indeed, for a quasi-coherent sheaf $M$ on $X$, we have 
(using the above notations)  that 
$f_{*}M|_\mathscr{U}=f_{*}M|_{\text{spec}(B)}$ is the equalizer
\begin{equation}
				\begin{array}[h]{l}
								f_{*}M\big|_{\mathscr{U}}\xrightarrow{\qquad}\prod\limits_{i=1}^{N}f_{*}\left(
								M\big|_{\mathcal{V}_{i}} \right)\rdarrow{}{}
								\prod\limits_{i,j\le N}\prod\limits_{k=1}^{N_{i j}}f_{*}\left(
								M\big|_{\mathcal{W}_{i j k}} \right) \\\\
								\text{and } 
								\begin{array}[t]{l}
								f_{*}\left( M\big|_{\mathcal{V}_i}
								\right) = \left( f_{i}^{\#}M(\mathcal{V}_{i})
								\right)^{\natural}, \qquad
								 f_{*}\left( M\big|_{\mathcal{W}_{i j k}}
								\right) = \left( f_{i j k}^{\#}M(\mathcal{W}_{i j k})
								\right)^{\natural}
								\end{array}
				\end{array}
				\label{eq:30.15}
\end{equation}
are quasi-coherent. \vspace{.1cm}\\
The adjunctions (\ref{eq:30.14}) satisfy the usual properties, such as the existence of canonical equivalences
\begin{equation}
				\begin{array}[H]{c}
								\left( g\circ f \right)_{*}\xrightarrow{\;\sim\;} g_{*}\circ f_{*} \qquad
								\left( g\circ f \right)^{*}\xrightarrow{\;\sim\;} f^{*}\circ g^{*} \\\\
								f^{*}\left( M\otimes_{\bigo_{Y}} N \right) \xrightarrow{\;\sim\;}
								f^{*} M\otimes_{\bigo_{X}} f^{*}N \\\\
								f_{*}\myhom_{\bigo_{X}}\left( f^{*}M,N \right)\xrightarrow{\;\sim\;}
								\myhom_{\bigo_{Y}}\left( M,f_{*}N \right). 
				\end{array}
				\label{eq:30.16}
\end{equation}
\section{The global derived category}
The language of model categories is notoriously difficult for categories of sheaves, and
to describe the derived category of $\mathscr{O}_{X}$-modules one usually uses the
``hairy'' injective resolutions (rather than the straightforward projective or even free
resolutions). Using the language of $\infty$ categories one can overcome these
difficulties and \myemph{define} $\mathbb{D}(X)$, the derived category of quasi-coherent
$\mathscr{O}_{X}\text{-modules}$, as the homotopy category $\mathbb{D}(X)=\text{Ho}
D(X)_{\infty}$, of the $\infty$ categorical limit $D(X)_{\infty}$ of the $\infty$
 categories $D(A)_{\infty}$ of fibrant co-fibrant $S_{A}$-modules, the limit
 taken over the category
\begin{equation*}
				\text{Aff}/_{X} = \left\{ \text{spec}(A)\subseteq X \; \text{open} \right\}
\end{equation*}
We can do the same for a generalized scheme $X\in \text{GSch}$, and define the derived
category of quasi-coherent commutative $\mathscr{O}_{X}\text{-sets}$ 
\begin{equation*}
				\mathbb{D}(X)= \pi_{0}D(X)_{\infty}\quad , \quad D(X)_{\infty}=
				\infty\limleftarrow\limits_{\text{Aff}/_{X}} D(A)_{\infty}
\end{equation*}
Using Lurie's straightening/ unstraightening functors \cite{MR2522659}, corollary 3.3.3.2, this
limit $D(X)_{\infty}$ has an equivalent, and more concrete description as the cartesian
sections of an inner, Cartesian and co-Cartesian, fibrantion of $\infty$ categories 
\begin{equation}
				\begin{array}[h]{l}
								N_{\bbDelta}(\rho): N_{\bbDelta}\left( \mathscr{D}(X)
								\right)\xtworightarrow{\qquad}
				N(\text{Aff}/_{X}), \\\\
				\rho : \mathscr{D}(X)=\text{Aff}/_{X}\ltimes \left( S^{\cdot}_{A}\text{-mod}\right)\to
				\text{Aff}/_{X} \;\;\; \text{the forgetful functor.}
				\end{array}
				\label{eq:5.36}
\end{equation}
Here for a generalized scheme $X$, 
$\mathcal{D}(X)=\text{Aff}/X \ltimes (S^\cdot\mymod)$ is the simplicial
category with objects pairs $(\text{spec}(A),M^{\cdot})$, with
$\text{spec}(A)\subseteq X$ open affine, and $M^{.}\in
S^{\cdot}_{A}\mymod$. The maps are given by
\begin{equation}
				\mathcal{D}(X)\left(
				(\text{\normalfont spec}(A_0),M_{0}^{\cdot}),
(\text{\normalfont spec}(A_1),M_{1}^{\cdot})\right) =
S_{A_{0}}\text{-mod}\left(
\mathbb{L}\phi^{*}M^{\cdot}_{1},M^{\cdot}_{0} \right)
\end{equation}
where
$\phi:\text{spec}(A_0)\xhookrightarrow{\;\;}\text{spec}(A_1)$
denotes the inclusion (and are empty if $\text{spec}\left( A_0
\right)\not\subseteq\text{spec}(A_1)$). 
The coherent nerve of $\mathcal{D}(X)$ is the simplicial set
\begin{equation}
				N_{\BDelta}\left( \mathcal{D}(X) \right)_{n} =
				\text{Cat}_{\BDelta}\left(\mathcal{C}(\Delta^{n}),\mathcal{D}(X) \right)  
				\label{eq:31.3}
\end{equation}
Explicitly, its elements are given by the data of 
\begin{equation*}
				\begin{array}[H]{lll}
								\text{(\romannumeral 1)} &
												(U_0\subseteq U_1 \subseteq
												\dots \subseteq U_n)\in N_n\left( \text{Aff}/X \right), \quad
												U_i=\text{spec}(A_i)\subseteq X \text{ open}, \\\\
								\text{(\romannumeral 2)} & 
												M^{\cdot}_{i}\in S^{\cdot}_{A_{i}}\mymod \\\\
								\text{(\romannumeral 3)} &
												\text{For $0\le i< j\le
												n$}, \quad \varphi_{ij}\in
												S^{\cdot}_{A_{i}}\mymod\left(
												\mathbb{L}\phi_{i
												j}^{*}M^{\cdot}_{j},M_{i}^{\cdot}
												\right) 
				\end{array}
\end{equation*}
where $\phi_{i j}: U_i\hookrightarrow U_j$ denotes the inclusion, and
{\underline{higher coherent homotopies}} given by
\begin{equation}
				\begin{array}[h]{l}
								\varphi_\tau\in S^{\cdot}_{A_i}\mymod\left(
												\mathbb{L}\phi^{*}_{i j} M^{\cdot}_{j}\otimes
								\Delta(\ell)_{+},M_{i}^{\cdot} \right) \\\\
								\ \qquad\text{for all
												$\tau=(\tau_0\subseteq \tau_1\subseteq \dots \subseteq \tau_\ell)\in
								N_{\ell}(\mathcal{P}_{i j})$}
				\end{array}
				\label{eq:31.4}
\end{equation}
(Where $P_{i j}$ is the set of subsets of $[i,j]$ containing the end
points). \vspace{.1cm}\\
These are required to satisfy:
\begin{equation}
				\displaystyle d_{k}\varphi_{\tau}=
\varphi_{(\tau_0\subseteq\dots
\hat{\tau}_k\ldots \subseteq \tau_{\ell})}
\label{eq:31.5}
\end{equation} 
and 
\begin{equation}
\begin{array}[h]{l}
				\text{for $\ell=0$, $\tau=(\tau_{0})=\left\{
				i,k_1,\dots,k_{q},j \right\}$}, \\\\
				 \varphi_{\tau_{0}} =  \varphi_{i k_1}\circ 
				 \mathbb{L}\phi_{i k_1}^{*}\left( \varphi_{k_1,k_2}\circ
								\mathbb{L}\phi_{k_1,k_2}^{*}\left( \ldots
												\varphi_{k_{q-1},k_q}\circ \mathbb{L}
												\phi_{k_{q-1},k_{q}}^{*}\left( \varphi_{k_q,j}
								\right)\dots
\right) \right)
\end{array}
\label{eq:31.6}
\end{equation}
\begin{equation*}
				\begin{array}[h]{ll}
								\text{A section }  m: N\left( \text{Aff}/X \right)\xrightarrow{\qquad} N_{\BDelta}\left(
												\mathcal{D}(X)
								\right)\;\; , \quad
								 N_{\BDelta }(\rho)\circ m =\text{id}
				\end{array}
\end{equation*}
is {\underline{Cartesian}} iff for all $n$-simplex
$(U_0\subseteq U_1\subseteq \dots \subseteq U_n)\in N_{n}\left( \text{Aff}/X \right)$,
$U_i=\text{spec}(A_i)$, the data $m(U_{\cdot})$  satisfies that all the arrows 
$\varphi_{i j}\in S^{\cdot}_{A_{i}}\mymod(\mathbb{L}\phi^{*}_{i j}
M_{j}^{\cdot}, M_{i}^{\cdot})$, $0\le i < j\le n$, are weak-equivalences. \\ 
The category $D(X)_{\infty}$ is symmetric monoidal stable quasi-category, so
$\mathbb{D}(X)$ is symmetric monoidal triangulated category. Where $X$ is an ordinary
scheme, $\mathbb{D}(X)$ is equivalent to the usual derived category of quasi-coherent
$\mathscr{O}_{X}$-modules. A $c\text{-map}\; f\in \text{GSch}(X,Y)$ induces adjunction
$f_{\ast}:\mathbb{D}(X) \rightleftarrows\mathbb{D}(Y): f^{\ast}$  satisfying the derived
version of (\ref{eq:30.16}). 
\nocite{*} 			
\bibliography{From_bi-operads_to_generalized_schemes}{}

\begin{thebibliography}{10}

\bibitem{MR0245577}
M.~Artin and B.~Mazur.
\newblock {\em Etale homotopy}.
\newblock Lecture Notes in Mathematics, No. 100. Springer-Verlag, Berlin-New
  York, 1969.

\bibitem{MR3459031}
Ilan Barnea and Tomer~M. Schlank.
\newblock A projective model structure on pro-simplicial sheaves, and the
  relative \'{e}tale homotopy type.
\newblock {\em Adv. Math.}, 291:784--858, 2016.

\bibitem{MR2016697}
Clemens Berger and Ieke Moerdijk.
\newblock Axiomatic homotopy theory for operads.
\newblock {\em Comment. Math. Helv.}, 78(4):805--831, 2003.

\bibitem{MR2248514}
Clemens Berger and Ieke Moerdijk.
\newblock The {B}oardman-{V}ogt resolution of operads in monoidal model
  categories.
\newblock {\em Topology}, 45(5):807--849, 2006.

\bibitem{MR2342815}
Clemens Berger and Ieke Moerdijk.
\newblock Resolution of coloured operads and rectification of homotopy
  algebras.
\newblock In {\em Categories in algebra, geometry and mathematical physics},
  volume 431 of {\em Contemp. Math.}, pages 31--58. Amer. Math. Soc.,
  Providence, RI, 2007.

\bibitem{MR2860274}
Clemens Berger and Ieke Moerdijk.
\newblock On an extension of the notion of {R}eedy category.
\newblock {\em Math. Z.}, 269(3-4):977--1004, 2011.

\bibitem{MR1422371}
Jonathan Block and Andrey Lazarev.
\newblock Homotopy theory and generalized duality for spectral sheaves.
\newblock {\em Internat. Math. Res. Notices}, (20):983--996, 1996.

\bibitem{MR0420609}
J.~M. Boardman and R.~M. Vogt.
\newblock {\em Homotopy invariant algebraic structures on topological spaces}.
\newblock Lecture Notes in Mathematics, Vol. 347. Springer-Verlag, Berlin-New
  York, 1973.

\bibitem{MR2402406}
Dennis~V. Borisov and Yuri~I. Manin.
\newblock Generalized operads and their inner cohomomorphisms.
\newblock In {\em Geometry and dynamics of groups and spaces}, volume 265 of
  {\em Progr. Math.}, pages 247--308. Birkh\"{a}user, Basel, 2008.

\bibitem{MR0365573}
A.~K. Bousfield and D.~M. Kan.
\newblock {\em Homotopy limits, completions and localizations}.
\newblock Lecture Notes in Mathematics, Vol. 304. Springer-Verlag, Berlin-New
  York, 1972.

\bibitem{MR341469}
Kenneth~S. Brown.
\newblock Abstract homotopy theory and generalized sheaf cohomology.
\newblock {\em Trans. Amer. Math. Soc.}, 186:419--458, 1973.

\bibitem{MR2294028}
Denis-Charles Cisinski.
\newblock Les pr\'{e}faisceaux comme mod\`eles des types d'homotopie.
\newblock {\em Ast\'{e}risque}, (308):xxiv+390, 2006.

\bibitem{MR3931682}
Denis-Charles Cisinski.
\newblock {\em Higher categories and homotopical algebra}, volume 180 of {\em
  Cambridge Studies in Advanced Mathematics}.
\newblock Cambridge University Press, Cambridge, 2019.

\bibitem{MR2805991}
Denis-Charles Cisinski and Ieke Moerdijk.
\newblock Dendroidal sets as models for homotopy operads.
\newblock {\em J. Topol.}, 4(2):257--299, 2011.

\bibitem{MR3100887}
Denis-Charles Cisinski and Ieke Moerdijk.
\newblock Dendroidal {S}egal spaces and {$\infty$}-operads.
\newblock {\em J. Topol.}, 6(3):675--704, 2013.

\bibitem{MR3100888}
Denis-Charles Cisinski and Ieke Moerdijk.
\newblock Dendroidal sets and simplicial operads.
\newblock {\em J. Topol.}, 6(3):705--756, 2013.

\bibitem{cisinski2014note}
Denis-Charles Cisinski and Ieke Moerdijk.
\newblock Note on the tensor product of dendroidal sets.
\newblock {\em arXiv preprint arXiv:1403.6507}, 2014.

\bibitem{dhillon2018infty}
Ajneet Dhillon and P{\'a}l Zs{\'a}mboki.
\newblock On the $\infty$-stack of complexes over a scheme.
\newblock {\em arXiv preprint arXiv:1801.06701}, 2018.

\bibitem{MR688240}
V.~Drinfeld.
\newblock Hamiltonian structures on {L}ie groups, {L}ie bialgebras and the
  geometric meaning of classical {Y}ang-{B}axter equations.
\newblock {\em Dokl. Akad. Nauk SSSR}, 268(2):285--287, 1983.

\bibitem{MR934283}
V.~Drinfeld.
\newblock Quantum groups.
\newblock In {\em Proceedings of the {I}nternational {C}ongress of
  {M}athematicians, {V}ol. 1, 2 ({B}erkeley, {C}alif., 1986)}, pages 798--820.
  Amer. Math. Soc., Providence, RI, 1987.

\bibitem{MR579087}
W.~G. Dwyer and D.~M. Kan.
\newblock Simplicial localizations of categories.
\newblock {\em J. Pure Appl. Algebra}, 17(3):267--284, 1980.

\bibitem{MR1361887}
W.~G. Dwyer and J.~Spali\'{n}ski.
\newblock Homotopy theories and model categories.
\newblock In {\em Handbook of algebraic topology}, pages 73--126.
  North-Holland, Amsterdam, 1995.

\bibitem{MR1392221}
Emmanuel~Dror Farjoun.
\newblock {\em Cellular spaces, null spaces and homotopy localization}, volume
  1622 of {\em Lecture Notes in Mathematics}.
\newblock Springer-Verlag, Berlin, 1996.

\bibitem{MR3643404}
Benoit Fresse.
\newblock {\em Homotopy of operads and {G}rothendieck-{T}eichm\"{u}ller groups.
  {P}art 1}, volume 217 of {\em Mathematical Surveys and Monographs}.
\newblock American Mathematical Society, Providence, RI, 2017.
\newblock The algebraic theory and its topological background.

\bibitem{getzler1994operads}
Ezra Ge\phantom{s}\hspace{-.13cm}tzler and John~DS Jones.
\newblock Operads, homotopy algebra and iterated integrals for double loop
  spaces.
\newblock {\em arXiv preprint hep-th/9403055}, 1994.

\bibitem{MR1601666}
E.~Getzler and M.~M. Kapranov.
\newblock Modular operads.
\newblock {\em Compositio Math.}, 110(1):65--126, 1998.

\bibitem{MR1301191}
Victor Ginzburg and Mikhail Kapranov.
\newblock Koszul duality for operads.
\newblock {\em Duke Math. J.}, 76(1):203--272, 1994.

\bibitem{MR1711612}
Paul~G. Goerss and John~F. Jardine.
\newblock {\em Simplicial homotopy theory}, volume 174 of {\em Progress in
  Mathematics}.
\newblock Birkh\"{a}user Verlag, Basel, 1999.

\bibitem{MR3075000}
A.~Grothendieck and J.~A. Dieudonn\'{e}.
\newblock {\em \'{E}l\'{e}ments de g\'{e}om\'{e}trie alg\'{e}brique. {I}},
  volume 166 of {\em Grundlehren der mathematischen Wissenschaften [Fundamental
  Principles of Mathematical Sciences]}.
\newblock Springer-Verlag, Berlin, 1971.

\bibitem{MR3408444}
Philip Hackney, Marcy Robertson, and Donald Yau.
\newblock {\em Infinity properads and infinity wheeled properads}, volume 2147
  of {\em Lecture Notes in Mathematics}.
\newblock Springer, Cham, 2015.

\bibitem{MR3924179}
Philip Hackney, Marcy Robertson, and Donald Yau.
\newblock Higher cyclic operads.
\newblock {\em Algebr. Geom. Topol.}, 19(2):863--940, 2019.

\bibitem{haran1989}
Shai Haran.
\newblock Index theory, potential theory, and the {R}iemann hypothesis.
\newblock In {\em {$L$}-functions and arithmetic ({D}urham, 1989)}, volume 153
  of {\em London Math. Soc. Lecture Note Ser.}, pages 257--270. Cambridge Univ.
  Press, Cambridge, 1991.

\bibitem{MR1872029}
Shai Haran.
\newblock {\em The mysteries of the real prime}, volume~25 of {\em London
  Mathematical Society Monographs. New Series}.
\newblock The Clarendon Press, Oxford University Press, New York, 2001.

\bibitem{MR2330442}
Shai Haran.
\newblock Non-additive geometry.
\newblock {\em Compos. Math.}, 143(3):618--688, 2007.

\bibitem{MR2433635}
Shai Haran.
\newblock {\em Arithmetical investigations}, volume 1941 of {\em Lecture Notes
  in Mathematics}.
\newblock Springer-Verlag, Berlin, 2008.
\newblock Representation theory, orthogonal polynomials, and quantum
  interpolations.

\bibitem{haran2017geometry}
Shai Haran.
\newblock Geometry over ${F}_1$.
\newblock {\em arXiv preprint arXiv:1709.05831}, 2017.

\bibitem{MR3605614}
Shai Haran.
\newblock New foundations for geometry---two non-additive languages for
  arithmetical geometry.
\newblock {\em Mem. Amer. Math. Soc.}, 246(1166):x+200, 2017.

\bibitem{haran2020algebra}
Shai Haran.
\newblock Algebra over generalized rings.
\newblock {\em arXiv preprint arXiv:2006.15613}, 2020.

\bibitem{MR3545944}
Gijs Heuts, Vladimir Hinich, and Ieke Moerdijk.
\newblock On the equivalence between {L}urie's model and the dendroidal model
  for infinity-operads.
\newblock {\em Adv. Math.}, 302:869--1043, 2016.

\bibitem{heuts2018trees}
Gijs Heuts and Ieke Moerdijk.
\newblock Trees in algebra and topology.
\newblock {\em preprint}, 2018.

\bibitem{MR1465117}
Vladimir Hinich.
\newblock Homological algebra of homotopy algebras.
\newblock {\em Comm. Algebra}, 25(10):3291--3323, 1997.

\bibitem{hirschhorn2009model}
Philip~S Hirschhorn.
\newblock {\em Model categories and their localizations}.
\newblock Number~99. American Mathematical Soc., 2009.

\bibitem{MR3802235}
Eric Hoffbeck and Ieke Moerdijk.
\newblock Shuffles of trees.
\newblock {\em European J. Combin.}, 71:55--72, 2018.

\bibitem{MR1860878}
Mark Hovey.
\newblock Spectra and symmetric spectra in general model categories.
\newblock {\em J. Pure Appl. Algebra}, 165(1):63--127, 2001.

\bibitem{MR1695653}
Mark Hovey, Brooke Shipley, and Jeff Smith.
\newblock Symmetric spectra.
\newblock {\em J. Amer. Math. Soc.}, 13(1):149--208, 2000.

\bibitem{MR906403}
J.~F. Jardine.
\newblock Simplicial presheaves.
\newblock {\em J. Pure Appl. Algebra}, 47(1):35--87, 1987.

\bibitem{MR927763}
Andr\'{e} Joyal.
\newblock Foncteurs analytiques et esp\`eces de structures.
\newblock In {\em Combinatoire \'{e}num\'{e}rative ({M}ontreal, {Q}ue.,
  1985/{Q}uebec, {Q}ue., 1985)}, volume 1234 of {\em Lecture Notes in Math.},
  pages 126--159. Springer, Berlin, 1986.

\bibitem{MR1935979}
Andr\'{e}. Joyal.
\newblock Quasi-categories and {K}an complexes.
\newblock volume 175, pages 207--222. 2002.
\newblock Special volume celebrating the 70th birthday of Professor Max Kelly.

\bibitem{joyal2008theory}
Andr{\'e} Joyal.
\newblock The theory of quasi-categories and its applications.
\newblock 2008.

\bibitem{joyal2011feynman}
Andr{\'e} Joyal and Joachim Kock.
\newblock Feynman graphs, and nerve theorem for compact symmetric
  multicategories.
\newblock {\em Electronic Notes in Theoretical Computer Science},
  270(2):105--113, 2011.

\bibitem{MR2342834}
Andr\'{e} Joyal and Myles Tierney.
\newblock Quasi-categories vs {S}egal spaces.
\newblock In {\em Categories in algebra, geometry and mathematical physics},
  volume 431 of {\em Contemp. Math.}, pages 277--326. Amer. Math. Soc.,
  Providence, RI, 2007.

\bibitem{MR79762}
Daniel~M. Kan.
\newblock Abstract homotopy. {I}.
\newblock {\em Proc. Nat. Acad. Sci. U.S.A.}, 41:1092--1096, 1955.

\bibitem{kan1957css}
Daniel~M Kan.
\newblock On css complexes.
\newblock {\em American Journal of Mathematics}, 79(3):449--476, 1957.

\bibitem{kan1958adjoint}
Daniel~M Kan.
\newblock Adjoint functors.
\newblock {\em Transactions of the American Mathematical Society},
  87(2):294--329, 1958.

\bibitem{kan1958combinatorial}
Daniel~M Kan.
\newblock A combinatorial definition of homotopy groups.
\newblock {\em Annals of Mathematics}, pages 282--312, 1958.

\bibitem{kan1958homotopy}
Daniel~M Kan.
\newblock On homotopy theory and css groups.
\newblock {\em Annals of Mathematics}, pages 38--53, 1958.

\bibitem{MR1423619}
Jean-Louis Loday.
\newblock La renaissance des op\'{e}rades.
\newblock Number 237, pages Exp. No. 792, 3, 47--74. 1996.
\newblock S\'{e}minaire Bourbaki, Vol. 1994/95.

\bibitem{MR2954392}
Jean-Louis Loday and Bruno Vallette.
\newblock {\em Algebraic operads}, volume 346 of {\em Grundlehren der
  mathematischen Wissenschaften [Fundamental Principles of Mathematical
  Sciences]}.
\newblock Springer, Heidelberg, 2012.

\bibitem{lurie2004derived}
Jacob Lurie.
\newblock {\em Derived algebraic geometry}.
\newblock PhD thesis, Massachusetts Institute of Technology, 2004.

\bibitem{MR2522659}
Jacob Lurie.
\newblock {\em Higher topos theory}, volume 170 of {\em Annals of Mathematics
  Studies}.
\newblock Princeton University Press, Princeton, NJ, 2009.

\bibitem{algebra2017available}
Jacob Lurie.
\newblock {\em Higher Algebra}.
\newblock 2017.

\bibitem{MR1712872}
Saunders Mac~Lane.
\newblock {\em Categories for the working mathematician}, volume~5 of {\em
  Graduate Texts in Mathematics}.
\newblock Springer-Verlag, New York, second edition, 1998.

\bibitem{MR1898414}
Martin Markl, Steve Shnider, and Jim Stasheff.
\newblock {\em Operads in algebra, topology and physics}, volume~96 of {\em
  Mathematical Surveys and Monographs}.
\newblock American Mathematical Society, Providence, RI, 2002.

\bibitem{MR0420610}
J.~P. May.
\newblock {\em The geometry of iterated loop spaces}.
\newblock Lecture Notes in Mathematics, Vol. 271. Springer-Verlag, Berlin-New
  York, 1972.

\bibitem{MR1436912}
J.~P. May.
\newblock Definitions: operads, algebras and modules.
\newblock In {\em Operads: {P}roceedings of {R}enaissance {C}onferences
  ({H}artford, {CT}/{L}uminy, 1995)}, volume 202 of {\em Contemp. Math.}, pages
  1--7. Amer. Math. Soc., Providence, RI, 1997.

\bibitem{MR4222648}
Ieke Moerdijk.
\newblock Closed dendroidal sets and unital operads.
\newblock {\em Theory Appl. Categ.}, 36:Paper No. 5, 118--170, 2021.

\bibitem{MR2797154}
Ieke Moerdijk and Bertrand To\"{e}n.
\newblock {\em Simplicial methods for operads and algebraic geometry}.
\newblock Advanced Courses in Mathematics. CRM Barcelona.
  Birkh\"{a}user/Springer Basel AG, Basel, 2010.
\newblock Edited by Carles Casacuberta and Joachim Kock.

\bibitem{MR2366165}
Ieke Moerdijk and Ittay Weiss.
\newblock Dendroidal sets.
\newblock {\em Algebr. Geom. Topol.}, 7:1441--1470, 2007.

\bibitem{moerdijk2009inner}
Ieke Moerdijk and Ittay Weiss.
\newblock On inner kan complexes in the category of dendroidal sets.
\newblock {\em Advances in Mathematics}, 221(2):343--389, 2009.

\bibitem{MR258031}
Daniel Quillen.
\newblock Rational homotopy theory.
\newblock {\em Ann. of Math. (2)}, 90:205--295, 1969.

\bibitem{MR0257068}
Daniel Quillen.
\newblock On the (co-) homology of commutative rings.
\newblock In {\em Applications of {C}ategorical {A}lgebra ({P}roc. {S}ympos.
  {P}ure {M}ath., {V}ol. {XVII}, {N}ew {Y}ork, 1968)}, pages 65--87. Amer.
  Math. Soc., Providence, R.I., 1970.

\bibitem{MR0338129}
Daniel Quillen.
\newblock Higher algebraic {$K$}-theory. {I}.
\newblock In {\em Algebraic {$K$}-theory, {I}: {H}igher {$K$}-theories ({P}roc.
  {C}onf., {B}attelle {M}emorial {I}nst., {S}eattle, {W}ash., 1972)}, pages
  85--147. Lecture Notes in Math., Vol. 341, 1973.

\bibitem{MR0223432}
Daniel~G. Quillen.
\newblock {\em Homotopical algebra}.
\newblock Lecture Notes in Mathematics, No. 43. Springer-Verlag, Berlin-New
  York, 1967.

\bibitem{R1974}
C.~L. Reedy.
\newblock Homotopy theory of model categories.

\bibitem{MR353298}
Graeme Segal.
\newblock Categories and cohomology theories.
\newblock {\em Topology}, 13:293--312, 1974.

\bibitem{MR2074990}
Christophe Soul\'{e}.
\newblock Les vari\'{e}t\'{e}s sur le corps \`a un \'{e}l\'{e}ment.
\newblock {\em Mosc. Math. J.}, 4(1):217--244, 312, 2004.

\bibitem{MR2320654}
Bruno Vallette.
\newblock A {K}oszul duality for {PROP}s.
\newblock {\em Trans. Amer. Math. Soc.}, 359(10):4865--4943, 2007.

\bibitem{weil1939analogie}
Andr{\'e} Weil.
\newblock Sur l’analogie entre les corps de nombres alg{\'e}briques et les
  corps de fonctions alg{\'e}briques.
\newblock {\em Revue Scient}, 77:104--106, 1939.

\bibitem{weiss2007dendroidal}
Ittay Weiss.
\newblock {\em Dendroidal sets}.
\newblock PhD thesis, Utrecht University, 2007.

\bibitem{MR2742425}
Ittay Weiss.
\newblock From operads to dendroidal sets.
\newblock In {\em Mathematical foundations of quantum field theory and
  perturbative string theory}, volume~83 of {\em Proc. Sympos. Pure Math.},
  pages 31--70. Amer. Math. Soc., Providence, RI, 2011.

\bibitem{MR3329226}
Donald Yau and Mark~W. Johnson.
\newblock {\em A foundation for {PROP}s, algebras, and modules}, volume 203 of
  {\em Mathematical Surveys and Monographs}.
\newblock American Mathematical Society, Providence, RI, 2015.

\end{thebibliography}
\bibliographystyle{plain}
\end{document}